\pdfoutput=1
\documentclass[12pt]{article}
\usepackage{layout}
\usepackage{url}
\usepackage{proof}
\usepackage{mathrsfs}
\usepackage{amsmath,amsthm,amssymb}
\usepackage{bbm}
\usepackage{graphicx}
\usepackage[all]{xy}
\usepackage{color}
\usepackage{indentfirst}

\usepackage{stmaryrd}
\newcommand{\shear}{\mathbin{\mkern-6mu\fatslash}}

\numberwithin{equation}{section}

\newtheorem{thm}{Theorem}[section]
\newtheorem{prop}[thm]{Proposition}
\newtheorem{lemm}[thm]{Lemma}
\newtheorem{cor}[thm]{Corollary}
\newtheorem{defi}[thm]{Definition}

\theoremstyle{definition}
\newtheorem{rmk}[thm]{Remark}

\def\fk#1{\mathfrak{#1}}
\def\cal#1{\mathcal{#1}}

\def\rm#1{\mathrm{#1}}
\def\bb#1{\mathbb{#1}}

\def\wh#1{\widehat{#1}}
\def\wt#1{\widetilde{#1}}
\def\ol#1{\overline{#1}}

\newcommand{\ds}{\displaystyle}
\newcommand{\QQ}{\mathbb{Q}}
\newcommand{\Qlb}{\overline{\mathbb{Q}}_\ell}

\newcommand{\ZZ}{\mathbb{Z}}

\newcommand{\Hom}{\mathrm{Hom}}

\newcommand{\Perv}{\mathrm{Perv}}
\newcommand{\Rep}{\mathrm{Rep}}
\newcommand{\DRep}{\mathrm{DRep}}
\newcommand{\DDRep}{\mathcal{D}\mathrm{Rep}}
\newcommand{\Vect}{\mathrm{Vect}}
\newcommand{\Catinf}[1]{\widehat{\mathrm{Cat}}_{\infty,#1}}
\newcommand{\LocSys}{\mathrm{LS}}
\newcommand{\Sat}{\mathrm{Sat}}

\newcommand{\Hck}{\mathcal{H}\mathrm{ck}}

\newcommand{\HHom}{\mathscr{H}\mathrm{om}}
\newcommand{\Div}{\mathrm{Div}}
\newcommand{\Divtil}{\widetilde{\mathrm{Div}}_0^{\perp}}
\newcommand{\ULA}{\mathrm{ULA}}
\newcommand{\perf}{\mathrm{perf}}
\newcommand{\Perf}{\mathrm{Perf}}

\newcommand{\QCoh}{\mathrm{QCoh}}
\newcommand{\Coh}{\mathrm{Coh}}
\newcommand{\et}{\mathrm{\acute{e}t}}
\newcommand{\Det}{D_{\mathrm{\acute{e}t}}}

\newcommand{\bd}{\mathrm{bd}}

\newcommand{\dia}{\diamondsuit}
\newcommand{\Sym}{\mathrm{Sym}}
\newcommand{\Witt}{\mathrm{Witt}}
\newcommand{\eq}{\mathrm{eq}}

\newcommand{\Ind}{\mathrm{Ind}}

\newcommand{\Gr}{\mathrm{Gr}}

\newcommand{\spl}{\mathrm{spl}}

\newcommand{\lmod}{{\operatorname{-mod}}}

\DeclareMathOperator{\Spec}{Spec}
\DeclareMathOperator{\Spd}{Spd}
\DeclareMathOperator{\Spa}{Spa}

\title{\Large{Derived geometric Satake equivalence on the Beilinson--Drinfeld Grassmannian with one leg in mixed characteristic}}
\author{Katsuyuki Bando\footnote{Katsuyuki Bando, Research Institute for Mathematical Sciences, Kyoto University, Kyoto, Japan, \texttt{kbando@kurims.kyoto-u.ac.jp}}}

\usepackage{cleveref}
\begin{document}
\maketitle
\abstract{
Fargues--Scholze developed a framework for the geometric Langlands program on the Fargues--Fontaine curve.
In particular, they proved the geometric Satake equivalence on the moduli space of closed Cartier divisors on the curve.
We prove the derived version of this equivalence with one leg.
Namely, we show that the derived category of \'{e}tale sheaves on the local Hecke stack is equivalent to the category of L-group-equivariant perfect complexes over the symmetric algebra of the shifted and $(-1)$-Tate-twisted Lie algebra.
}
\section{Introduction}
The geometric Satake equivalence is a symmetric monoidal equivalence between the category of perverse sheaves on the Hecke stack of a reductive group $G$ and the category of finite-dimensional algebraic representations of the dual group of $G$.
Since it is an equivalence between a geometric category and a representation-theoretic category, it is important in geometric representation theory, especially in the geometric Langlands program.
Explicitly, for a discrete valuation field $F$ whose residue field $k$ is algebraically closed and for a reductive group $G$ over $\cal{O}_F$, we can define the Hecke stack $\Hck_{G,\Spec k}$, which is the geometrization of the double quotient of $G(F)$ by $G(\cal{O}_F)$, and there is an equivalence of the form
\[
\Perv(\Hck_{G,\Spec k},\Qlb)\simeq \Rep^{\rm{f.d.}}(\wh{G},\Qlb).
\]
This is proved in \cite{MV} when $F$ is of equal characteristic, and in \cite{Zhumixed} when $F$ is mixed.

The derived versions of these equivalences, called ``the derived geometric Satake equivalences'' are proved by \cite{BF} in equal characteristic and by \cite{Ban} in mixed characteristic:
\[
D^{\ULA}(\Hck_{G,\Spec k},\Qlb)\simeq \rm{Sym}(\wh{\fk{g}}[-2])\lmod^{\perf}(D(\Rep(\widehat{G},\Qlb))).
\]
Here we regard $\rm{Sym}(\wh{\fk{g}}[-2])$ as a formal dg-algebra, and the right-hand side is the category of $\rm{Sym}(\wh{\fk{g}}[-2])$-module objects $M$ in the derived category of representations of $\wh{G}$, such that $M$ is a perfect complex over $\rm{Sym}(\wh{\fk{g}}[-2])$.

There is a factorizable version of the derived geometric Satake equivalence, in which we use a Hecke stack over a smooth projective curve $X$ instead of a Hecke stack over a point.
This equivalence is proved in \cite{CR}.
For the geometric local Langlands program for a local field $F$, the Fargues--Fontaine curve can be used instead of an algebraic curve $X$ as in \cite{FS}.
In this situation, we use a Hecke stack denoted by $\Hck_{G,(\Div^1)^I}$ defined in \cite[{\sc Definition} VI.1.6]{FS}.
In \cite[CHAPTER VI]{FS}, they proved the (non-derived) geometric Satake equivalence over $(\Div^1)^I$:
\[
\Perv(\Hck_{G,(\Div^1)^I},\Qlb)\simeq \Rep^{\rm{f.d.}}((\wh{G}\rtimes W_{F})^I,\Qlb),
\]
where $W_{F}$ is the Weil group of $F$, and the semiproduct $\widehat{G}\rtimes W_{F}$ arises from the usual algebraic $W_{F}$-action $\widehat{G}$ twisted by the cyclotomic action (see \S \ref{ssc:dualgp}).
In particular, when $|I|=1$, they have
\[
\Perv(\Hck_{G,\Div^1},\Qlb)\simeq \Rep^{\rm{f.d.}}(\wh{G}\rtimes W_{F},\Qlb).
\]
In this paper, we prove the derived version of this:
\begin{thm}\label{thm:main}
Let $G$ be a reductive group over $F$.
There exists an equivalence
\begin{align}\label{eqn:main}
\cal{D}^{\ULA}(\Hck_{G,\Div^1},\Qlb)&\simeq \rm{Sym}(\wh{\fk{g}}(-1)[-2]))\lmod^{\perf}(\cal{D}\Rep(\widehat{G}\rtimes W_{F},\Qlb))
\end{align}
of stable monoidal $\infty$-categories, where $\wh{G}\rtimes W_{F}$ acts on $\wh{\fk{g}}$ via the adjoint action, the symbol $(-1)$ denotes the $(-1)$-Tate twist, and we regard $\rm{Sym}(\wh{\fk{g}}(-1)[-2]))$ as a formal dg-algebra.
Moreover, it is compatible with the (non-derived) geometric Satake equivalence in \cite[\S VI]{FS}.
\end{thm}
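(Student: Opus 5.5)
We follow the Bezrukavnikov--Finkelberg strategy as adapted in \cite{Ban}, with the Weil group action built in from the start.

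\textbf{Step 1: reduction to generators.} We first show that $\cal{D}^{\ULA}(\Hck_{G,\Div^1},\Qlb)$ is the idempotent-complete stable subcategory generated by the perverse ULA sheaves $\cal{S}_V$ attached by \cite[\S VI]{FS} to $V\in\Rep^{\rm{f.d.}}(\wh{G}\rtimes W_F,\Qlb)$. To do this we use the stratification by Schubert cells and the fact that ULA objects over $\Div^1$ have ULA restrictions to the strata. On the spectral side, $\rm{Sym}(\wh{\fk{g}}(-1)[-2])\lmod^{\perf}(\cal{D}\Rep(\wh{G}\rtimes W_F,\Qlb))$ is generated by the free modules $\rm{Sym}(\wh{\fk{g}}(-1)[-2])\otimes V$. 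It therefore suffices to construct a monoidal functor $V\mapsto\cal{S}_V$ from free modules, together with a natural identification of mapping complexes.

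\textbf{Step 2: the Ext algebra.} The key computation is a natural isomorphism of graded algebras
\[
\bigoplus_n \Hom(\cal{S}_{\mathbf{1}},\cal{S}_{V}[n]) \simeq \Hom_{\wh{G}\rtimes W_F}\bigl(\mathbf{1},\rm{Sym}(\wh{\fk{g}}(-1)[-2])\otimes V\bigr),
\]
compatible with convolution and with the action of the equivariant cohomology $H^*(\Hck^{\le 0})$. We would compute the left side through the hyperbolic localization and constant term functors of \cite{FS}, reducing to $T$ and to the fibre over a geometric point $\Spd C\to\Div^1$. There the stack becomes the $B^+_{\rm{dR}}$-Grassmannian Hecke stack, whose derived Satake (the single-point analogue of \cite{Ban}) gives the answer with $\rm{Sym}(\wh{\fk{g}}[-2])$ after forgetting the $W_F$-action. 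It remains to determine the $W_F$-action. Since $\Div^1=\Spd\breve{F}/\varphi^{\ZZ}$, the $W_F$-equivariance is governed by descent along $\Spd C\to\Div^1$. The generator $H^2(BL^+\mathbb{G}_m)$ is a first Chern class and so carries the Tate twist $(-1)$. This forces $\wh{\fk{g}}(-1)$ with the twisted $W_F$-action of \S\ref{ssc:dualgp}, once we use the cyclotomic twist already present in the Fargues--Scholze $\wh{G}\rtimes W_F$.

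\textbf{Step 3: formality and assembly.} A priori, endomorphisms of the generator $\bigoplus_V \cal{S}_V$ give only an $A_\infty$ (or $E_1$) algebra in $\cal{D}\Rep(\wh{G}\rtimes W_F)$. We need it to be formal, i.e.\ equivalent to $\rm{Sym}(\wh{\fk{g}}(-1)[-2])$ with zero differential. We would argue by weights. After choosing the Frobenius-twisted structure coming from $\Div^1$, the Ext groups in degree $2n$ are pure of weight $2n$, and odd Exts vanish by parity. Pointwise purity of the $\cal{S}_V$ then makes every higher $A_\infty$ operation vanish, since such operations would have to shift degree without shifting weight. With formality in hand, Barr--Beck/Morita along the generator yields the equivalence, and monoidality follows from fusion/convolution compatibility. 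Compatibility with \cite[\S VI]{FS} holds by construction on the heart of free modules of rank one.

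\textbf{Main obstacle.} We expect Steps 2--3 over $\Div^1$, rather than over a point, to be the hardest. The base is not a point, so the relevant mapping objects live in $\cal{D}(\Div^1)$, i.e.\ in $W_F$-representations, and the purity/weight argument must be run continuously in $W_F$ and in $\cal{D}\Rep$. Our first approach is to pull back to $\Spd C$, prove formality there $W_F$-equivariantly using the Frobenius weight filtration, and then descend. The descent step needs the ULA condition and the fact that $\cal{D}^{\ULA}$ over $\Div^1$ is the category of $W_F$-equivariant objects over $\Spd C$.
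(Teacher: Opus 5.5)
\textbf{There is a genuine gap: Steps 2--3 are formulated over $\Div^1$, and there the weight argument fails.} By definition, $\cal{D}^{\ULA}(\Hck_{G,\Div^1},\Qlb)$ is the category of $W_F/I_E$-homotopy invariants of $\cal{D}^{\ULA}(\Hck_{G,\Spd\breve{E}},\Qlb)$. Its mapping complexes are therefore $R\Gamma(W_F/I_E,-)$ applied to mapping complexes over $\Spd\breve{E}$, and those already involve continuous $I_E$-cohomology.

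For the unit this gives $R\Gamma\bigl(W_F/I_E,\,R\Gamma(I_E,\Qlb)\otimes\cal{O}^{\shear}_{\wh{\fk{t}}^*/W}\bigr)$. Everything except the weight-$0$ part is killed, which leaves $R\Gamma(W_F/I_E,\Qlb)\simeq\Qlb\oplus\Qlb[-1]$. Consequently:
\begin{itemize}
\item $\mathrm{Ext}^1(\cal{S}_{\mathbf 1},\cal{S}_{\mathbf 1})\neq 0$, so odd Exts do not vanish.
\item The Ext groups over $\Div^1$ carry no Frobenius action at all, so ``higher $A_\infty$-operations must preserve weight'' has no content there.
\item The right side of your Step 2 formula must be $R\Hom$ in $\cal{D}\Rep(\wh{G}\rtimes W_F)$, which sees $H^1(W_F,\Qlb)\cong\Qlb$. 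The graded $\Hom_{\wh{G}\rtimes W_F}$ you wrote is only $\Qlb$ in degree $0$ in this example, so the formula as stated is false.
\end{itemize}

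Your fallback --- prove formality over a geometric point $W_F$-equivariantly and then descend --- is the right idea and is what the paper does. Two ingredients are missing.

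First, you cannot descend along $\Spd C\to\Div^1$ by the non-discrete group $W_F$. The paper works over $\Spd\breve{E}=[\Spd C/I_E]$ and absorbs the continuous $I_E$-part into the coefficients via
$\cal{D}^{\ULA}(\Hck_{G,\Spd\breve{E}})\simeq\cal{D}^{\ULA}(\Hck_{G,[\Spd k/I_E]})\simeq\cal{D}^{\ULA}(\Hck_{G,\Spd k})\otimes\cal{D}_{\rm{lc}}([\Spd k/I_E])$.
This uses the $[\Spd\cal{O}_C/I_E]$-degeneration plus a K\"unneth lemma. Only then does it take invariants, and only under the discrete group $W_F/I_E$.

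Second, taking invariants requires the equivalence over $\Spd\breve{E}$ as a morphism in $\Catinf{\Qlb}^{mon,B(W_F/I_E)}$, i.e.\ coherent equivariance data. Knowing the Weil action on Ext groups does not supply this. Neither does a Morita equivalence built from a minimal model chosen non-equivariantly. Monoidality ``from fusion'' is likewise unjustified in a Morita framework.

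The paper obtains both equivariance and monoidality at once. It realizes the equivalence as the comparison of two fully faithful functors, $R\Gamma_{\Spd\breve{E}}$ and $\kappa^{\shear}_{I_E}$, into $\cal{O}^{\shear}_{\bb{T}(\wh{\fk{t}}^*/W)}\lmod(\DDRep(I_E))$. These have the same essential image, and each is canonically monoidal and $W_F/I_E$-equivariant.

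The only formality needed is that of the commutative algebra $R\Gamma(\Hck_{G,\Spd C},\Qlb)$ in $\DDRep(W_F/I_E)$, after checking that $I_E$ acts trivially. That formality is proved by exactly your purity idea, using $H^j(W_F/I_E,V)=0$ for $V$ pure of nonzero weight. The Weil action on $\cal{O}_{\bb{T}(\wh{\fk{t}}^*/W)}$ is then pinned down by restriction to the $T$-fixed points $[\lambda]$, not by the Chern-class heuristic alone.
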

We first show the derived geometric Satake equivalence over $\Spd C$:
\begin{thm}\label{thm:mainSpdC}
Let $G$ be a reductive group over $F$.
There exists an equivalence
\begin{align}\label{eqn:mainSpdC}
\cal{D}^{\ULA}(\Hck_{G,\Spd C},\Qlb)\simeq \rm{Sym}(\wh{\fk{g}}[-2]))\lmod^{\perf}(\cal{D}\Rep(\widehat{G},\Qlb))
\end{align}
of stable monoidal $\infty$-categories.
\end{thm}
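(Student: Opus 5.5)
We will follow the strategy of Bezrukavnikov--Finkelberg \cite{BF}, as adapted to mixed characteristic in \cite{Ban}, now over the base $\Spd C$. The starting point is the non-derived geometric Satake equivalence of \cite[\S VI]{FS} over $\Spd C$, $\Perv(\Hck_{G,\Spd C},\Qlb)\simeq\Rep^{\rm{f.d.}}(\wh{G},\Qlb)$, which is compatible with convolution and fiber functors. Write $\cal{S}$ for the functor from $\Rep^{\rm{f.d.}}(\wh{G})$ to ULA sheaves. The basic object is the regular representation $\cal{O}(\wh{G})$, viewed as an ind-object. Concretely, we form the ind-ULA sheaf $\cal{S}(\cal{O}(\wh{G}))$, which is a ring object under convolution, and study the graded algebra
\[
A:=\mathrm{RHom}^\bullet(\mathbf{1},\cal{S}(\cal{O}(\wh{G}))),
\]
where $\mathbf{1}$ is the unit sheaf supported on the trivial Schubert cell. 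The group $\wh{G}$ acts on $A$ through the right regular action. The functor $\cal{F}\mapsto \mathrm{RHom}^\bullet(\mathbf{1},\cal{F}\star\cal{S}(\cal{O}(\wh{G})))$ then lands in $A$-modules in $\cal{D}\Rep(\wh{G})$. The goal is to identify $A$ with $\rm{Sym}(\wh{\fk{g}}[-2])$ as a formal $E_1$ (indeed $E_2$) algebra, and to show that this functor is fully faithful on ULA objects with essential image the perfect modules.

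\textbf{Step 1: compute equivariant cohomology.} We first compute $\mathrm{RHom}(\mathbf{1},\cal{S}(V))$ for $V\in\Rep(\wh{G})$. This Ext equals the $L^+G$-equivariant cohomology of the costalk at the base point of the IC-sheaf. By parity vanishing, the relevant spectral sequences degenerate. Two inputs are available for the parity statement:
\begin{itemize}
\item parity vanishing for stalks and costalks of $\cal{S}(V)$, via Mirković--Vilonen/semi-infinite orbits as in \cite{FS};
\item the identification $H^*_{L^+G}(\Spd C)=H^*_{G}(\mathrm{pt})=\cal{O}(\wh{\fk{g}}^*...)$.
\end{itemize}
The second is more precisely the Ginzburg-type isomorphism $H^*_{L^+G}(\Gr)\cong \cal{O}(\wh{\fk{g}}^*)$-type statement, where $H^*(\Gr_G)=U(\wh{\fk{g}}_e)^*$ for a regular nilpotent $e$ (Yun--Zhu). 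From these we want the identification
\[
A\cong \cal{O}(\wh{G}\times \wh{\fk{g}}^*[2]/\text{...})^{\wh{G}}\text{-twisted},
\]
which amounts to $A\cong\rm{Sym}(\wh{\fk{g}}[-2])\otimes\cal{O}(\wh{G})$ equivariantly. The argument runs as in \cite{BF}: restrict to the regular centralizer and use Kostant's theorem. In mixed characteristic over $\Spd C$, the needed inputs (equivariant cohomology of $\Gr$, the Yun--Zhu description, hyperbolic localization) were established in \cite{Ban} for the Witt vector Grassmannian. We will transport them using the comparison between the $B_{dR}^+$-Grassmannian over $\Spd C$ and the Witt vector affine Grassmannian over $\bar{\mathbb{F}}_p$ via nearby cycles, which is compatible with Satake by \cite[\S VI]{FS}. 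Alternatively, we reprove them directly using the Schubert cell stratification and cellular structure.

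\textbf{Step 2: formality.} We must show that the dg-algebra $\mathrm{REnd}$ is formal. For this we use a weight/purity argument. Since $C$ carries no Frobenius, we instead descend to a finite field model, e.g. the Witt vector Hecke stack over $\mathbb{F}_q$ used in \cite{Ban}, where Frobenius weights force formality (cohomological degree equals weight). We then transport the conclusion to $\Spd C$ via the full faithfulness of the nearby-cycles/specialization comparison on ULA sheaves. An alternative is to construct a $\mathbb{G}_m$-action (loop rotation is unavailable, so this is where the difficulty lies), so we prefer the specialization route.

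\textbf{Step 3: conclude.} Full faithfulness on the generators $\cal{S}(V)$ follows from Steps 1--2. The ULA category is generated under finite colimits, shifts and retracts by the $\cal{S}(V)$, since ULA objects have finitely many Schubert strata and each stratum contributes perverse pieces. Perfect $\rm{Sym}(\wh{\fk{g}}[-2])$-modules are likewise generated by the free modules $V\otimes\rm{Sym}$. Monoidality comes from the ring structure of $\cal{S}(\cal{O}(\wh{G}))$ together with the monoidality of non-derived Satake.

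We expect the main obstacle to be the comparison that transfers both the cohomology computation and formality from the Witt vector setting to $\Spd C$. This comparison amounts to proving that specialization is a monoidal equivalence on $\cal{D}^{\ULA}$ compatible with $\mathrm{RHom}$.
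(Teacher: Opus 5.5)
\textbf{There is a genuine gap, exactly at the step you call the main obstacle.} It is not a side issue: without that comparison your Steps 1--3 cannot be carried out, and once you have it they are unnecessary.

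Over $\Spd C$ there is no loop rotation, and the same holds on the Witt-vector Grassmannian. Loop rotation is what \cite{BF} uses to determine the ring $H^*_{L^+G}(\Gr)$ and to prove its key essential-image result. Parity and the cellular structure only give the graded dimensions of $\mathrm{Ext}^*(\mathbf{1},\cal{S}(V))$. They do not give $A$ as a $\wh{G}$-equivariant algebra, so your fallback of reproving the inputs directly fails.

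Nor does \cite{Ban} establish these inputs intrinsically for Witt vectors. What it provides is an equivalence with the \emph{equal characteristic} category, via the Fargues--Fontaine surface. The paper packages the needed input as Proposition~\ref{prop:DULAequiv}:
\begin{enumerate}
\item \cite[Corollary VI.6.7]{FS} for $\Spd C\leftarrow\Spd\cal{O}_C\rightarrow\Spd k$;
\item the comparison with schemes of \cite[\S 27]{Sch}, plus a stratumwise essential-surjectivity check, to reach $\Hck^{\Witt}_{G^{\spl},\Spec k}$;
\item \cite[Theorem 1]{Ban} to reach $\Hck^{\eq}_{G^{\spl},\Spec k}$.
\end{enumerate}
All these functors are pullbacks, so they commute with convolution and are monoidal. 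Composing with the equal characteristic equivalence of \cite{BF} then proves the statement outright; the paper records this as its first method. Its actual proof works in the BF formalism rather than your $\underline{\mathrm{End}}(\mathbf{1})$/Barr--Beck one. It shows that the cohomology functor $R\Gamma$ to $\cal{O}^{\shear}_{\bb{T}(\wh{\fk{t}}^*/W)}\lmod$ and the Kostant functor are both fully faithful with the same essential image. It still rests on the same comparison.

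Several specific claims in your proposal are also wrong or unjustified.
\begin{itemize}
\item The algebra is $A\simeq\Sym(\wh{\fk{g}}[-2])$ with the adjoint $\wh{G}$-action, because $\mathrm{RHom}(\mathbf{1},\cal{S}(U))\simeq(\Sym(\wh{\fk{g}}[-2])\otimes U)^{\wh{G}}$. It is not $\Sym(\wh{\fk{g}}[-2])\otimes\cal{O}(\wh{G})$. Modules over the latter in $\cal{D}\Rep(\wh{G})$ de-equivariantize to plain $\Sym(\wh{\fk{g}}[-2])$-modules, which is the wrong category.
\item The correct rings are $H^*_{L^+G}(\Spd C)\cong\cal{O}_{\wh{\fk{t}}^*/W}$ and $H^*_{L^+G}(\Gr)\cong\cal{O}_{\bb{T}(\wh{\fk{t}}^*/W)}$.
\item Monoidality does not follow from a ring structure on $\cal{S}(\cal{O}(\wh{G}))$ plus monoidality of non-derived Satake. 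You need a coherent central structure $\cal{F}\star\cal{S}(V)\simeq\cal{S}(V)\star\cal{F}$ for all ULA $\cal{F}$, at the $\infty$-categorical level. This is required both for $\cal{F}\mapsto\mathrm{RHom}(\mathbf{1},\cal{F}\star\cal{S}(\cal{O}(\wh{G})))$ to be monoidal and for $A$ to be $E_2$, so that $\otimes_A$ makes sense. You do not construct it. The paper avoids the issue by making $R\Gamma$ monoidal through the K\"unneth map (\ref{eqn:monoidalonRGamma}), checked on $\cal{IC}$, and similarly for the Kostant functor.
\item It is not true that $C$ ``carries no Frobenius'' in the relevant sense. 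Since $\Div^1\cong[\Spd C/W_F]$, the group $W_F$ acts on $\Hck_{G,\Spd C}$. The paper proves formality directly there (Lemma~\ref{lemm:H*GmWequiv}, Theorem~\ref{thm:H*HckWeil}, Proposition~\ref{prop:RHckequiv}). It runs purity on $\cal{O}_{\bb{T}(\wh{\fk{t}}^*/W)}$, where $\mathrm{Ad}_\chi$ acts trivially.
\item Your finite-field route to formality is acceptable once the comparison is in hand. You need a $\sqrt{q}$-normalization, though, because $\wh{\fk{g}}(-1)$ is not pure for the twisted action. It also loses the $W_F$-equivariance the paper needs for Theorem~\ref{thm:main}.
\end{itemize}
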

Essentially, this theorem is already proved in \cite{Ban}, via \cite[{\sc Corollary} VI.6.7]{FS}:
In \cite{Ban}, we introduced a space, called the ``Fargues--Fontaine surface'', obtained by combining the equal characteristic and mixed characteristic Fargues--Fontaine curves.
Using this surface, we constructed an equivalence between the derived Satake category on the equal characteristic Hecke stack $\cal{D}^{\ULA}(\Hck^{\eq},\Qlb)$ and that on the Witt-vector Hecke stack $\cal{D}^{\ULA}(\Hck^{\Witt},\Qlb)$.
By \cite[{\sc Corollary} VI.6.7]{FS}, the latter category is also equivalent to $\cal{D}^{\ULA}(\Hck_{G,\Spd C},\Qlb)$.
Therefore, Theorem \ref{thm:mainSpdC} can be shown by composing these equivalences and the derived Satake equivalence in the equal characteristic proved in \cite{BF}.

However, in this paper, we give an alternative proof of Theorem \ref{thm:mainSpdC} that is more analogous to the argument in \cite{BF}, while we still use the above equivalences of derived Satake categories (in particular, we still rely on the idea of the Fargues--Fontaine surface).
This method allows us to prove that the equivalence (\ref{eqn:mainSpdC}) is $W_{F}$-equivariant.
Regarding this $W_{F}$-equivariance, a similar result is sketched in the last paragraph of \cite[\S 6.7]{BSV}, where they explain the Frobenius-equivariance of Bezrukavnikov--Finkelberg's derived Satake equivalence over $\ol{\bb{F}}_q$.
They establish it parallel to \cite{BF} by verifying the Frobenius-equivariance of each functor in \cite{BF}.
However, our strategy differs in that we refine the argument before \S 6.5 in \cite{BF} and avoid the argument in \S 6.5 and 6.6 in \cite{BF}.
More precisely, in \cite{BF}, they construct a cohomology functor (denoted by $H^*$ in \cite{BF})
\[
R\Gamma\colon \cal{D}^{\ULA}(\Hck_{G,\Spd C},\Qlb)\to \cal{O}_{\bb{T}(\wh{\fk{t}}^*/W)}^{\shear}\lmod
\]
and a Kostant functor
\[
\kappa \colon \Sym(\wh{\fk{g}}[-2])\lmod^{\perf}(\Rep(\wh{G},\Qlb))\to \cal{O}_{\bb{T}(\wh{\fk{t}}^*/W)}^{\shear}\lmod.
\]
Then they proved that these two are fully faithful on the full subcategory of semisimple elements and have the same essential image there.
This defines the fully faithful functor
\[
\wt{S}_{qc}\colon \Sym(\wh{\fk{g}}[-2])\lmod^{fr}(\Rep(\wh{G},\Qlb))\to \cal{D}^{\ULA}(\Hck_{G,\Spd C},\Qlb), 
\]
where $\Sym(\wh{\fk{g}}[-2])\lmod^{fr}(\Rep(\wh{G},\Qlb))$ is the fully subcategory of semisimple objects in $\Sym(\wh{\fk{g}}[-2])\lmod^{\perf}(\Rep(\wh{G},\Qlb))$.
They provide several arguments in \cite[\S 6.5,6.6]{BF} to extend this to the desired equivalence.
However, we prove that the above two functors are fully faithful on the whole category, by using the fact that the sources are generated by semisimple elements under extensions.
We can also show that they are monoidal and $W_F$-equivariant, and have the same essential image.
Therefore, we can directly obtain the desired equivalence (\ref{eqn:mainSpdC}) that is monoidal and $W_F$-equivariant.
We also proved the relationship with the (non-derived) geometric Satake equivalence, which was not discussed in \cite{Ban}.

Once we prove the $W_{F}$-equivariance of (\ref{eqn:mainSpdC}), Theorem \ref{thm:main} can be proved by ``taking $W_{F}$-invariants'' of both sides of (\ref{eqn:mainSpdC}).
More precisely, letting $E/F$ be a finite Galois extension such that $G$ is unramified over $E$, we show the $W_{F}/I_E$-equivariant equivalence
\[
\cal{D}^{\ULA}(\Hck_{G,\Spd \breve{E}},\Qlb)\simeq \rm{Sym}(\wh{\fk{g}}[-2]))\lmod^{\perf}(\cal{D}\Rep(\widehat{G}\times I_E,\Qlb))
\]
and ``take $W_{F}/I_E$-invariants'' of both sides to get (\ref{eqn:main}).

As an application of Theorem \ref{thm:main}, we, together with Teruhisa Koshikawa, plan to enhance the spectral action on the category of lisse $\Qlb$-sheaves on the stack $\rm{Bun}_G$ of $G$-bundles on the Fargues–Fontaine curve.
\subsubsection*{Plan of paper}
We quickly explain the organization of this paper.
In \S \ref{sec:not&prelim}, we introduce some notation and preliminaries.
In \S \ref{sec:cohofHecke}, we calculate the cohomology of the Hecke stack over $\Spd C$ and the $W_F$-action on it, reducing to the equal characteristic case.
We need this cohomology to define the functor $R\Gamma$.
In \S \ref{sec:derSatoverSpdC}, we prove the derived Satake equivalence over $\Spd C$.
In \S \ref{sec:derSatoverSpdEbreve}, we prove the derived Satake equivalence over $\Spd \breve{E}$ with $W_F/I_E$-equivariance.
This immediately implies the derived Satake equivalence over $\Div^1$, in \S \ref{sec:derSatoverDiv1}.
\subsection*{Acknowledgments}
The author is grateful to Teruhisa Koshikawa and Naoki Imai for their useful discussions and comments.
This work is supported by JSPS KAKENHI Grant Number 25KJ0189.
\section{Notation and Preliminaries}\label{sec:not&prelim}
\subsection{Fundamental notation}
Let $p$ be a prime number and $F$ a finite extension of $\QQ_p$.
We write $\cal{O}_{F}$ for the ring of integers of $F$, and $k_{F}$ for its residue field of order $q$.
Let $k=\ol{k}_{F}$ be an algebraic closure of $k_{F}$.
We write $C:=\wh{\ol{F}}$ for the completion of an algebraic closure of $F$ and $\breve{F}$ for the completion of the maximal unramified extension of $F$ in $C$.
We denote by $W_{F}$ the Weil group of $F$, and by $I_{F}$ the inertia group of $F$.

Let $\ell$ be a prime number not equal to $p$.
Let $\Lambda$ be either
a ring killed by some power of $\ell$, 
a finite extension $L$ of $\bb{Q}_{\ell}$,
its ring of integers $\cal{O}_L$, or
$\Qlb$.
We usually consider the case $\Lambda= \Qlb$.
For a dg-algebra $R$, we write $R\lmod$ for the stable $\infty$-category of left dg $R$-modules.
Let $R\lmod^{\perf}\subset R\lmod$ denote the stable full subcategory of perfect $R$-complexes.
Put $\Vect_{\Qlb}:=\Qlb\lmod$ and $\Vect_{\Qlb}^{\perf}:=\Qlb\lmod^{\perf}$.
Let $\Vect^{\heartsuit}$ denote the abelian category of $\Qlb$-vector spaces and $\Vect^{\heartsuit, \rm{f.d.}}$ the abelian category of finite dimensional $\Qlb$-vector spaces.

For an algebraic stack $X$ over $\Qlb$, let $\QCoh(X,\Qlb)$, $\Perf(X,\Qlb)$, and $\Coh(X,\Qlb)$ denote the stable $\infty$-categories of quasi-coherent, perfect, and coherent complexes on $X$, respectively.

Let $G$ be a connected reductive group over $F$.
Let $T$ be a (not necessarily split) maximal torus in $G$.
For any torus $T'$ over a scheme $S$,  we write $\bb{X}^*(T')=\Hom_S(T',\bb{G}_{m,S})$ (resp.~$\bb{X}_*(T')=\Hom_S(\bb{G}_{m,S},T')$) for the character (resp.~cocharacter) lattice.
Fixing a Borel subgroup of $G_{\ol{F}}$ containing $T_{\ol{F}}$, 
let $\bb{X}_*^+(T_{\ol{F}})$ denote the subset of dominant cocharacters.
Let $W=N_{G_{\ol{F}}}(T_{\ol{F}})/T_{\ol{F}}$ denote the Weyl group of $G$.

Throughout this paper, we fix a finite Galois extension $E$ over $F$ such that $G$ is unramified over $E$.
\subsection{Categories of Representations}
For a group scheme $H$ over $\Spec \Lambda$ and a locally profinite group $\Gamma$ with a $\Gamma$-action on $H$ factoring through a finite quotient of $\Gamma$, let $\Rep^{\rm{f.p.}}(H\rtimes \Gamma,\Lambda)$ denote the exact category of representations of $H\rtimes \Gamma$ on a finite projective $\Lambda$-module, which 
are algebraic on $H$ and continuous on $\Gamma$.
When $\Lambda$ is a field, we also write $\Rep^{\rm{f.d.}}$ for $\Rep^{\rm{f.p.}}$.
We write 
\[
\Rep(H\rtimes \Gamma,\Lambda):=\Ind(\Rep^{\rm{f.p.}}(H\rtimes \Gamma,\Lambda))
\]
for its ind-completion.

We define 
\[
\DDRep^{\perf}(I_E,\Lambda)
\]
as follows:
When $\Lambda$ is a torison ring, put
\[
\DDRep^{\perf}(I_E,\Lambda):=\cal{D}^b(\Rep^{\rm{f.p.}}(I_E,\Lambda)).
\]
Let $L$ be  a finite extension of $\QQ_{\ell}$.
We define
\begin{align*}
\DDRep^{\perf}(I_E,\cal{O}_L)&:=\lim_{\substack{\longleftarrow \\ n}}\DDRep^{\perf}(I_E,\cal{O}_L/\ell^n),\\ 
\DDRep^{\perf}(I_E,L)&=\DDRep^{\perf}(I_E,\cal{O}_L)[1/\ell],
\end{align*}
where $(-)[1/\ell]$ denotes the $\infty$-category obtained by inverting $\ell$ in the Hom-spaces.
We also define $\DDRep^{\perf}(I_E,\Qlb)$ as the colimit of $\DDRep^{\perf}(I_E,L)$'s.

Let $\Rep^{\rm{sm}}(I_E,\Lambda)$ denote the category of smooth $I_E$-representations on a $\Lambda$-module.
We need the following basic results.
\begin{lemm}\label{lemm:DReperfcharacterizationtorsion}
Assume that $\Lambda$ is a torison ring.
The canonical functor
\[
\cal{D}^b(\Rep^{\rm{f.p.}}(I_E,\Lambda))\to \cal{D}(\Rep^{\rm{sm}}(I_E,\Lambda)), 
\]
induced by the universality of $\cal{D}^b(-)$, see \cite[Corollary 7.4.12]{BCKW}, is fully faithful.

Moreover, an object $A\in \cal{D}(\Rep^{\rm{sm}}(I_E,\Lambda))$ is in the essential image of $\cal{D}^b(\Rep^{\rm{f.p.}}(I_E,\Lambda))$ if and only if its image under the forgetful functor
\[
\cal{D}(\Rep^{\rm{sm}}(I_E,\Lambda))\to \Lambda\lmod
\]
is a perfect $\Lambda$-complex.
\end{lemm}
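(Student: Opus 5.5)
Fully faithfulness reduces to comparing Ext groups of finite projective representations; the essential image is then handled by induction on cohomological amplitude. Since both categories are stable and $\cal{D}^b(\Rep^{\rm{f.p.}}(I_E,\Lambda))$ is generated under shifts and cones by objects of $\Rep^{\rm{f.p.}}(I_E,\Lambda)$, a d\'evissage shows it suffices to prove that for $V,W\in\Rep^{\rm{f.p.}}(I_E,\Lambda)$ the map
\[
\Hom_{\cal{D}^b(\Rep^{\rm{f.p.}})}(V,W[i])\to \Hom_{\cal{D}(\Rep^{\rm{sm}})}(V,W[i])=\mathrm{Ext}^i_{\Rep^{\rm{sm}}}(V,W)
\]
is an isomorphism for all $i$. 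For the right-hand side, $V$ is finite projective over $\Lambda$, so $\mathrm{Ext}^i_{\Rep^{\rm{sm}}}(V,W)\simeq H^i_{\rm{cont}}(I_E,V^\vee\otimes W)$, continuous cohomology with discrete (finite, since $\Lambda$ is a finite torsion ring in the case of interest; in general a colimit argument is needed) coefficients. For the left-hand side, I would use the standard description of Ext in the bounded derived category of an exact category via Yoneda extensions. Then I would verify effaceability, which is the main step: every class in $H^i(I_E,M)$ with $M=V^\vee\otimes W$ finite projective dies after passing to some $M\hookrightarrow M'$ in the exact category.

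To do the effaceability, write any class as inflated from a finite quotient $I_E/U$ with $U$ open normal, acting trivially on $M$. Then embed $M\hookrightarrow \mathrm{Ind}_{U}^{I_E}M=\Lambda[I_E/U]\otimes_\Lambda M$, which is finite projective over $\Lambda$ and hence lies in $\Rep^{\rm{f.p.}}$. The subtlety is that cohomology of $I_E$ (not of the finite quotient) is involved: $H^i(I_E,\mathrm{Ind}_U^{I_E}M)=H^i(U,M)$ need not vanish. However, since $H^i(I_E,M)=\mathrm{colim}_U H^i(I_E/U,M^U)$, after enlarging the finite quotient the class becomes inflated from a finite group. Inflated classes are killed by coinduction from the trivial subgroup of the finite quotient, i.e.\ the image in $H^i(I_E,\mathrm{Ind}^{I_E}_U M)$ factors through $H^i(\{1\},M)=0$ via inflation for $i\ge1$. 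This gives the universal $\delta$-functor comparison and hence the isomorphism; I expect this to be the main technical point.

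For the essential image, the forward direction is clear, since the image of each $V$ is perfect over $\Lambda$ and perfect complexes are closed under cones. Conversely, let $A$ have perfect underlying complex. Then $A$ is bounded, and I would induct on amplitude. I would not reduce to single cohomology groups, since $H^i(A)$ need not be $\Lambda$-projective (e.g.\ $\Lambda=\ZZ/\ell^2$). Instead, I would represent $A$ by a bounded complex of smooth representations and use the fact that any smooth representation which is finitely generated over $\Lambda$ is a quotient of a finite projective one such as $\Lambda[I_E/U]^n$. A standard construction (truncating the top degree and resolving) then produces a bounded-above complex $P^\bullet$ of objects of $\Rep^{\rm{f.p.}}$ quasi-isomorphic to $A$. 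Perfectness over $\Lambda$ then lets one truncate: the kernel $Z$ of $P^m\to P^{m+1}$ at a sufficiently low degree $m$ is $\Lambda$-projective, because the tor-amplitude is bounded. Since $Z$ is also smooth, it lies in $\Rep^{\rm{f.p.}}$. Hence $A$ is quasi-isomorphic to a bounded complex in $\Rep^{\rm{f.p.}}$, as required.
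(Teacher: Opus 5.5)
Your proof is correct. For the essential image it coincides with the paper's: resolve by permutation modules $\Lambda[I_E/U_i]^{\oplus n_i}$, then cut off in a low degree where the kernel of the differential is $\Lambda$-projective. For full faithfulness, however, you take a different route. The paper never computes $\mathrm{Ext}$. It applies the standard resolving criterion on the epimorphism side: any surjection $B\to A$ in $\Rep^{\rm{sm}}(I_E,\Lambda)$ onto an object of $\Rep^{\rm{f.p.}}(I_E,\Lambda)$ is dominated by an admissible epimorphism $\Lambda[I_E/U]^{\oplus n}\to A$ factoring through $B$. This is obtained just by lifting generators, which is the same step the paper then reuses for the essential image.

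You instead identify $\mathrm{Ext}^i_{\Rep^{\rm{sm}}}(V,W)$ with $\varinjlim_U H^i(I_E/U,(V^\vee\otimes W)^U)$, efface classes on the monomorphism side via coinduction from a finite quotient, and run the $\delta$-functor induction against Yoneda Ext in the exact category. This costs you the Yoneda description of $\Hom_{\cal{D}^b(\cal{E})}(V,W[i])$ and the induction, but it makes transparent why the comparison holds. It is also the right formulation on that side: the stronger demand that every monomorphism $W\hookrightarrow B$ be dominated by an admissible one actually fails here. For example, take $\Lambda=\bb{F}_\ell$ and $B$ the locally constant functions on $I_E$. Every equivariant map from $B$ to a finitely generated smooth representation kills the constants, because inside any open normal subgroup of $I_E$ there is another of index divisible by $\ell$. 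Your weaker effaceability is exactly what the $\delta$-functor argument needs.

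Two bookkeeping remarks:
\begin{enumerate}
\item Your $\delta$-functor is in the variable $W$, so you should efface via $W\hookrightarrow \Lambda[I_E/U]\otimes_\Lambda W$, with $U$ open normal and acting trivially on both $V$ and $W$. Then $V^\vee\otimes(\Lambda[I_E/U]\otimes_\Lambda W)\cong \Lambda[I_E/U]\otimes_\Lambda(V^\vee\otimes W)$, and your computation applies verbatim.
\item Finiteness of $\Lambda$ is never needed, since the colimit description of smooth cohomology holds for any torsion $\Lambda$.
\end{enumerate}
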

\begin{proof}
For the first statement, it suffices to show that for any object in $A\in \Rep^{\rm{f.p.}}(I_E,\Lambda)$ and for any surjection $B\to A$ in $\Rep^{\rm{sm}}(I_E,\Lambda)$, there exist an object $A'\in \Rep^{\rm{f.p.}}(I_E,\Lambda)$ and a morphism $X'\to Y$ such that the composition $A'\to B\to A$ is a surjection whose kernel is in $\Rep^{\rm{f.p.}}(I_E,\Lambda)$.
Choose a system of generators of $A$ as an $I_E$-representation, and lift it to $B$.
By the definition of $\Rep^{\rm{sm}}(I_E,\Lambda)$, this allows us to get a homomorphism $A':=\Lambda[I_E/U]^{\oplus n}\to B$ for some open subgroup $U\subset I_E$.
The composition $A'\to B\to A$ is surjective by construction, and its kernel is in $\Rep^{\rm{f.p.}}(I_E,\Lambda)$ since this category is closed in kernels.

For the last statement, assume that the image of $A\in \cal{D}(\Rep^{\rm{sm}}(I_E,\Lambda))$ under the forgetful functor is a perfect $\Lambda$-complex.
Since all cohomologies of $A$ are finitely generated over $\Lambda$, by the above resolution method, we get a complex of $I_E$-representation that is quasi-isomorphic to $A$ and of the form
\[
\cdots \to \Lambda[I_E/U_1]^{\oplus n_1}\to \Lambda[I_E/U_0]^{\oplus n_0}\to 0\to 0\to \cdots,
\]
where $U_i\subset I_E$ is an open subgroup.
Since $A$ is a perfect $\Lambda$-complex, the kernel of the differential of this complex is finite projective over $\Lambda$ in sufficiently low degrees where the cohomology of $A$ vanishes.
Therefore, this complex is quasi-isomorphic to the finite complex of objects in $\Rep^{\rm{f.p.}}(I_E,\Lambda)$.
This implies the result.
\end{proof}

For a finite extension $L$ over $\QQ_{\ell}$, the functor
\begin{align}\label{eqn:derivedlimit}
\lim_{\substack{\longleftarrow \\ n}}(\cal{O}_L/\ell^n\lmod)\to \cal{O}_L\lmod,\quad (A_n)_n\mapsto R\lim_{\substack{\longleftarrow \\ n}}A_n
\end{align}
taking the derived limit is a fully faithful functor whose essential image is the subcategory of $\ell$-adically derived complete objects (i.e. objects satisfying $M\cong \ds R\lim_{\substack{\longleftarrow \\ n}} (M\otimes^{\bb{L}}_{\cal{O}_L}\cal{O}_L/\ell^n)$).
Furthermore, by \cite[Lemma 09AW]{Sta}, the full subcategory $\ds\lim_{\substack{\longleftarrow \\ n}}(\cal{O}_L/\ell^n\lmod^{\perf})$ of the source maps onto the full subcategory of perfect $\cal{O}_L$-complexes.
\begin{cor}\label{cor:DReperfcharacterizationOL}
Let $L$ be a finite extension over $\QQ_{\ell}$.
The canonical functor
\[
\DDRep^{\perf}(I_E,\cal{O}_L):=\lim_{\substack{\longleftarrow \\ n}}\cal{D}^b(\Rep^{\rm{f.p.}}(I_E,\cal{O}_L/\ell^n)) \to \lim_{\substack{\longleftarrow \\ n}}\cal{D}(\Rep^{\rm{sm}}(I_E,\cal{O}_L/\ell^n))
\]
is fully faithful.

Moreover, an object $A\in \ds \lim_{\substack{\longleftarrow \\ n}}\cal{D}(\Rep^{\rm{sm}}(I_E,\cal{O}_L/\ell^n))$ is in the essential image of $\DDRep^{\perf}(I_E,\cal{O}_L)$ if and only if its image under the forgetful functor
\[
\lim_{\substack{\longleftarrow \\ n}}\cal{D}(\Rep^{\rm{sm}}(I_E,\cal{O}_L/\ell^n))\to \lim_{\substack{\longleftarrow \\ n}}(\cal{O}_L/\ell^n\lmod)\xrightarrow{(\ref{eqn:derivedlimit})} \cal{O}_L\lmod
\]
is a perfect $\cal{O}_L$-complex.
\end{cor}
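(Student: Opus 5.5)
The plan is to deduce both assertions levelwise from Lemma \ref{lemm:DReperfcharacterizationtorsion} and then pass to the limit over $n$, using the description (\ref{eqn:derivedlimit}) of the limit of module categories together with \cite[Lemma 09AW]{Sta}.

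\emph{Full faithfulness.} For each $n$, Lemma \ref{lemm:DReperfcharacterizationtorsion} (applied to $\Lambda=\cal{O}_L/\ell^n$) shows that
\[
\cal{D}^b(\Rep^{\rm{f.p.}}(I_E,\cal{O}_L/\ell^n))\to \cal{D}(\Rep^{\rm{sm}}(I_E,\cal{O}_L/\ell^n))
\]
is fully faithful. These functors are compatible with the transition functors $-\otimes^{\bb{L}}_{\cal{O}_L/\ell^{n+1}}\cal{O}_L/\ell^n$, which preserve the f.p.\ subcategories because finite projective modules base change to finite projective modules. A limit of fully faithful functors between compatible diagrams of $\infty$-categories is fully faithful. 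Indeed, mapping spaces in a limit are limits of the levelwise mapping spaces, and the induced map is a limit of equivalences. This gives the first claim.

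\emph{Essential image.} The forgetful functors to $\cal{O}_L/\ell^n\lmod$ are compatible with base change, so the composite in the statement is well defined.

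For the ``only if'' direction, let $A=(A_n)_n$ come from $\DDRep^{\perf}(I_E,\cal{O}_L)$. By Lemma \ref{lemm:DReperfcharacterizationtorsion}, each underlying $A_n$ is a perfect $\cal{O}_L/\ell^n$-complex. Hence $(A_n)_n$ lies in $\lim_n(\cal{O}_L/\ell^n\lmod^{\perf})$, and by \cite[Lemma 09AW]{Sta} its derived limit is perfect over $\cal{O}_L$.

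For the ``if'' direction, suppose $M:=R\lim_n A_n$ is a perfect $\cal{O}_L$-complex. Since (\ref{eqn:derivedlimit}) is fully faithful, the system of underlying modules $(A_n)_n$ is recovered as $(M\otimes^{\bb{L}}_{\cal{O}_L}\cal{O}_L/\ell^n)_n$, so each underlying $A_n$ is perfect over $\cal{O}_L/\ell^n$. By Lemma \ref{lemm:DReperfcharacterizationtorsion}, each $A_n$ then lies in the essential image of $\cal{D}^b(\Rep^{\rm{f.p.}}(I_E,\cal{O}_L/\ell^n))$. Full faithfulness at each level identifies the transition equivalences $A_{n+1}\otimes^{\bb{L}}\cal{O}_L/\ell^n\simeq A_n$ with equivalences inside the subcategories. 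Therefore the compatible system lifts to an object of the limit $\DDRep^{\perf}(I_E,\cal{O}_L)$. Formally, an object of a limit of full subcategories is just an object of the ambient limit whose components all lie in the subcategories.

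\emph{Main obstacle.} The delicate point is that the base change functors on the smooth-representation side must be the correct derived ones and must commute with the forgetful functors. This has to be checked so that the levelwise components of $A$ really are the $M\otimes^{\bb{L}}\cal{O}_L/\ell^n$. I would verify this directly: the forgetful functor is exact and commutes with tensoring by the flat-resolved module $\cal{O}_L/\ell^n$, for instance by using the two-term resolution $\cal{O}_L\xrightarrow{\ell^n}\cal{O}_L$, which is $I_E$-equivariant with trivial action.
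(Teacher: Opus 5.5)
Your proposal is correct and follows the same route as the paper: you apply Lemma \ref{lemm:DReperfcharacterizationtorsion} at each level $n$, then pass to the limit using the full faithfulness of (\ref{eqn:derivedlimit}) and \cite[Lemma 09AW]{Sta}. The paper does exactly this in one line, and you have simply written out the limit formalities. One small slip, which does not affect the argument: the compatibility of the forgetful functors with the transition functors $-\otimes^{\bb{L}}_{\cal{O}_L/\ell^{n+1}}\cal{O}_L/\ell^n$ should be checked with a free resolution over $\cal{O}_L/\ell^{n+1}$ (for instance the infinite periodic one, with trivial $I_E$-action), not with the two-term resolution $\cal{O}_L\xrightarrow{\ell^n}\cal{O}_L$, since that one computes $-\otimes^{\bb{L}}_{\cal{O}_L}\cal{O}_L/\ell^n$, which is a different functor.
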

\begin{proof}
This follows from Lemma \ref{lemm:DReperfcharacterizationtorsion} and the paragraph just before this corollary.
\end{proof}

Since $L\lmod^{\perf}=\cal{O}_L\lmod^{\perf}$ and $\Qlb\lmod^{\perf}=\ds\lim_{\substack{\longrightarrow \\ L}}L\lmod^{\perf}$ hold, there is a canonical forgetful functor
\[
\DDRep^{\perf}(I_E,\Lambda)\to \Lambda\lmod^{\perf}
\]
for general $\Lambda$.

Put
\[
\DDRep(I_E,\cal{O}_L):=\Ind(\DDRep^{\perf}(I_E,\cal{O}_L)).
\]

The conjugation action of $W_F$ on $I_E$ induces the action of $W_F$ (considered as a discrete group) on $\DDRep^{\perf}(I_E,\Lambda)$ and $\DDRep(I_E,\Lambda)$.
Since the restriction of this action to $I_E$ is naturally isomorphic to the trivial action, we get the $W_F/I_E$-action on $\DDRep^{\perf}(I_E,\Lambda)$ and $\DDRep(I_E,\Lambda)$.
We define
\begin{align*}
\DDRep^{\perf}(W_F,\Lambda)&:=\DDRep^{\perf}(I_E,\Lambda)^{W_F/I_E},\\
\DDRep(W_F,\Lambda)&:=\DDRep(I_E,\Lambda)^{W_F/I_E},
\end{align*}
where $(-)^{W_F/I_E}$ is the operation that takes $W_F/I_E$-invariants defined, for example, in \cite[(A.2)]{AG}.
For an $\infty$-category $\cal{C}$ with $W_F/I_E$-actions, the category $\cal{C}^{W_{F}/I_E}$ is the totalization of a cosimplicial category whose $i$-th term is $\prod_{(W_{F}/I_E)^i}\cal{C}$.
When $\Lambda=\cal{O}_L,L$, or $\Qlb$, there is a canonical $t$-structure on $\DDRep^{\perf}(W_F,\Lambda)$ whose heart is $\Rep^{\rm{f.p.}}(I_E,\Lambda)^{W_F/I_E}=\Rep^{\rm{f.p.}}(W_F,\Lambda)$.

For an exact category $\cal{E}$, let $\cal{D}^b(\cal{E})$ denote its bounded derived category, see \cite[\S 7.4]{BCKW}.
For a Grothendieck abelian category $\cal{A}$, let $\cal{D}(\cal{A})$ be its derived category defined in \cite[\S 1.3.5]{LurHA}.

Let $H$ be an algebraic group over $\Lambda$.
Put 
\begin{align*}
\DDRep^{\perf}(H,\Lambda)&:=\cal{D}^b(\Rep^{\rm{f.p.}}(H,\Lambda)),\\
\DDRep(H,\Lambda)&:=\Ind(\cal{D}^b(\Rep^{\rm{f.p.}}(H,\Lambda))).
\end{align*}
If $H$ is a reductive group, there is a canonical equivalence
\begin{align}\label{eqn:DIndequiv}
\DDRep(H,\Qlb)\simeq \cal{D}(\Ind(\Rep^{\rm{f.p.}}(H,\Qlb))),
\end{align}
This follows from \cite[Theorem 4.9]{Kra}.
Note that they only prove that the right-hand side is equivalent to the homotopy category $K(\rm{Inj})$ of complexes of injective objects there, but the latter is equivalent to the derived category since $\Rep(H,\Qlb)$ has finite global dimension if $H$ is reductive.

We also define
\begin{align*}
\DDRep^{\perf}(H\times I_E,\Lambda)&:=\DDRep^{\perf}(H,\Lambda)\otimes_{\Lambda\lmod^{\perf}} \DDRep^{\perf}(I_E,\Lambda),\\
\DDRep(H\times I_E,\Lambda)&:=\DDRep(H,\Lambda)\otimes_{\Lambda\lmod} \DDRep(I_E,\Lambda).
\end{align*}
When $H$ has a $W_F/I_E$-action, we have a $W_F/I_E$-action on $\DDRep^{\perf}(H\times I_E,\Lambda)$ and $\DDRep(H\times I_E,\Lambda)$, and put
\begin{align*}
\DDRep^{\perf}(H\rtimes W_F,\Lambda)&:=\DDRep^{\perf}(H\times I_E,\Lambda)^{W_F/I_E},\\
\DDRep(H\rtimes W_F,\Lambda)&:=\DDRep(H\times I_E,\Lambda)^{W_F/I_E}.
\end{align*}
If $H$ is reductive, any object $V\in \Rep^{\rm{f.p.}}(H\times I_E,\Qlb)$ is of the form
\[
\bigoplus_{i=1}^n V_i\boxtimes W_i,
\]
where $n$ is a nonnegative integer, $V_i$ is an irreducible $H$-representation, and $W_i$ is a $I_E$-representation.
Thus, there is a canonical functor
\[
\Rep^{\rm{f.p.}}(H\times I_E,\Qlb)\to \DDRep^{\perf}(H\times I_E,\Qlb),\quad \bigoplus_{i=1}^n V_i\boxtimes W_i\mapsto \bigoplus_{i=1}^n V_i\boxtimes W_i.
\]
Since $\Rep^{\rm{f.p.}}(H\rtimes W_F,\Qlb)=\Rep^{\rm{f.p.}}(H\times I_E,\Qlb)^{W_F/I_E}$ holds, we also get a functor
\[
\Rep^{\rm{f.p.}}(H\rtimes W_F,\Qlb)\to \DDRep^{\perf}(H\rtimes W_F,\Qlb).
\]

For a stable symmetric monoidal $\infty$-category $\cal{C}$, we write $\rm{Alg}(\cal{C})$ for the category of associative algebra objects in $\cal{C}$ and $\rm{CAlg}(\cal{C})$ for the category of commutative algebra objects in $\cal{C}$.
For a stable monoidal $\infty$-category 
$\cal{C}$, and for an associative algebra object $A\in \rm{Alg}(\cal{C})$, we denote by $A\lmod(\cal{C})$ the $\infty$-category of left $A$-module objects in $\cal{C}$.
For $H'=H, H\times I_E$ or $H\rtimes W_F$, and for  $A\in \rm{Alg}(\DDRep(H',\Qlb))$, we write
\[
A\lmod^{\perf}(\DDRep(H',\Qlb)))
\]
for the full subcategory of $A\lmod(\DDRep(H',\Qlb))$ consisting of objects whose images in 
\[
A\lmod(\Vect_{\Qlb})=A\lmod
\]
under the forgetful functor are perfect over $A$.
Let
\[
A\lmod^{fr}(\DDRep(H',\Qlb))
\]
denote the essential image of 
\[
fr\colon \DDRep^{\perf}(H',\Qlb) \to A\lmod(\DDRep(H',\Qlb)), V\mapsto A\otimes V.
\]
\subsection{Dual group and its Weil action}\label{ssc:dualgp}
We write $\wh{G}_{\Lambda}$ for the dual groups of $G$ over $\Lambda$ and put $\wh{G}:=\wh{G}_{\Qlb}$.
Let $\fk{\wh{g}}$ denote the Lie algebras of $\wh{G}$.
We define $\wh{T}$, $\wh{\fk{t}}$, etc. similarly.
Let
\[
\rm{act}^{\rm{alg}}_{\wh{G}}\colon W_{F} \to \rm{Aut}(\wh{G}_{\Lambda})
\]
denote the usual action by pinned automorphisms.
We twist this action as follows, see the appendix by Richarz--Zhu in \cite{RZ}:
Let $2\rho\in \bb{X}_*(\wh{T}_{\Lambda})$ denote the sum of the positive coroots of $\wh{G}_{\Lambda}$ and $\rho_{\rm{ad}}\colon \bb{G}_{m,\Lambda}\to \wh{T}_{\Lambda}/Z(\wh{G}_{\Lambda})\subset (\wh{G}_{\Lambda})_{\rm{ad}}$ denote a unique homomorphism such that $2\rho_{\rm{ad}}$ is equal to the composition $\bb{G}_{m,\Lambda}\overset{2\rho}{\to} \wh{T}_{\Lambda}\to \wh{T}_{\Lambda}/Z(\wh{G}_{\Lambda})$.
We define
\[
\chi\colon W_{F}\overset{\rm{cycl}}{\to} \rm{Aut}(\mu_{\ell^{\infty}}(\ol{F}))\cong \bb{Z}_{\ell}^{\times} \to \Lambda^{\times} \overset{\rho_{\rm{ad}}}\to (\wh{G}_{\Lambda})_{\rm{ad}}(\Lambda),
\]
where $\rm{cycl}$ is the cyclotomic action.
This induces $\rm{Ad}_{\chi}\colon W_{F} \to \rm{Aut}(\wh{G}_{\Lambda})$ by inner automorphisms.
Put 
\begin{align}\label{eqn:actgeom}
\rm{act}^{\rm{geom}}_{\wh{G}}:= \rm{act}_{\wh{G}}^{\rm{alg}}\circ \rm{Ad}_{\chi}\colon W_{F}\to \rm{Aut}(\wh{G}_{\Lambda}).
\end{align}
Note that since the action via $\rm{Ad}_{\chi}$ is trivial on $\wh{T}_{\Lambda}$, we have 
\[(\rm{act}^{\rm{geom}}_{\wh{G}}(w))|_{\wh{T}_{\Lambda}}=(\rm{act}^{\rm{alg}}_{\wh{G}}(w))|_{\wh{T}_{\Lambda}}=\rm{act}^{\rm{geom}}_{\wh{T}}(w)=\rm{act}^{\rm{alg}}_{\wh{T}}(w)
\]
for any $w\in W_{F}$.

Let $\rm{act}^{\rm{geom}}_{\wh{\fk{g}}}(-)$ denote the $W_{F}$-action on $\wh{\fk{g}}$ given by the derivative of $\rm{act}^{\rm{geom}}_{\wh{G}}(-)$.
For $w\in W_{F}$, put
\[
\rm{act}^{\rm{geom}}_{\wh{\fk{g}}^*}(w):=(\rm{act}^{\rm{geom}}_{\wh{\fk{g}}}(w^{-1}))^*\colon \wh{\fk{g}}^*\to \wh{\fk{g}}^*.
\]
\subsection{Notation on diamonds and v-stacks}\label{ssc:notvstack}
The notation on diamonds and v-stacks is largely based on \cite{Sch}.
For a perfect field $k'$ of characteristic $p$, we write $\Perf_{k'}$ for the category of perfectoid spaces over $k'$.
When we consider v-stacks in this paper, we work on $\Perf_{k_{F}}$.

For a small v-stack $X$, let $\rm{Frob}_{X}\colon X\to X$ be the absolute Frobenius of $X$, which is defined such that for any $S\in \Perf_{k_{F}}$, the induced map $X(S)\to X(S)$ is given by the precomposition with the absolute Frobenius of $S$.

For a small v-stack $X$, we write $\Det(X,\Lambda)$ for the derived category of \'{e}tale sheaves on $X$, and let $\cal{D}_{\et}(X,\Lambda)$ denote its $\infty$-categorical enhancement.
These are defined as follows: when $\Lambda$ is a torsion ring, the definitions are given in \cite[Definition 1.7]{Sch} and \cite[Lemma 17.1]{Sch}; when $\Lambda$ is the ring of integers, these are obtained by passing to the limit as in \cite[Definition 26.1]{Sch}; when $\Lambda$ is a finite extension of $\QQ_{\ell}$, these are defined by inverting $\ell$; and for $\Lambda=\Qlb$, these are given as their colimit.
The six functors can be naively defined for $\cal{D}_{\et}$'s.

Similarly, we define the full subcategory $\cal{D}_{\rm{lc}}(X,\Lambda)\subset \cal{D}_{\et}(X,\Lambda)$ of locally constant complexes, and its homotopy category $D_{\rm{lc}}(X,\Lambda)$ as in the paragraph before {\sc Proposition} IV.7.3 in \cite{FS} when $\Lambda$ is a torsion ring, and as above for general $\Lambda$.
We also define the category $\LocSys(X,\Lambda)$ of local systems on $X$, as the full subcategory of $\cal{D}_{\rm{lc}}(X,\Lambda)$ consisting of complexes that are concentrated in degree 0.
\begin{rmk}
When $\Lambda$ contains $\QQ_{\ell}$, our definition of $\cal{D}_{\et}$ and $\cal{D}_{\rm{lc}}$ might not be the ``correct'' one.
For example, $\cal{D}_{\rm{lc}}([\Spd k/W_F],\Qlb)$ is not equivalent to $\DDRep^{\perf}(W_F,\Qlb))$.
The local system corresponds to some object in $\Rep^{\rm{f.d.}}(W_F,\Qlb)$ does not belong even to $\cal{D}_{\et}([\Spd k/W_F],\Qlb)$.
However, this is not a serious issue, since we restrict our use of $\cal{D}_{\et}$ and $\cal{D}_{\rm{lc}}$ to limited situations.
\end{rmk}
We need the following lemmas later:
\begin{lemm}\label{lemm:DlcSpdkIFRepIFequiv}
There is a canonical equivalence
\begin{align}\label{eqn:DlcSpdkIFRepIF}
\cal{D}_{\rm{lc}}([\Spd k/I_E],\Lambda)\simeq \DDRep^{\perf}(I_E,\Lambda).    
\end{align}
\end{lemm}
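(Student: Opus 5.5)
Reduce to the case where $\Lambda$ is a torsion ring; the other cases then follow formally. For $\Lambda=\cal{O}_L$ both sides are defined as limits over $n$ of the torsion cases: the left side via \cite[Definition 26.1]{Sch}, the right side by definition. For $L$ and $\Qlb$ both sides are obtained by inverting $\ell$ and then passing to the colimit. So it suffices to give, for torsion $\Lambda$, an equivalence that is natural in $\Lambda$, i.e.\ compatible with the base changes $\cal{O}_L/\ell^{n+1}\to\cal{O}_L/\ell^n$.

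For torsion $\Lambda$, I would use the description of sheaves on the classifying stack $[\Spd k/I_E]$. The map $\Spd k\to[\Spd k/I_E]$ is a pro-étale $\underline{I_E}$-torsor. Since $\Spd k$ is (strictly) totally disconnected with trivial étale site, $\cal{D}_{\et}(\Spd k,\Lambda)\simeq\Lambda\lmod$. Moreover, v-descent along the Čech nerve $\underline{I_E}^{\bullet}\times\Spd k$ identifies $\cal{D}_{\et}([\Spd k/I_E],\Lambda)$ with $\cal{D}(\Rep^{\rm{sm}}(I_E,\Lambda))$. Here $\cal{D}_{\et}(\underline{S}\times \Spd k,\Lambda)\simeq\cal{D}(C(S,\Lambda))$ for profinite $S$, and this is exactly the continuous/smooth cochain description. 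This is the analogue of \cite[Proposition V.2.1 / Theorem V.1.1]{FS} for $[\ast/\underline{G}]$ with $G$ profinite and $\Lambda$ torsion ($p\neq\ell$ is not even needed, since $I_E$ is profinite and we only use smoothness). Under this identification, pullback to $\Spd k$ is the forgetful functor to $\Lambda\lmod$.

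Next I would identify the locally constant objects. By the definition before {\sc Proposition} IV.7.3 in \cite{FS}, an object of $\cal{D}_{\et}([\Spd k/I_E],\Lambda)$ is in $\cal{D}_{\rm{lc}}$ exactly when its pullback along the v-cover $\Spd k$ is locally constant with perfect fibres, i.e.\ a perfect $\Lambda$-complex. Hence $\cal{D}_{\rm{lc}}([\Spd k/I_E],\Lambda)$ is the full subcategory of $\cal{D}(\Rep^{\rm{sm}}(I_E,\Lambda))$ of objects whose underlying complex is perfect. By Lemma \ref{lemm:DReperfcharacterizationtorsion}, this subcategory is the essential image of the fully faithful functor from $\cal{D}^b(\Rep^{\rm{f.p.}}(I_E,\Lambda))=\DDRep^{\perf}(I_E,\Lambda)$, which gives (\ref{eqn:DlcSpdkIFRepIF}) for torsion $\Lambda$.

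The main obstacle is the descent step: making precise that $\cal{D}_{\et}([\Spd k/I_E],\Lambda)\simeq\cal{D}(\Rep^{\rm{sm}}(I_E,\Lambda))$ compatibly with change of $\Lambda$. I would first try citing \cite[Proposition V.2.1]{FS} directly (there $k$ is replaced by an algebraically closed base, which is fine). Failing that, I would run the Čech descent by hand, using that the resulting cosimplicial diagram of module categories over $C(I_E^{\bullet},\Lambda)$ computes comodules over the coalgebra $C(I_E,\Lambda)$, i.e.\ smooth representations. In the $\cal{O}_L$ case one checks that the limit of the torsion equivalences lands in the $\cal{O}_L$-version via Corollary \ref{cor:DReperfcharacterizationOL}: an object of the limit is locally constant iff its derived limit is perfect over $\cal{O}_L$. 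This is needed because the definition of $\cal{D}_{\rm{lc}}$ for $\cal{O}_L$ is itself given by passing to the limit.
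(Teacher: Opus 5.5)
Your proposal is correct and follows the paper's route: reduce to torsion $\Lambda$, use the argument of \cite[{\sc Theorem} V.1.1]{FS} to identify $\cal{D}_{\rm{lc}}([\Spd k/I_E],\Lambda)$ with the objects of $\cal{D}(\Rep^{\rm{sm}}(I_E,\Lambda))$ whose underlying complex is perfect, and conclude by Lemma \ref{lemm:DReperfcharacterizationtorsion}. The only point the paper handles more carefully is the descent comparison: since $I_E$ is not pro-$p$, the paper applies it only to bounded complexes, which suffices here because perfect complexes are bounded, and you likewise do not need the unbounded equivalence $\cal{D}_{\et}([\Spd k/I_E],\Lambda)\simeq\cal{D}(\Rep^{\rm{sm}}(I_E,\Lambda))$ that you assert.
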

\begin{proof}
We can assume that $\Lambda$ is a torsion ring.
First, $\DDRep^{\perf}(I_E,\Lambda)$ is equivalent to the full subcategory of the derived category $\cal{D}(\Rep^{\rm{sm}}(I_E,\Lambda))$ consisting of objects whose image under the forgetful functor is a perfect complex.
This follows from the same argument as \cite[{\sc Theorem} V.1.1]{FS}, noting that the pro-$p$ assumption there is unnecessary when restricted to bounded complexes.
Now the lemma follows from Lemma \ref{lemm:DReperfcharacterizationtorsion}.
\end{proof}
\begin{lemm}\label{lemm:BIFSpdFbreveff}
Let $X$ be a small v-stack.
Let $q\colon \Spd \breve{E}=[\Spd C/I_E]\to [\Spd k/I_E]$ be the map induced by the projection $\Spd C\to \Spd k$.
Then the pullback functor
\[
(\rm{id}_X\times q)^*\colon \cal{D}_{\et}(X\times [\Spd k/I_E],\Lambda)\to \cal{D}_{\et}(X\times \Spd \breve{E},\Lambda)
\]
is fully faithful.
\end{lemm}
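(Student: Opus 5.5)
Since both sides are compatible with the limit and colimit constructions defining $\cal{D}_{\et}$ for $\cal{O}_L$, $L$ and $\Qlb$, we reduce to the case where $\Lambda$ is a torsion ring. We then recast the statement as a question about $q_*$. The map $\rm{id}_X\times q$ is a base change of $q$ along the projection $X\times[\Spd k/I_E]\to[\Spd k/I_E]$, and $q$ itself is the quotient by $I_E$ of the projection $\pi\colon\Spd C\to\Spd k$. Full faithfulness of the pullback is equivalent to the unit $A\to (\rm{id}_X\times q)_*(\rm{id}_X\times q)^*A$ being an isomorphism for every $A$.

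By the projection formula, or by base change together with the fact that the pushforward is computed v-locally on the target, it suffices to check this after pulling back along the v-cover $X\times\Spd k\to X\times[\Spd k/I_E]$. Here we use that $\Spd \breve{E}=[\Spd C/I_E]$ is compatible with base change along $\Spd k\to[\Spd k/I_E]$: the fiber product is $\Spd C$. So we must show that for the projection $p\colon X\times\Spd C\to X\times\Spd k$, the unit $B\to p_*p^*B$ is an isomorphism for all $B\in\cal{D}_{\et}(X\times\Spd k,\Lambda)$.

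\textbf{Key input: $\Spd C\to\Spd k$ is cohomologically trivial.} We use the statement, in \cite{Sch} and \cite[{\sc Proposition} IV.7.? / V.?]{FS}, that for any small v-stack $Y$ the pullback $\cal{D}_{\et}(Y,\Lambda)\to\cal{D}_{\et}(Y\times\Spd C,\Lambda)$ is fully faithful; equivalently $\Spd C\to \Spd k$ is an ``$\ell$-cohomologically smooth with contractible fibers'' map. Concretely, $\Spd C$ is the quotient of the open unit disc $\Spd k[[t^{1/p^\infty}]]$-type perfectoid punctured disc. More precisely, it is $\Spd C^\flat$ over $\Spd k$, and $\Spd C^\flat\to\Spd k$ is pro-\'etale locally a perfectoid open disc minus nothing, so \cite[Proposition 27.? (contractibility of the disc)]{Sch} applies. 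For general $Y$, we reduce via v-descent to $Y$ strictly totally disconnected and use base change. In that situation the claim is that $R\Gamma(Y\times\Spd C^\flat,\Lambda)=R\Gamma(Y,\Lambda)$ for constant coefficients, with the analogous statement for $j_!$ of quasicompact opens. This is exactly the homotopy invariance / contractibility of the perfectoid open unit disc (\cite[Proposition 17.? ]{Sch}, or \cite[{\sc Proposition} IV.7.?]{FS}: pullback along $\Spd C^\flat \to \Spd k$ is fully faithful, used there for $\rm{Div}^1$).

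The main obstacle is this last step, because $\Spd C$ is not quasicompact over $\Spd k$, so proper base change is unavailable. To handle it, we would write $\Spd C^\flat$ as an increasing union of closed perfectoid discs and show that the inverse system of cohomologies is essentially constant. On each closed disc, the cohomology is computed by the known result that the closed perfectoid unit ball is cohomologically trivial relative to the base (\cite[Theorem 19.? / Proposition 17.?]{Sch}). The transition maps are isomorphisms, so the $R\lim$ term has no $\lim^1$ contribution.
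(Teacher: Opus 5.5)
Your outline matches the paper's proof, which defers to the proof of \cite[{\sc Proposition} IV.7.1]{FS}. You reduce to torsion $\Lambda$, pass to the v-cover $X\times\Spd k\to X\times[\Spd k/I_E]$ (the base change of $q$ along it is $X\times\Spd C$), and reduce to the full faithfulness of $\cal{D}_{\et}(Y,\Lambda)\to\cal{D}_{\et}(Y\times_{\Spd k}\Spd C,\Lambda)$ for arbitrary small v-stacks $Y$ over $\Spd k$. Your reduction step is fine if you read the pushforward in the v-topos, where it commutes with restriction along any map. Equivalently, compare mapping complexes termwise on the \v{C}ech nerve $X\times\Spd k\times\underline{I_E}^n$; a limit of fully faithful functors is fully faithful. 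The input you need is Scholze's theorem on change of algebraically closed base field in \cite{Sch}. With that citation your proof is complete and is the paper's argument.

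What does not work is your attempt, in the last two paragraphs, to prove that input yourself.
\begin{itemize}
\item For affinoid $Y$, the space $Y\times_{\Spd k}\Spd C$ is not a union of closed perfectoid discs. For a pseudo-uniformizer $t\in C^\flat$ it lies over the perfectoid punctured open unit disc $Y\times\Spd k(\!(t)\!)$. Its natural quasi-compact exhaustion is by preimages of annuli, not balls.
\item The cohomological triviality of the closed perfectoid ball therefore says nothing here. Annuli carry nonzero cohomology, for instance the Kummer class of $t$ in $H^1(-,\mu_\ell)$. That class dies on $Y\times\Spd C$ only because $t$ acquires all $\ell$-power roots in the algebraically closed field $C^\flat$.
\item So your claim that the transition maps are isomorphisms, with no $R\lim^1$ contribution, is not a consequence of anything you list. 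Showing that the cohomology of these pieces is controlled after passing to all of $C^\flat$ is exactly the non-formal content of Scholze's theorem.
\end{itemize}
Drop that paragraph and replace it by the citation. As written, it is a gap, not a proof.
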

\begin{proof}
We can assume that $\Lambda$ is a torison ring.
Then the proof is identical to the proof of \cite[Proposition IV.7.1]{FS}.
\end{proof}
\begin{lemm}\label{lemm:SpdkIFSpdCIFequiv}
The canonical pullback functor
\[
\cal{D}_{\rm{lc}}([\Spd k/I_E],\Lambda)\to  \cal{D}_{\rm{lc}}([\Spd C/I_E],\Lambda)
\]
is an equivalence.
\end{lemm}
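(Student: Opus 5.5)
Since $\Spd \breve{E}=[\Spd C/I_E]$, we may, as in the previous lemmas, first pass to the case where $\Lambda$ is a torsion ring. The cases $\Lambda=\cal{O}_L$, $L$ and $\Qlb$ then follow formally, because both sides are built from the torsion case by the same limits, inversion of $\ell$ and colimits. In the torsion case the strategy is to prove full faithfulness and essential surjectivity separately.

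Full faithfulness is immediate from Lemma \ref{lemm:BIFSpdFbreveff} with $X=\Spd k$, i.e.\ $X$ the final object. That lemma already gives full faithfulness of $q^*$ on all of $\cal{D}_{\et}$. Since $q^*$ carries locally constant complexes to locally constant complexes, it restricts to a fully faithful functor on the full subcategories $\cal{D}_{\rm{lc}}$.

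For essential surjectivity, let $A\in\cal{D}_{\rm{lc}}([\Spd C/I_E],\Lambda)$. Its pullback $A_C$ along $\Spd C\to[\Spd C/I_E]$ is locally constant on $\Spd C$, and it is dualizable with perfect stalks (this is part of the definition of $\cal{D}_{\rm{lc}}$ in \cite[\S IV.7]{FS}). Since $\Spd C$ is strictly totally disconnected, in fact a geometric point, I would use \cite[Proposition IV.7.1 or Theorem V.1.1]{FS} to identify $\cal{D}_{\rm{lc}}(\Spd C,\Lambda)$ with $\Lambda\lmod^{\perf}$. Hence $A_C$ is a constant perfect complex, namely $M=R\Gamma(\Spd C,A_C)$.

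The descent datum along the $I_E$-torsor $\Spd C\to\Spd\breve{E}$ then makes $M$ a complex with an action of $I_E$. The key point to check is that this action is continuous, i.e.\ smooth. To see this, compute $R\Gamma$ over the finite levels $\Spd C^{U}$ for open $U\subset I_E$: a locally constant sheaf trivializes over some $\Spd(\breve{E}')$ with $\breve{E}'/\breve{E}$ finite, so the action factors through a finite quotient on each cohomology group. This produces an object of $\cal{D}(\Rep^{\rm{sm}}(I_E,\Lambda))$ whose underlying complex is perfect. By Lemma \ref{lemm:DReperfcharacterizationtorsion} it lies in $\DDRep^{\perf}(I_E,\Lambda)$, and via (\ref{eqn:DlcSpdkIFRepIF}) it defines an object $B\in\cal{D}_{\rm{lc}}([\Spd k/I_E],\Lambda)$. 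One must then show $q^*B\simeq A$. Both sides are locally constant, and they agree after pullback to $\Spd C$ compatibly with the $I_E$-descent data; v-descent for $\cal{D}_{\et}$ then gives the isomorphism.

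I expect the main obstacle to be this last step of essential surjectivity. The difficulty is making rigorous that a locally constant complex on $[\Spd C/I_E]$ is the same as a perfect complex with a smooth (rather than merely abstract or profinite) $I_E$-action, at the level of $\infty$-categories and not just cohomology groups. My first attempt would be to write both categories as limits over the Čech nerve of $\Spd C\to[\Spd C/I_E]$, whose terms are $\underline{I_E}^n\times\Spd C$. The categories $\cal{D}_{\rm{lc}}(\underline{I_E}^n\times\Spd C,\Lambda)$ are locally constant functions $I_E^n\to\Lambda\lmod^{\perf}$, and the same holds for $\Spd k$ in place of $\Spd C$, since $\Spd C\to\Spd k$ is cohomologically trivial on such profinite sets. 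Comparing the two cosimplicial diagrams termwise would then give the equivalence directly, subsuming both halves of the argument.
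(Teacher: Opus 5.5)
Your proposal follows the paper's proof. You reduce to torsion $\Lambda$, obtain full faithfulness from Lemma \ref{lemm:BIFSpdFbreveff} applied with $X$ the point, and get essential surjectivity from the descent argument of the last part of \cite[Proposition IV.7.1]{FS}, which is exactly what the paper invokes; your \v{C}ech-nerve comparison of $\cal{D}_{\rm{lc}}(\underline{I_E}^n\times\Spd C,\Lambda)$ with $\cal{D}_{\rm{lc}}(\underline{I_E}^n\times\Spd k,\Lambda)$, both viewed as locally constant data on the profinite sets $I_E^n$, is a sound way to make that step precise.
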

\begin{proof}
We can assume that $\Lambda$ is a torison ring.
It is fully faithful by Lemma \ref{lemm:BIFSpdFbreveff}, and an equivalence by the same argument as the proof of the last part of \cite[Proposition IV.7.1]{FS}.
\end{proof}
\begin{lemm}\label{lemm:SpdkIFSpdOCIFequiv}
The canonical pullback functor
\[
\cal{D}_{\rm{lc}}([\Spd k/I_E],\Lambda)\to  \cal{D}_{\rm{lc}}([\Spd \cal{O}_C/I_E],\Lambda)
\]
is an equivalence.
\end{lemm}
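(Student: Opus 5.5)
We would prove the lemma by factoring the pullback through $[\Spd C/I_E]$ and using Lemma \ref{lemm:SpdkIFSpdCIFequiv}. As in the preceding lemmas, we first reduce to the case where $\Lambda$ is a torsion ring. The case $\Lambda=\cal{O}_L$ follows by passing to the limit over $\cal{O}_L/\ell^n$, the case $\Lambda=L$ by inverting $\ell$, and $\Qlb$ by the colimit over $L$. Each of these operations preserves equivalences of $\infty$-categories, and all three categories involved are defined this way.

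For $\Lambda$ torsion, the maps $\Spd C\to \Spd \cal{O}_C\to \Spd k$ are $I_E$-equivariant and induce pullbacks
\[
\cal{D}_{\rm{lc}}([\Spd k/I_E],\Lambda)\xrightarrow{a^*} \cal{D}_{\rm{lc}}([\Spd \cal{O}_C/I_E],\Lambda)\xrightarrow{b^*}\cal{D}_{\rm{lc}}([\Spd C/I_E],\Lambda).
\]
By Lemma \ref{lemm:SpdkIFSpdCIFequiv}, the composite $b^*a^*$ is an equivalence. Hence it suffices to show that $b^*$ is fully faithful. Then $a^*$ is fully faithful, and it is essentially surjective: for $A$ in the middle category, $b^*A\simeq b^*a^*B$ for some $B$, so full faithfulness of $b^*$ lifts this to $A\simeq a^*B$.

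To show that $b^*$ is fully faithful, we reduce via descent along $\Spd\cal{O}_C\to[\Spd\cal{O}_C/I_E]$, which is compatible with the analogous cover of $[\Spd C/I_E]$. Hom complexes on the quotient stacks are computed as continuous $I_E$-cohomology (a limit over the Čech nerve) of Hom complexes upstairs. So it suffices to show that
\[
\cal{D}_{\rm{lc}}(\Spd \cal{O}_C\times \underline{S},\Lambda)\to \cal{D}_{\rm{lc}}(\Spd C\times \underline{S},\Lambda)
\]
is fully faithful for profinite sets $S$ (the terms $I_E^n$). Since the objects are locally constant, after localizing on $S$ they are constant perfect complexes, and Hom reduces to the statement that $R\Gamma(\Spd\cal{O}_C\times\underline S,\Lambda)\to R\Gamma(\Spd C\times\underline S,\Lambda)$ is an isomorphism. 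Both sides equal $C(S,\Lambda)$.

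The main obstacle is this cohomology comparison. For $\Spd \cal{O}_C$, proper base change (or the fact that $\Spd\cal{O}_C$ is strictly totally disconnected-like, henselian along its special point $\Spd k$) gives $R\Gamma(\Spd\cal{O}_C,\Lambda)=R\Gamma(\Spd k,\Lambda)=\Lambda$. For $\Spd C$, this is the standard vanishing of higher étale cohomology of a geometric point. Alternatively, we could mimic the proof of \cite[Proposition IV.7.1]{FS}, which treats exactly such invariance for $\Spd \cal{O}_C$-type bases. The product with $\underline{S}$ is handled by writing $S$ as a limit of finite sets and using that cohomology commutes with this cofiltered limit for qcqs spaces.
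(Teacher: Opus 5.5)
Your formal reduction is correct, and it differs from the paper's route. The pullback along $[\Spd C/I_E]\to[\Spd k/I_E]$ factors through $[\Spd \cal{O}_C/I_E]$ and is an equivalence by Lemma \ref{lemm:SpdkIFSpdCIFequiv}. So the statement is equivalent to full faithfulness of $b^*$, and essential surjectivity of $a^*$ comes for free. The paper instead proves full faithfulness of $a^*$ directly, by comparing cohomology on $\Spd\cal{O}_C\times I_E^n$ with the special fibre, and then reruns the essential-surjectivity argument of \cite[Proposition IV.7.1]{FS}. The price of your route is that you need full faithfulness of $b^*$ on \emph{arbitrary} objects of $\cal{D}_{\rm{lc}}([\Spd\cal{O}_C/I_E],\Lambda)$, since your lifting step uses $\Hom(A,a^*B)$ with $A$ arbitrary. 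At exactly the point you call the main obstacle, the proposal has a genuine gap.

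Two claims there lack valid justification.

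\begin{enumerate}
\item ``After localizing on $S$ the objects are constant.'' This is automatic on $\Spd C\times\underline{S}$, which is strictly totally disconnected. On $\Spd\cal{O}_C\times\underline{S}$ it asserts that $\Spd\cal{O}_C$ carries no non-constant locally constant complexes, uniformly in profinite families. That is essentially the statement being proved (with trivial group), so you cannot restrict to constant coefficients here.
\item $R\Gamma(\Spd\cal{O}_C,\Lambda)\cong R\Gamma(\Spd k,\Lambda)$ is not an instance of proper base change. There is no proper map here whose relevant fibre is the special point. The claim is a henselian, nearby-cycles statement about a v-sheaf that is not even a diamond, so ``strictly totally disconnected-like'' is only a heuristic. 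Also, the argument of \cite[Proposition IV.7.1]{FS} is the one used here for $\Spd C$ (Lemmas \ref{lemm:BIFSpdFbreveff} and \ref{lemm:SpdkIFSpdCIFequiv}); it does not by itself treat $\Spd\cal{O}_C$.
\end{enumerate}

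The missing ingredient is the auxiliary lemma inside the paper's proof. One applies \cite[Theorem IV.5.3]{FS} with $X=\Spd C$ and base $Y\times\Spd k(\!(u)\!)$; this product is what makes the setting spatial, which $\Spd\cal{O}_C$ alone is not. The result is $Rp_*A\cong i^*A$ for every $A$ on $\Spd\cal{O}_C\times Y$ whose restriction to $\Spd C\times Y$ is an exterior tensor product. For $Y=\underline{S}$ profinite this covers all locally constant $A$, since on $\Spd C\times\underline{S}$ such objects are pulled back from $S$. This nearby-cycles input carries the content of the lemma; the factorization through $[\Spd C/I_E]$ does not reduce it to anything easier. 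Your argument needs this statement, or an equivalent one, to close.
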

\begin{proof}
We can assume that $\Lambda$ is a torsion ring.
We first show the following lemma:
\begin{lemm}
Let $Y$ be a small v-stack such that $Y\times \Spd k(\!(u)\!)$ is a spatial diamond.
Then the pullback functor
\[
\cal{D}_{\rm{consta}}(Y,\Lambda)\to \cal{D}_{\et}(Y\times \Spd \cal{O}_C,\Lambda), 
\]
where $\cal{D}_{\rm{consta}}(Y,\Lambda)\subset \cal{D}_{\et}(Y,\Lambda)$ is the full subcategory of constant sheaves, is fully faithful.
\end{lemm}
\begin{proof}
Let $p\colon \Spd \cal{O}_C\times Y\to Y$ denote the projection and $i\colon Y=\Spd k\times Y\to \Spd \cal{O}_C\times Y$ the closed immersion.
By \cite[Theorem IV.5.3]{FS} applied with $X=\Spd C$ and $S=Y\times \Spd k(\!(u)\!)$, we can show that 
\[
Rp_*A\cong i^*A
\]
for $A\in D_{\et}(\Spd \cal{O}_C\times Y)$ whose restriction to $D_{\et}(\Spd C\times Y)$ is an exterior tensor product, see the argument in \cite[Remark V.4.3]{FS}.
Therefore, the cohomologies of a constant complex on $\Spd \cal{O}_C\times Y$ are isomorphic to those on $Y$.
This proves the lemma.
\end{proof}
Applying this lemma with $Y=I_E^n$, it follows that the pullback functor
\[
\cal{D}_{\rm{consta}}(I_E^n,\Lambda)\to \cal{D}_{\et}(I_E^n\times \Spd \cal{O}_C,\Lambda)
\]
is fully faithful.
Now the claim follows from the same argument as the proof of \cite[Proposition IV.7.1]{FS}.
\end{proof}
Therefore, we get
\begin{align}\label{eqn:DlcSpdEbreveDRepIEeqiv}
 \cal{D}_{\rm{lc}}(\Spd \breve{E},\Lambda)\simeq \cal{D}_{\rm{lc}}([\Spd \cal{O}_C/I_E],\Lambda)\simeq 
\cal{D}_{\rm{lc}}([\Spd k/I_E],\Lambda)\simeq \DDRep^{\perf}(I_E,\Lambda).
\end{align}
\begin{cor}
Let $L$ be a finite extension of $\QQ_{\ell}$.
When $\Lambda=\cal{O}_L$, $L$, or $\Qlb$, there is a canonical $t$-structure on $\DDRep^{\perf}(I_E,\Lambda)$ such that the forgetful functor
\[
\DDRep^{\perf}(I_E,\Lambda)\to \Lambda\lmod^{\perf}
\]
is t-exact.
Its heart is the category $\Rep^{\rm{f.g.}}(I_E,\Lambda)$ of continuous $I_E$-representations on a finitely-generated $\Lambda$-module.
\end{cor}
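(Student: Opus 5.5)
The plan is to transport the standard $t$-structure on $\cal{D}_{\et}$ along the equivalence (\ref{eqn:DlcSpdEbreveDRepIEeqiv}). I would rather build it directly, however, through the characterization in Lemma \ref{lemm:DReperfcharacterizationtorsion} and Corollary \ref{cor:DReperfcharacterizationOL}. I would first treat $\Lambda=\cal{O}_L$ and then deduce the cases $L$ and $\Qlb$ formally.

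For $\Lambda=\cal{O}_L$, Corollary \ref{cor:DReperfcharacterizationOL} identifies $\DDRep^{\perf}(I_E,\cal{O}_L)$ with the full subcategory of $\lim_n\cal{D}(\Rep^{\rm{sm}}(I_E,\cal{O}_L/\ell^n))$ consisting of objects whose underlying $\cal{O}_L$-complex $R\lim_n A_n$ is perfect. I would define the connective and coconnective parts by asking that this underlying $\cal{O}_L$-complex lie in $D^{\le 0}$, respectively $D^{\ge 0}$. The step requiring care, and the one I expect to be the main obstacle, is that these classes are preserved by truncation. That is, for such an $A$ the truncations $\tau^{\le 0}$ and $\tau^{\ge 0}$ of the underlying perfect complex should lift to compatible systems of smooth $I_E$-representations.

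For this I would use the following description: an object of $\DDRep^{\perf}(I_E,\cal{O}_L)$ is the same as a perfect $\cal{O}_L$-complex $M$ together with a continuous action of $I_E$, in the sense that the action on each $M\otimes^{\bb{L}}\cal{O}_L/\ell^n$ is smooth. Concretely, $H^i(M)$ is a finitely generated $\cal{O}_L$-module, and $I_E$ acts on $H^i(M)/\ell^n$ through a finite quotient for each $n$. Then $H^i(M)$ is itself an object of the limit category, namely the system $(H^i(M)\otimes^{\bb{L}}\cal{O}_L/\ell^n)_n$, each term being a two-term complex of smooth representations. Its underlying complex is perfect because $\cal{O}_L$ is regular. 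So $H^i$ defines a functor to continuous representations on finitely generated $\cal{O}_L$-modules. Truncation triangles can then be formed in the limit category $\lim_n\cal{D}(\Rep^{\rm{sm}})$. Since $R\lim$ commutes with the forgetful functor, the truncations have perfect underlying complexes, and Corollary \ref{cor:DReperfcharacterizationOL} places them back in $\DDRep^{\perf}(I_E,\cal{O}_L)$.

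The heart then consists of objects whose underlying complex is a finitely generated module $N$ in degree $0$ with a compatible continuous action. Full faithfulness identifies this with $\Rep^{\rm{f.g.}}(I_E,\cal{O}_L)$, and t-exactness of the forgetful functor holds by construction. For $\Lambda=L$, inverting $\ell$ is compatible with the $t$-structure because $\ell$-torsion objects form a Serre subcategory of the heart; the heart becomes $\Rep^{\rm{f.g.}}(I_E,\cal{O}_L)[1/\ell]=\Rep^{\rm{f.d.}}(I_E,L)$, since every continuous $L$-representation admits a stable lattice by compactness of $I_E$. For $\Qlb$, I would take the colimit over $L$. The transition functors are t-exact, so a $t$-structure results, with heart the colimit of the hearts. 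That colimit is $\Rep^{\rm{f.g.}}(I_E,\Qlb)$ because any continuous representation of the compact group $I_E$ is defined over some finite $L$ (a Baire category argument). These last identifications are routine.
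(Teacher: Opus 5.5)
Your route differs from the paper's, and the step you flagged as the main obstacle is asserted rather than proved. You cannot truncate termwise in $\lim_n\cal{D}(\Rep^{\rm{sm}}(I_E,\cal{O}_L/\ell^n))$. The transition functors $-\otimes^{\bb{L}}_{\cal{O}_L/\ell^{n+1}}\cal{O}_L/\ell^n$ are only right $t$-exact. Moreover, $(\tau^{\le 0}M)\otimes^{\bb{L}}\cal{O}_L/\ell^n$ differs from $\tau^{\le 0}(M\otimes^{\bb{L}}\cal{O}_L/\ell^n)$, whose $H^0$ is an extension of $H^1(M)[\ell^n]$ by $H^0(M)/\ell^n$. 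So the correct truncation of $A=(A_n)_n$ is the system $((\tau^{\le 0}M)\otimes^{\bb{L}}\cal{O}_L/\ell^n)_n$, and nothing you name produces it together with an $I_E$-equivariant map to $A$. Knowing that each $H^i(M)$ separately lifts to the limit does not give the map $\tau^{\le 0}A\to A$ or its cofiber. Even that lift is not as you describe: $H^i(M)\otimes^{\bb{L}}\cal{O}_L/\ell^n$ is not literally a two-term complex of smooth $\cal{O}_L/\ell^n$-representations, since $H^i(M)$ is neither killed by $\ell^n$ nor smooth. When $H^i(M)$ has torsion, the lift needs a real argument.

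What is missing is control of $\Hom$'s in $\DDRep^{\perf}(I_E,\cal{O}_L)$. For instance, you would need that $R\Hom(A,B)\simeq R\lim_n R\Gamma(I_E,R\Hom_{\cal{O}_L}(M,N)\otimes^{\bb{L}}\cal{O}_L/\ell^n)$ is computed by continuous cochains, hence is left $t$-exact with $H^0$ equal to the $I_E$-invariants. This input is needed in three places:
\begin{enumerate}
\item To lift $M\to H^b(M)[-b]$ and build truncations by induction on amplitude.
\item For the orthogonality axiom, which you never check. It does not follow from a definition via underlying complexes, because the forgetful functor is not fully faithful and termwise $\Hom$'s need not vanish. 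For $A=\cal{O}_L$ and $B=(\cal{O}_L/\ell)[-1]$ with trivial action, $\Hom(A_n,B_n)=\cal{O}_L/\ell$ in $\cal{D}(\Rep^{\rm{sm}}(I_E,\cal{O}_L/\ell^n))$; the vanishing appears only after $R\lim_n$.
\item To identify the heart with $\Rep^{\rm{f.g.}}(I_E,\cal{O}_L)$, which Corollary \ref{cor:DReperfcharacterizationOL} alone does not give.
\end{enumerate}

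The paper avoids all of this with a different decomposition. Using (\ref{eqn:DlcSpdEbreveDRepIEeqiv}) and descent along $\Spd C\to\Spd\breve{E}$, with $\Spd C\times I_E^i$ strictly totally disconnected, it writes $\DDRep^{\perf}(I_E,\cal{O}_L)$ as the totalization of $\cal{C}_i=\lim_n\cal{D}_{\rm{top,lc}}(I_E^i,\cal{O}_L/\ell^n)$. It transports the standard $t$-structure from $\cal{C}_0\simeq\cal{O}_L\lmod^{\perf}$. It then computes the heart as the limit of $\cal{C}_0^{\heartsuit}\rightrightarrows\cal{C}_1^{\heartsuit}$, where endomorphisms in $\cal{C}_1^{\heartsuit}$ are $\rm{Map}_{\rm{cont}}(I_E,\rm{End}_{\cal{O}_L}(M))$. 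In that setting the $\Hom$'s are continuous functions on profinite sets, which is exactly the input you lack. Your treatment of $L$ and $\Qlb$ matches the paper's.
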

\begin{proof}
Assume that $\Lambda=\cal{O}_L$.
Since $\DDRep^{\perf}(I_E,\cal{O}_L/\ell^n)\simeq \cal{D}_{\rm{lc}}(\Spd \breve{E},\cal{O}_L/\ell^n)$ holds, it is equivalent to the totalization of a cosimplicial category whose $i$-th term is 
\[
\cal{C}'_{i,n}:=\cal{D}_{\rm{lc}}(\Spd C\times I_E^i,\cal{O}_L/\ell^n).
\]
Since $\Spd C\times I_E^i$ is strictly totally disconnected, $\cal{C}_{i,n}$ is equivalent to 
\[
\cal{C}_{i,n}:=\cal{D}_{\rm{top, lc}}(I_E^i,\cal{O}_L/\ell^n).
\]
where $\cal{D}_{\rm{top, lc}}(I_E^i,\cal{O}_L/\ell^n)$ is the derived category of the sheaves of $\cal{O}_L/\ell^n$-modules on the topological space $I_E^i$.
Therefore, $\DDRep^{\perf}(I_E,\cal{O}_L)\simeq \cal{D}_{\rm{lc}}(\Spd \breve{E},\cal{O}_L)$ is equivalent to the totalization of a cosimplicial category whose $i$-th term is 
\[
\cal{C}_i:=\ds\lim_{\substack{\longleftarrow \\ n}}\cal{D}_{\rm{top, lc}}(I_E^i,\cal{O}_L/\ell^n).
\]
The standard $t$-structure on $\cal{C}_0\overset{(\ref{eqn:derivedlimit})}{\simeq} \cal{O}_L\lmod^{\perf}$ defines the $t$-structure on $\DDRep^{\perf}(I_E,\Lambda)$, and its heart is the limit of the following diagram consisting of the two categories:
\begin{align}\label{eqn:heartofDRepperfOL}
\xymatrix{
\cal{C}_0^{\heartsuit}\ar@<0.5ex>[r]\ar@<-0.5ex>[r]\ar@{<-}[r]&
\cal{C}_1^{\heartsuit}.
}
\end{align}
The left-hand side
\[
\cal{C}_0^{\heartsuit}=(\ds\lim_{\substack{\longleftarrow \\ n}}\cal{D}_{\rm{lc}}(*,\cal{O}_L/\ell^n))^{\heartsuit}\simeq (\lim_{\substack{\longleftarrow \\ n}}(\cal{O}_L/\ell^n\lmod^{\perf}))^{\heartsuit}\overset{(\ref{eqn:derivedlimit})}{\simeq} (\cal{O}_L\lmod^{\perf})^{\heartsuit}
\]
is equivalent to the category of finitely-generated $\cal{O}_L$-modules, and for an object in $\cal{C}_0^{\heartsuit}$ corresponding to a finitely-generated $\cal{O}_L$-module $M$, the endomorphism of its image in $\cal{C}_1^{\heartsuit}$ is isomorphic to 
\[
\ds\lim_{\substack{\longleftarrow \\ n}}\rm{Map}_{\rm{cont}}(I_E,\rm{End}_{\cal{O}_L/\ell^n}(M/\ell^nM))=\rm{Map}_{\rm{cont}}(I_E,\rm{End}_{\cal{O}_L}(M)),
\]
where $\rm{Map}_{\rm{cont}}(-,-)$ is the set of continuous maps.
Unwinding the definitions, it follows that the limit of (\ref{eqn:heartofDRepperfOL}) is equivalent to $\Rep^{\rm{f.g.}}(I_E,\Lambda)$.

Assume that $\Lambda=L$.
The $t$-structure for $L$ is induced by that for $\cal{O}_L$, and its heart is $\Rep^{\rm{f.g.}}(I_E,\cal{O}_L)[1/\ell]$.
It is equal to $\Rep^{\rm{f.p.}}(I_E,\cal{O}_L)[1/\ell]$, and since the action of the profinite group $I_E$ on an $L$-vector space preserves a $\cal{O}_L$-lattice in it,  we have $\Rep^{\rm{f.p.}}(I_E,L)=\Rep^{\rm{f.p.}}(I_E,\cal{O}_L)[1/\ell]$.

Assume that $\Lambda=\Qlb$.
The $t$-structure for $\Qlb$ is induced by those for $L$'s, and its heart is $\ds\lim_{\substack{\longrightarrow\\ L}}\Rep^{\rm{f.p.}}(I_E,L)$.
Since any representation of $I_E$ over $\Qlb$ is trivial on an open subgroup of the wild inertia group, and since the quotient of $I_E$ by this subgroup is topologically finitely generated, we have
\[
\Rep^{\rm{f.p.}}(I_E,\Qlb)=\ds\lim_{\substack{\longrightarrow\\ L}}\Rep^{\rm{f.p.}}(I_E,L).
\]
\end{proof}
\subsection{Notation and preliminaries on geometric Satake}\label{sec:notationHecke}
\subsubsection{Hecke stacks}
For $S\in \Perf_{k_{F}}$, let $X_S$ denote the Fargues--Fontaine curve over $S$, see \cite[{\sc Definition} II.1.15]{FS}.
Let $\Div^1_{k_{F}}:=\Div^1_{X,k_{F}}$ denote the moduli of closed Cartier divisors of $X_S$ of degree 1, defined in \cite[{\sc Definition} II.1.19]{FS}.
Put $\Div^1:=\Div^1_{k_{F}}\times_{\Spd k_{F}}\Spd k$.
By definition, we have
\begin{equation*}
\Div^1_{k_{F}}=[\Spd F/\rm{Frob}_{\Spd F}^{\ZZ}], \qquad
\Div^1=[\Spd \breve{F}/\varphi^{\ZZ}],
\end{equation*}
where $\varphi=\rm{Frob}_{\Spd F}\times_{\Spd k_{F}}\Spd k$.
We also consider variants of $\Div^1_{k_{F}}$ and $\Div^1$:
\begin{align*}
\Div^1_{\cal{Y},k_{F}}&:=\Spd \cal{O}_{F},\qquad
\Div^1_{\cal{Y}}:=\Spd \cal{O}_{\breve{F}},\\
\Div^1_{Y,k_{F}}&:=\Spd F,\qquad
\Div^1_{Y}:=\Spd \breve{F}.
\end{align*}
In \cite[\S VI]{FS}, they defined local Hecke stacks as follows: For an affinoid perfectoid space $S=\Spd (R, R^+)\in \Perf_{k_{F}}$ and a map $S\to \Div^1_{k_{F}}$ corresponding to an affinoid divisor $D$ of $X_S$ of degree $1$, let $B^+_{\Div^1_{k_{F}}}(S)$ denote the global section of the completion of $\cal{O}_{X_S}$ along $\cal{I}$, where $\cal{I}$ is the ideal sheaf corresponding to $D$, and put $B_{\Div^1_{k_{F}}}(S)=B^+_{\Div^1_{k_{F}}}(S)\left[\frac{1}{\cal{I}}\right]$.
Moreover, we define functors
\begin{equation*}
L^+_{\Div^1_{k_{F}}}G\colon \Perf_{k_{F}}\to \rm{Sets},\qquad 
L_{\Div^1_{k_{F}}}G\colon \Perf_{k_{F}}\to \rm{Sets},
\end{equation*}
sending such $S=\Spd (R,R^+)$ with a map $S\to \Div^1_{k_{F}}$ as above to $G(B^+_{\Div^1_{k_{F}}}(S))$ and $G(B_{\Div^1_{k_{F}}}(S))$, respectively.
For a morphism $S\to \Div^1_{k_{F}}$ from a small v-stack $S$, we write
\begin{equation*}
L^+_SG:=L^+_{S/\Div^1_{k_{F}}}G:=L^+_{\Div^1_{k_{F}}}G\times_{\Div^1_{k_{F}}} S, \quad
L_SG:=L_{S/\Div^1_{k_{F}}}G:=L_{\Div^1}G\times_{\Div^1_{k_{F}}} S.
\end{equation*}
Then we define the quotient \'{e}tale stacks
\begin{equation*}
\Gr_{G,S}:=\Gr_{G,S/\Div^1_{k_{F}}}:=[L_{S}G/L^+_{S}G],\quad
\Hck_{G,S}:=\Hck_{G,S/\Div^1_{k_{F}}}:=[L^+_{S}G\backslash L_{S}G/L^+_{S}G].
\end{equation*}
We call $\Gr_{G,S}$ the Beilinson--Drinfeld affine Grassmannian over $S$ and $\Hck_{G,S}$ the local Hecke stack over $S$.
For a reducitve group $G'$ over $\cal{O}_F$ and for a small v-stack $S\to \Div^1_{\cal{Y},k_{F}}$, we can also define $L^+_{S/\Div^1_{\cal{Y},k_{F}}}G'$, $L_{S/\Div^1_{\cal{Y},k_{F}}}G'$, $\Gr_{G',S/\Div^1_{\cal{Y},k_{F}}}$ and $\Hck_{G',S/\Div^1_{\cal{Y},k_{F}}}$ similarly.
\subsubsection{Schubert varieties}
Assume that $G$ is split.
For $\mu\in \bb{X}_*(T)$, we have a point
\begin{align}\label{eqn:[mu]}
[\mu]\colon S\to L_S\bb{G}_m\xrightarrow{L\mu} L_ST\subset L_SG\to \Gr_{G,S},
\end{align}
where the first map corresponds to a local generator $\xi$ of the ideal sheaf of the Fargues--Fontaine curve determined by the map $S\to \Div^1_{(-),k_{F}}$ ($(-)=\cal{Y}$ or $X$), and the last map is the natural projection.
The $L^+_SG$-orbit of this map does not depend on the choice of $\xi$, and we write $\Gr_{G,S,\mu}$ for this.
Assume that $\mu$ is dominant. 
Put
\begin{gather*}
\Gr_{G,S,\leq \mu}=\bigcup_{\substack{\lambda\in \bb{X}^+_*(T) 
\lambda\leq \mu}} \Gr_{G,S,\lambda}, \\
\Hck_{G,S,\mu}=[L^+_SG\backslash \Gr_{G,S,\mu}], \quad
\Hck_{G,S,\leq \mu}=[L^+_SG\backslash \Gr_{G,S,\leq \mu}].
\end{gather*}
Then $\Gr_{G,S,\leq \mu}\to S$ is proper, representable in spatial diamonds, and of finite transcendental degree.
Moreover, the map $\Gr_{G,S,\leq \mu}\to \Gr_{G,S}$ is a closed subfunctor, $\Gr_{G,S,\mu}\to \Gr_{G, S,\leq \mu}$ is an open subfunctor, and $\Gr_{G,S,\mu}$ $(\mu\in \bb{X}^+_*(T))$ defines a stratification of $\Gr_{G,S}$, see \cite[\S VI.2]{FS}.
The subfunctor $\Gr_{G,S,\leq \mu}$ (or $\Hck_{G,S,\leq \mu}$) is called a Schubert variety, and $\Gr_{G,S,\mu}$ (or $\Hck_{G,S,\mu}$) a Schubert cell.

Assume that $G$ is not necessarily split, but that $S\to \Div^1$ factors through the natural map $\Spd \breve{E}\to \Div^1$.
There is a split reductive group $G^{\spl}$ over $\bb{Z}$ and an isomorphism $G^{\spl}_{\breve{E}}\cong G_{\breve{E}}$.
This isomorphism extends to an isomorphism over $R^{\sharp}$ and lifts to $B^+_{dR}(R)$ for any affinoid perfectoid space $\Spa(R,R^+)$ over $S$.
This induces an isomorphism $L_{S}G\cong L_{S}G^{\spl}$ and $L^+_{S}G\cong L^+_{S}G^{\spl}$.
Therefore, it holds that
\begin{align}\label{eqn:HckGHckGspl}
\Hck_{G,S}\cong \Hck_{G^{\spl},S}.
\end{align}
For each $\mu\in \bb{X}^+_{*}(T_{\ol{F}})=\bb{X}^+_{*}(T_{\ol{F}}^{\spl})$, we write $\Hck_{G,\Spd C,\leq \mu}$ for the preimage of $\Hck_{G^{\spl},\Spd C,\leq \mu}$, which does not depend on the choice of split models.

\subsubsection{ULA sheaves and Satake category}\label{sssc:ULAsheavesSatcat}
Let $S$ be a small v-stack over $\Div^1$.
We define the $\infty$-category of ULA complexes on $\Hck_{G,S}$, but only for the cases where $S$ satisfies the condition described below or $S=\Div^1$.
We do not define it for other cases, as it is not necessary for this paper.

First, assume that $S\to \Div^1$ factors through the natural map $\Spd \breve{E}\to \Div^1$. 
When $\Lambda$ is a torsion ring, universal local acyclicity on $\Hck_{G,S}$ is defined in \cite[{\sc Definition} VI.6.1]{FS}.
Namely, we say that an object $A\in \Det(\Hck_{G,S},\Lambda)$ is universally locally acyclic (ULA) over $S$ if its pullback to $\Gr_{G,S}$ is ULA over $S$ in the sense of \cite[Definition IV.2.1]{FS}.
We write
\begin{equation*}
\cal{D}^{\ULA}(\Hck_{G,S},\Lambda)\subset \cal{D}_{\et}(\Hck_{G,S},\Lambda)
\end{equation*}
for the full subcategory of ULA objects.
Similarly, we write
\[
\Sat(\Hck_{G,S},\Lambda)\subset \cal{D}_{\et}(\Hck_{G,S},\Lambda)
\]
for the full subcategory of universally locally acyclic and flat perverse objects, see \cite[\S VI.7.1]{FS}.
For general $\Lambda$, the categories $\cal{D}^{\ULA}(\Hck_{G,S},\Lambda)$ and $\Sat(\Hck_{G,S},\Lambda)$ are defined as follows:
when $\Lambda$ is the ring of integers, these are obtained by passing to the limit as in \cite[Definition 26.1]{Sch}; when $\Lambda$ is a finite extension of $\QQ_{\ell}$, these are defined by inverting $\ell$; and for $\Lambda=\Qlb$, these are given as their colimit.

For a closed subspace $X\subset \Hck_{G,S}$, we can similarly define the category $\cal{D}^{\ULA}(X,\Lambda)$.
In particular, since 
\begin{align}\label{eqn:defofi0S}
i_{0,S}\colon [S/L^+_SG]=[L^+_SG\backslash L^+_SG/L^+_SG]\to [L^+_SG\backslash L_SG/L^+_SG]=\Hck_{G,S}    
\end{align}
is a closed immersion, we get the category $\cal{D}^{\ULA}([S/L^+_SG],\Lambda)$.
\begin{lemm}\label{lemm:ULAisbounded}
\[
\cal{D}^{\ULA}(\Hck_{G,S},\Lambda)=\lim_{\substack{\longrightarrow \\ \mu}}\cal{D}^{\ULA}(\Hck_{G,S,\leq \mu},\Lambda)
\]
\end{lemm}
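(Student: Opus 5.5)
The statement says that every ULA complex on $\Hck_{G,S}$ is supported on some Schubert variety $\Hck_{G,S,\leq \mu}$. I will argue first for $\Lambda$ a torsion ring and then pass to $\cal{O}_L$, $L$ and $\Qlb$ by the limit, localization and colimit constructions. For each dominant $\mu$ the inclusion $i_{\leq\mu}\colon \Hck_{G,S,\leq\mu}\to \Hck_{G,S}$ is a closed immersion. Hence $i_{\leq\mu,*}=i_{\leq\mu,!}$ is fully faithful on $\cal{D}_{\et}$, and since it is proper it preserves ULA objects. So the transition functors in the colimit are fully faithful, and the colimit is a full subcategory of $\cal{D}^{\ULA}(\Hck_{G,S},\Lambda)$. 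What remains is essential surjectivity: every ULA object $A$ satisfies $A\cong i_{\leq\mu,*}i_{\leq\mu}^*A$ for some $\mu$, and $i_{\leq\mu}^*A$ is again ULA.

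The key step is to show that a ULA object has quasi-compact support. Pull $A$ back to $\Gr_{G,S}$ and work v-locally on $S$, so that $S=\Spa(R,R^+)$ is strictly totally disconnected. Then $S$ is quasi-compact, and it suffices to check support on geometric points $\Spa(C',C'^+)\to S$; ULA-ness is preserved by this base change. Over a geometric point, \cite[Proposition VI.6.5]{FS} (or its proof) shows that ULA objects are exactly those whose restriction to each Schubert cell is locally constant with perfect fibres and which are \emph{bounded}, i.e.\ supported on finitely many cells. If that boundedness is built into the definition of $\cal{D}_{\et}(\Hck_{G,S},\Lambda)$ (the latter being defined as a colimit over Schubert varieties, as in \cite[\S VI.4]{FS}), the claim is almost formal. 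In either case, a given $A$ then lies in the image of $\cal{D}_{\et}(\Hck_{G,S,\leq\mu},\Lambda)$ for some $\mu$.

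Next, $i_{\leq\mu}^*A$ is ULA over $S$. Its pullback to $\Gr_{G,S,\leq\mu}$ is the restriction of a ULA sheaf that is supported there. By full faithfulness of $i_{\leq\mu,*}$ and the characterization of ULA objects via $\mathbb{D}$-duality (\cite[Theorem IV.2.23]{FS}), which commutes with proper pushforward, $i_{\leq\mu}^*A$ is ULA.

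For $\Lambda=\cal{O}_L$ one takes limits over $n$. The only point is that the bound $\mu$ can be chosen uniformly in $n$. This holds because the support of $A\otimes^{\bb{L}}\cal{O}_L/\ell^n$ is contained in that of $A\otimes^{\bb{L}}\cal{O}_L/\ell$, by the triangles for $\ell$-adic filtrations. Inverting $\ell$ and passing to the colimit over $L$ commute with the filtered colimit over $\mu$.

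I expect the main obstacle to be the quasi-compactness of support over a general base $S$. In particular, one must check that the locus where $A$ is nonzero is a finite union of strata uniformly over a quasi-compact $S$. The approach I would try first is to use constructibility of ULA sheaves on the proper spaces $\Gr_{G,S,\leq\mu}\to S$, together with the fact that the stratification is locally finite over a quasi-compact base.
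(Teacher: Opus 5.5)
Your treatment of the integral case is the real content of the lemma, and it agrees with the paper. Once $A\otimes^{\bb{L}}\cal{O}_L/\ell$ is supported on some $\Hck_{G,S,\leq\mu}$, so is every $A\otimes^{\bb{L}}\cal{O}_L/\ell^n$, and the cases $L$ and $\Qlb$ are then formal. The paper gets the uniform bound from two facts: stalks of ULA objects are perfect over $\cal{O}_L/\ell^n$, and a perfect complex vanishes iff its reduction mod $\ell$ does. Your argument instead uses the finite $\ell$-adic filtration of $\cal{O}_L/\ell^n$, whose graded pieces are all $\cong\cal{O}_L/\ell$. This exhibits $A\otimes^{\bb{L}}\cal{O}_L/\ell^n$ as an iterated extension of copies of $A\otimes^{\bb{L}}\cal{O}_L/\ell$. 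It gives the needed direction just as well, and it does not even use perfectness.

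The torsion case, however, is misdiagnosed. In the paper it holds by definition. As in \cite[Definition VI.6.1]{FS}, universal local acyclicity is a condition imposed on objects of $\cal{D}_{\et}(\Hck_{G,S},\Lambda)^{\bd}$, i.e.\ objects with quasi-compact support. It has to be: the ULA formalism of \cite[\S IV.2]{FS} is applied to the pieces $\Gr_{G,S,\leq\mu}\to S$, which are proper, representable in spatial diamonds and of finite transcendence dimension, and not to the ind-space $\Gr_{G,S}$ itself.

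So what you call the key step and the main obstacle is not something to prove, and the non-definitional route you sketch cannot succeed. First, \cite[Proposition VI.6.5]{FS} is stated for objects already of bounded support, so deducing boundedness from it is circular. Second, constructibility on a single proper $\Gr_{G,S,\leq\mu}$ only controls objects that already live there. In fact, without boundedness built in, the claim is false. The constant sheaf $\Lambda$ on $\Hck_{G,S}$ restricts to a ULA object on every $\Hck_{G,S,\leq\mu}$ (the paper itself uses that $i_{\leq\mu*}\Lambda$ is ULA), yet it is supported on no Schubert variety. Reducing to geometric points of $S$ does not help either, since the unboundedness is in the ind-direction of $\Gr_{G,S}$, not in $S$.

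Keep your ``if boundedness is built into the definition'' branch and drop the ``in either case''.
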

\begin{proof}
When $\Lambda$ is a torsion ring, the lemma holds by definition.
Let $L$ be a finite extension of $\QQ_{\ell}$ and assume that $\Lambda=\cal{O}_L$.
Since a perfect complex $M$ over $\cal{O}_L/\ell^n$ is zero if and only if $M\otimes^{\bb{L}}_{\cal{O}_L/\ell^n}(\cal{O}_L/\ell)=0$, it follows that $A\in \cal{D}^{\ULA}(\Hck_{G,S},\cal{O}_L/\ell^n)$ is supported on $\Hck_{G,S,\leq \mu}$ if and only if $A\otimes^{\bb{L}}_{\cal{O}_L/\ell^n}(\cal{O}_L/\ell)$ is supported on $\Hck_{G,S,\leq \mu}$.
This proves the lemma for $\Lambda=\cal{O}_L$.
Now, the lemma for $\Lambda=L$ or $\Qlb$ follows from the definitions.
\end{proof}
When $G'$ is a reductive group over $\cal{O}_F$ and $S$ is a small v-stack over $\Div^1_{\cal{Y}}$ such that $S\to \Div^1_{\cal{Y}}$ factors through the map $\Spd \cal{O}_{\breve{E}}\to \Div^1_{\cal{Y}}$ induced by the natural map $\Spd \cal{O}_C\to \Div^1_{\cal{Y}}$.
For example, $S=\Spd \cal{O}_C, \Spd k, [\Spd \cal{O}_C/I_E]$ or $[\Spd k/I_E]$ satisfies this condition.
Then we define $\cal{D}^{\ULA}(\Hck_{G',S},\Lambda)$, $\Sat(\Hck_{G',S},\Lambda)$, and so on in a similar way.

Next, assume that $S=\Div^1$.
Note that there is a $W_E/I_F$-action on $\Hck_{G,\Spd \breve{E}}=\Hck_{G,\Div^1}\times_{\Div^1}\Spd \breve{E}$ since $\Div^1$ is a quotient of $\Spd \breve{E}$ by a $W_F/I_E$-action, see \cite[\S IV.7]{FS}.
This defines a $W_F/I_E$-action on the $\infty$-category $\cal{D}^{\ULA}(\Hck_{G,\Spd \breve{E}},\Lambda)$.
We define 
\[
\cal{D}^{\ULA}(\Hck_{G,\Div^1},\Lambda):=\cal{D}^{\ULA}(\Hck_{G,\Spd \breve{E}},\Lambda)^{W_F/I_E}.
\]
For a closed subspace $X\subset \Hck_{G,\Div^1}$, we can similarly define the category $\cal{D}^{\ULA}(X,\Lambda)$.
\begin{rmk}\label{rmk:defofDULA}
\begin{enumerate}
\item It follows immediately from the definition that if $S\to \Div^1$ factors through $\Spd \breve{E}$, then we have inclusions
\[
\Sat(\Hck_{G,S},\Lambda)\subset \cal{D}^{\ULA}(\Hck_{G,S},\Lambda)\subset \cal{D}_{\et}(\Hck_{G,S},\Lambda)
\]
for general $\Lambda$.
On the other hand, $\cal{D}^{\ULA}(\Hck_{G,\Div^1},\Lambda)$ is not a full subcategory of $\cal{D}_{\et}(\Hck_{G,\Div^1},\Lambda)$ under our definition.
However, it is not a serious problem since we do not need this inclusion.
\item When $\Lambda$ is a torsion ring, our definition of $\cal{D}^{\ULA}(\Hck_{G,\Div^1},\Lambda)$ coincides with the definition in \cite[{\sc Definition} VI.6.1]{FS}.
This follows from the result on descent in \cite[Proposition 17.3]{Sch}
and the equivalence 
\[
\cal{D}_{\et}(\Hck_{G,\Spd \breve{E}}\times (W_{F}/I_E)^i,\Lambda)\simeq \prod_{(W_{F}/I_E)^i}\cal{D}_{\et}(\Hck_{G,\Spd \breve{E}},\Lambda).
\]
\item 
It is presumably possible to define the analogue of $\cal{D}^{\ULA}(\Hck_{G,S},\Lambda)$ using the motivic language of \cite{SchBM}.
The definition would be like
\[
\cal{D}^{\ULA}_{MT}(\Hck_{G,S})\otimes_{\cal{D}_{MT}(*)^{\rm{dual}}}\Lambda\lmod^{\perf},
\]
in the notation of \cite{SchGLCM}.
This definition is expected to be equivalent to our definition (including the case where $S=\Div^1$), but we do not discuss this in this paper.
\end{enumerate} 
\end{rmk}

Assume that $S\to \Div^1$ factors through $\Spd \breve{E}$.
The category $\cal{D}^{\ULA}(\Hck_{G,S},\Lambda)$ is a monoidal category via the convolution product $-\star -$ defined in \cite[\S VI.8]{FS}.
Namely, considering the diagram
\begin{align}\label{eqn:conv}
\Hck_{G,S}\times_S \Hck_{G,S}\xleftarrow{p} \Hck^{(2)}_{G,S}\xrightarrow{m} \Hck_{G,S}, 
\end{align}
where $\Hck^{(2)}_{G,S}:=[L^+_SG\backslash L_SG\times^{L_S^+G} L_SG/L_S^+G]$, the convolution product is defined by $A\star B:=Rm_*p^*(A\boxtimes B)$.

The monoidal unit $\mathbbm{1}_{D^{\ULA}(\Hck_{G,S},\Lambda)}$ is $i_{0,S*}\Lambda$ where $i_{0,S}$ is the closed immersion defined in (\ref{eqn:defofi0S}).
Let $\pi_{\Hck_{G,S}}\colon \Gr_{G,S}\to \Hck_{G,S}$ and $\pi_{G,S}\colon \Gr_{G,S}\to S$ denote the natural maps, and put
\begin{align}\label{eqn:defofFGS}
F_{G,S}:=\cal{H}^i(R\pi_{G,S*}\pi_{\Hck_{G,S}}^*(-))\colon \Sat(\Hck_{G,S},\Lambda)\to \LocSys(S,\Lambda)    
\end{align}
as in \cite[{\sc Definition/Proposition} VI.7.10]{FS}.
By the same argument as the Fargues--Scholze's geometric Satake equivalence in \cite[\S VI]{FS}, we can prove the following:
\begin{prop}\label{prop:nonderSatSpd}
\begin{enumerate}
\item There are a symmetric monoidal structure on $\Sat(\Hck_{G,\Spd \breve{E}},\Lambda)$ and a symmetric monoidal structure on $F_{G,\Spd \breve{E}}$ that induce a symmetric monoidal equivalence
\begin{align}\label{eqn:nonderivedSpdbreveFLambda}
S_{\Spd \breve{E}}\colon \Rep^{\rm{f.p.}}(\wh{G}_{\Lambda}\times I_{E},\Lambda)\simeq \Sat(\Hck_{G,\Spd \breve{E}},\Lambda)
\end{align}
via the Tannakian formalism.
\item There are a symmetric monoidal structure on $\Sat(\Hck_{G,\Spd C},\Lambda)$ and a symmetric monoidal structure on $F_{G,\Spd C}$ that induce a symmetric monoidal equivalence
\begin{align}\label{eqn:nonderSatSpdCLambda}
S_{\Spd C}\colon \Rep^{\rm{f.p.}}(\wh{G}_{\Lambda},\Lambda)\simeq \Sat(\Hck_{G,\Spd C},\Lambda).
\end{align}
\end{enumerate}
\end{prop}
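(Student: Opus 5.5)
We would follow the structure of the proof of \cite[{\sc Theorem} VI.11.1]{FS} and treat part (2) first, then deduce part (1) from it. For part (2) over $S=\Spd C$, the argument is essentially that of \cite[\S VI.7--VI.10]{FS}. We first check the basic structural facts. Then we build the symmetric monoidal structure and the Tannakian reconstruction. Finally we identify the reconstructed group with $\wh{G}_\Lambda$.

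The structural facts are the following. $\Sat(\Hck_{G,\Spd C},\Lambda)$ is stable under convolution $\star$ from (\ref{eqn:conv}), because $m$ is ind-proper and ULA is preserved. The functor $F_{G,\Spd C}$ is exact, faithful, and monoidal, and it takes values in finite projective $\Lambda$-modules, since $\LocSys(\Spd C,\Lambda)$ is just finite projective $\Lambda$-modules. The key geometric input is the fusion product. We pass to $\Hck_{G,(\Spd C)^I}$ over $(\Div^1)^I$ restricted to $\Spd C$-points, and use the Beilinson--Drinfeld degeneration to construct the commutativity constraint.

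Because (\ref{eqn:HckGHckGspl}) identifies $\Hck_{G,\Spd C}\cong \Hck_{G^{\spl},\Spd C}$, we may assume $G$ is split here. Then we run the reconstruction and the identification of the Tannakian group. We would use hyperbolic localization/constant term functors to reduce to the torus and recover the root datum. We then pin down the group scheme over $\Lambda$ by the argument through $\Lambda=\ZZ_\ell$ via the torsion coefficient case. This yields $\wh{G}_\Lambda$ up to the sign modification of the commutativity constraint, exactly as in \cite[{\sc Proposition} VI.10.2]{FS}. Alternatively, one could use \cite[{\sc Corollary} VI.6.7]{FS} to compare with the Witt vector affine Grassmannian and import \cite{Zhumixed}, but we would follow the \cite{FS} route so that it works for all $\Lambda$.

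For part (1) over $S=\Spd \breve{E}=[\Spd C/I_E]$, we view $\Sat(\Hck_{G,\Spd\breve{E}},\Lambda)$ as the $I_E$-equivariant objects of $\Sat(\Hck_{G,\Spd C},\Lambda)$. The fiber functor $F_{G,\Spd\breve{E}}$ lands in $\LocSys(\Spd\breve{E},\Lambda)$. By (\ref{eqn:DlcSpdEbreveDRepIEeqiv}) and the Corollary on the $t$-structure, this category is $\Rep^{\rm{f.p.}}(I_E,\Lambda)$. Since $G$ is unramified, and in fact split, over $\breve{E}$, the split model gives an $I_E$-equivariant identification of Schubert cells. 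We would show that the restriction to $\Spd C$ together with the fiber functor exhibits the category as $\Sat(\Hck_{G,\Spd C},\Lambda)\otimes \Rep^{\rm{f.p.}}(I_E,\Lambda)$.

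The $I_E$-action on the Satake category is trivial on $\wh{G}_\Lambda$. The reason is that the action factors through automorphisms of the split Hecke stack defined over $\Spd\breve{E}$, so the $\wh{G}$-action is untwisted; the cyclotomic twist only appears through $\Spd\breve F$ versus Frobenius on $W_F/I_E$, and on $I_E$ it is absorbed into the $I_E$-factor. Concretely, we would mimic \cite[{\sc Proposition} VI.9.6 and \S VI.11]{FS}. There, over $[\Spd C/W]$ (here over $[\Spd C/I_E]$), the Tannakian group becomes $\wh{G}_\Lambda\rtimes I_E$ with the action induced from the Galois action on the split model, and this action is trivial since the model is split over $\breve E$.

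We expect the main obstacle to be this last step. The difficulty is to check carefully that the $I_E$-action on the Satake category gives the direct product $\wh{G}_\Lambda\times I_E$ rather than a twisted semidirect product, and that the resulting objects are exactly the finite projective continuous representations. This requires the full faithfulness statements of Lemma~\ref{lemm:BIFSpdFbreveff} and Lemma~\ref{lemm:SpdkIFSpdCIFequiv}. It also requires controlling the cyclotomic twist, which is trivial on $I_E$ only after identifying the Tate twists on cohomology of Schubert cells, whose $\mu_{\ell^\infty}$-action comes via $\chi$.
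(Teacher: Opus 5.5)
Your plan is the paper's plan: reduce to torsion $\Lambda$ and rerun \cite[\S VI]{FS} over $\Spd C$ and over $\Spd\breve{E}=[\Spd C/I_E]$. However, it has a hole exactly where the paper has to go beyond \cite{FS}. You list ``$F_{G,\Spd C}$ is exact, faithful, and monoidal'' among the basic structural facts, and you use fusion only to produce the commutativity constraint on $\Sat(\Hck_{G,\Spd C},\Lambda)$. The Tannakian reconstruction needs a symmetric monoidal structure on $F_{G,\Spd C}$ itself, compatible with the structure on the source. The paper's proof says explicitly that the argument of \cite{FS} is not entirely applicable to constructing this monoidal structure on $F_{G,\Spd C}$, and it takes it from \cite{Bantwomonoidal}. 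So this step is neither a formality nor covered by ``following \cite[\S VI.7--VI.10]{FS}''. You must either construct the isomorphisms $F_{G,\Spd C}(A\star B)\cong F_{G,\Spd C}(A)\otimes F_{G,\Spd C}(B)$ with their associativity, unit and symmetry compatibilities, or import them from \cite{Bantwomonoidal} as the paper does. Relatedly, stability of $\Sat$ under $\star$ needs preservation of perversity and flatness, not only of ULA-ness.

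Conversely, the step you call the main obstacle is easy. Since $\ell\neq p$, the extension $F(\mu_{\ell^\infty})/F$ is unramified, so the cyclotomic character, and hence $\chi$, is trivial on $I_F$. Since $G$ is unramified over $E$, $\rm{act}^{\rm{alg}}_{\wh{G}}$ is trivial on $I_E$. Thus $\rm{act}^{\rm{geom}}_{\wh{G}}|_{I_E}$ is trivial outright, with no comparison of Tate twists on Schubert cells involved. Running the argument of \cite[\S VI.10--VI.11]{FS} over $[\Spd C/I_E]$ in place of $\Div^1\cong[\Spd C/\underline{W_F}]$, with the fiber functor valued in $\LocSys(\Spd\breve{E},\Lambda)\simeq\Rep^{\rm{f.p.}}(I_E,\Lambda)$ via (\ref{eqn:DlcSpdEbreveDRepIEeqiv}), then gives $\wh{G}_{\Lambda}\times I_E$.

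Also drop the description of $\Sat(\Hck_{G,\Spd\breve{E}},\Lambda)$ as $\Sat(\Hck_{G,\Spd C},\Lambda)\otimes\Rep^{\rm{f.p.}}(I_E,\Lambda)$ for torsion or integral $\Lambda$. There, objects of $\Rep^{\rm{f.p.}}(\wh{G}_{\Lambda}\times I_E,\Lambda)$ need not be finite sums of exterior tensor products. This is why the paper uses such a splitting only with $\Qlb$-coefficients (Lemma \ref{lemm:nonderSatSpdEbreve}).
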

\begin{proof}
We can assume that $\Lambda$ is a torsion ring.
Then we can show the claims using the same argument as \cite[\S VI]{FS}.
Note that the argument in \cite{FS} is not entirely applicable to the construction of the monoidal structure on $F_{G,\Spd C}$; this is handled in \cite{Bantwomonoidal}.
\end{proof}
\subsubsection{Witt vector and equal characteristic Hecke stacks}\label{sssc:schemes}
We often reduce problems on the above Hecke stack defined as a v-stack to the Witt vector Hecke stack or the equal characteristic Hecke stack, which is defined scheme-theoretically.
In this section, we collect preliminaries on the derived category of \'{e}tale sheaves on the latter.

Assume that $G$ is split.
We still write $G$ for its model over $\cal{O}_F$.
Let $\Gr^{\Witt}_{G,\Spec k}$ (resp.$\Gr^{\eq}_{G,\Spec k}$) be the Witt-vector (resp. the equal characteristic) affine Grassmannian defined in \cite[\S 1.1]{Zhumixed} (resp. for example in \cite[\S 2]{MV}). Similarly, let $L^{+,\Witt}_{\Spec k}G$ (resp. $L^{+,\eq}_{\Spec k}G$) be the Witt vector (resp. the equal characteristic) loop group, and put $\Hck_{G,\Spec k}^{(-)}=[L^{+,(-)}_{\Spec k}G\backslash \Gr^{(-)}_{G,\Spec k}]$ for $(-)=\Witt$ or $\eq$.
We similarly define Schubert cells, Schubert varieties, and the point $[\mu]$.

For $(-)=\Witt$ or $\eq$, we first define the category
\[
\cal{D}_{\rm{cons}}(\Hck^{(-)}_{G,\Spec k,\leq \mu},\Lambda)
\]
of perfect-constructible $\Lambda$-complexes on $\Hck^{(-)}_{G,\Spec k,\leq \mu}$ for $\mu\in \bb{X}_*^+(T)$ as follows:
Since the $L^{+,(-)}_{\Spec k}G$-action on $\Gr^{(-)}_{G,\Spec k,\leq \mu}$ factors through a quotient $L^{+,(-)}_{\Spec k}G/L^{(-)}_{\Spec k}G^{(m)}$ for some $m$, where $L^{(-)}_{\Spec k}G^{(m)}$ is the $m$-th congruence subgroup defined in \cite[\S 5.2]{Zhueq} or \cite[(1.1.1)]{Zhumixed}.
Since $L^{+,(-)}_{\Spec k}G/L^{(-)}_{\Spec k}G^{(m)}$ is a scheme of finite type over $k$ or its perfection, the quotient stack
\[
X^{(m)}_{\leq \mu}=[(L^{+,(-)}_{\Spec k}G/L^{(-)}_{\Spec k}G^{(m)})\backslash \Gr^{(-)}_{G,\Spec k,\leq \mu}]
\]
is an Artin stack of finite type over $\Spec k$ or its perfection.
Then, by \cite{LZ}, when $\Lambda$ is a torsion ring, there is a $\infty$-category 
\[
\cal{D}_{\et}(X^{(m)}_{\leq \mu},\Lambda)
\]
of \'{e}tale sheaves on $X^{(m)}_{\leq \mu}$ (since the choice of a deperfection does not change the category of \'{e}tale sheaves by the arguments in \cite[\S A]{Zhumixed}).
Let 
\[
\cal{D}_{\rm{cons}}(X^{(m)}_{\leq \mu},\Lambda).
\]
denote the full subcategory of perfect-constructible sheaves, where a complex on $X^{(m)}_{\leq \mu}$ is called perfect-constructible if its pullback to $\Gr_{G,\Spec k,\leq \mu}$ is perfect-constructible.
We put
\begin{align*}
\cal{D}_{\et}(\Hck^{(-)}_{G,\Spec k,\leq \mu},\Lambda)&:=\cal{D}_{\et}(X^{(m)}_{\leq \mu},\Lambda)\\
\cal{D}_{\rm{cons}}(\Hck^{(-)}_{G,\Spec k,\leq \mu},\Lambda)&:=\cal{D}_{\rm{cons}}(X^{(m)}_{\leq \mu},\Lambda).    
\end{align*}
These do not depend on the choice of $m$, since the congruence subgroup $L^{(-)}_{\Spec k}G^{(m)}$ is (the perfection of) a pro-unipotent group for any $m$, and since taking the quotient stack by the trivial action of a unipotent group does not change the derived category.

On the Hecke stack, perfect-constructibility agrees with universal local acyclicity:
\begin{lemm}\label{lemm:consULA}
Assume that $\Lambda$ is a torsion ring.
An object 
\[
A\in \cal{D}((X^{(m)}_{\leq \mu})_{\text{\'{e}t}},\Lambda)
\]
belongs to $\cal{D}_{\rm{cons}}((X^{(m)}_{\leq \mu})_{\text{\'{e}t}},\Lambda)$ if and only if it is universally locally acyclic(ULA) as a complex on $\Hck_{G,\Spec k}$ in the sense of \cite[Definition 4.6]{Ban}.
\end{lemm}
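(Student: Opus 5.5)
To prove Lemma \ref{lemm:consULA}, I would reduce everything to a statement on the affine Grassmannian and then use the Schubert stratification. By \cite[Definition 4.6]{Ban}, a complex $A$ on $\Hck_{G,\Spec k}$ supported on $X^{(m)}_{\leq\mu}$ is ULA exactly when its pullback $\pi^*A$ to $\Gr^{(-)}_{G,\Spec k,\leq\mu}$ is ULA over $\Spec k$. Perfect-constructibility is likewise defined through this pullback. So the first step is to pass to $B:=\pi^*A$ on $\Gr_{\leq\mu}:=\Gr^{(-)}_{G,\Spec k,\leq\mu}$ and prove that $B$ is perfect-constructible if and only if it is ULA over $\Spec k$.

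Since the base is the spectrum of an algebraically closed field, I would next use the standard fact that, for a separated scheme of finite type (or its perfection) over $\Spec k$, being ULA over $\Spec k$ is equivalent to being constructible with perfect stalks. This is the case of a geometric point as base, as in \cite[\S IV.2]{FS} or \cite[\S 4]{Ban}. Perfectness makes sense here because $\Lambda$ is torsion of finite global-dimension type, and we should really work with perfect-constructible complexes.

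The direction "perfect-constructible $\Rightarrow$ ULA" goes as follows. Pick a finite stratification of $\Gr_{\leq\mu}$ into locally closed pieces on which $B$ is locally constant with perfect stalks; one can take the Schubert cells $\Gr_\lambda$, $\lambda\leq\mu$, after refining if needed. A locally constant perfect complex on a scheme over $k$ is ULA, and ULA-ness is stable under $j_!$ and extensions. Since $B$ is an iterated extension of $j_{\lambda!}$ of such complexes, $B$ is ULA.

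The converse direction, "ULA $\Rightarrow$ perfect-constructible", is the step I expect to be the main obstacle. Here one has to extract constructibility from the ULA condition over a point. Following the characterization in \cite[\S IV.2]{FS}, ULA over $\Spec k$ is equivalent to dualizability in the category of cohomologically smooth correspondences. In particular $B$ is then compact, i.e. $R\mathrm{Hom}(B,-)$ commutes with colimits. On a qcqs finite-type scheme the compact objects of $\cal{D}_{\et}$ are exactly the constructible complexes with perfect stalks (a finite étale-cohomological-dimension argument, valid since $\ell\neq p$). Alternatively, I would test the ULA condition against the pullback along each stratum inclusion and use that the $L^+G$-equivariance forces cohomology sheaves to be locally constant along the orbits $\Gr_\lambda$. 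Finally, the independence of $m$ and the deperfection issues are handled by the remarks preceding the lemma, so the statement descends from $\Gr_{\leq\mu}$ to $X^{(m)}_{\leq\mu}$.
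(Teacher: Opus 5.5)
Your argument is essentially correct, but it takes a different route from the paper. You pass to $\pi^*A$ on $\Gr^{(-)}_{G,\Spec k,\leq\mu}$ and invoke the general fact that, over an algebraically closed field, a complex on a (perfectly) finite type scheme is ULA if and only if it is perfect-constructible. In your version the $L^+G$-equivariance plays no role.

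The paper instead uses the equivariance. The pullback $\pi^*A$ is automatically locally constant on each of the finitely many Schubert cells in $\Gr_{\leq\mu}$. Hence $A$ is perfect-constructible if and only if its stalks at the points $[\lambda]$, $\lambda\leq\mu$, are perfect. Then \cite[Proposition 4.7]{Ban}, the scheme-theoretic analogue of \cite[Proposition VI.6.5]{FS}, characterizes ULA-ness on the Hecke stack by exactly this stalk condition.

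The two routes trade off as follows. Yours proves slightly more, since no equivariance is needed. However, it depends on \cite[Definition 4.6]{Ban} being literally ``the pullback to $\Gr$ is ULA over $\Spec k$'' in the Lu--Zheng/Hansen--Scholze sense, and on the point-base fact holding for that notion. The paper's version is a two-line reduction to a characterization already available in \cite{Ban}.

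Once you quote the point-base fact, your direction-by-direction discussion is redundant, and some of its justifications are wrong as general principles.
\begin{itemize}
\item ULA-ness is not stable under $j_!$ over a general base. The constant sheaf on $\bb{A}^2\setminus\{0\}$ is ULA over $\bb{A}^1$ via a coordinate projection, but its extension by zero to $\bb{A}^2$ is not: at the origin the stalk is $0$ while the Milnor fibre has cohomology $\Lambda$. The stability holds in your situation only because, over a geometric point, every perfect-constructible complex is ULA, which is the fact you are invoking.
\item ``Dualizable in cohomological correspondences, hence compact'' is not a justified implication as written. Compactness of $B$ in $\cal{D}_{\et}$ is essentially equivalent to the constructibility you are trying to establish.
\end{itemize}
If you want to avoid quoting the general fact, the alternative you sketch at the end (equivariance forcing local constancy along the orbits) is the right one. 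It is precisely the paper's proof, with \cite[Proposition 4.7]{Ban} supplying the step from ULA to perfectness of the stalks at the points $[\lambda]$.
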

\begin{proof}
Let 
\[
\pi_{\Gr,k}\colon \Gr_{G,\Spec k}^{(-)}\to \Hck_{G,\Spec k}^{(-)}
\]
be the natural projection.
By definition, $A$ belongs to $\cal{D}_{\rm{cons}}((X^{(m)}_{\leq \mu})_{\text{\'{e}t}},\Lambda)$ if and only if $\pi_{\Gr,k}^*A$ is perfect-constructible after pulling back to $\Gr_{G,\Spec k}^{(-)}$.
By the equivariance of $A$, the object $\pi_{\Gr,k}^*A$ is locally constant on each Schubert cell.
Therefore, $A$ belongs to $\cal{D}_{\rm{cons}}((X^{(m)}_{\leq \mu})_{\text{\'{e}t}},\Lambda)$ if and only if the fiber $[\mu]^*A$ is a perfect complex.
Then the claim follows from \cite[Proposition 4.7]{Ban}.
\end{proof}
For a finite extension $L$ of $\QQ_\ell$, we define
\begin{align}
\cal{D}_{\rm{cons}}(\Hck^{(-)}_{G,\Spec k,\leq \mu},\cal{O}_L)&:= \lim_{\substack{\longleftarrow\\ n}} \cal{D}_{\rm{cons}}(\Hck^{(-)}_{G,\Spec k,\leq \mu},\cal{O}_L/\ell^n),\label{eqn:Dconslim} \\
\cal{D}_{\rm{cons}}(\Hck^{(-)}_{G,\Spec k,\leq \mu},L)&:= \cal{D}_{\rm{cons}}(\Hck^{(-)}_{G,\Spec k,\leq \mu},\cal{O}_L)[1/\ell]\label{eqn:Dconslinv}
\end{align}
and
\begin{align}\label{eqn:Dconscolim}
\cal{D}_{\rm{cons}}(\Hck^{(-)}_{G,\Spec k,\leq \mu},\Qlb):=\lim_{\substack{\longrightarrow\\ L/\QQ_{\ell}}}\cal{D}_{\rm{cons}}(\Hck^{(-)}_{G,\Spec k,\leq \mu},L)
\end{align}
where $L$ runs over finite extensions of $\QQ_{\ell}$ in $\Qlb$.
On the other hand, by \cite{BSproet} and \cite{Cho}, there is a category 
\[
D_{\rm{cons}}((X^{(m)}_{\leq \mu})_{\text{pro\'{e}t}},\Lambda)
\]
of perfect-constructible sheaves on the pro-\'{e}tale site over $X^{(m)}_{\leq \mu}$.
\begin{lemm}
Assume that $\Lambda=\cal{O}_L/\ell^n,\cal{O}_L,L$ or $\Qlb$, where $L$ is a finite extensions of $\QQ_{\ell}$ and $n$ is a nonnegative integer.
The homotopy category of $\cal{D}_{\rm{cons}}(\Hck^{(-)}_{G,\Spec k,\leq \mu},\Lambda)$ is equivalent to $D_{\rm{cons}}((X^{(m)}_{\leq \mu})_{\text{pro\'{e}t}},\Lambda)$.
\end{lemm}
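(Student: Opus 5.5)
We compare the two sides case by case in $\Lambda$. For $\Lambda=\cal{O}_L/\ell^n$ (a torsion ring), the comparison is the standard identification of étale and pro-étale sheaves with torsion coefficients. By \cite[Corollary 5.1.6]{BSproet}, the pullback along $\nu\colon (X^{(m)}_{\leq \mu})_{\text{pro\'{e}t}}\to (X^{(m)}_{\leq \mu})_{\text{\'{e}t}}$ is fully faithful on bounded-below complexes with torsion coefficients. Its essential image contains the constructible complexes, and it preserves and detects perfect-constructibility. For the Artin stack $X^{(m)}_{\leq \mu}$ we first argue on the smooth atlas $\Gr^{(-)}_{G,\Spec k,\leq\mu}\times L^{+,(-)}G/L^{(-)}G^{(m)}$ (or directly on $\Gr_{\leq\mu}$ with the group action). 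We then descend, using that both sides satisfy smooth descent: for the étale side by \cite{LZ}, and for the pro-étale side by the construction in \cite{Cho}. We also check that the descent data match, because the homotopy categories of the two descent diagrams agree term by term. This gives the case $\Lambda=\cal{O}_L/\ell^n$.

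For $\Lambda=\cal{O}_L$, the left side is by definition (\ref{eqn:Dconslim}) the limit over $n$ of the torsion categories. On the pro-étale side, \cite[Proposition 6.8.11 and \S6.5]{BSproet} (extended to stacks in \cite{Cho}) identify $D_{\rm{cons}}(-,\cal{O}_L)$ with derived-complete constructible complexes. The functor $K\mapsto (K\otimes^{\bb{L}}\cal{O}_L/\ell^n)_n$ is then an equivalence onto $\lim_n D_{\rm{cons}}(-,\cal{O}_L/\ell^n)$, with inverse $R\lim$, exactly as in the discussion around (\ref{eqn:derivedlimit}). The main obstacle is a mismatch between the homotopy category of an $\infty$-categorical limit and the limit of the homotopy categories, which are not the same in general. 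We resolve it at the level of objects and mapping groups rather than full categories. Mapping spectra in $\lim_n$ are $R\lim_n$ of the torsion mapping spectra, so $\pi_0$ equals $\pi_0 R\lim_n\rm{RHom}(K/\ell^n,K'/\ell^n)$. By derived completeness of $K'$, this equals $\rm{Hom}_{D(\cal{O}_L)}(K,K')$ on the pro-étale side. Essential surjectivity holds because an object of the limit is a compatible system $(K_n)$, and its $R\lim$ in the pro-étale topos is constructible and reproduces the $K_n$.

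For $\Lambda=L$, inverting $\ell$ in Hom-spaces commutes with passing to homotopy categories. Moreover, \cite{BSproet} defines $D_{\rm{cons}}(-,L)$ as the $\ell$-isogeny category of $D_{\rm{cons}}(-,\cal{O}_L)$ on a qcqs base. We verify this description still holds on $X^{(m)}_{\leq\mu}$ by descent, using that the stack is of finite type. Finally, for $\Lambda=\Qlb$, both sides are filtered colimits over $L$: the left side by (\ref{eqn:Dconscolim}), and the right side by \cite[\S6.8]{BSproet}, since constructible $\Qlb$-complexes descend to some finite $L$. Homotopy categories commute with filtered colimits of $\infty$-categories, which finishes the proof.
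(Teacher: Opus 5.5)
Your proposal is correct and follows essentially the same route as the paper, which also argues case by case in $\Lambda$: smooth descent (\cite{LZ} on the étale side, \cite{Cho} on the pro-étale side) reduces the torsion and $\cal{O}_L$ cases to the scheme case treated in \cite{BSproet}. The only divergence is for $\Lambda=L,\Qlb$, where the paper simply cites \cite[Theorem 1.2]{Cho} while you deduce these cases from the $\cal{O}_L$ case by inverting $\ell$ and taking the colimit over $L$. That works, but $(-)[1/\ell]$ does not commute with the infinite descent totalization in general, so your step ``by descent'' really rests on the boundedness of the Hom complexes and on the existence of integral lattices for constructible $L$-complexes on this finite-type stack.
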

\begin{proof}
It suffices to show a similar statement for any Artin stack $X$.
When $\Lambda$ is a torsion ring, then by descent (\cite[Proposition 5.3.5]{LZ} and \cite[Proposition 4.27]{Cho}), this reduces to the case of schemes and is proved in the paragraph after \cite[Definition 6.5.1]{BSproet}.
When $\Lambda=\cal{O}_L$, this reduces to the case of schemes again, and follows from \cite[Definition 6.5.1]{BSproet}.
When $\Lambda=L$ or $\Qlb$, this follows from \cite[Theorem 1.2]{Cho}.
\end{proof}
In view of Lemma \ref{lemm:consULA}, we also write $\cal{D}^{\ULA}(\Hck_{G,\Spec k,\leq \mu}^{(-)},\Lambda)$ for $\cal{D}_{\rm{cons}}(\Hck_{G,\Spec k,\leq \mu}^{(-)},\Lambda)$, and we define the category of ULA complexes on $\Hck_{G,\Spec k}^{(-)}$ by
\[
\cal{D}^{\ULA}(\Hck_{G,\Spec k}^{(-)},\Lambda)=\lim_{\substack{\longrightarrow \\ \mu\in \bb{X}_*^+(T)}} \cal{D}_{\rm{cons}}((\Hck_{G,\Spec k,\leq \mu}^{(-)})_{\et},\Lambda)
\]
even if $\Lambda$ is not a torsion ring.

The perverse $t$-structure on $\cal{D}_{\rm{cons}}(\Gr_{G,\Spec k, \leq \mu},\Qlb)$ induces that on $\cal{D}_{\rm{cons}}(\Hck_{G,\Spec k, \leq \mu},\Qlb)$, that is, $A\in \cal{D}_{\rm{cons}}(\Hck_{G,\Spec k, \leq \mu},\Qlb)$ is in ${}^p\cal{D}^{\geq 0}$ (resp.${}^p\cal{D}^{\leq 0}$) if and only if its pullback to $\Gr_{G,\Spec k, \leq \mu}$ is in ${}^p\cal{D}^{\geq 0}$ (resp.${}^p\cal{D}^{\leq 0}$) with respect to the usual perverse $t$-structure $({}^p\cal{D}^{\geq 0},{}^p\cal{D}^{\leq 0})$.
This induces the perverse $t$-structure on $\cal{D}^{\ULA}(\Hck_{G,\Spec k}^{(-)},\Qlb)$.
Since any constructible sheaf on any scheme of finite type over $k$ is bounded with respect to the perverse $t$-structure, we have the following:
\begin{lemm}\label{lemm:DULAschemeboundedperverse}
Any object in $\cal{D}^{\ULA}(\Hck_{G,\Spec k}^{(-)},\Qlb)$ is bounded with respect to the perverse $t$-structure.    
\end{lemm}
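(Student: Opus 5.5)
Since $\cal{D}^{\ULA}(\Hck_{G,\Spec k}^{(-)},\Qlb)$ is defined as the filtered colimit over $\mu\in\bb{X}_*^+(T)$ of the categories $\cal{D}_{\rm{cons}}(\Hck^{(-)}_{G,\Spec k,\leq\mu},\Qlb)$, every object lies in one of these. So it suffices to fix $\mu$ and show that every $A\in\cal{D}_{\rm{cons}}(\Hck^{(-)}_{G,\Spec k,\leq\mu},\Qlb)$ is perverse-bounded. Here I use that the pushforward along the closed immersion $\Hck_{\leq\mu}\to\Hck_{\leq\mu'}$ is perverse $t$-exact, so that boundedness does not depend on the chosen $\mu$.

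By definition, the perverse $t$-structure on $\Hck^{(-)}_{G,\Spec k,\leq\mu}$ is detected after pullback along $\Gr^{(-)}_{G,\Spec k,\leq\mu}\to\Hck^{(-)}_{G,\Spec k,\leq\mu}$. It therefore suffices to show that the pullback of $A$ is perverse-bounded on $\Gr^{(-)}_{G,\Spec k,\leq\mu}$. In the equal characteristic case this is a projective scheme of finite type over $k$. In the Witt vector case it is the perfection of such a scheme, and by \cite[\S A]{Zhumixed} the \'etale and constructible categories, together with the perverse $t$-structure, can be computed on a deperfection of finite type.

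By the preceding lemma, the homotopy category of $\cal{D}_{\rm{cons}}$ with $\Qlb$-coefficients is the usual category $D_{\rm{cons}}$ of \cite{BSproet}. It is then standard that a constructible complex $K$ on a finite type scheme $X$ is perverse-bounded. I would argue this by choosing a finite stratification (for instance, the Schubert stratification, on which the pullback of $A$ is locally constant by equivariance) and then inducting on the number of strata using the recollement triangles $j_!j^*K\to K\to i_*i^*K$. On a smooth stratum of dimension $d$, a bounded complex with lisse cohomology sheaves is perverse-bounded, because $\cal{F}[d]$ is perverse for $\cal{F}$ lisse. The functors $j_!$ and $i_*$ have bounded perverse amplitude, with $j_!$ right $t$-exact and of amplitude at most $\dim X$, so boundedness propagates through the induction. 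Finally, ordinary boundedness of $A$ holds because $\cal{D}_{\rm{cons}}$ consists of perfect-constructible complexes, whose fibers are perfect.

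The only step needing care is the transfer from the $\infty$-categorical, colimit-over-$L$ definition to the classical $D^b_c$ in which the standard facts hold. This is exactly what the comparison lemma with \cite{BSproet} and \cite{Cho} supplies: $\Qlb$-objects descend to some finite $L$, and the perverse truncations are compatible with base change in $L$.
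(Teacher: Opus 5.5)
Your proposal is correct and follows the paper's route. The paper gives no separate proof: it simply notes that constructible complexes on finite type schemes over $k$ are perverse-bounded. You make explicit the same reduction to a fixed $\Hck^{(-)}_{G,\Spec k,\leq\mu}$ and its cover $\Gr^{(-)}_{G,\Spec k,\leq\mu}$, and then sketch that standard fact. Your stratification induction is fine, though ordinary boundedness together with $D^{\leq -\dim X}\subset {}^pD^{\leq 0}\subset D^{\leq 0}$ and $D^{\geq 0}\subset {}^pD^{\geq 0}$ already gives it directly.
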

\section{Cohomology of Hecke stack}\label{sec:cohofHecke}
In this section, we will prove some results on the cohomology ring of $\Hck_{G,\Spd C}$.
\subsection{Equivalence between categories of ULA sheaves}
Recall that we are fixing a maximal torus $T\subset G$ (not necessarily split).
Let $T^{\spl}\subset G^{\spl}$ be a split model over $\Spec \bb{Z}$ of $T\subset G$, i.e. a split reductive group $G^{\spl}$ over $\Spec \bb{Z}$ and its maximal split torus $T^{\spl}$ equipped with an identification of the inclusion $T^{\spl}_{\ol{F}}\subset G^{\spl}_{\ol{F}}$ with $T_{\ol{F}}\subset G_{\ol{F}}$.
First, we have the following proposition:
\begin{prop}\label{prop:DULAequiv}
There exist monoidal equivalences
\begin{align}
\cal{D}^{\ULA}(\Hck_{G,\Spd C},\Lambda)&\simeq \cal{D}^{\ULA}(\Hck_{G^{\spl},\Spd C},\Lambda)\notag\\
&\simeq \cal{D}^{\ULA}(\Hck_{G^{\spl},\Spd \cal{O}_C},\Lambda)\notag\\
&\simeq \cal{D}^{\ULA}(\Hck_{G^{\spl},\Spd k},\Lambda)\notag\\
&\simeq \cal{D}^{\ULA}(\Hck^{\Witt}_{G^{\spl},\Spec k},\Lambda)\notag\\
&\simeq \cal{D}^{\ULA}(\Hck^{eq}_{G^{\spl},\Spec k},\Lambda).\label{eqn:DULAequiv}
\end{align}
of monoidal $\infty$-categories.
\end{prop}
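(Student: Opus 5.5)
I would establish the five equivalences in (\ref{eqn:DULAequiv}) one at a time. Throughout I reduce to the case where $\Lambda$ is a torsion ring, say $\Lambda=\cal{O}_L/\ell^n$. The cases $\cal{O}_L$, $L$ and $\Qlb$ then follow formally by passing to $\lim_n$, inverting $\ell$, and taking $\mathrm{colim}_L$. The categories on both sides are defined by exactly these operations (Lemma \ref{lemm:ULAisbounded} and the definitions (\ref{eqn:Dconslim})--(\ref{eqn:Dconscolim})). Each equivalence should also respect the filtration by $\leq\mu$, so it suffices to produce compatible equivalences on bounded pieces and check monoidality at the torsion level. For monoidality, I would note that every functor used below is a pullback (or a nearby cycles functor) along a map compatible with the convolution diagram (\ref{eqn:conv}). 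Such a functor commutes with $p^*$, with exterior products, and (by proper base change, since $m$ is proper on bounded pieces) with $Rm_*$.

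\emph{First step.} The isomorphism (\ref{eqn:HckGHckGspl}) gives $\Hck_{G,\Spd C}\cong\Hck_{G^{\spl},\Spd C}$ compatibly with $m$ and $p$. This yields the first equivalence directly.

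\emph{Second and third steps.} These pass through the $\Spd\cal{O}_C$-family, and this is where \cite[Corollary VI.6.7]{FS} is used. Consider the restrictions
\[
j^*\colon\cal{D}^{\ULA}(\Hck_{G^{\spl},\Spd\cal{O}_C},\Lambda)\to\cal{D}^{\ULA}(\Hck_{G^{\spl},\Spd C},\Lambda),
\]
\[
i^*\colon\cal{D}^{\ULA}(\Hck_{G^{\spl},\Spd\cal{O}_C},\Lambda)\to\cal{D}^{\ULA}(\Hck_{G^{\spl},\Spd k},\Lambda)
\]
to the generic and special fibres. FS prove that both are equivalences. Full faithfulness comes from the computation $Rp_*A\cong i^*A$ of the type used in Lemma \ref{lemm:SpdkIFSpdOCIFequiv} (\cite[Theorem IV.5.3]{FS}), applied to internal Homs of ULA objects over $\Spd\cal{O}_C$. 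Essential surjectivity follows because the source on each side is generated by $!$- and $*$-extensions of constant sheaves from Schubert cells, and these cells exist uniformly over $\Spd\cal{O}_C$. Both functors are pullbacks, hence monoidal.

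\emph{Fourth step.} This compares $\Hck_{G^{\spl},\Spd k}$ with $\Hck^{\Witt}_{G^{\spl},\Spec k}$. The v-stack is the diamondification of the perfect scheme stack, and the comparison is with \'etale sheaves on $X^{(m)}_{\leq\mu}$. Here I would use the comparison between \'etale cohomology of perfectly finite type schemes and of their diamonds, namely \cite[Proposition 27.2]{Sch}, extended to stacks by descent. ULA on the v-side matches ULA on the scheme side by \cite[Proposition 4.7]{Ban}, and the latter matches constructibility by Lemma \ref{lemm:consULA}.

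\emph{Fifth step.} This is the Witt-to-equal-characteristic comparison, which is the main result of \cite{Ban} via the Fargues--Fontaine surface. I would quote it directly, checking only that the equivalence there is monoidal. It is constructed from nearby-cycle and restriction functors along families, so it commutes with convolution.

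I expect the main obstacle to be monoidality in the last step: one has to verify that the nearby cycles functors commute with $Rm_*$ and $p^*$. I would handle this as in the second and third steps, showing that on ULA objects nearby cycles coincide with restriction, so that the required compatibility reduces to proper base change.
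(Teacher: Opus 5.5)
\textbf{Gap in the fourth step: essential surjectivity is missing.} Your chain of equivalences and the inputs you cite agree with the paper's proof: torsion reduction, (\ref{eqn:HckGHckGspl}), \cite[Corollary VI.6.7]{FS}, Scholze's \S 27 comparison, and \cite[Theorem 1]{Ban}. The problem is the fourth step. \cite[Proposition 27.2]{Sch}, extended to the stacks $X^{(m)}_{\leq\mu}$ by descent, only gives a \emph{fully faithful} functor
$c^*\colon \cal{D}_{\et}(\Hck^{\Witt}_{G^{\spl},\Spec k,\leq\mu},\Lambda)\to \cal{D}_{\et}(\Hck_{G^{\spl},\Spd k,\leq\mu},\Lambda)$.
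The ULA characterizations show that $c^*$ sends ULA objects to ULA objects: \cite[Proposition VI.6.5]{FS} on the v-side, and \cite[Proposition 4.7]{Ban} together with Lemma \ref{lemm:consULA} on the scheme side. A characterization by fibres, however, does not produce preimages. Nothing in your argument shows that every ULA object on $\Hck_{G^{\spl},\Spd k}$ lies in the essential image of $c^*$, and this is not part of the comparison theorem, which is not an equivalence on all \'etale sheaves.

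\textbf{How the paper closes it.} It uses the same stratification idea you sketched for the second and third steps.
\begin{itemize}
\item Since $c^*$ commutes with $*$-pullbacks and $!$-pushforwards, it is compatible with the recollement along the Schubert stratification.
\item Full faithfulness lets you lift the gluing maps, so essential surjectivity reduces to a single cell.
\item On a cell, $\Hck_{G^{\spl},\Spec k,\mu}=[\Spec k/H_\mu]$ and $\Hck_{G^{\spl},\Spd k,\mu}=[\Spd k/H_\mu^{\dia}]$, and ULA objects are locally constant with perfect fibres.
\item The descent argument of \cite[Proposition IV.7.1]{FS} then reduces everything to $D_{\mathrm{lc}}(\Spec k,\Lambda)\to D_{\mathrm{lc}}(\Spd k,\Lambda)$. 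Both sides are $\Lambda\lmod^{\perf}$.
\end{itemize}

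\textbf{Two smaller points in the same step.}
\begin{itemize}
\item To compare with $X^{(m)}_{\leq\mu}$ on the v-side at all, you need \cite[Proposition VI.4.1]{FS} to replace $L^+G$ by its finite-type quotient $L^+G/L^+G^{(m)}$. Your proposal uses this without saying so.
\item For $c^*$, commuting with $Rm_*=Rm_!$ is not a proper base change statement. It is \cite[Propositions 27.1, 27.4]{Sch}.
\end{itemize}
With these additions your argument coincides with the paper's.
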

\begin{proof}
By (\ref{eqn:Dconslim}-\ref{eqn:Dconscolim}), Lemma \ref{lemm:ULAisbounded} and the definitions of $\cal{D}^{\ULA}$, we can assume that $\Lambda$ is a torsion ring.
First, the isomorphism (\ref{eqn:HckGHckGspl}) implies
\[
\cal{D}^{\ULA}(\Hck_{G,\Spd C},\Lambda)\simeq \cal{D}^{\ULA}(\Hck_{G^{\spl},\Spd C},\Lambda).
\]
Moreover, by \cite[{\sc Corollary VI.6.7}]{FS}, pullback functors define equivalences
\begin{align}\label{eqn:equivCOCk}
\cal{D}^{\ULA}(\Hck_{G^{\spl},\Spd C},\Lambda)\overset{\sim}{\leftarrow}\cal{D}^{\ULA}(\Hck_{G^{\spl},\Spd \cal{O}_C},\Lambda)\overset{\sim}{\rightarrow}\cal{D}^{\ULA}(\Hck_{G^{\spl},\Spd k},\Lambda).
\end{align}
Next, we claim that
\begin{align*}
\cal{D}^{\ULA}(\Hck_{G^{\spl},\Spd k},\Lambda)\simeq \cal{D}^{\ULA}(\Hck_{G^{\spl},\Spec k},\Lambda).
\end{align*}
This follows from the result in \cite[\S 27]{Sch} on the comparison to schemes.
More precisely, it goes as follows:
It holds that
\begin{align}
\Hck_{G^{\spl},\Spec k,\leq \mu}^{\Witt}&=[L^{+,\Witt}_{\Spec k}G^{\spl}\backslash \Gr^{\Witt}_{G^{\spl},\Spec k,\leq \mu}]\\
\Hck_{G^{\spl},\Spd k,\leq \mu}&=[(L^{+,\Witt}_{\Spec k}G^{\spl})^{\diamondsuit}\backslash (\Gr^{\Witt}_{G^{\spl},\Spec k,\leq \mu})^{\dia}]
\end{align}
in the terminology of \cite[\S 27]{Sch}.
As written in \S \ref{sssc:schemes}, the $L^{+,\Witt}_{\Spec k}G^{\spl}$-action on $\Gr^{\Witt}_{G^{\spl},\Spec k,\leq \mu}$ factors through a quotient $L^{+,\Witt}_{\Spec k}G^{\spl}/L^{+,\Witt}_{\Spec k}G^{\spl, (m)}$ by the $m$-th congruence group for some $m$, and 
\[
\cal{D}_{\et}(\Hck_{G^{\spl},\Spec k,\leq \mu}^{\Witt},\Lambda)=\cal{D}_{\et}([(L^{+,\Witt}_{\Spec k}G^{\spl}/L^{+,\Witt}_{\Spec k}G^{\spl, (m)})\backslash \Gr^{\Witt}_{G^{\spl},\Spec k,\leq \mu} ],\Lambda).
\]
Similarly, by \cite[Proposition VI.4.1]{FS}, 
\[
\cal{D}_{\et}(\Hck_{G^{\spl},\Spd k,\leq \mu},\Lambda)\simeq \cal{D}_{\et}([(L^{+,\Witt}_{\Spec k}G^{\spl}/L^{+,\Witt}_{\Spec k}G^{\spl, (m)})^{\diamondsuit}\backslash (\Gr^{\Witt}_{G^{\spl},\Spec k,\leq \mu})^{\diamondsuit} ],\Lambda).
\]
It follows from \cite[Proposition 27.2]{Sch} and descent results (\cite[Proposition 5.3.5]{LZ} and \cite[Proposition 17.3]{Sch}) that for any stack $X$ that is the perfection of an Artin stack of finite type over $\Spec k$, there is a canonical fully faithful functor
\[
c_X^*\colon \cal{D}_{\et}(X,\Lambda)\to \cal{D}_{\et}(X^{\diamondsuit},\Lambda), 
\]
and this functor commutes with $*$-pullbacks and $!$-pushfowards by \cite[Proposition 27.1, Proposition 27.4]{Sch}.
Hence, there is a fully faithful functor
\begin{align}\label{eqn:HckSpeckmucomparisontosch}
\cal{D}_{\et}(\Hck^{\Witt}_{G^{\spl},\Spec k,\leq \mu},\Lambda)\to \cal{D}_{\et}(\Hck_{G^{\spl},\Spd k,\leq \mu},\Lambda)    
\end{align}
for any $\mu$.
By the characterization of universal local acyclicity in \cite[{\sc Proposition} VI.6.5]{FS} and \cite[Proposition 4.7]{Ban}, one can show that this restricts to a functor
\[
\cal{D}^{\ULA}(\Hck^{\Witt}_{G^{\spl},\Spec k,\leq \mu},\Lambda)\to \cal{D}^{\ULA}(\Hck_{G^{\spl},\Spd k,\leq \mu},\Lambda).
\]
Therefore, we obtain a fully faithful functor
\[
\cal{D}^{\ULA}(\Hck_{G^{\spl},\Spec k},\Lambda)\to \cal{D}^{\ULA}(\Hck_{G^{\spl},\Spd k},\Lambda).
\]
This functor is monoidal since the convolution product is defined using $*$-pullbacks and $!$-pushforwards.
By the semiorthogonal decomposition corresponding to the stratification into Schubert cells, the essential surjectivity of this functor is reduced to that of
\[
D^{\ULA}(\Hck_{G^{\spl},\Spec k,\mu},\Lambda)\to D^{\ULA}(\Hck_{G^{\spl},\Spd k,\mu},\Lambda)
\]
for each $\mu$.
Since $\Hck_{G^{\spl},\Spec k,\mu}=[\Spec k/H_{\mu}]$ and  $\Hck_{G^{\spl},\Spd k,\mu}=[\Spd k/H_{\mu}^{\dia}]$ holds for some group scheme $H_{\mu}$, the essential surjectivity reduces to that of $D_{\rm{lc}}(\Spec k,\Lambda)\to D_{\rm{lc}}(\Spd k,\Lambda)$ by the descent argument in the proof of \cite[{\sc Proposition} IV.7.1]{FS}, where $D_{\rm{lc}}(-)$ means the category of locally constant sheaves with perfect fibers.
Since $D_{\rm{lc}}(\Spec k,\Lambda)$ and $D_{\rm{lc}}(\Spd k,\Lambda)$ are equivalent to the category of perfect $\Lambda$-complexes, the claim follows.

Finally, we have
\begin{align}\label{eqn:equivWitteq}
\cal{D}^{\ULA}(\Hck_{G^{\spl},\Spec k}^{\Witt},\Lambda)\simeq \cal{D}^{\ULA}(\Hck_{G^{\spl},\Spec k}^{\eq},\Lambda)
\end{align}
by \cite[Theorem 1]{Ban}.
This proves the proposition.
\end{proof}
We used \cite[Theorem 1]{Ban} in the above proof.
Let us recall the argument of the proof of \cite[Theorem 1]{Ban}.
We introduce a scheme $\Divtil$ and an affine Grassmannian $\Gr_{\Divtil}$ over $\Divtil$, such that its fiber over a point $x_1\colon \Spec k\to \Divtil$ is the Witt-vector affine Grassmannian, and the fiber over another point $x_2$ is (the perfection of) the equal characteristic affine Grassmannian.
Then we can show (\ref{eqn:equivWitteq}) by using triples 
\begin{align}\label{eqn:O1O2}
(x_1,\Spec \cal{O}_1,\Spec \ol{K(\Divtil)}])\text{ and }(x_2,\Spec \cal{O}_2,\Spec \ol{K(\Divtil)}])
\end{align}
for some Henselian local ring $\cal{O}_1,\cal{O}_2$ instead of using a triple $(\Spd k, \Spd \cal{O}_C,\Spd C)$ in \cite[{\sc Corollary VI.6.7}]{FS}.
We will not use the details of this proof, but we will later use the fact that all the functors appearing in this proof are pullbacks along maps.

We will need the following lemma on the preservation of perversity later:
\begin{lemm}\label{lemm:SatPervequiv}
For $(-)=\eq$ or $\Witt$, let $\Perv(\Hck^{(-)}_{G^{\spl},\Spec k},\Qlb)\subset \cal{D}^{\ULA}(\Hck^{(-)}_{G^{\spl},\Spec k},\Qlb)$ denote the full subcategory of the $L^{+,(-)}_{\Spec k}G^{\spl}$-equivariant perverse $\Qlb$-sheaves on $\Gr^{(-)}_{G^{\spl},\Spec k}$.
\begin{enumerate}
\item The equivalences in Proposition \ref{prop:DULAequiv} restrict to monoidal equivalences between 
\begin{align}
\Sat(\Hck_{G,\Spd C},\Qlb)&\simeq \Sat(\Hck_{G^{\spl},\Spd C},\Qlb)\notag\\
&\simeq \Sat(\Hck_{G^{\spl},\Spd \cal{O}_C},\Qlb)\notag\\
&\simeq \Sat(\Hck_{G^{\spl},\Spd k},\Qlb)\notag\\
&\simeq \Perv(\Hck^{\Witt}_{G^{\spl},\Spec k},\Qlb)\notag\\
&\simeq \Perv(\Hck^{\eq}_{G^{\spl},\Spec k},\Qlb).\label{eqn:SatPervequiv}
\end{align}
\item \label{item:threenonderSatcompati} There are canonical monoidal natural isomorphisms that make squares
\[
\xymatrix{
\Sat(\Hck_{G,\Spd C},\Qlb)\ar[r]^-{(\ref{eqn:nonderSatSpdCLambda})}\ar[d]_{\simeq}^{(\ref{eqn:SatPervequiv})}&\Rep^{\rm{f.d.}}(\wh{G},\Qlb)\ar@{=}[d]\\
\Perv(\Hck^{\Witt}_{G^{\spl},\Spec k},\Qlb)\ar[r]^-{S_{\Spec k}^{\Witt}}\ar[d]_{\simeq}^{(\ref{eqn:SatPervequiv})}&\Rep^{\rm{f.d.}}(\wh{G},\Qlb)\ar@{=}[d]\\
\Perv(\Hck^{\eq}_{G^{\spl},\Spec k},\Qlb)\ar[r]^-{S_{\Spec k}^{\eq}}&\Rep^{\rm{f.d.}}(\wh{G},\Qlb)\\
}
\]
commute, where $S_{\Spec k}^{\Witt}$ is the geometric Satake equivalence in \cite{Zhumixed} and $S_{\Spec k}^{\eq}$ is the one in \cite{MV}.
\end{enumerate}
\end{lemm}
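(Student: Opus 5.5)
For part (1), I would check perversity and flatness/ULA separately along each equivalence in Proposition \ref{prop:DULAequiv}; all of these are pullback functors (or isomorphisms of stacks).

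\emph{The first equivalence.} It comes from the isomorphism (\ref{eqn:HckGHckGspl}), so it visibly preserves $\Sat$.

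\emph{The equivalences of (\ref{eqn:equivCOCk}).} \cite[{\sc Corollary} VI.6.7]{FS} and the surrounding discussion in \cite[\S VI.7]{FS} show that restriction to $\Spd C$ and to $\Spd k$ both preserve relative perversity. This works because the perverse $t$-structure on ULA sheaves is detected by the stalks $[\mu]^*$ and $[\mu]^!$ on Schubert cells, with the shift given by $\langle 2\rho,\mu\rangle$. For $\Qlb$-coefficients, flatness is automatic after inverting $\ell$, so $\Sat$ corresponds to the perverse objects on both sides.

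\emph{The comparison to schemes.} The functor $c^*$ commutes with $*$-pullbacks and $!$-pushforwards. The strata $\Hck_{\mu}$ correspond, and so do their dimensions. Hence the stalkwise criteria for ${}^p\cal{D}^{\le 0}$ and ${}^p\cal{D}^{\ge 0}$ match under $c^*$. For the costalk condition I would use that $i_\mu^!$ commutes with $c^*$; this follows from \cite[Proposition 27.4]{Sch} by adjunction together with the full faithfulness of $c^*$.

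\emph{The equivalence (\ref{eqn:equivWitteq}).} This is the step I expect to be the main obstacle. As noted after the proof of Proposition \ref{prop:DULAequiv}, the equivalence is a zig-zag of pullbacks along the triples (\ref{eqn:O1O2}), passing through ULA sheaves on $\Gr_{\Divtil}$ restricted to the Henselian bases $\Spec\cal{O}_i$ and the geometric generic point. I would show that the relative perverse $t$-structure over these bases is preserved by restriction to the special points $x_i$ and to the generic point. Applying the same Schubert-cell stalk/costalk criterion as above (using that ULA complexes commute with $!$-restrictions to strata), a complex is relatively perverse iff its restriction to the closed fibre is perverse iff its restriction to the generic fibre is perverse. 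Monoidality in all cases is inherited from Proposition \ref{prop:DULAequiv}.

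For part (2), I would construct the natural isomorphisms square by square using fibre functors.

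\emph{The fibre functors.} Each Satake equivalence is the Tannakian reconstruction from a fibre functor given by total cohomology $\bigoplus_i \cal{H}^i(R\pi_*\pi^*(-))$ on the affine Grassmannian. These are $F_{G,\Spd C}$, Zhu's fibre functor, and the Mirković--Vilonen fibre functor.

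\emph{Compatibility of fibre functors.} Since proper pushforward commutes with the pullbacks used above (proper base change, and compatibility of $c^*$ with pushforward), these fibre functors are identified, monoidally, along the chain (\ref{eqn:SatPervequiv}). Here \cite{Bantwomonoidal} supplies the monoidal structure on $F_{G,\Spd C}$, which matches the one defined via fusion/constant term in the scheme setting.

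\emph{Identifying the dual groups.} The dual groups reconstructed in each setting are identified with $\wh{G}$ via the same normalization by constant term functors to the torus, i.e.\ weight functors indexed by $\bb{X}_*(T)$. The constant term functors also commute with the pullbacks, so the induced automorphism of $\wh{G}$ is trivial on $\wh{T}$ and on the pinning. Hence the squares commute up to canonical monoidal isomorphism.
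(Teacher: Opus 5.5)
Your outline is correct, and for most of part (1) it is the paper's argument. The first equivalence comes from (\ref{eqn:HckGHckGspl}). For the two $\Spd\cal{O}_C$-degenerations, the paper also reduces, via the Schubert-cell criteria \cite[Proposition VI.6.4, Proposition VI.7.7]{FS} with torsion coefficients, to fibres of locally constant complexes on $\Spd k$, $\Spd\cal{O}_C$ and $\Spd C$.

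There are three differences in part (1).
\begin{itemize}
\item For the $\Spd k$/Witt step, the paper asserts $t$-exactness of the comparison on bounded-support objects and then matches the $IC_\mu$'s. It uses that $\Sat(\Hck_{G,\Spd k},\Qlb)$ is semisimple with simple objects $IC_{\mu,\Spd k}$ (as for $\Sat_G(\QQ_\ell)$ in \cite[\S VI.11]{FS}), and that $\Perv(\Hck^{\Witt}_{G,\Spec k},\Qlb)$ is generated by the $IC_{\mu,\Spec k}$. Your remark that flatness is automatic after inverting $\ell$ is what replaces this, and it needs a line of proof: take ${}^p\cal{H}^0$ of an $\cal{O}_L$-model modulo its $\ell$-power-torsion subobject.
\item Adjunction plus full faithfulness gives $c^*i_\mu^!\simeq i_\mu^!c^*$ only once you know $i_\mu^!c^*A$ lies in the essential image of $c^*$. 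This follows from the essential surjectivity established in the proof of Proposition \ref{prop:DULAequiv}.
\item For the Witt/equal-characteristic step the paper simply cites \cite[\S 6]{Ban}. Your relative-perversity argument over $\Spec\cal{O}_1$ and $\Spec\cal{O}_2$ is a reproof of that result.
\end{itemize}

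In part (2), the genuine divergence is how you control the Tannakian automorphisms $\varphi_1,\varphi_2$ of $\wh{G}$.
\begin{itemize}
\item The paper reuses part (1). The $IC_\mu$ correspond under (\ref{eqn:SatPervequiv}), and every Satake functor sends $IC_\mu$ to $V_\mu$. So each $\varphi_i$ fixes all irreducibles up to isomorphism and is therefore inner; nothing beyond part (1) is needed.
\item Your route through constant-term (weight) functors needs hyperbolic localization to commute with every comparison functor, including the specialization over $\Divtil$. The paper never proves this, so it is additional work.
\item In exchange you get a sharper conclusion: $\varphi_i|_{\wh{T}}=\mathrm{id}$, hence $\varphi_i=\mathrm{Ad}(t)$ for some $t\in\wh{T}(\Qlb)$.
\item It does \emph{not} give triviality on a pinning. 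Weight functors do not see root vectors, and \cite{MV} and \cite{Zhumixed} identify their Tannakian groups with $\wh{G}$ only up to inner automorphism. So ``canonical'' must be read up to that ambiguity, as in the remark following the statement.
\item Proper base change identifies the fibre functors but not their tensor structures. \cite{Bantwomonoidal} covers only the $\Spd C$/$\Spd k$ comparison, and it compares against Zhu's equivariant-cohomology tensor structure, not a fusion one. For the $\Spd k$/Witt and Witt/equal-characteristic steps you need, as the paper indicates, that all these tensor structures are built from $H^*_{L^+G}(\Gr,-)$ over $H^*_{L^+G}(*)$. That construction must be compatible with $c^*$ and must also make sense over the bases $\Spec\cal{O}_i$.
\end{itemize}
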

\begin{rmk}
Unlike \cite[\S VI]{FS}, the constructions of the geometric Satake equivalences in \cite{MV} and \cite{Zhumixed} are not completely canonical.
They establish the geometric Satake equivalence from the fiber functor
\[
\Perv(\Hck^{(-)}_{G^{\spl},\Spec k},\Qlb)\to \Vect_{\Qlb}^{\heartsuit, \rm{f.d.}}, A\mapsto \bigoplus_{i}H^i(\Gr^{(-)}_{G^{\spl},\Spec k},A)
\]
via Tannakian construction, but the isomorphism between the Tannakian group and $\wh{G}$ has an ambiguity of inner automorphisms of $\wh{G}$.
Therefore, the term ``canonical'' in the statement of Lemma \ref{lemm:SatPervequiv} (\ref{item:threenonderSatcompati}) means, more precisely, ``canonical up to this ambiguity.''
This ambiguity does not affect our results.
Moreover, it is possible to avoid this ambiguity and prove the statement strictly if we use the method of \cite{FS} in the proofs of \cite{MV} and \cite{Zhumixed}.
\end{rmk}
\begin{proof}
\begin{enumerate}
\item The last equivalence is proved in \cite[\S 6]{Ban}.

For the first four equivalence, the result is implicitly mentioned in \cite[Remark I.2.14]{FS}.
We give a proof as follows:
Since the first equivalence is implied by (\ref{eqn:HckGHckGspl}), we can assume that $G=G^{\spl}_F$, and abusively write $G$ for $G^{\spl}$.
For the second and third equvalence, we can assume that $\Lambda$ is a torsion ring.
By characterization of ULA sheaves and Satake sheaves in \cite[Proposition VI.6.4, Proposition VI.7.7]{FS}, the second (resp. third) equivalence reduces to the claim that all fibers of an object in $\cal{D}_{\rm{lc}}(\Spd \cal{O}_C,\Lambda)$ are finite projective $\Lambda$-modules in degree 0 if and only if its pullback to $\cal{D}_{\rm{lc}}(\Spd k,\Lambda)$ (resp. $\cal{D}_{\rm{lc}}(\Spd C,\Lambda)$) satisfies the same condition.
This follows from the equivalences $\cal{D}_{\rm{lc}}(S,\Lambda)\simeq \Lambda\lmod^{\perf}$ for $S=\Spd \cal{O}_C, \Spd k$, and $\Spd C$.

For the fourth equivalence, let
\[
\cal{D}_{\et}(\Hck_{G,\Spd k},\Lambda)^{\bd}\subset \cal{D}_{\et}(\Hck_{G,\Spd k},\Lambda)
\]
denote the full subcategory of all objects with quasi-compact support.
The perverse $t$-structure on $\cal{D}_{\et}(\Hck_{G,\Spd k},\Lambda)^{\bd}$ is defined in \cite[\sc{Definition/Proposition} VI.7.1]{FS} when $\Lambda$ is a torsion ring, and by taking limits, inverting $\ell$ and taking a colimit, we also get the perverse $t$-structure on $\cal{D}_{\et}(\Hck_{G,\Spd k},\Qlb)^{\bd}$.
Since the fully faithful functor (\ref{eqn:HckSpeckmucomparisontosch}) is essentially surjective by the same argument as the proof of Proposition \ref{prop:DULAequiv}, we get an equivalence
\begin{align}\label{eqn:comparisontoschDetbd}
D_{\et}(\Hck^{\Witt}_{G,\Spec k},\Qlb)^{\bd}\simeq D_{\et}(\Hck_{G,\Spd k},\Qlb)^{\bd}.    
\end{align}
By the definition of the perverse $t$-structures, we can show that this equivalence preserves the perverse $t$-structures. 

Let $j_{\mu,\Spd k}\colon \Hck_{G,\Spd k,\mu}\hookrightarrow \Hck_{G,\Spd k}$ denote the inclusion and put $d_{\mu}=\langle 2\rho,\mu\rangle$.
Let $IC_{\mu,\Spd k}$ denote the image of the natural homomorphism
\[
{}^p\cal{H}^0(j_{\mu,\Spd k!}\Qlb[d_{\mu}])\to {}^p\cal{H}^0(Rj_{\mu,\Spd k*}\Qlb[d_{\mu}]).
\]
In fact, $IC_{\mu,\Spd k}\cong {}^p\cal{H}^0(j_{\mu,\Spd k!}\Qlb[d_{\mu}])\cong  {}^p\cal{H}^0(Rj_{\mu,\Spd k*}\Qlb[d_{\mu}])$ holds by \cite[Proposition VI.7.5]{FS}.
Since $j_{\mu,\Spd k !}$ and ${}^p\cal{H}^0(-)$ is compatible with the comparison equivalence (\ref{eqn:comparisontoschDetbd}), this equivalence maps $IC_{\mu,\Spd k}$ to the intersection cohomology sheaf $IC_{\mu,\Spec k}$ on $\Gr^{\Witt}_{G,\Spec k,\leq \mu}$.
By the same argument as \cite[\S VI.11]{FS} on ``$\Sat_G(\QQ_{\ell})$'', the category $\Sat(\Hck_{G,S},\Qlb)$ is given by $\ds\bigoplus_{\mu\in \bb{X}_*^+(T)}\Vect_{\Qlb}^{\heartsuit, \rm{f.d.}}\otimes IC_{\mu,\Spd k}$.
Since $\Perv(\Hck^{\Witt}_{G,\Spec k},\Qlb)$ is generated by $IC_{\mu,\Spec k}$'s by \cite[\S 2.1.1]{Zhumixed}, we get the fourth equivalence.

Note that monoidalities follow from those of (\ref{eqn:DULAequiv}).
\item 
We can assume that $G=G^{\spl}_F$ and abusively write $G$ for $G^{\spl}$.
Put 
\begin{align*}
\cal{P}_1&=\Sat(\Hck_{G,\Spd C},\Qlb),\\
\cal{P}_2&=\Sat(\Hck_{G,\Spd k},\Qlb)\\
\cal{P}_3&=\Perv(\Hck^{\Witt}_{G,\Spec k},\Qlb),\\
\cal{P}_4&=\Perv(\Hck^{\eq}_{G,\Spec k},\Qlb).
\end{align*}
For $1\leq i\leq 4$, let
\[
F_i\colon \cal{P}_i\to \Vect^{\heartsuit,\rm{f.p.}}
\]
be the functor sending a perverse sheaf $A$ to the direct sum of the cohomology groups of $A$ on the affine Grassmannian over all degrees.
The functor $F_1$ is the one defined in (\ref{eqn:defofFGS}).
We equip $\cal{P}_1,\cal{P}_3,\cal{P}_4$ with symmetric monoidal structures as in Proposition \ref{prop:nonderSatSpd}, \cite[\S 2.4]{Zhumixed} and \cite[\S 5.2]{Zhueq}, respectively.
We also equip $F_1,F_3,F_4$ with monoidal structures that are symmetric as in Proposition \ref{prop:nonderSatSpd}, \cite[\S 2.3]{Zhumixed} and \cite[\S 5.2]{Zhueq}, respectively.
In \cite[\S 5.2]{Zhueq}, it is mentioned that there are several ways to endow a monoidal structure with $F_4$ but that these coincide.
Moreover, we equip $F_2$ with a monoidal structure, in the same way as \cite[\S 2.3]{Zhumixed}.

Since $F_i$ is defined via $*$-pullbacks and pushforward from the proper subspace in the affine Grassmannian, we can show that there is a natural isomorphism
\begin{align}\label{eqn:diag:fiberfunctorintw}
\vcenter{
\xymatrix{
\cal{P}_i\ar[r]^-{F_i}\ar[d]^{\simeq}_{(\ref{eqn:SatPervequiv})}&\Vect^{\heartsuit,\rm{f.d.}}\ar@{=}[d]\\
\cal{P}_{i+1}\ar[r]^-{F_{i+1}}&\Vect^{\heartsuit,\rm{f.d.}}
}    
}
\end{align}
for $i=1,2,3$.
We claim that this isomorphism is monoidal for $i=1,2,3$.
When $i=1$, it is proved in \cite{Bantwomonoidal}.
When $i=2$, the monoidal structures on $F_2$ and $F_3$ are induced from the monoidal structure on a functor of taking the equivariant cohomology $H^*_{L^+G}(\Gr_G,-)$ as an $H^*_{L^+G}(*,\Qlb)\lmod$.
These constructions are compatible with the comparison functor in the proof of (\ref{eqn:DULAequiv}).
Hence, the natural isomorphism (\ref{eqn:diag:fiberfunctorintw}) is monoidal for $i=2$.

Similarly for the case $i=3$, since the ways of defining monoidal structures on $F_3$ and $F_4$ are parallel, and since we can define similar functors and monoidal structures on the spaces $\Spec \cal{O}_1$, $\Spec \ol{K(\Divtil)}$ and $\Spec \cal{O}_2$ in (\ref{eqn:O1O2}), we can show that the natural isomorphism (\ref{eqn:diag:fiberfunctorintw}) is monoidal for $i=3$.

Thus, we obtain a diagram
\begin{align}\label{eqn:diag:fiberfunctorcompati}
\xymatrix{
\cal{P}_1\ar[r]^-{F_1}\ar[d]^{\simeq}_{(\ref{eqn:SatPervequiv})}&\Vect^{\heartsuit,\rm{f.d.}}\ar@{=}[d]\\
\cal{P}_3\ar[r]^-{F_3}\ar[d]^{\simeq}_{(\ref{eqn:SatPervequiv})}&\Vect^{\heartsuit,\rm{f.d.}}\ar@{=}[d]\\
\cal{P}_{4}\ar[r]^-{F_{4}}&\Vect^{\heartsuit,\rm{f.d.}}
}
\end{align}
commutative up to monoidal natural isomorphisms.
Since $F_1,F_3,F_4$ are symmetric and faithful, it follows that the left vertical isomorphisms are symmetric.
Therefore, by Tannakian formalism, there are automorphisms $\varphi_1,\varphi_2\colon \wh{G}\to \wh{G}$ such that the diagram
\[
\xymatrix{
\Sat(\Hck_{G,\Spd C},\Qlb)\ar[r]\ar[d]_{\simeq}^{(\ref{eqn:SatPervequiv})}&\Rep^{\rm{f.d.}}(\wh{G},\Qlb)\ar[d]_{\simeq}^{(-)^{\varphi_1}}\\
\Perv(\Hck^{\Witt}_{G,\Spec k},\Qlb)\ar[r]\ar[d]_{\simeq}^{(\ref{eqn:SatPervequiv})}&\Rep^{\rm{f.d.}}(\wh{G},\Qlb)\ar[d]_{\simeq}^{(-)^{\varphi_2}}\\
\Perv(\Hck^{\eq}_{G,\Spec k},\Qlb)\ar[r]&\Rep^{\rm{f.d.}}(\wh{G},\Qlb)\\
}
\]
is commutative up to monoidal natural isomorphisms, where $(-)^{\varphi_i}$ is a functor twisting a $\wh{G}$-action by $\varphi_i$.
However, by (1), the intersection sheaves correspond under the left vertical arrows, and it is known that the intersection sheaf $IC_{\mu}$ corresponds to the irreducible representation $V_\mu$ under the horizontal arrows.
Therefore, the functors $(-)^{\varphi_1}$ and $(-)^{\varphi_2}$ send an irreducible representation with highest weight $\mu$ to itself.
This implies $\varphi_1,\varphi_2$ are inner automorphisms.
\end{enumerate}
\end{proof}
\subsection{Cohomology ring of Hecke stack}
From Proposition \ref{prop:DULAequiv}, we can compute various equivariant cohomologies:
\begin{cor}\label{cor:H*G}
There exists a canonical graded isomorphism
\[
H^*_{L^+_{\Spd C}G}(\Spd C,\Qlb)
\cong \cal{O}_{\wh{\fk{t}}^*/W},
\]
where $\cal{O}_{\wh{\fk{t}}^*/W}$ is graded so that the coordinates of $\wh{\fk{t}}^*$ are of degree 2.
\end{cor}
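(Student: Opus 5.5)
The plan is to reduce the computation to the equal characteristic Hecke stack via Proposition \ref{prop:DULAequiv}, and then use the classical computation of the equivariant cohomology of a point.

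First, I interpret $H^*_{L^+_{\Spd C}G}(\Spd C,\Qlb)$ as the graded endomorphism algebra of the monoidal unit:
\[
H^*_{L^+_{\Spd C}G}(\Spd C,\Qlb)\cong \bigoplus_i \Hom_{\cal{D}^{\ULA}(\Hck_{G,\Spd C},\Qlb)}(\mathbbm{1},\mathbbm{1}[i]).
\]
Here $\mathbbm{1}=i_{0,\Spd C*}\Qlb$. Since $i_{0,\Spd C}$ is a closed immersion, $i_{0*}$ is fully faithful. So these Hom groups compute $\Hom(\Qlb,\Qlb[i])$ on $[\Spd C/L^+_{\Spd C}G]$, which is the equivariant cohomology. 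One checks that $\cal{D}^{\ULA}([\Spd C/L^+G],\Qlb)$ is a full subcategory of $\cal{D}_{\et}$, so Homs may be computed there.

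Second, the equivalences of Proposition \ref{prop:DULAequiv} are monoidal and built from pullbacks, so they send the unit to the unit. They therefore induce graded ring isomorphisms
\[
H^*_{L^+_{\Spd C}G}(\Spd C,\Qlb)\cong \mathrm{End}^*(\mathbbm{1}_{\Hck^{\eq}_{G^{\spl},\Spec k}})\cong H^*_{L^{+,\eq}G^{\spl}}(\Spec k,\Qlb).
\]
The ring structure is preserved because it is composition of endomorphisms, which any fully faithful functor respects.

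Third, for the scheme-theoretic group, $L^{+,\eq}G^{\spl}\to G^{\spl}_k$ has pro-unipotent kernel. Hence $H^*_{L^{+,\eq}G^{\spl}}(\Spec k)\cong H^*(BG^{\spl}_k,\Qlb)$. This is the standard computation $\Sym(\bb{X}^*(T^{\spl})\otimes\Qlb(-1))^W$, with generators in degree $2$, obtained via restriction to $BT$ and the Borel/Leray–Hirsch argument. Identifying $\bb{X}^*(T)\otimes\Qlb=\bb{X}_*(\wh T)\otimes\Qlb=\wh{\fk t}$, we get $\Sym(\wh{\fk t})^W=\cal O_{\wh{\fk t}^*/W}$, with coordinates in degree $2$. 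Over $\Spec k$ the Tate twist is invisible, since $k$ is algebraically closed. Canonicity comes from $H^2(B\bb{G}_m)=\Qlb(-1)$ with the first Chern class, and from $W$-invariance being independent of the choice of $T$.

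The main obstacle I expect is the first step together with canonicity. One must verify that the Hom spaces in $\cal{D}^{\ULA}$, defined through limits over $\cal{O}_L/\ell^n$, inverting $\ell$, and a colimit over $L$, really compute $\Qlb$-equivariant cohomology of the infinite-type classifying stack $[\Spd C/L^+G]$. I would handle this by working at torsion level, using the truncation $L^+G\to L^+G/L^+G^{(m)}$ and its pro-unipotence (Proposition VI.4.1 of \cite{FS}). I would then use that each $H^i_{L^+G}$ with $\cal{O}_L/\ell^n$-coefficients is finite, so the Mittag-Leffler condition holds and the passage to $\Qlb$ commutes with cohomology.

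A secondary point is making sure that the identification is independent of the split model $G^{\spl}$. This holds because a change of split model acts on $\wh{\fk t}$ through $W$, which acts trivially on the invariants.
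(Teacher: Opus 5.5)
Your proposal is correct and follows essentially the same route as the paper. Both arguments identify $H^*_{L^+_{\Spd C}G}(\Spd C,\Qlb)$ with the cohomology of the unit $i_{0*}\Qlb$, transport it along the monoidal, unit-preserving equivalences of Proposition~\ref{prop:DULAequiv}, discard the pro-unipotent congruence subgroup to reach $H^*_G(\Spec k)$, and quote the classical computation. The only difference is that the paper stops at the Witt-vector stack $\Hck^{\Witt}_{G,\Spec k}$, which avoids the extra comparison of \cite[Theorem 1]{Ban}, whereas you continue to the equal characteristic stack; this is equally valid.
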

\begin{proof}
We can assume that $G=G^{\spl}_F$, and we abusively write $G$ for $G^{\spl}$.
Since $\mathbbm{1}:=i_{0*}\Qlb$ (defined in \S \ref{sssc:ULAsheavesSatcat}) is ULA, it follows from Proposition \ref{prop:DULAequiv} that
\[
H^*_{L^+_{\Spd C}G}(\Spd C)\cong H^*(\Hck_{G,\Spd C},\mathbbm{1})\cong H^*(\Hck^{\Witt}_{G,\Spec k},\mathbbm{1})\cong H^*_{L^{+,\Witt}_{\Spec k}G}(\Spec k).
\]
Since the first congruence group $L^{+,\Witt}_{\Spec k}G^{(1)}$ is (the perfection of) a pro-unipotent group and $L^{+,\Witt}_{\Spec k}G/L^{+,\Witt}_{\Spec k}G^{(1)}$ is the perfection of $G_{\Spec k}$, we have
\[
H^*_{L^{+,\Witt}_{\Spec k}G}(\Spec k)\cong H^*_{G}(\Spec k).
\]
The isomorphism $H^*_{G}(\Spec k)\cong \cal{O}_{\wh{\fk{t}}^*/W}$ is well-known.
\end{proof}
\begin{cor}\label{cor:H*Gm}
There exists a canonical graded isomorphism
\[
H^*_{L^+_{\Spd C}\bb{G}_m}(\Spd C,\Qlb)
\cong \Qlb[x],
\]
where $x$ is an indeterminate of degree 2 (, which is the Chern class of the canonical line bundle).
\end{cor}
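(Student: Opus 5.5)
The plan follows the proof of Corollary \ref{cor:H*G} verbatim, with $G$ replaced by the split torus $\bb{G}_m$. In this case no split model is needed, since $\bb{G}_m$ is already split over $\cal{O}_F$, and the Weyl group is trivial.

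\textbf{Step 1: transfer to the Witt-vector side.} Let $\mathbbm{1}=i_{0*}\Qlb$ on $\Hck_{\bb{G}_m,\Spd C}$; it is ULA. Since equivariant cohomology is computed by global sections on the Hecke stack, we get
\[
H^*_{L^+_{\Spd C}\bb{G}_m}(\Spd C,\Qlb)\cong H^*(\Hck_{\bb{G}_m,\Spd C},\mathbbm{1}).
\]
Apply Proposition \ref{prop:DULAequiv} with $G=\bb{G}_m$. All the functors in that chain are pullbacks, so they send the unit to the unit. Being monoidal equivalences, they also identify the endomorphism algebras of the unit, i.e. the graded algebras $\Hom(\mathbbm{1},\mathbbm{1}[*])$. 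This gives
\[
H^*_{L^+_{\Spd C}\bb{G}_m}(\Spd C,\Qlb)\cong H^*_{L^{+,\Witt}_{\Spec k}\bb{G}_m}(\Spec k,\Qlb).
\]

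\textbf{Step 2: reduce to $B\bb{G}_m$.} The first congruence subgroup $L^{+,\Witt}_{\Spec k}\bb{G}_m^{(1)}$ is the perfection of a pro-unipotent group, and the quotient $L^{+,\Witt}_{\Spec k}\bb{G}_m/L^{+,\Witt}_{\Spec k}\bb{G}_m^{(1)}$ is the perfection of $\bb{G}_{m,k}$. Pullback along a map with pro-unipotent fibres, and along perfection, does not change étale cohomology. Hence
\[
H^*_{L^{+,\Witt}_{\Spec k}\bb{G}_m}(\Spec k)\cong H^*(B\bb{G}_{m,k},\Qlb).
\]

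\textbf{Step 3: compute.} By the standard computation via $\bb{P}^\infty$ (or the projective bundle formula), $H^*(B\bb{G}_{m,k},\Qlb)=\Qlb[x]$, where $x=c_1$ of the tautological line bundle sits in degree $2$. This matches $\cal{O}_{\wh{\fk{t}}^*}$ for $\wh{T}=\bb{G}_m$, consistent with Corollary \ref{cor:H*G}.

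\textbf{Main obstacle: canonicity of $x$.} The isomorphism should send $x$ to the Chern class of the canonical line bundle on $[\Spd C/L^+_{\Spd C}\bb{G}_m]$, the one pulled back from $B\bb{G}_m$ through $L^+\bb{G}_m\to\bb{G}_m$. I would argue this by naturality of first Chern classes under pullback. The comparison functors of Proposition \ref{prop:DULAequiv} are all pullbacks along the maps $\Spd C\leftarrow\Spd\cal{O}_C\rightarrow\Spd k$, together with the comparison to schemes. Each of these commutes with the Kummer map from line bundles, hence preserves $c_1$. Tate twists play no role here, because over $\Spd C$ and $\Spec k$ we fix a trivialization of $\Qlb(1)$ throughout.
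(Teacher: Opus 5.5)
Your proposal is correct and matches the paper's route. The paper gives no separate proof: it treats the statement as the case $G=\bb{G}_m$ of Corollary \ref{cor:H*G}, whose proof is exactly your Steps 1--3. That is, transfer $H^*(\Hck,\mathbbm{1})$ to the Witt-vector side via Proposition \ref{prop:DULAequiv}, then remove the pro-unipotent first congruence subgroup. Your extra normalization of $x$ through a fixed trivialization of $\Qlb(1)$ is consistent with Lemma \ref{lemm:H*GmWequiv}, where $x$ is shown to span $\Qlb(-1)$ once Weil actions are taken into account.
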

\begin{cor}\label{cor:H*T}
There exists a canonical graded isomorphism
\[
H^*_{L^+_{\Spd C}T}(\Spd C,\Qlb)
\cong \Qlb[\wh{\fk{t}}], 
\]
where the elements of $\wh{\fk{t}}$ are of degree 2.
\end{cor}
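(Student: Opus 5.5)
The argument runs parallel to that of Corollary \ref{cor:H*G}; the only input specific to a torus is the computation of equivariant cohomology of a torus over $\Spec k$. First, since the torus $T$ splits over $\ol{F}$, the identification (\ref{eqn:HckGHckGspl}) applied with $G=T$ gives $L^+_{\Spd C}T\cong L^+_{\Spd C}T^{\spl}$. Hence we may replace $T$ by $T^{\spl}$, which we abusively call $T$. Corollary \ref{cor:H*Gm} is the special case $T=\bb{G}_m$ with $\wh{\fk{t}}=\Qlb\cdot x$, so we treat a general split torus.

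Next, apply Proposition \ref{prop:DULAequiv} to the reductive group $T$. The unit $\mathbbm{1}=i_{0*}\Qlb$ is ULA, and all equivalences in (\ref{eqn:DULAequiv}) are monoidal, so they send the unit to the unit. Therefore they induce isomorphisms of graded endomorphism algebras of the unit:
\[
H^*_{L^+_{\Spd C}T}(\Spd C,\Qlb)\cong \rm{End}^*(\mathbbm{1})\cong H^*_{L^{+,\Witt}_{\Spec k}T}(\Spec k,\Qlb).
\]
We compute $\rm{End}^*(\mathbbm{1})$ as $H^*$ of the closed substack $[S/L^+_ST]$, using the adjunction $i_0^*\dashv i_{0*}$ and the fact that $i_0$ is a closed immersion. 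This is compatible with the pullback functors in the proof of Proposition \ref{prop:DULAequiv}, because those functors are pullbacks along maps that respect $i_0$.

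Then pass from the loop group to $T$ itself. The first congruence subgroup $L^{+,\Witt}_{\Spec k}T^{(1)}$ is the perfection of a pro-unipotent group, and the quotient by it is the perfection of $T_{\Spec k}$. Pulling back along the quotient map is therefore an isomorphism on cohomology, since unipotent groups are cohomologically trivial and perfection does not change étale cohomology. This gives
\[
H^*_{L^{+,\Witt}_{\Spec k}T}(\Spec k)\cong H^*(BT_{\Spec k},\Qlb).
\]
Writing $T\cong\bb{G}_m^r$ and using the Künneth formula together with $H^*(B\bb{G}_m)=\Qlb[c_1]$, we get the polynomial algebra $\Sym(\bb{X}^*(T)\otimes\Qlb)$ in degree-2 generators. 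This identification is canonical: a character $\lambda$ maps to the first Chern class of the associated line bundle on $BT$. Finally, $\bb{X}^*(T)=\bb{X}_*(\wh{T})$, so $\bb{X}^*(T)\otimes\Qlb=\wh{\fk{t}}$, which yields $\Qlb[\wh{\fk{t}}]$ with $\wh{\fk{t}}$ in degree 2.

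The main point requiring care is canonicity. We have to check that the isomorphism does not depend on the choice of split model, and that it is compatible with the Chern-class description over $\Spd C$. The split model $T^{\spl}$ is unique up to unique isomorphism compatible with the fixed identification over $\ol{F}$, so there is no real ambiguity here. The Chern class of the line bundle attached to $\lambda$ is defined by pullback along $B\lambda\colon BL^+T\to BL^+\bb{G}_m$. This is compatible with all the pullback functors in Proposition \ref{prop:DULAequiv}, so canonicity reduces to the $\bb{G}_m$ case, Corollary \ref{cor:H*Gm}.
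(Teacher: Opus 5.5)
Your proof is correct and is the argument the paper intends. The paper gives no separate proof of this corollary: it is the proof of Corollary \ref{cor:H*G} specialized to a torus, namely reduce to the split model, transport the unit through Proposition \ref{prop:DULAequiv} to the Witt-vector Hecke stack, and discard the pro-unipotent congruence subgroup, with $H^*(BT)\cong\Sym(\bb{X}^*(T)\otimes\Qlb)=\Sym(\wh{\fk{t}})$ in place of the Chevalley computation. Your Chern-class normalization via pullback along $B\lambda$ also matches the correspondence the paper records right after the statement.
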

The correspondence is as follows:
For a character $\alpha\colon T_{\ol{F}}\to \bb{G}_{m,\ol{F}}$, the image of $x$ under the induced map 
\[
\alpha^*\colon H^*_{L^+_{\Spd C}\bb{G}_{m}}(\Spd C,\Qlb)\to H^*_{L^+_{\Spd C}T}(\Spd C,\Qlb)
\]
corresponds to the image of $\alpha$ under the map $\bb{X}^*(T_{\ol{E}})\cong \bb{X}_*(\wh{T})\overset{\rm{Lie}}{\to}\Hom_{\Qlb}(\Qlb,\wh{\fk{t}}) =\wh{\fk{t}}$.
\begin{defi}
Assume that $G$ is semisimple and simply-connected.
We define the derived global section algebra of $\Hck_{G,\Spd C}$ by
\[
R\Gamma(\Hck_{G,\Spd C},\Qlb):=\underset{\substack{\longleftarrow \\ \mu\in \bb{X}_*^+(T_{\ol{F}})}}{R\lim}R\Gamma(\Hck_{G,\Spd C},i_{\leq \mu *}\Qlb)\in \Qlb\lmod
\]
where 
\[
R\Gamma(\Hck_{G,\Spd C},i_{\leq \mu *}\Qlb):=R\Hom_{\cal{D}_{\et}(\Hck_{G,\Spd C},\Qlb)}(i_{\leq \mu *}\Qlb, i_{\leq \mu *}\Qlb)\in \Qlb\lmod
\]
Moreover, we define the $i$-th cohomology group of $\Hck_{G,\Spd C}$ by 
\[
H^i(\Hck_{G,\Spd C},\Qlb):=H^i(R\Gamma(\Hck_{G,\Spd C},\Qlb)).
\]
We often write $H^i_{L^+_{\Spd C}G}(\Gr_{G,\Spd C},\Qlb)$ for $H^i(\Hck_{G,\Spd C},\Qlb)$.
\end{defi}
In fact, $H^i(\Hck_{G,\Spd C},\Qlb)$ can be written as the limit of $H^i(\Hck_{G,\Spd C,\leq \mu},\Qlb)$.
\begin{lemm}\label{lemm:H*twodef}
Assume that $G$ is semisimple and simply-connected.
The canonical homomorphism
\[
H^i(\Hck_{G,\Spd C},\Qlb)\to \lim_{\substack{\longleftarrow \\ \mu\in \bb{X}_*^+(T_{\ol{F}})}} H^i(\Hck_{G,\Spd C,\leq \mu},\Qlb)
\]
is an equivalence.
\end{lemm}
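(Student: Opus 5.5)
The natural approach is the Milnor exact sequence for the derived limit of a countable (or cofinally countable) inverse system. Since $\bb{X}_*^+(T_{\ol{F}})$ with the dominance order admits a cofinal sequence $\mu_1\leq\mu_2\leq\cdots$ (for example $\mu_n=n\cdot 2\rho^\vee$-type multiples of a regular dominant coweight, using that $G$ is semisimple), the $R\lim$ defining $R\Gamma(\Hck_{G,\Spd C},\Qlb)$ may be computed along this sequence. Writing $R\Gamma_n:=R\Gamma(\Hck_{G,\Spd C},i_{\leq \mu_n *}\Qlb)$, we first identify $H^i(R\Gamma_n)\cong H^i(\Hck_{G,\Spd C,\leq\mu_n},\Qlb)$ by adjunction $i_{\leq\mu_n}^*\dashv i_{\leq\mu_n*}$ and full faithfulness of $i_{\leq\mu_n*}$ for the closed immersion. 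The Milnor sequence then gives
\[
0\to \textstyle\lim^1_n H^{i-1}(\Hck_{G,\Spd C,\leq\mu_n},\Qlb)\to H^i(\Hck_{G,\Spd C},\Qlb)\to \lim_n H^{i}(\Hck_{G,\Spd C,\leq\mu_n},\Qlb)\to 0,
\]
so it suffices to show the $\lim^1$ terms vanish, i.e. that the systems $\{H^{j}(\Hck_{G,\Spd C,\leq\mu_n},\Qlb)\}_n$ satisfy the Mittag-Leffler condition.

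For Mittag-Leffler I would show that each transition map $H^j(\Hck_{G,\Spd C,\leq\mu_{n+1}})\to H^j(\Hck_{G,\Spd C,\leq\mu_n})$ is surjective, and moreover that each $H^j$ is finite-dimensional. Via Proposition \ref{prop:DULAequiv} (the constant sheaves $i_{\leq\mu*}\Qlb$ are ULA, being pullbacks from the scheme side, and the equivalences are pullbacks, hence preserve these objects and their Hom complexes), these cohomologies agree with the equivariant cohomologies $H^j_{L^{+,\eq}G}(\Gr^{\eq}_{G,\Spec k,\leq\mu},\Qlb)$ of equal-characteristic Schubert varieties, with compatible restriction maps. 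There, the stratification by Schubert cells $\Gr_\lambda$, each an affine bundle over a partial flag variety with equivariant cohomology $H^*_{\mathrm{Stab}_\lambda}(\mathrm{pt})$ concentrated in even degrees, gives via the long exact sequences for open/closed decompositions (using purity/parity: all cohomologies are even, so the sequences split into short exact sequences) that $H^*_{L^+G}(\Gr_{\leq\mu_{n+1}})\to H^*_{L^+G}(\Gr_{\leq\mu_n})$ is surjective, and that each graded piece is finite-dimensional (the contributions in a fixed degree $j$ come from cells with $\langle 2\rho,\lambda\rangle\le j$ shifts, finitely many, each contributing a finite-dimensional graded piece of a polynomial ring). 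Either surjectivity or finite-dimensionality alone already yields Mittag-Leffler over $\Qlb$.

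The main obstacle I expect is the parity/surjectivity argument: one needs that the Gysin sequences for adding a cell $\Gr_\lambda$ (with compactly supported equivariant cohomology $H^{*-2\dim}_{\mathrm{Stab}_\lambda}$) split, which I would get from evenness of all terms; alternatively, to avoid this, I would only prove finite-dimensionality of each $H^j(\Hck_{G,\Spd C,\leq\mu},\Qlb)$, which follows from the same cell induction without any splitting and suffices for vanishing of $\lim^1$ of vector spaces. A minor point to check is that the $R\lim$ over the whole poset agrees with that over the cofinal sequence, which holds since $\lim$ and $R\lim$ are invariant under restriction to cofinal (filtered) subsystems.
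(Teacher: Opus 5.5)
Your proposal is correct, but it reaches Mittag--Leffler by a different route from the paper.

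\textbf{The paper's route.} The paper also reduces, via Proposition \ref{prop:DULAequiv}, to equal-characteristic Schubert varieties. It then proves surjectivity of the transition maps using weights:
\begin{itemize}
\item purity of $H^i(\Gr^{\eq}_{G,\Spec k,\leq\mu},\Qlb)$, from \cite{GinPerv} and \cite[Theorem 1.5]{GinLoop};
\item equivariant formality $H^*(\Hck_{\leq\mu})\cong H^*(\Gr_{\leq\mu})\otimes H^*_{L^+G}(\mathrm{pt})$, from \cite[Theorem 14.1 (8)]{GKM};
\item graded Nakayama, which reduces the claim to the non-equivariant surjectivity in \cite[Theorem 1.5]{GinLoop}.
\end{itemize}

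\textbf{Your finite-dimensionality route.} Your fallback, finite-dimensionality of each $H^j(\Hck_{G,\Spd C,\leq\mu},\Qlb)$, is strictly more elementary. It already proves the lemma as stated, because a tower of finite-dimensional vector spaces is automatically Mittag--Leffler. The finite-dimensionality itself follows from your cell induction, or directly from the Leray spectral sequence for $[H\backslash\Gr_{\leq\mu}]\to BH$, where $H$ is a connected finite-type quotient of $L^+G$. What this route does not give is surjectivity of the transition maps. The paper reuses that surjectivity in \S\ref{ssc:WeilonCohHck} to conclude that $\lim_\mu H^i(\Hck_{G,\Spd C,\leq\mu},\Qlb)$ is attained at a single $\mu'$. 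A finite-dimensional limit together with surjective transitions forces stabilization; finite-dimensionality alone does not.

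\textbf{Your parity route.} Your parity argument does recover surjectivity, with evenness replacing weights, and it is a valid alternative. Two small corrections:
\begin{itemize}
\item The $L^+G$-equivariant compactly supported cohomology of a cell $\Gr_\lambda$ is $H^{*-2e}_{P_\lambda}(\mathrm{pt})$, where $e$ is the dimension of the affine fibres of $\Gr_\lambda\to G/P_\lambda$. It is not a shift by $2\dim \Gr_\lambda$; a closed minuscule orbit, where $e=0$, shows the difference. Parity is unaffected.
\item The cofinal sequence exists because $\bb{X}_*(T_{\ol{F}})$ is the coroot lattice, i.e.\ because $G$ is simply-connected. Semisimplicity alone is not enough: dominant coweights in different cosets of the coroot lattice are incomparable.
\end{itemize}
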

\begin{proof}
Since $G$ is semisimple and simply-connected, the index set $\bb{X}_*^+(T_{\ol{F}})$ is a countable directed set.
By Mittag--Leffler, it suffices to show that the homomorphism
\[
H^i(\Hck_{G,\Spd C,\leq \mu'},\Qlb)\to H^i(\Hck_{G,\Spd C,\leq \mu},\Qlb)
\]
is surjective for any $\mu'\geq \mu\in \bb{X}_*^+(T_{\ol{F}})$.
By Proposition \ref{prop:DULAequiv}, it reduces to the surjectivity of 
\begin{align}\label{eqn:H*Hckmu'tomu}
H^i(\Hck^{\eq}_{G^{\spl},\Spec k,\leq \mu'},\Qlb)\to H^i(\Hck^{\eq}_{G^{\spl},\Spec k,\leq \mu},\Qlb).
\end{align}
We know the explicit description of $H^i(\Gr^{\eq}_{G^{\spl},\Spec k},\Qlb)$ by \cite{GinPerv}, and one can show that $H^i(\Gr^{\eq}_{G^{\spl},\Spec k},\Qlb)$ is pure of Frobenius weight $i$.
By \cite[Theorem 1.5]{GinLoop}, it follows that $H^i(\Gr^{\eq}_{G^{\spl},\Spec k,\leq \mu},\Qlb)$ is also pure of weight $i$ for any $\mu \in \bb{X}_*^+(T_{\ol{F}})$.
By \cite[Theorem 14.1 (8)]{GKM}, it follows that there is a non-canonical isomorphism
\[
H^*(\Hck^{\eq}_{G^{\spl},\Spec k,\leq \mu},\Qlb)\cong H^*(\Gr^{\eq}_{G^{\spl},\Spec k,\leq \mu},\Qlb)\otimes H^*_{L^{+,\eq}_{\Spec k}G}(\Spec k,\Qlb).
\]
Therefore, by the graded Nakayama lemma, the surjectivity of (\ref{eqn:H*Hckmu'tomu}) reduces to the surjectivity of 
\[
H^i(\Gr^{\eq}_{G^{\spl},\Spec k,\leq \mu'},\Qlb)\to H^i(\Gr^{\eq}_{G^{\spl},\Spec k,\leq \mu},\Qlb),
\]
which follows from \cite[Theorem 1.5]{GinLoop}.
\end{proof}
\begin{prop}\label{prop:H*Hck}
Assume that $G$ is semisimple and simply-connected.
Then there is a canonical graded isomorphism
\begin{align}
H^*_{L^+_{\Spd C}G}(\Gr_{G,\Spd C},\Qlb)
\cong \cal{O}_{\bb{T}(\wh{\fk{t}}^*/W)},\label{eqn:H*Hck}
\end{align}
where  $\bb{T}(-)$ denotes the tangent bundle, $W$ is the Weyl group of $G$, and the grading of $\cal{O}_{\bb{T}(\wh{\fk{t}}^*/W)}$ is induced from that of $\cal{O}_{\bb{T}(\wh{\fk{t}}^*)}(\hookleftarrow \cal{O}_{\bb{T}(\wh{\fk{t}}^*/W)})$, which is graded so that the coordinates of $\wh{\fk{t}}^*$ in the base direction have degree 2 and those in the fiber direction have degree 0.
\end{prop}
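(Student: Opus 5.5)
We reduce to the equal characteristic affine Grassmannian and use the known computation there. By Lemma \ref{lemm:H*twodef}, $H^i(\Hck_{G,\Spd C},\Qlb)$ is the inverse limit of $H^i(\Hck_{G,\Spd C,\leq\mu},\Qlb)$. Each term is $R\Hom$ between ULA objects, namely $\mathrm{End}(i_{\leq\mu*}\Qlb)$. The chain of monoidal equivalences in Proposition \ref{prop:DULAequiv} consists of pullback functors. These preserve the constant sheaf on Schubert varieties and the stratification, so they identify $i_{\leq\mu*}\Qlb$ along the chain. This gives isomorphisms
\[
H^*(\Hck_{G,\Spd C,\leq \mu},\Qlb)\cong H^*(\Hck^{\eq}_{G^{\spl},\Spec k,\leq \mu},\Qlb),
\]
compatible with the restriction maps for $\mu\leq\mu'$ and with the module structures over $H^*_{L^+G}(*)\cong\cal{O}_{\wh{\fk{t}}^*/W}$ from Corollary \ref{cor:H*G}. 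Passing to the limit, the left-hand side of (\ref{eqn:H*Hck}) is identified with $\varprojlim_\mu H^*_{L^{+,\eq}G}(\Gr^{\eq}_{G^{\spl},\leq\mu},\Qlb)$, taken degreewise.

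Next we compute the equal characteristic side. For $G$ semisimple and simply connected, the equivariant cohomology of the affine Grassmannian in equal characteristic is known: over $\bb{C}$ by Bezrukavnikov--Finkelberg \cite{BF} (via Ginzburg), and over $\ol{\bb{F}}_q$ with $\Qlb$ coefficients by the same argument or by a spreading-out comparison. The answer is
\[
H^*_{L^+G}(\Gr_G)\cong \cal{O}_{\bb{T}(\wh{\fk{t}}^*/W)}
\]
with the stated grading. The argument runs as follows.
\begin{enumerate}
\item Localize to the $T$-fixed points. Purity and equivariant formality, as in the proof of Lemma \ref{lemm:H*twodef}, make $H^*_{L^+T}(\Gr)$ a free $\Qlb[\wh{\fk{t}}]$-module.
\item The restriction map $H^*_{L^+T}(\Gr)\to \prod_{\lambda\in\bb{X}_*(T)}\Qlb[\wh{\fk{t}}]$ is injective, and its image is described by GKM conditions along the $T$-stable curves.
\item This image is identified with the ring of functions on the graph of the group scheme $\wh{\fk{t}}\times \bb{X}_*(T)$, that is, with $\cal{O}_{\bb{T}\wh{\fk{t}}^*}$ completed. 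Then take $W$-invariants.
\end{enumerate}
The degree convention follows from Corollary \ref{cor:H*T}: base coordinates of $\wh{\fk{t}}^*$ have degree $2$. The fiber coordinates come from $H^0$ of the fixed points $\bb{X}_*(T)$, hence have degree $0$.

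To keep the isomorphism canonical, we fix the identification $H^*_{L^+T}(*)\cong\Qlb[\wh{\fk{t}}]$ of Corollary \ref{cor:H*T} and use the same identification on the equal characteristic side. The comparison equivalences are compatible with restriction to $T$-equivariant cohomology and to the fixed points $[\lambda]$, since all functors involved are pullbacks. Hence the generators described via Chern classes and the fixed points $[\lambda]$ match on both sides.

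The main obstacle is the equal characteristic computation over $\ol{\bb{F}}_q$ together with the passage to the limit over $\mu$. One has to show that the inverse limit of the finite level answers is exactly the (graded, hence degreewise finite) ring $\cal{O}_{\bb{T}(\wh{\fk{t}}^*/W)}$, and that the GKM description applies to the ind-scheme. To handle this, work degreewise: in each degree the limit stabilizes by Mittag--Leffler and finite dimensionality. Then compare the Hilbert series of both sides, using the freeness result of \cite[Theorem 14.1]{GKM} and Ginzburg's description $H^*(\Gr)\cong \cal{O}(\wh{\fk{t}}$-centralizer of a principal nilpotent$)$.
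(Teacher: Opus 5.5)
Your top-level argument is the paper's: transport the cohomology along the pullback equivalences of Proposition \ref{prop:DULAequiv} to $\Hck^{\eq}_{G^{\spl},\Spec k}$ and quote \cite[Theorem 1]{BF}. One small omission: applying Proposition \ref{prop:DULAequiv} requires $i_{\leq\mu*}\Qlb$ to be ULA, and you only assert this. The paper checks it by reducing to torsion coefficients, where it follows from the criterion of \cite[{\sc Proposition} VI.6.5]{FS}.

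The sketch you add of the equal characteristic computation is wrong, however, and could not serve as the argument ``over $\ol{\bb{F}}_q$ by the same argument''.
\begin{itemize}
\item In step 3, the image of $H^*_{L^+T}(\Gr_G)$ is neither $\cal{O}_{\bb{T}\wh{\fk{t}}^*}$ nor a completion of it. Nor do the $W$-invariants of $\cal{O}_{\bb{T}\wh{\fk{t}}^*}$ give $\cal{O}_{\bb{T}(\wh{\fk{t}}^*/W)}$. With the stated grading, the degree-$0$ parts of these rings are $\Sym(\wh{\fk{t}})$ and $\Sym(\wh{\fk{t}})^W$, both infinite-dimensional, whereas $H^0=\Qlb$ because $\Gr_G$ is connected.
\item Already for $G=SL_2$ (base coordinate $x$, fiber coordinate $dx$) one has $\cal{O}_{\bb{T}(\wh{\fk{t}}^*/W)}=\Qlb[x^2,x\,dx]\subsetneq\Qlb[x^2,x\,dx,(dx)^2]=(\cal{O}_{\bb{T}\wh{\fk{t}}^*})^W$.
\item The correct $T$-equivariant ring is $\cal{O}_{\bb{T}(\wh{\fk{t}}^*/W)}\otimes_{\cal{O}_{\wh{\fk{t}}^*/W}}\cal{O}_{\wh{\fk{t}}^*}$, a proper subring of $\cal{O}_{\bb{T}\wh{\fk{t}}^*}$. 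Identifying the image in $\prod_\lambda\Qlb[\wh{\fk{t}}]$ with this ring is exactly the nontrivial content of \cite{BF}. They do it by adding loop rotation, matching the image with functions on the deformation to the normal cone $N_{(\wh{\fk{t}}^*/W)^2}\Delta$ via the graphs $\Gamma_\lambda$, and specializing to $\hbar=0$, as recalled in the proof of Proposition \ref{prop:H*Hckemb}.
\item Step 2 also fails as stated. Without loop rotation, the tangent weights at a fixed point are roots occurring with multiplicity (for $SL_2$ they are all $\pm\alpha$). So the $T$-action is not GKM, and there is no finite set of $T$-curves along which to impose conditions.
\end{itemize}
None of this is needed. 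If you drop the sketch and rely on \cite[Theorem 1]{BF}, as the paper does, your proof coincides with the paper's.
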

\begin{proof}
First, we claim that $i_{\leq \mu *}\Lambda\in \cal{D}^{\ULA}(\Hck_{G,\Spd C},\Lambda)$.
We can assume that $\Lambda$ is a torsion ring, and in this case the claim follows since $i_{\leq \mu *}\Lambda$ satisfies the condition of \cite[{\sc Proposition} VI.6.5]{FS}.

Therefore, by Proposition \ref{prop:DULAequiv} and by the definition of $H^*_{L^+_{\Spd C}G}(\Gr_{G,\Spd C},\Qlb)$, it follows that
\[
H^*_{L^+_{\Spd C}G}(\Gr_{G,\Spd C},\Qlb)\cong H^*_{L^+_{\Spec k}G}(\Gr^{\eq}_{G,\Spec k},\Qlb).
\]
The claim follows since it holds that
\[
H^*_{L^{+,\eq}_{\Spec k}G}(\Gr^{\eq}_{G,\Spec k},\Qlb)\cong \cal{O}_{\bb{T}(\wh{\fk{t}}^*/W)}
\]
by \cite[Theorem 1]{BF}.
\end{proof}
In (\ref{eqn:H*Hck}), we do not consider the Weil action on both sides.
This will be discussed in the next section.
The rest of this subsection is devoted to some preparation.
The group $L^+_{\Spd C}T(\subset L^+_{\Spd C}G)$ acts on $\Gr_{G,\Spd C}$ and the image of $[\lambda]$ (defined in (\ref{eqn:[mu]})) is fixed under this action.
(Actually, all fixed points are of this form.)
Therefore we have the isomorphism
\begin{align}\label{eqn:prodisom}
\lim_{\substack{\longleftarrow \\ \mu\in \bb{X}^+_*(T_{\ol{F}})}}H^*_{L^+_{\Spd C}T}\left(\coprod_{\substack{\lambda\in \bb{X}_*(T_{\ol{F}})\\ [\lambda]\in \Gr_{G,\leq \mu}}}[\lambda],\Qlb\right)\
&\cong \prod_{\lambda\in \bb{X}_*(T_{\ol{F}})}H^*_{L^+_{\Spd C}T}([\lambda],\Qlb)\notag\\
&\cong \prod_{\lambda\in \bb{X}_*(T_{\ol{F}})} \Qlb[\wh{\fk{t}}].
\end{align}
Therefore, when $G$ is semisimple and simply-connected, there is a canonical homomorphism
\begin{align}
\cal{O}_{\bb{T}(\wh{\fk{t}}^*/W)}&\overset{(\ref{eqn:H*Hck})}{\cong} H^*_{L^+_{\Spd C}G}(\Gr_{G,\Spd C},\Qlb)\notag\\
&\cong \lim_{\substack{\longleftarrow \\ \mu\in \bb{X}^+_*(T_{\ol{F}})}}H^*_{L^+_{\Spd C}G}(\Gr_{G,\Spd C,\leq \mu},\Qlb)\notag\\
&\to \lim_{\substack{\longleftarrow \\ \mu\in \bb{X}^+_*(T_{\ol{F}})}}H^*_{L^+_{\Spd C}T}\left(\coprod_{\substack{\lambda\in \bb{X}_*(T_{\ol{F}})\\ [\lambda]\in \Gr_{G,\leq \mu}}}[\lambda],\Qlb\right)\notag \\
&\overset{(\ref{eqn:prodisom})}{\cong} \prod_{\lambda\in \bb{X}_*(T_{\ol{F}})} \Qlb[\wh{\fk{t}}].\label{eqn:H*Hckemb}
\end{align}
Then we have the following:
\begin{prop}\label{prop:H*Hckemb}
Assume that $G$ is semisimple and simply-connected.
The homomorphism (\ref{eqn:H*Hckemb}) agrees with the following composition:
\begin{align}\label{eqn:OTtWemb}
&\cal{O}_{\bb{T}(\wh{\fk{t}}^*/W)}\to\cal{O}_{\bb{T}(\wh{\fk{t}}^*)}
\cong \cal{O}_{\wh{\fk{t}}^*}[\wh{\fk{t}}]\notag\\
&\to \prod_{\lambda\in \bb{X}_*(T_{\ol{F}})} \cal{O}_{\wh{\fk{t}}^*}\cong \prod_{\lambda\in \bb{X}_*(T_{\ol{F}})} \Qlb[\wh{\fk{t}}].
\end{align}
Here the isomorphism in the first line is the one over an algebra $\cal{O}_{\wh{\fk{t}}^*}$ with respect to the $\cal{O}_{\wh{\fk{t}}^*}$-algebra structure on $\cal{O}_{\bb{T}(\wh{\fk{t}}^*)}$ coming from the natural projection $\bb{T}(\wh{\fk{t}}^*)\to \wh{\fk{t}}^*$ to the base space, and the arrow in the second line is given by
\[
\cal{O}_{\wh{\fk{t}}^*}[\wh{\fk{t}}]\ni a\cdot t_1\ldots t_n\mapsto (a \langle \lambda,t_1\rangle \cdots \langle \lambda,t_n\rangle)_{\lambda}\in \prod_{\lambda\in \bb{X}_*(T_{\ol{F}})} \cal{O}_{\wh{\fk{t}}^*}
\]
for $a\in \cal{O}_{\wh{\fk{t}}^*}$ and $t_1,\ldots,t_n\in \wh{\fk{t}}$.
Here the image of $\lambda$ under $\bb{X}_*(T_{\ol{F}})\cong \bb{X}^*(\wh{T})\overset{\rm{Lie}}{\to} \Hom_{\Qlb}(\wh{\fk{t}},\Qlb)=\wh{\fk{t}}^*$ is also written by $\lambda$.
In particular, the homomorphism (\ref{eqn:H*Hckemb}) is injective.
\end{prop}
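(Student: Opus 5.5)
The plan is to transport the whole diagram to the equal characteristic Hecke stack, where the statement is part of Bezrukavnikov--Finkelberg's computation in \cite{BF}. All maps in (\ref{eqn:H*Hckemb}) are built from restriction along $L^+T\subset L^+G$ and pullback along the fixed points $[\lambda]$. By Proposition \ref{prop:DULAequiv}, the comparison functors are pullbacks, and they also exist for $T$ in place of $G$ (the torus case gives Corollaries \ref{cor:H*T}, \ref{cor:H*Gm}). So the first step is to check that these comparison equivalences commute with the forgetful map from $L^+G$- to $L^+T$-equivariant cohomology and with restriction to each $[\lambda]$.

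\textbf{Reduction.} I will work at a fixed finite level $\mu$, where everything is a map of finitely many cohomology groups of ULA objects, and then pass to the limit using Lemma \ref{lemm:H*twodef}. The identifications $H^*_{L^+T}([\lambda])\cong \Qlb[\wh{\fk{t}}]$ are defined via Chern classes of characters through Corollary \ref{cor:H*Gm}. I need to check that these are compatible along the chain $\Spd C\leftarrow \Spd\cal{O}_C\to\Spd k$, then to $\Spec k$ (Witt), then to the equal characteristic setting. For this I note that the point $[\lambda]$ is defined in each setting via the uniformizer $\xi$, and the comparison maps send $[\lambda]$ to $[\lambda]$, since the Fargues--Fontaine surface degenerates the local parameter to a local parameter. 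The Chern class $x$ of the canonical line bundle is likewise preserved under pullback. After this, (\ref{eqn:H*Hckemb}) for $\Hck_{G,\Spd C}$ is identified with the analogous localization map for $\Hck^{\eq}_{G^{\spl},\Spec k}$.

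\textbf{Equal characteristic computation.} In the equal characteristic case, $\cal{O}_{\bb{T}(\wh{\fk{t}}^*/W)}$ is identified with $H^*_{L^+G}(\Gr)$ in \cite{BF}. That identification is made precisely through localization to torus fixed points: the equivariant cohomology embeds into $\prod_\lambda H^*_T(\mathrm{pt})$ by GKM and equivariant formality, which we have from purity as in Lemma \ref{lemm:H*twodef}. The image is described by the formula $a\,t_1\cdots t_n\mapsto (a\langle\lambda,t_1\rangle\cdots\langle\lambda,t_n\rangle)_\lambda$. Indeed, the tangent-direction coordinates $t\in\wh{\fk{t}}$ arise from $H^2$ classes, namely the first Chern classes of determinant-type line bundles attached to characters, and their restriction to $[\lambda]$ is the scalar $\langle\lambda,t\rangle$ plus the base coordinate part. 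To match the precise normalization, I will make this computation directly for $G=T$ (and for $\bb{G}_m$, where $\Gr$ is discrete). There the fixed point $[\lambda]$ restricts a line bundle of character $\alpha$ to the weight $\langle\lambda,\alpha\rangle$. I then deduce the $G$ case by functoriality in $T\to G$ together with the injectivity of restriction to fixed points. Injectivity of (\ref{eqn:H*Hckemb}) then follows because $\cal{O}_{\bb{T}(\wh{\fk{t}}^*/W)}\to\cal{O}_{\bb{T}(\wh{\fk{t}}^*)}$ is injective. The evaluation map $\cal{O}_{\wh{\fk{t}}^*}[\wh{\fk{t}}]\to\prod_\lambda\cal{O}_{\wh{\fk{t}}^*}$ is also injective, since a polynomial on $\wh{\fk{t}}^*$ vanishing on the Zariski-dense lattice $\bb{X}_*$ is zero.

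\textbf{Main obstacle.} The main obstacle is making sure that the canonical isomorphism (\ref{eqn:H*Hck}) imported from \cite{BF} has exactly the normalization of the localization formula, not merely one up to an automorphism of $\bb{T}(\wh{\fk{t}}^*/W)$, together with the compatibility of the Chern class normalizations through the Witt and Fargues--Fontaine surface comparisons. My first approach is to define (\ref{eqn:H*Hck}) only up to this check and pin it down via the torus computation. If the convention in \cite{BF} differs, I would redefine (\ref{eqn:H*Hck}) as the unique map making the localization formula hold. Its image in $\prod_\lambda$ is determined by the embedding, so the statement becomes a characterization of the canonical isomorphism.
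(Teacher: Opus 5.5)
Your reduction to the equal characteristic Hecke stack matches the paper's, and your injectivity argument at the end is fine. The gap is in the equal characteristic step: you never compute what the specific isomorphism (\ref{eqn:H*Hck}) does on generators, and the mechanism you propose for it is wrong.

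\textbf{How the isomorphism is actually defined.} The isomorphism imported from \cite[Theorem 1]{BF} is the specialization at $\hbar=0$ of the loop-rotation equivariant isomorphism $H^*_{L^+G\rtimes\bb{G}_m}(\Gr_G,\Qlb)\cong\cal{O}_{N_{(\wh{\fk{t}}^*/W)^2}\Delta}$. That ring is generated by the left and right classes $\pi_1^*f,\pi_2^*f$ for $f\in\cal{O}_{\wh{\fk{t}}^*/W}$. The fiber coordinates of $\bb{T}(\wh{\fk{t}}^*/W)$ are the reductions mod $\hbar$ of the divided differences $(\pi_2^*f-\pi_1^*f)/\hbar$.

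\textbf{Why your mechanism fails.} These fiber coordinates are not first Chern classes of line bundles attached to characters.
\begin{itemize}
\item The fiber coordinate $df_i$ of a basic invariant of degree $d_i$ has cohomological degree $2(d_i-1)$. So for simple $G$ only one of them, the one from the quadratic invariant (the determinant line bundle), lies in $H^2$.
\item An individual $t\in\wh{\fk{t}}$ is not $W$-invariant and is not a cohomology class at all. A degree-$0$ class on the connected $\Gr_G$ restricts to the same constant at every $[\lambda]$, not to $\langle\lambda,t\rangle$.
\item Your computation for $G=T$ or $\bb{G}_m$ carries no information, since there $\Gr$ is discrete and localization is the identity.
\item Functoriality along $T\to G$ cannot tell you which classes of $H^*_{L^+G}(\Gr_G)$ are BF's generators.
\end{itemize}

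\textbf{The missing ingredient} is the loop-rotation computation of \cite[Lemma 1]{BF}, which the paper uses. The composite $H^*_{G\times\bb{G}_m\times G}(\mathrm{pt})\to H^*_{L^+T\rtimes\bb{G}_m}([\lambda])$ is given by the graph $\Gamma_\lambda=\{x_2=x_1+a\lambda\}$. In other words, the left and right classes of $f$ restrict at $[\lambda]$ to $f(x)$ and $f(x+\hbar\lambda)$. Hence the fiber coordinate $df$ restricts, mod $\hbar$, to the directional derivative $\partial_\lambda f(x)$. This is exactly what the formula $a\,t_1\cdots t_n\mapsto(a\langle\lambda,t_1\rangle\cdots\langle\lambda,t_n\rangle)_\lambda$ gives on the $W$-invariant element $df=\sum_j(\partial_jf)\,t_j$. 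Equivalently, $\Gamma_\lambda\to N_{(\wh{\fk{t}}^*/W)^2}\Delta$ becomes $x\mapsto(\lambda,x)$ at $\hbar=0$.

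\textbf{Your fallback does not rescue the argument.} Redefining (\ref{eqn:H*Hck}) so that the formula holds changes the statement. It also still requires proving that the image of $H^*_{L^+G}(\Gr_G)$ in $\prod_\lambda\Qlb[\wh{\fk{t}}]$ equals the image of $\cal{O}_{\bb{T}(\wh{\fk{t}}^*/W)}$ under the explicit map. That is a substantive result, essentially \cite[Theorem 1]{BF} itself. GKM and equivariant formality give injectivity of localization and describe the image by conditions along one-dimensional orbits, but you would still have to match those conditions with $\cal{O}_{\bb{T}(\wh{\fk{t}}^*/W)}$.
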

\begin{proof}
Since all the homomorphisms involved are induced by pullback along a map of stacks, the homomorphism 
\[
H^*_{L^+_{\Spd C}G}(\Gr^{\eq}_{G,\Spd C},\Qlb)\to \prod_{\lambda\in \bb{X}_*(T_{\ol{F}})}H^*_{L^+_{\Spd C}T}([\lambda],\Qlb)
\]
and 
\begin{align}\label{eqn:H*toprodeq}
H^*_{L^{+,\eq}_{\Spec k}G^{\spl}}(\Gr_{G^{\spl},\Spec k},\Qlb)\to \prod_{\lambda\in \bb{X}_*(T^{\spl})}H^*_{L^+_{\Spec k}T^{\spl}}([\lambda],\Qlb)
\end{align}
are compatible under the isomorphisms in the proofs of Proposition \ref{prop:H*Hck} and Corollary \ref{cor:H*T}.
Therefore, the theorem reduces to a similar statement for (\ref{eqn:H*toprodeq}).
One can show this statement by looking carefully at the proof of \cite[Theorem 1]{BF}.
The sketch is as follows:
By abuse of notation, write $G$ for $G^{\spl}$, $T$ for $T^{\spl}$, and so on.
We also omit the subscript $\Spec k$ and the superscript $\eq$.
In \cite[Theorem 1]{BF}, the isomorphism $H^*_{L^+G}(\Gr_G,\Qlb)\cong \cal{O}_{\bb{T}(\wh{\fk{t}}^*/W)}$ is obtained by taking $\hbar=0$ in an isomorphism 
\begin{align}\label{eqn:H*GmHck}
H^*_{L^+G\rtimes \bb{G}_m}(\Gr_G,\Qlb)\cong \cal{O}_{N_{(\wh{\fk{t}}^*/W)^2}\Delta}
\end{align}
over $H^*_{\bb{G}_m}(\Spec k,\Qlb)\cong \bb{A}^1=\Spec \Qlb[\hbar]$.
Here $\bb{G}_m$ acts on $L^+G$ and $LG$ by the loop rotation, and for a closed subscheme $Z$ of a scheme $X$, the space $N_XZ$ is Fulton's deformation to the normal cone.
The isomorphism (\ref{eqn:H*GmHck}) is induced by 
\begin{align*}
&\cal{O}_{(\wh{\fk{t}}^*/W)^2\times \bb{A}^1}\cong H^*_{G\times \bb{G}_m\times G}(\Spec k,\Qlb) \\
&\cong H^*_{L^+G\rtimes \bb{G}_m\ltimes L^+G}(\Spec k,\Qlb)\to H^*_{L^+G\rtimes \bb{G}_m}(\Gr_G,\Qlb),
\end{align*}
where the right homomorphism is the pullback via $\Gr_G\to [\Spec k/L^+G]$.
We consider the composition of this homomorphism with the pullback homomorphism $H^*_{L^+G\rtimes \bb{G}_m}(\Gr_G,\Qlb)\to H^*_{L^+T\rtimes \bb{G}_m}([\lambda],\Qlb)$.
By the argument of the proof of \cite[Lemma 1]{BF}, the pullback
\[
H^*_{G\times \bb{G}_m\times G}(\Spec k,\Qlb)\to H^*_{L^+T\rtimes \bb{G}_m}([\lambda],\Qlb)
\]
is isomorphic to the homomorphism $f_{\lambda}\colon \cal{O}_{(\wh{\fk{t}}^*/W)^2\times \bb{A}^1}\to \cal{O}_{\Gamma_{\lambda}}$, where 
\[
\Gamma_{\lambda}:=\{(x_1,x_2,a)\in (\wh{\fk{t}}^*)^2\times \bb{A}^1\mid x_2=x_1+a\cdot \lambda\}
\]
and $f_{\lambda}$ corresponds to the map $\Gamma_{\lambda} \subset (\wh{\fk{t}}^*)^2\times \bb{A}^1\to (\wh{\fk{t}}^*/W)^2\times \bb{A}^1$.
This induces a map $\Gamma_{\lambda}\to N_{(\wh{\fk{t}}^*/W)^2}\Delta$.
Restricting to the locus $\hbar=0$, it becomes a map
\begin{align*}
\wh{\fk{t}}^*&\to \wh{\fk{t}}^*\times \wh{\fk{t}}^*\cong \bb{T}(\wh{\fk{t}}^*)\to \bb{T}(\wh{\fk{t}}^*/W), \\
x&\mapsto (\lambda,x)
\end{align*}
over $\wh{\fk{t}}^*/W$.
This implies the claim.
\end{proof}
By the same argument, we can show the following lemma:
\begin{lemm}\label{lemm:H*GtoH*T}
The composition 
\begin{align*}
\cal{O}_{\wh{\fk{t}}^*/W}\cong H^*_{L^+_{\Spd C}G}(\Spd C)\to H^*_{L^+_{\Spd C}T}(\Spd C)\cong \cal{O}_{\wh{\fk{t}}^*}
\end{align*}
 agrees with the homomorphism induced by the natural map $\wh{\fk{t}}^*\to \wh{\fk{t}}^*/W$.
In particular, it is injective.
\end{lemm}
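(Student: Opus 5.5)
The plan is to reduce, as in the proof of Proposition \ref{prop:H*Hckemb}, to the equal characteristic Hecke stack and then compute there. First, every map in the composition
\[
\cal{O}_{\wh{\fk{t}}^*/W}\cong H^*_{L^+_{\Spd C}G}(\Spd C)\to H^*_{L^+_{\Spd C}T}(\Spd C)\cong \cal{O}_{\wh{\fk{t}}^*}
\]
is pullback along a map of stacks, namely $[\Spd C/L^+_{\Spd C}T]\to [\Spd C/L^+_{\Spd C}G]$.

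The identifications of Corollaries \ref{cor:H*G} and \ref{cor:H*T} were obtained from the chain of equivalences in Proposition \ref{prop:DULAequiv}. All functors in that chain are pullbacks along maps of spaces: the isomorphism (\ref{eqn:HckGHckGspl}), the pullbacks along $\Spd k\to\Spd\cal{O}_C\leftarrow\Spd C$, the comparison $c^*$ to schemes, and the pullbacks on the spaces (\ref{eqn:O1O2}) used for \cite[Theorem 1]{Ban}. These constructions apply equally to $T^{\spl}\subset G^{\spl}$, and they are functorial in the group homomorphism $T\to G$. The only point to check is that the relevant torus-equivariant objects are ULA, so that they lie in the categories being compared. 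For instance, $H^*_{L^+T}$ of a point can be computed as $\rm{End}^*$ of the unit on $[S/L^+_ST]$, which is handled in the same way for the torus.

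Therefore the homomorphism in question is identified with the restriction map
\[
H^*_{L^{+,\eq}_{\Spec k}G^{\spl}}(\Spec k,\Qlb)\to H^*_{L^{+,\eq}_{\Spec k}T^{\spl}}(\Spec k,\Qlb).
\]
Since the first congruence subgroups are pro-unipotent, this map is in turn identified with the classical restriction $H^*_{G}(\Spec k)\to H^*_T(\Spec k)$. Classically, $H^*_T(\mathrm{pt})=\Sym(\bb{X}^*(T)\otimes\Qlb)$ with generators in degree $2$, and we identify $\bb{X}^*(T)\otimes\Qlb=\wh{\fk{t}}$, so that $H^*_T(\mathrm{pt})\cong\cal{O}_{\wh{\fk{t}}^*}$. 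Moreover, $H^*_G(\mathrm{pt})\to H^*_T(\mathrm{pt})$ is injective with image the $W$-invariants. The isomorphism $H^*_G\cong\cal{O}_{\wh{\fk{t}}^*/W}$ of Corollary \ref{cor:H*G} is, by definition, this invariant description, and it is compatible with the torus identification as explained after Corollary \ref{cor:H*T}. This gives exactly the map induced by $\wh{\fk{t}}^*\to\wh{\fk{t}}^*/W$. Injectivity then follows because $\wh{\fk{t}}^*\to\wh{\fk{t}}^*/W$ is surjective, hence dominant.

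The main obstacle is the compatibility step. One must verify that the identification of Corollary \ref{cor:H*T}, made via characters $\alpha\colon T\to\bb{G}_m$ and the Chern class $x$, matches the classical identification on the equal characteristic side. The plan is to check this on first Chern classes, using the functoriality in $\alpha$ of both sides. Because both algebras are generated in degree $2$ by these classes, agreement on them suffices.
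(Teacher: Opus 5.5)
Your proposal is correct and follows essentially the paper's route. The paper proves this lemma ``by the same argument'' as Proposition \ref{prop:H*Hckemb}: every map is a pullback along a map of stacks, so it is compatible with the identifications of Corollaries \ref{cor:H*G} and \ref{cor:H*T}, and this reduces the claim to the classical restriction $H^*_G(\mathrm{pt})\to H^*_T(\mathrm{pt})$ onto the $W$-invariants. Your check on the classes $\alpha^*(x)$ is exactly how the paper fixes the torus identification after Corollary \ref{cor:H*T}.
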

\begin{lemm}\label{lemm:H*GtoH*Hck}
Assume that $G$ is semisimple and simply-connected.
Let 
\[
\pi_1,\pi_2\colon \Hck_{G,\Spd C}\to [\Spd C/L^+_{\Spd C}G]
\]
be the projections corresponding to the quotient by the right action and the left action, respectively.
For $i=1,2$, the composition 
\begin{align*}
\cal{O}_{\wh{\fk{t}}^*/W}&\cong H^*_{L^+_{\Spd C}G}(\Spd C,\Qlb)
\overset{\pi_i^*}{\to} H^*(\Hck_{G,\Spd C},\Qlb)\cong \cal{O}_{\bb{T}(\wh{\fk{t}}^*/W)}
\end{align*}
agrees with the homomorphism induced by the natural projection $\bb{T}(\wh{\fk{t}}^*/W)\to \wh{\fk{t}}^*/W$.
In particular, it is injective.
\end{lemm}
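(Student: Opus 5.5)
We want to show that, for $i=1,2$, the composition $\cal{O}_{\wh{\fk{t}}^*/W}\to H^*(\Hck_{G,\Spd C},\Qlb)\cong \cal{O}_{\bb{T}(\wh{\fk{t}}^*/W)}$ is induced by the bundle projection. The plan is to detect this after composing with the injective map (\ref{eqn:H*Hckemb}) of Proposition \ref{prop:H*Hckemb}, so that everything is computed on the torus-fixed points $[\lambda]$.

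Concretely, consider the commutative square of pullbacks along the inclusion of fixed points
\[
[\lambda]/L^+_{\Spd C}T\to \Hck_{G,\Spd C}\xrightarrow{\pi_i}[\Spd C/L^+_{\Spd C}G].
\]
The composite map factors as
\[
[\lambda]/L^+_{\Spd C}T\to [\Spd C/L^+_{\Spd C}T]\to [\Spd C/L^+_{\Spd C}G].
\]
For $i=1$ the first arrow is the structure map. For $i=2$ it is the structure map twisted by conjugation by $\lambda(\xi)\in L_{\Spd C}T$. Since $T$ is commutative, this conjugation is trivial on $L^+_{\Spd C}T$, so in both cases the map is canonically the structure map.

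Hence the $\lambda$-component of the composite
\[
\cal{O}_{\wh{\fk{t}}^*/W}\to \prod_{\lambda}\Qlb[\wh{\fk{t}}]
\]
equals $H^*_{L^+G}(\Spd C)\to H^*_{L^+T}(\Spd C)$. By Lemma \ref{lemm:H*GtoH*T}, this is the pullback along $\wh{\fk{t}}^*\to \wh{\fk{t}}^*/W$, independently of $\lambda$.

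On the other side, we take the bundle-projection map $\cal{O}_{\wh{\fk{t}}^*/W}\to \cal{O}_{\bb{T}(\wh{\fk{t}}^*/W)}$ and follow it by the composition (\ref{eqn:OTtWemb}). Elements pulled back from the base go to $\cal{O}_{\wh{\fk{t}}^*}\subset \cal{O}_{\wh{\fk{t}}^*}[\wh{\fk{t}}]$, which are polynomials of degree zero in the fiber variables. They are then sent to the constant tuple $(a)_\lambda$, again via $\wh{\fk{t}}^*\to \wh{\fk{t}}^*/W$. By Proposition \ref{prop:H*Hckemb}, (\ref{eqn:OTtWemb}) agrees with the injective map (\ref{eqn:H*Hckemb}), so the two maps into $\cal{O}_{\bb{T}(\wh{\fk{t}}^*/W)}$ coincide.

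Injectivity then follows because $\bb{T}(\wh{\fk{t}}^*/W)\to \wh{\fk{t}}^*/W$ has a section (the zero section), or alternatively from Lemma \ref{lemm:H*GtoH*T}.

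The main obstacle is the bookkeeping in identifying, for $i=2$, the restriction of $\pi_2$ to the fixed point $[\lambda]$. We must check that the stabilizer is $L^+T$ acting through the conjugate embedding, and that the identification of Corollary \ref{cor:H*T} is unaffected by this conjugation. A second point is to confirm that the identification $H^*(\Hck)\cong \lim_\mu H^*(\Hck_{\leq\mu})$ used in (\ref{eqn:H*Hckemb}) is compatible with $\pi_i^*$. This holds because everything is induced by pullbacks of stacks, as in Lemma \ref{lemm:H*twodef}. If the conjugation step causes trouble, the fallback is to transport via Proposition \ref{prop:DULAequiv} to the equal characteristic case, where this statement is part of \cite[Theorem 1]{BF}.
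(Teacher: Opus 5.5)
Your argument is correct, but it takes a different route from the paper. The paper gives no separate proof of this lemma. It is asserted ``by the same argument'' as Proposition \ref{prop:H*Hckemb}: since $\pi_i^*$ and all identifications are pullbacks, one transports everything to the equal characteristic Hecke stack via Proposition \ref{prop:DULAequiv} and reads the statement off from the construction in \cite[Theorem 1]{BF}. There, the isomorphism (\ref{eqn:H*GmHck}) is by definition induced by $\cal{O}_{(\wh{\fk{t}}^*/W)^2\times\bb{A}^1}\cong H^*_{G\times\bb{G}_m\times G}(\Spec k)$, and the two factors $\cal{O}_{\wh{\fk{t}}^*/W}$ correspond to pullback along the two projections. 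At $\hbar=0$, both projections $(\wh{\fk{t}}^*/W)^2\to\wh{\fk{t}}^*/W$ restrict on the normal cone $\bb{T}(\wh{\fk{t}}^*/W)$ to the bundle projection. So in the paper the lemma is essentially built into BF's isomorphism.

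You instead use Proposition \ref{prop:H*Hckemb} as a black box. Since (\ref{eqn:H*Hckemb}) is injective, it suffices to compare after restricting to the $T$-fixed points. There your observation settles both $i=1,2$ at once via Lemma \ref{lemm:H*GtoH*T}: the stabilizer homomorphism $L^+_{\Spd C}T\to L^+_{\Spd C}G\times L^+_{\Spd C}G$ at $[\lambda]$ is $t\mapsto (t,\lambda(\xi)^{-1}t\lambda(\xi))=(t,t)$. Meanwhile, (\ref{eqn:OTtWemb}) visibly sends base functions to constant tuples. Your route is more modular: it needs no second look inside \cite{BF}, and it makes the symmetry between $\pi_1$ and $\pi_2$ transparent. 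It is not more elementary overall, though, since Proposition \ref{prop:H*Hckemb} and Lemma \ref{lemm:H*GtoH*T} were obtained by the same inspection of \cite{BF}. Your stated fallback is exactly the paper's argument.
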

Moreover, recall the convolution diagram (\ref{eqn:conv}) and put
\[
\Hck_{G,\Spd C,\leq (\mu_1, \mu_2)}^{(2)}:=p^{-1}(\Hck_{G,\Spd C,\leq \mu_1}\times \Hck_{G,\Spd C,\leq \mu_2}).
\]
Let $i_{\leq (\mu_1,\mu_2)}\colon \Hck_{G,\Spd C,\leq (\mu_1, \mu_2)}^{(2)}\to \Hck_{G,\Spd C}^{(2)}$ be the closed immersion, and put $m_{\leq (\mu_1,\mu_2)}:=m\circ i_{\leq (\mu_1,\mu_2)}$.
By the definition of $-\star -$, we have a homomorphism
\begin{align*}
R\Gamma(\Hck_{G,\Spd C, \leq (\mu_1+\mu_2)},\Qlb)&\to R\Gamma(\Hck_{G,\Spd C, \leq (\mu_1+\mu_2)},Rm_{\leq (\mu_1,\mu_2) *}\Qlb)\\
&=R\Gamma(\Hck_{G,\Spd C}, i_{\leq \mu_1*}\Qlb\star i_{\leq \mu_2*}\Qlb),
\end{align*}
and a homomorphism
\begin{align}
&R\Gamma(\Hck_{G,\Spd C, \leq \mu_1},\Qlb)\otimes^{\bb{L}}_{R\Gamma([\Spd C/L^+_{\Spd C}G])}R\Gamma(\Hck_{G,\Spd C, \leq \mu_2},\Qlb)\notag\\
&\to R\Gamma(\Hck_{G,\Spd C, \leq \mu_1}\times_{[\Spd C/L^+_{\Spd C}G]}\Hck_{G,\Spd C, \leq \mu_2},\Qlb) \notag\\
&=R\Gamma(\Hck_{G,\Spd C, \leq (\mu_1,\mu_2)}^{(2)},\Qlb)\notag\\
&\simeq R\Gamma(\Hck_{G,\Spd C, \leq (\mu_1+\mu_2)},Rm_{\leq (\mu_1,\mu_2) *}\Qlb)\notag\\
&=R\Gamma(\Hck_{G,\Spd C}, i_{\leq \mu_1*}\Qlb\star i_{\leq \mu_2*}\Qlb)\label{eqn:RGammaHckconv}
\end{align}
where the first map is given by applying the following lemma with $X=Y=\Hck_{G,\Spd C}$, $Z=[\Spd C/L^+_{\Spd C}G]$ and with $f,g\colon \Hck_{G,\Spd C}\to [\Spd C/L^+_{\Spd C}G]$ the map corresponds to the right and left actions, respectively.
\begin{lemm}\label{lemm:RGammaKunnethformal}
For any cartesian diagram
\[
\xymatrix{
X\times_ZY\ar[r]^-{\rm{pr}_1}\ar[d]_{\rm{pr}_2}&X\ar[d]^{f}\\
Y\ar[r]_-{g}&Z
}
\]
of small v-stacks and for any $A\in \cal{D}_{\et}(X,\Lambda)$ and $B\in \cal{D}_{\et}(Y,\Lambda)$, there is a canonical homomorphism
\[
R\Gamma(X,A)\otimes^{\bb{L}}_{R\Gamma(Z,\Lambda)}R\Gamma(Y,B)\to R\Gamma(X\times_ZY,A\boxtimes B).
\]
\end{lemm}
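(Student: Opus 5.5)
The map will be built from three ingredients: the lax monoidal structure of $R\Gamma(Z,-)$, the projection formula, and base change for the cartesian square. Set $h:=f\circ \rm{pr}_1=g\circ \rm{pr}_2\colon X\times_ZY\to Z$. Whenever we take $R\Gamma$ on a small v-stack, we work in $\cal{D}_{\et}$ and use the lax symmetric monoidal functor $R\Gamma(Z,-)=R\Hom(\Lambda,-)$. For general $\Lambda$ we first construct everything for torsion $\Lambda$ functorially, and then pass to limits, invert $\ell$, and take colimits. Each step below is natural, so it is compatible with these operations.

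\textbf{Step 1.} We have $R\Gamma(X,A)=R\Gamma(Z,Rf_*A)$ and $R\Gamma(Y,B)=R\Gamma(Z,Rg_*B)$. The lax monoidal structure of $R\Gamma(Z,-)$ makes $R\Gamma(Z,Rf_*A)$ and $R\Gamma(Z,Rg_*B)$ into modules over the $E_\infty$-algebra $R\Gamma(Z,\Lambda)$. It also gives a map
\[
R\Gamma(Z,Rf_*A)\otimes^{\bb{L}}_{R\Gamma(Z,\Lambda)}R\Gamma(Z,Rg_*B)\to R\Gamma(Z,Rf_*A\otimes^{\bb{L}} Rg_*B).
\]
To get this map, write the relative tensor product as the geometric realization of the bar construction $M\otimes R\Gamma(Z,\Lambda)^{\otimes n}\otimes N$. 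Each term maps compatibly to $R\Gamma(Z,\cal{F}\otimes\Lambda^{\otimes n}\otimes\cal{G})=R\Gamma(Z,\cal{F}\otimes\cal{G})$, and the resulting augmented simplicial object induces the map. Equivalently, $R\Gamma(Z,-)\colon \cal{D}_{\et}(Z,\Lambda)\to R\Gamma(Z,\Lambda)\lmod$ is lax symmetric monoidal, because it is right adjoint to the symmetric monoidal functor $R\Gamma(Z,\Lambda)\lmod\to\cal{D}_{\et}(Z,\Lambda)$, $M\mapsto M\otimes_{R\Gamma(Z,\Lambda)}\Lambda_Z$.

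\textbf{Step 2.} We construct a natural map $Rf_*A\otimes^{\bb{L}} Rg_*B\to Rh_*(A\boxtimes B)$, where $A\boxtimes B=\rm{pr}_1^*A\otimes \rm{pr}_2^*B$. It is adjoint to
\[
h^*(Rf_*A\otimes Rg_*B)\cong \rm{pr}_1^*f^*Rf_*A\otimes \rm{pr}_2^*g^*Rg_*B\to \rm{pr}_1^*A\otimes\rm{pr}_2^*B,
\]
which is obtained from the counits $f^*Rf_*\to\rm{id}$ and $g^*Rg_*\to\rm{id}$. This step uses only pullback and pushforward adjunctions and the symmetric monoidality of $*$-pullback. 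We do not need base change to be an isomorphism, so no properness or finiteness hypothesis enters.

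\textbf{Step 3.} Apply $R\Gamma(Z,-)$ to the map of Step 2 and compose with the map of Step 1. Using $R\Gamma(Z,Rh_*(-))=R\Gamma(X\times_ZY,-)$, this gives the desired homomorphism.

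The main point needing care is making Step 1 precise at the $\infty$-categorical level: we must identify the module structures on $R\Gamma(X,A)$ and $R\Gamma(Y,B)$ coming from $f^*$ and $g^*$ with those coming from the lax monoidal structure. The adjunction description above handles this cleanly. Beyond that, the proof is only a verification that the identifications are canonical.
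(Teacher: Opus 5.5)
Your proof is correct and has the same structure as the paper's: the map $Rf_*A\otimes^{\bb{L}}Rg_*B\to R(f\circ \rm{pr}_1)_*(A\boxtimes B)$, adjoint to the counits, reduces everything to the case $X=Y=Z$, after which you apply $R\Gamma(Z,-)$ and compose. The only difference is in that base case: the paper appeals to the description of $A\otimes^{\bb{L}}B$ as the sheafification of $U\mapsto R\Gamma(U,A)\otimes^{\bb{L}}_{R\Gamma(U,\Lambda)}R\Gamma(U,B)$, whereas you use the lax symmetric monoidal structure on $R\Gamma(Z,-)$, as right adjoint of $M\mapsto M\otimes_{R\Gamma(Z,\Lambda)}\Lambda_Z$ or via the bar construction, which is arguably the cleaner way to construct that map.
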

\begin{proof}
This can be proved by a formal argument using six functors.
Namely, we can assume that $\Lambda$ is a torsion ring.
There is a homomorphism
\begin{align*}
&(f\circ \rm{pr}_1)^*(Rf_*A\otimes^{\bb{L}}Rg_*B)\\
&\cong (f\circ \rm{pr}_1)^*Rf_*A\otimes^{\bb{L}}(f\circ \rm{pr}_1)^*Rg_*B\\
&\cong (f\circ \rm{pr}_1)^*Rf_*A\otimes^{\bb{L}}(g\circ \rm{pr}_2)^*Rg_*B\\
&\cong f^*Rf_*A\boxtimes g^*Rg_*B\\
&\to A\boxtimes B
\end{align*}
and by passing to the adjoint, we have
\[
Rf_*A\otimes^{\bb{L}}Rg_*B\to R(f\circ \rm{pr}_1)_*(A\boxtimes B).
\]
Therefore, if we show the lemma for $X=Y=Z$, then the desired homomorphism is given by
\begin{align*}
R\Gamma(X,A)\otimes^{\bb{L}}_{R\Gamma(Z,\Lambda)}R\Gamma(Y,B)&\cong R\Gamma(Z,Rf_*A)\otimes^{\bb{L}}_{R\Gamma(Z,\Lambda)}R\Gamma(Z,Rg_*B)\\
&\to R\Gamma(Z,Rf_*A\otimes^{\bb{L}}Rg_*B)\\
&\to R\Gamma(X\times_ZY,A\boxtimes B).
\end{align*}
Thus, we can assume that $X=Y=Z$.
In this case, the claim follows from the fact that the tensor product $A\otimes^{\bb{L}}B$ as sheaves valued in the derived category $\Lambda\lmod$ agrees with a sheafification of a presheaf of the form
$U\mapsto R\Gamma(U,A)\otimes^{\bb{L}}_{R\Gamma(U,\Lambda)}R\Gamma(U,B)$.
\end{proof}
This defines homomorphisms
{\footnotesize
\begin{align*}
\xymatrix{
&H^*(\Hck_{G,\Spd C,\leq \mu_1},\Qlb)\otimes_{R_G}H^*(\Hck_{G,\Spd C,\leq \mu_1},\Qlb)\ar[d]\\
H^*(\Hck_{G,\Spd C,\leq (\mu_1+\mu_2)},\Qlb)\ar[r]&H^*(\Hck_{G,\Spd C},i_{\leq \mu_1*}\Qlb\star i_{\leq \mu_2*}\Qlb),
}
\end{align*}
}
where $R_G:=H^*([\Spd C/L^+_{\Spd C}G])$.
Let
\begin{align}\label{eqn:defofmT}
m_{\bb{T}}\colon \bb{T}(\wh{\fk{t}}^*/W)\times_{\wh{\fk{t}}^*/W}\bb{T}(\wh{\fk{t}}^*/W)\to \bb{T}(\wh{\fk{t}}^*/W)
\end{align}
be a morphism of vector bundles over $\wh{\fk{t}}^*/W$, which is fiberwise given by addition.
This defines a homomorphism
\begin{align*}
m_{\bb{T}}^*\colon \cal{O}_{\bb{T}(\wh{\fk{t}}^*/W)}\to \cal{O}_{\bb{T}(\wh{\fk{t}}^*/W)}\otimes_{\cal{O}_{\wh{\fk{t}}^*/W}}\cal{O}_{\bb{T}(\wh{\fk{t}}^*/W)}.
\end{align*}
Then we can show the following:
\begin{lemm}\label{lemm:H*conv}
The diagram
\[
\xymatrix{
\cal{O}_{\bb{T}(\wh{\fk{t}}^*/W)}\ar[dd]\ar[r]^-{m_{\bb{T}}^*}&\cal{O}_{\bb{T}(\wh{\fk{t}}^*/W)}\otimes_{\cal{O}_{\wh{\fk{t}}^*/W}}\cal{O}_{\bb{T}(\wh{\fk{t}}^*/W)}\ar[d]\\
&H^*(\Hck_{G,\Spd C,\leq \mu_1},\Qlb)\otimes_{R_G}H^*(\Hck_{G,\Spd C,\leq \mu_1},\Qlb)\ar[d]\\
H^*(\Hck_{G,\Spd C,\leq (\mu_1+\mu_2)},\Qlb)\ar[r]&H^*(\Hck_{G,\Spd C},i_{\leq \mu_1*}\Qlb\star i_{\leq \mu_2*}\Qlb)
}
\]
commutes, where the left vertical map is given by Proposition \ref{prop:H*Hck} (composed with the pullback $H^*(\Hck_{G,\Spd C},\Qlb)\to H^*(\Hck_{G,\Spd C,\leq (\mu_1+\mu_2)},\Qlb)$), and similarly, the upper right map is given by Proposition \ref{prop:H*Hck}, Corollary \ref{cor:H*G}, and Lemma \ref{lemm:H*GtoH*Hck}.
\end{lemm}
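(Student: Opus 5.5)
We check commutativity after composing with the injective localization map to torus fixed points, and then compute both compositions there explicitly.

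\textbf{Reduction to fixed points.} By Proposition \ref{prop:H*Hckemb}, (\ref{eqn:H*Hckemb}) is injective on $\cal{O}_{\bb{T}(\wh{\fk{t}}^*/W)}$. So we need an analogous injective map out of the target $H^*(\Hck_{G,\Spd C},i_{\leq \mu_1*}\Qlb\star i_{\leq \mu_2*}\Qlb)\cong H^*(\Hck^{(2)}_{G,\Spd C,\leq(\mu_1,\mu_2)},\Qlb)$. We take the restriction to the $L^+_{\Spd C}T$-fixed points of the convolution Grassmannian. These are the points $([\lambda_1],[\lambda_2])$ with $[\lambda_i]\in\Gr_{G,\leq\mu_i}$, lying over $[\lambda_1+\lambda_2]$, and the restriction map lands in $\prod_{(\lambda_1,\lambda_2)}\Qlb[\wh{\fk{t}}]$.

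\textbf{Injectivity of the restriction.} All maps here are pullbacks, so by the transport argument of Proposition \ref{prop:DULAequiv} and Proposition \ref{prop:H*Hckemb} we may work on $\Hck^{\eq}_{G^{\spl},\Spec k}$. There the convolution Grassmannian $\Gr_{\leq\mu_1}\tilde\times\Gr_{\leq\mu_2}$ is paved by affine spaces, with cells given by Iwahori orbits, and its cohomology is pure. Equivariant formality then gives injectivity of $T$-equivariant localization, via \cite{GKM} as in Lemma \ref{lemm:H*twodef}. The restriction from $G$-equivariant to $T$-equivariant cohomology is also injective, by the $W$-invariants argument as in Lemma \ref{lemm:H*GtoH*T}. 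We therefore only need to compare the two compositions in each component $(\lambda_1,\lambda_2)$.

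\textbf{Computing the two paths.} The lower-left path is pullback along $m$. At the fixed point $([\lambda_1],[\lambda_2])$ it is restriction to $[\lambda_1+\lambda_2]$, which by Proposition \ref{prop:H*Hckemb} sends $a\,t_1\cdots t_n\mapsto a\prod\langle\lambda_1+\lambda_2,t_i\rangle$. For the upper-right path we use the K\"unneth map of Lemma \ref{lemm:RGammaKunnethformal}, which is compatible with restriction to the fixed point $[\lambda_1]\times_{[\ast/L^+T]}[\lambda_2]$. It therefore sends $f\otimes g$ to the product of the restrictions of $f$ at $\lambda_1$ and of $g$ at $\lambda_2$, identified over $\cal{O}_{\wh{\fk{t}}^*}$.

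The main obstacle is the $\cal{O}_{\wh{\fk{t}}^*/W}$-linearity in the tensor product. The right factor of $\Hck_{\leq\mu_1}$ and the left factor of $\Hck_{\leq\mu_2}$ must give the same $\wh{\fk{t}}^*$ coordinate at the fixed point. In the loop-rotation-equivariant picture of Proposition \ref{prop:H*Hckemb}, the graphs $\Gamma_{\lambda}$ compose: $x_2=x_1+a\lambda_1$ and $x_3=x_2+a\lambda_2$ give $x_3=x_1+a(\lambda_1+\lambda_2)$. Setting $a=\hbar=0$ identifies all the base coordinates, so both paths send a function $\phi$ on $\bb{T}(\wh{\fk{t}}^*)$ to the same element, $\phi(x,\lambda_1+\lambda_2)$ on the one hand and $m_{\bb{T}}^*\phi$ evaluated at $((x,\lambda_1),(x,\lambda_2))$ on the other. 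This is exactly the statement that $m_{\bb{T}}$ is fiberwise addition. I would carry out this step carefully in the equal characteristic model, following \cite[Lemma 1]{BF}, and then transport it back using that all comparison functors are pullbacks.
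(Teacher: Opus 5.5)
Your argument is correct, but it takes a different route from the paper's.

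\textbf{The paper's route.} After reducing to the equal characteristic Hecke stack, the paper works equivariantly for loop rotation. On classifying stacks, compatibility of convolution with the composition map $m_1\colon((x_1,x_2,a),(x_2,x_3,a))\mapsto(x_1,x_3,a)$ on $(\wh{\fk{t}}^*/W)^2\times\bb{A}^1$ is formal. This induces a map $m_2$ on the deformation to the normal cone $N_{(\wh{\fk{t}}^*/W)^2}\Delta$, compatible with convolution on $H^*_{\bb{G}_m}(\Hck_{G,\leq\mu})$. Finally, $m_2$ specializes to $m_{\bb{T}}$ at $\hbar=0$. No injectivity statement on the convolution Hecke stack is needed.

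\textbf{Your route.} You stay at $\hbar=0$ and detect equality after restricting to the $L^+T$-fixed points $([\lambda_1],[\lambda_2])$ of $\Hck^{(2)}_{G,\leq(\mu_1,\mu_2)}$. Proposition~\ref{prop:H*Hckemb} evaluates both paths. The identity then reduces to two facts: $d\pi_x\colon\wh{\fk{t}}^*\to T_{\pi(x)}(\wh{\fk{t}}^*/W)$ is linear, and $m_{\bb{T}}$ is fiberwise addition.

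\textbf{What each route costs.} You need an input the paper never uses: injectivity of $H^*(\Hck^{(2)}_{G,\leq(\mu_1,\mu_2)},\Qlb)\to\prod_{(\lambda_1,\lambda_2)}\Qlb[\wh{\fk{t}}]$. In the equal characteristic model this follows from an affine paving of $\Gr_{\leq\mu_1}\tilde{\times}\Gr_{\leq\mu_2}$, equivariant formality, and the localization theorem. You also need to transport these fixed-point restrictions on $\Hck^{(2)}$ along the comparison equivalences, exactly as in the proof of Proposition~\ref{prop:H*Hckemb}. In return, the convolution compatibility becomes a transparent pointwise computation. The paper's version instead rests on a deformation argument that depends on the internals of \cite{BF}, including the normal-cone description of the loop-rotation equivariant cohomology.

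\textbf{Minor points.}
\begin{itemize}
\item The paving is most easily obtained by trivializing the twisted product over the Iwahori cells of the first factor. The resulting cells are products of Iwahori cells, not necessarily Iwahori orbits. This does not affect the argument.
\item The loop-rotation discussion in your last paragraph is unnecessary. At $\hbar=0$, consider the fixed point $[\lambda_1]$ composed with the right projection to $[\ast/L^+G]$, and the fixed point $[\lambda_2]$ composed with the left projection. Both composites are induced by $L^+T\subset L^+G$, because conjugation by the element of $LT$ defining $[\lambda_1]$ is trivial on the commutative group $L^+T$. So the base coordinates agree directly.
\item Your function $\phi$ should live on $\bb{T}(\wh{\fk{t}}^*/W)$, evaluated at $d\pi_x(\lambda_i)$, rather than on $\bb{T}(\wh{\fk{t}}^*)$.
\end{itemize}
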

\begin{proof}
All functors used to define the involved homomorphisms are pullbacks under maps of small v-stacks or pushforwards by proper maps, such as $\Hck_{G,\Spd C,\leq \mu}\to [\Spd C/L^+_{\Spd C}G]$ or $m_{\leq (\mu_1,\mu_2)}$.
Similar maps exist for $\Hck_{G,\Spd \cal{O}_C}$, $\Hck_{G,\Spd k}$, $\Hck_{G,\Spec k}^{\Witt}$, or $\Hck_{G,\Spec k}^{\eq}$.
Therefore, by the same argument as the proof of Proposition \ref{prop:H*Hck} using Proposition \ref{prop:DULAequiv}, the proper base change and \cite[Proposition 27.5]{Sch}, the claim reduces to the equal characteristic case.
Then, one can show this by looking carefully at the proof of \cite[Theorem 1]{BF}.
The sketch is as follows:

By abuse of notation, write $G$ for $G^{\spl}$, $T$ for $T^{\spl}$, and so on.
We also omit the subscript $\Spec k$ and the superscript $\eq$.
First, define the convolution product $-\star -$ in $\Det([*/(L^+G\rtimes \bb{G}_m\ltimes L^+G)])=\Det([*/(L^+G\rtimes \bb{G}_m)]\times_{[*/\bb{G}_m]} [*/(L^+G\rtimes \bb{G}_m)])$ using the convolution diagram
\begin{align*}
&([*/(L^+G\rtimes \bb{G}_m)]\times_{[*/\bb{G}_m]} [*/(L^+G\rtimes \bb{G}_m)])\times ([*/(L^+G\rtimes \bb{G}_m)]\times_{[*/\bb{G}_m]} [*/(L^+G\rtimes \bb{G}_m)])\\
&\overset{\rm{pr}_{12}\times \rm{pr}_{23}}\leftarrow [*/(L^+G\rtimes \bb{G}_m)]\times_{[*/\bb{G}_m]} [*/(L^+G\rtimes \bb{G}_m)]\times_{[*/\bb{G}_m]} [*/(L^+G\rtimes \bb{G}_m)]\\
&\overset{\rm{pr}_{13}}\to [*/(L^+G\rtimes \bb{G}_m)]\times_{[*/\bb{G}_m]} [*/(L^+G\rtimes \bb{G}_m)]
\end{align*}
Then we can show the diagram
\[
\xymatrix{
\cal{O}_{(\wh{\fk{t}}^*/W)^2\times \bb{A}^1}\ar[dd]^{\cong}\ar[r]^-{m_1^*}&\cal{O}_{(\wh{\fk{t}}^*/W)^2\times \bb{A}^1}\otimes_{\cal{O}_{\wh{\fk{t}}^*/W\times \bb{A}^1}}\cal{O}_{(\wh{\fk{t}}^*/W)^2\times \bb{A}^1}\ar[d]\\
&H^*_{L^+G\rtimes \bb{G}_m\ltimes L^+G}(*)\otimes_{H^*_{L^+G\rtimes \bb{G}_m}(*)}H^*_{L^+G\rtimes \bb{G}_m\ltimes L^+G}(*)\ar[d]\\
H^*_{L^+G\rtimes \bb{G}_m\ltimes L^+G}(*,\Qlb)\ar[r]&H^*_{L^+G\rtimes \bb{G}_m\ltimes L^+G}(*,\Qlb\star \Qlb)
}
\]
commutes, where $m_1$ is defined by 
\begin{align*}
((\wh{\fk{t}}^*/W)^2\times \bb{A}^1)\times_{\wh{\fk{t}}^*/W\times \bb{A}^1}((\wh{\fk{t}}^*/W)^2\times \bb{A}^1)&\overset{m_1}\to(\wh{\fk{t}}^*/W)^2\times \bb{A}^1, \\
((x_1,x_2,a),(x_2,x_3,a))&\mapsto (x_1,x_3,a).
\end{align*}
The map $m_1$ induces a map
\begin{align*}
m_2\colon (N_{(\wh{\fk{t}}^*/W)^2\times \bb{A}^1}\Delta) \times_{(\wh{\fk{t}}^*/W)\times \bb{A}^1} (N_{(\wh{\fk{t}}^*/W)^2\times \bb{A}^1}\Delta)\to N_{(\wh{\fk{t}}^*/W)^2\times \bb{A}^1}\Delta.
\end{align*}
From this, we can show the commutativity of the diagram of the form
\[
\xymatrix{
\cal{O}_{N_{(\wh{\fk{t}}^*/W)^2}\Delta}\ar[dd]^{\cong}\ar[r]^-{m_2^*}&\cal{O}_{N_{(\wh{\fk{t}}^*/W)^2}\Delta}\otimes_{\cal{O}_{\wh{\fk{t}}^*/W\times \bb{A}^1}}\cal{O}_{N_{(\wh{\fk{t}}^*/W)^2}\Delta}\ar[d]\\
&H^*_{\bb{G}_m}(\Hck_{G,\leq \mu_1},\Qlb)\otimes_{H^*([\Spec k/L^+G])}H^*_{\bb{G}_m}(\Hck_{G,\leq \mu_2},\Qlb)\ar[d]\\
H^*_{\bb{G}_m}(\Hck_{G,\leq (\mu_1+\mu_2)},\Qlb)\ar[r]&H^*_{\bb{G}_m}(\Hck_{G},i_{\leq \mu_1*}\Qlb\star i_{\leq \mu_2*}\Qlb).
}
\]
Restricted to the locus $\hbar =0$, the map $m_2$ agrees with the map $m_{\bb{T}}$.
This shows the lemma.
\end{proof}
\subsection{Weil action on cohomology ring of Hecke stack}\label{ssc:WeilonCohHck}
In this section, we determine how the Weil actions correspond under (\ref{eqn:H*Hck}).

For a semisimple and simply-connected group $G$, the object $R\Gamma(\Hck_{G,\Spd C},\Qlb)$ has the structure of an object in $\DDRep(W_F,\Qlb)$ as follows:
In $\Qlb\lmod$, we have
\begin{align}
R\Gamma(\Hck_{G,\Spd C},\Qlb)&:=R\lim_{\substack{\longleftarrow\\\mu}}R\Gamma(\Hck_{G,\Spd C,\leq \mu},\Qlb)\notag\\
&\simeq \lim_{\substack{\longrightarrow\\ n}}\tau^{\leq n}R\lim_{\substack{\longleftarrow\\\mu}}R\Gamma(\Hck_{G,\Spd C,\leq \mu},\Qlb)\notag\\
&\simeq \lim_{\substack{\longrightarrow\notag\\ n}}R\lim_{\substack{\longleftarrow\\\mu}}\tau^{\leq n}R\Gamma(\Hck_{G,\Spd C,\leq \mu},\Qlb)\notag\\
&\simeq \lim_{\substack{\longrightarrow\\ n}}R\lim_{\substack{\longleftarrow\\\mu}}((\tau^{\leq n}R\Gamma(\Hck_{G,\Spd C,\leq \mu},\ZZ_{\ell}))\otimes_{\ZZ_{\ell}}\Qlb)\label{eqn:RHckcalculation}
\end{align}
Since $\Div^1\cong [\Spd C/\underline{W_{F}}]$ (see \cite[\S IV.7]{FS}), the $\Hck_{G,\Spd C}$ has an action of $W_F$.
The $I_E$-action restricts to $\Hck_{G,\Spd C,\leq \mu}$, and we have
\[
R\Gamma(\Hck_{G,\Spd C,\leq \mu},\Lambda)\in \cal{D}(\Rep^{\rm{sm}}(I_E,\Lambda))
\]
for a torsion ring $\Lambda$.
Hence, the object
\[
R\Gamma(\Hck_{G,\Spd C,\leq \mu},\ZZ_{\ell})=R\lim_{\substack{\longleftarrow\\ i}}R\Gamma(\Hck_{G,\Spd C,\leq \mu},\ZZ/\ell^i)
\]
is regarded as an object in $\ds\lim_{\substack{\longleftarrow\\ i}}\cal{D}(\Rep^{\rm{sm}}(I_E,\ZZ_{\ell}/\ell^i)))$.
Since all cohomologies of $\tau^{\leq n}R\Gamma(\Hck_{G,\Spd C,\leq \mu},\ZZ_{\ell})$ are finitely generated over $\ZZ_{\ell}$, it is perfect over $\ZZ_{\ell}$.
Therefore, we have
\[
\tau^{\leq n}R\Gamma(\Hck_{G,\Spd C,\leq \mu},\ZZ_{\ell})\in \DDRep^{\perf}(I_E,\ZZ_{\ell})
\]
by Corollary \ref{cor:DReperfcharacterizationOL}.
Thus, the object
\[
\tau^{\leq n}R\Gamma(\Hck_{G,\Spd C,\leq \mu},\Qlb)=(\tau^{\leq n}R\Gamma(\Hck_{G,\Spd C,\leq \mu},\ZZ_{\ell})) \otimes_{\ZZ_{\ell}}\Qlb
\]
can be regarded as an object in $\DDRep^{\perf}(I_E,\Qlb)$.
By Lemma \ref{lemm:H*twodef} and Proposition \ref{prop:H*Hck}, the limit
\[
H^i(\Hck_{G,\Spd C},\Qlb)=\lim_{\substack{\longleftarrow \\\mu}} H^i(\Hck_{G,\Spd C,\leq \mu},\Qlb)
\]
is finite-dimensional, and by the proof of Lemma \ref{lemm:H*twodef}, the transition maps of this limit are surjective.
Thus,
\[
\lim_{\substack{\longleftarrow \\\mu}} H^i(\Hck_{G,\Spd C,\leq \mu},\Qlb)\simeq H^i(\Hck_{G,\Spd C,\leq \mu'},\Qlb)
\]
for some $\mu'$, and
\[
R\lim_{\substack{\longleftarrow \\\mu}} \tau^{\leq n} R\Gamma(\Hck_{G,\Spd C,\leq \mu},\Qlb)\simeq \tau^{\leq n} R\Gamma(\Hck_{G,\Spd C,\leq \mu'},\Qlb)
\]
for some $\mu'$.
Therefore, we can regard the latter as an object in $\DDRep^{\perf}(I_E,\Qlb)$, and by (\ref{eqn:RHckcalculation}), $R\Gamma(\Hck_{G,\Spd C},\Qlb)$ as an object in $\DDRep(I_E,\Qlb)$.

Moreover, by restricting the index set of $\mu$ to a cofinal set of Weil-invariant dominant cocharacters, we see that every object above belongs to the $W_F/I_E$-invariants of each category.
Hence, we have $R\Gamma(\Hck_{G,\Spd C},\Qlb)\in \DDRep(W_F,\Qlb)$.
By these constructions, its $i$-th cohomology
\[
H^i(\Hck_{G,\Spd C},\Qlb):=\tau^{\geq i}\tau^{\leq i}R\Gamma(\Hck_{G,\Spd C},\Qlb)
\]
has the structure of an object in $\DDRep^{\perf}(W_F,\Qlb)^{\heartsuit}=\Rep^{\rm{f.d.}}(W_F,\Qlb)$.
Similarly, 
\[
H^i([\Spd C/L^+_{\Spd C}G],\Qlb)=H^i(\Hck_{G,\Spd C},i_{0*}\Qlb)
\]
also has the structure of an object in $\Rep^{\rm{f.d.}}(W_F,\Qlb)$.

We first upgrade Corollary \ref{cor:H*Gm} to include Weil actions.
\begin{lemm}\label{lemm:H*GmWequiv}
There exists a canonical graded isomorphism of $W_{F}$-representations
\[
H^*_{L^+_{\Spd C}\bb{G}_m}(\Spd C,\Qlb)\cong \rm{Sym(\Qlb(-1)[-2])}, 
\]
where $(i)$ denotes the $i$-th Tate twist.
\end{lemm}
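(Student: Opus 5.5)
We need to show $H^*_{L^+_{\Spd C}\bb{G}_m}(\Spd C,\Qlb)\cong \Sym(\Qlb(-1)[-2])$ as graded $W_F$-representations. By Corollary \ref{cor:H*Gm} the underlying graded vector space is $\Qlb[x]$ with $x$ in degree $2$, so the task is to determine the Weil action. The plan is to compute $H^2$ directly as a $W_F$-representation, identify it with $\Qlb(-1)$, and then use multiplicativity to get all degrees.

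\textbf{Reduction to the torus $\bb{G}_m$.} The cohomology ring is computed by pulling back along $[\Spd C/\bb{G}_{m}^{\dia}]\to[\Spd C/L^+_{\Spd C}\bb{G}_m]$, where the map is induced by the reduction $L^+_{\Spd C}\bb{G}_m\to \bb{G}_m$ (evaluation at the divisor, i.e.\ $B^+_{\rm dR}\to C$). Its kernel is the first congruence subgroup, an iterated extension of copies of $\bb{G}_a^{\dia}$-type objects, i.e.\ the analogue of the pro-unipotent group in the proof of Corollary \ref{cor:H*G}. Its cohomology is trivial, so this pullback is an isomorphism. The pullback is compatible with the $W_F$-action because the whole construction is functorial in the point $\Spd C\to\Div^1$, and $W_F$ acts through automorphisms of $\Spd C$ over $\Div^1$.

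\textbf{Computing $H^2$.} Next, $H^*([\Spd C/\bb{G}_m^{\dia}],\Qlb)$ is computed by the simplicial (\v{C}ech) nerve $(\bb{G}_m^{\dia})^n\times\Spd C$. This gives the usual identification
\[
H^2([\Spd C/\bb{G}_m^{\dia}])\cong H^1(\bb{G}_{m,C}^{\dia},\Qlb).
\]
By the comparison with the analytic adic space $\bb{G}_{m,C}^{\rm an}$ and the Kummer sequence, $H^1$ is the dual of the Tate module, namely $\Hom(\ZZ_\ell(1),\Qlb)=\Qlb(-1)$. The Galois group $W_F\subset \rm{Gal}(\ol{F}/F)$ acts on $\mu_{\ell^\infty}(C)$ by the cyclotomic character; this is what produces the Tate twist. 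Concretely, the class $x$ is the first Chern class of the universal line bundle on $[*/\bb{G}_m]$, i.e.\ the image of $1$ under the Kummer boundary map $H^1(\bb{G}_m)\to H^2(\mu_{\ell^n})=H^2(\ZZ/\ell^n(1))$. This shows that $x$ spans a copy of $\Qlb(-1)$ in degree $2$ equivariantly. One must pass through the torsion coefficients and limits exactly as in the construction of the $\DDRep(W_F,\Qlb)$-structure preceding the lemma. Since only finite-dimensional pieces in degree $2$ appear, no subtlety with $R\lim$ arises here.

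\textbf{Multiplicativity.} Cup product is $W_F$-equivariant because $W_F$ acts by automorphisms of stacks. The ring map $\Sym(H^2)\to H^*$ is therefore an equivariant isomorphism of graded rings, using Corollary \ref{cor:H*Gm} for bijectivity. This gives
\[
H^{2n}\cong \Qlb(-n)=\Sym^n(\Qlb(-1)[-2])
\]
in the appropriate degree.

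\textbf{Main obstacle.} I expect the hard step to be making the identification of $H^2$ with $\Qlb(-1)$ rigorous in the v-stack setting with its $W_F$-action, i.e.\ justifying the Kummer sequence and Chern class on $[\Spd C/\bb{G}_m^{\dia}]$ for torsion $\Lambda$. I would handle this via \cite[\S 27]{Sch}-type comparison with the rigid analytic $\bb{G}_m$ over $C$, where the Galois equivariance of the Kummer sequence is classical.
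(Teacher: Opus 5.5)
Your proposal is correct in outline, but it takes a genuinely different route from the paper.

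\textbf{The paper's route.} The paper never computes on the generic fiber.
\begin{enumerate}
\item For inertia, it extends the $I_F$-action to $[\Spd \cal{O}_C/L^+_{\Spd \cal{O}_C}\bb{G}_m]$. There it restricts to the trivial action on $[\Spd k/L^+_{\Spd k}\bb{G}_m]$, and the specialization isomorphisms $H^*_{L^+_{\Spd k}\bb{G}_m}(\Spd k)\leftarrow H^*_{L^+_{\Spd \cal{O}_C}\bb{G}_m}(\Spd \cal{O}_C)\to H^*_{L^+_{\Spd C}\bb{G}_m}(\Spd C)$ are equivariant, so $I_F$ acts trivially.
\item For a Frobenius lift $\sigma$, it descends the action to $[\Spd\breve F/L^+_{\Spd\breve F}\bb{G}_m]$. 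It then uses the sublemma that the absolute Frobenius of $L^+_{\Div^1_{k_F}}Z$ is the identity to identify $\sigma$ with the relative Frobenius over $k$. Finally it spreads this over $\Spd\cal{O}_{\breve F}$ and reads off $x\mapsto q^{-1}x$ on the special fiber from the scheme-theoretic computation.
\end{enumerate}

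\textbf{Your route.} You stay over $\Spd C$ throughout. You kill the first congruence subgroup directly; over $\Spd C$ this uses contractibility of the analytic affine line for the graded pieces $\xi^iB^+_{\mathrm{dR}}/\xi^{i+1}B^+_{\mathrm{dR}}$, rather than the Witt-vector pro-unipotent argument of Corollary~\ref{cor:H*G}. You then identify $H^2$ with $H^1(\bb{G}_m)$ and apply Galois-equivariant Kummer theory.

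\textbf{Trade-off.} Your route treats inertia and Frobenius in one stroke. It also shows the twist coming directly from the cyclotomic action on $\mu_{\ell^\infty}(C)$, with $x$ a Kummer/Chern class. The price is a comparison with the rigid-analytic $\bb{G}^{\mathrm{an}}_{m,C}$ and Kummer theory there. The paper instead only reuses specialization machinery it needs anyway.

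\textbf{Points to make explicit.}
\begin{enumerate}
\item You need to identify the $W_F$-action coming from $\Div^1\cong[\Spd C/\underline{W_F}]$ with the classical Galois action. For a lift $\tilde w\in\mathrm{Aut}(C/F)$ of $w$, the map $\Spd(\tilde w)$ covers a power of $\mathrm{id}_{\Spd F}\times\mathrm{Frob}_{\Spd k}$ on $\Spd\breve F=\Spd F\times_{\Spd k_F}\Spd k$, not a power of $\varphi$. So the automorphism of $\Spd C$ over $\Div^1$ is $\Spd(\tilde w)$ composed with a power of the absolute Frobenius. Correspondingly, on $\bb{G}_{m,\Div^1}\times_{\Div^1}\Spd C\cong(\bb{G}^{\mathrm{an}}_{m,C})^{\dia}$ the action is the naive Galois action twisted by absolute Frobenius. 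Absolute Frobenius induces the identity on \'{e}tale cohomology, so you do recover the classical action on $H^1(\bb{G}^{\mathrm{an}}_{m,C},\ZZ/\ell^n)$. This must be stated; it plays exactly the role of the paper's sublemma on absolute Frobenius.
\item There is a small slip in your reduction step. The homomorphism $L^+_{\Spd C}\bb{G}_m\to\bb{G}_m^{\dia}$ induces $[\Spd C/L^+_{\Spd C}\bb{G}_m]\to[\Spd C/\bb{G}_m^{\dia}]$, and it is along this map that you pull back. There is no canonical map in the direction you wrote.
\end{enumerate}
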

\begin{proof}
\begin{enumerate}
\item First, we claim that the inertia group $I_{F}\subset W_{F}$ acts trivially.
Since $L^+_{\Spd \cal{O}_C}\bb{G}_m$ is defined over $\Spd \cal{O}_{\breve{F}}$ and $\Spd \cal{O}_C\to \Spd \cal{O}_{\breve{F}}$ factors through the stack $[(\Spd \cal{O}_C)/I_{F}]$, the action of $I_{F}$ on $[\Spd C/L^+_{\Spd C}\bb{G}_m]$ extends to the action on $[\Spd \cal{O}_C/L^+_{\Spd \cal{O}_C}\bb{G}_m]$.
This is restricted to the action of $[\Spd k/L^+_{k}\bb{G}_m]$, which is trivial.
These actions are compatible with the isomorphism
\[
H^*_{L^+_{\Spd k}\bb{G}_m}(\Spd k)\overset{\sim}{\leftarrow}
H^*_{L^+_{\Spd \cal{O}_C}\bb{G}_m}(\Spd \cal{O}_C)\overset{\sim}{\rightarrow}H^*_{L^+_{\Spd C}\bb{G}_m}(\Spd C)
\]
in the proof of Corollary \ref{cor:H*Gm}, and the action on $H^*_{L^+_{\Spd k}\bb{G}_m}(\Spd k)$ is trivial.
This implies the claim.
\item We show that a Frobenius lift $\sigma\in W_{F}$ acts as $x\mapsto q^{-1}x$, where $x$ is as in Corollary \ref{cor:H*Gm}.
Using the fact that any locally constant complex on $\Spd \cal{O}_{\breve{F}}$ is constant and applying the same argument as the proof of Proposition \ref{prop:DULAequiv} and Corollary \ref{cor:H*Gm}, we can show that the pullback homomorphism
\[
H^*_{L^+_{\Spd \cal{O}_{\breve{F}}}\bb{G}}(\Spd \cal{O}_{\breve{F}})\to H^*_{L^+_{\Spd k}\bb{G}}(\Spd k) 
\]
and the composition 
\[
H^*_{L^+_{\Spd \cal{O}_{\breve{F}}}\bb{G}}(\Spd \cal{O}_{\breve{F}})\to H^*_{L^+_{\Spd \breve{F}}\bb{G}}(\Spd \breve{F})\to H^*_{L^+_{\Spd C}\bb{G}}(\Spd C)
\]
are isomorphisms.
The action of $\sigma\in W_{F}$ on $[\Spd C/L^+_{\Spd C}\bb{G}]$ descends to the action on $[\Spd \breve{F}/L^+_{\Spd \breve{F}}\bb{G}]$ (since $\Spd C\to \Div^1$ factors through $\Spd \breve{F}$).
Recalling the equalities $\Spd \breve{F}=\Spd F\times_{\Spd k_{F}} \Spd k$ and $\Div^1_{k_{F}}=\Spd F/\rm{Frob}_{\Spd F}$, it follows that the action of $\sigma$ on $[\Spd \breve{F}/L^+_{\Spd \breve{F}}\bb{G}]$ is $\rm{id}\times \rm{Frob}_{\Spd F}\times \rm{id}$ under the identification
\begin{align}\label{eqn:SpdEbreveLGm}
[\Spd \breve{F}/L^+_{\Spd \breve{F}}\bb{G}_m]\cong [\Div^1_{k_{F}}/L^+_{\Div^1_{k_{F}}}\bb{G}_m]\times_{\Div^1_{k_{F}}}\Spd F\times_{\Spd k_{F}}\Spd k.
\end{align}
However, we have the following lemma, which easily follows from the definition:
\begin{lemm}
For a scheme $Z$ over $F$, the absolute Frobenii of $L^+_{\Div^1_{k_{F}}}Z$ and $L_{\Div^1_{k_{F}}}Z$ are the identities.
\end{lemm}
\begin{proof}
For a perfectoid space $S$ over $k_{F}$, the set of $S$-valued points of $L^+_{\Div^1_{k_{F}}}Z$ (resp. $L_{\Div^1_{k_{F}}}Z$) is by definition the set of pairs of the Cartier divisor $D$ of the Fargues--Fontaine curve $X_S$, and an element of $Z(B^+_{\Div^1}(S))$ (resp. $Z(B_{\Div^1}(S))$).
For a morphism $S\to T$ of perfectoid spaces, the induced map on the set of valued points is given by the pullback of divisors and elements.
However, the pullback by the absolute Frobenius $S\to S$ does not change these data since $X_S$ is already the quotient by the Frobenius.
\end{proof}
Therefore, the action of $\sigma$ on (\ref{eqn:SpdEbreveLGm}) is given by the relative Frobenius over $\Spd k$.
This action extends to the relative Frobenius on $[\Spd \cal{O}_{\breve{F}}/L^+_{\Spd \cal{O}_{\breve{F}}}\bb{G}_m]$ over $k$, and restricts to the relative Frobenius of $[\Spd k/L^+_{\Spd k}\bb{G}_m]$ over $k$.
It suffices to show that the relative Frobenius on $[\Spd k/L^+_{\Spd k}\bb{G}_m]$ induces an action on the cohomology of the form $x\mapsto q^{-1}x$.
This is well-known once it is reduced to the schematic case.
\end{enumerate}
The lemma follows from (1) and (2).
\end{proof}
This allows us to enhance Corollary \ref{cor:H*T} to a Weil-equivariant form:
\begin{cor}\label{cor:H*TWequiv}
There exists a canonical graded isomorphism of $W_{F}$-representations
\[
H^*_{L^+_{\Spd C}T}(\Spd C,\Qlb)\cong \rm{Sym}(\wh{\fk{t}}(-1)[-2]),
\]
where $\wh{\fk{t}}$ has a $W_{F}$-action $\rm{act}^{\rm{geom}}_{\wh{\fk{t}}}$ (defined in \S \ref{ssc:dualgp}).
\end{cor}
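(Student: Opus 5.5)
The plan is to reduce to Lemma \ref{lemm:H*GmWequiv} via characters of $T$, using the correspondence recorded after Corollary \ref{cor:H*T}. By Corollary \ref{cor:H*T}, $H^*_{L^+_{\Spd C}T}(\Spd C,\Qlb)\cong \Qlb[\wh{\fk{t}}]$ as graded rings, with $\wh{\fk{t}}$ in degree $2$. Since this is a polynomial ring on its degree-$2$ part and $W_F$ acts by graded ring automorphisms, it suffices to identify the degree-$2$ piece $H^2_{L^+_{\Spd C}T}(\Spd C,\Qlb)\cong\wh{\fk{t}}$ with $\wh{\fk{t}}(-1)$, $W_F$-equivariantly for $\rm{act}^{\rm{geom}}_{\wh{\fk{t}}}=\rm{act}^{\rm{alg}}_{\wh{\fk{t}}}$. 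The isomorphism then extends multiplicatively to $\rm{Sym}(\wh{\fk{t}}(-1)[-2])$.

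For the degree-$2$ piece, I use that $\wh{\fk{t}}$ is spanned by the images of $\alpha\in\bb{X}^*(T_{\ol{F}})$, and that each $\alpha$ corresponds to $\alpha^*(x)$ with $x\in H^2_{L^+_{\Spd C}\bb{G}_m}(\Spd C,\Qlb)$. Take $w\in W_F$. Through $\Div^1\cong[\Spd C/W_F]$, $w$ acts on $[\Spd C/L^+_{\Spd C}T]$ and on $[\Spd C/L^+_{\Spd C}\bb{G}_m]$. The map induced by $\alpha\colon T_{\ol{F}}\to\bb{G}_{m,\ol{F}}$ intertwines these actions up to replacing $\alpha$ by its Galois conjugate ${}^{w}\alpha$, because $T$ is defined over $F$ and $\alpha$ only over $\ol{F}$. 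I would check this on valued points: $w$ acts on $T(B^+_{\Div^1}(S))$ through the Galois action on the coefficients. The resulting identity is $w\cdot\alpha^*(x)=({}^{w}\alpha)^*(w\cdot x)$. Lemma \ref{lemm:H*GmWequiv} gives $w\cdot x=\chi_{\rm{cycl}}(w)^{-1}x$, so
\[
w\cdot\alpha^*(x)=\chi_{\rm{cycl}}(w)^{-1}\,({}^{w}\alpha)^*(x).
\]
Under $\bb{X}^*(T_{\ol{F}})\cong\bb{X}_*(\wh{T})\to\wh{\fk{t}}$, the map $\alpha\mapsto{}^{w}\alpha$ is exactly $\rm{act}^{\rm{alg}}_{\wh{\fk{t}}}(w)$, since the $W_F$-action on $\wh{T}$ by pinned automorphisms is dual to the Galois action on $\bb{X}^*(T_{\ol{F}})$. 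The extra scalar $\chi_{\rm{cycl}}(w)^{-1}$ is precisely the twist $(-1)$. This identifies $H^2_{L^+_{\Spd C}T}(\Spd C,\Qlb)\cong\wh{\fk{t}}(-1)$ as $W_F$-representations, where $\wh{\fk{t}}$ carries $\rm{act}^{\rm{geom}}_{\wh{\fk{t}}}$, as noted in \S\ref{ssc:dualgp}.

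The main obstacle is making the intertwining $w\circ\alpha=({}^{w}\alpha)\circ w$ rigorous, including its compatibility with the $W_F$-structure on cohomology defined through $\DDRep(W_F,\Qlb)$. A possible complication is that $\alpha$ is only defined over $E$, not $F$. My first attempt is to pass to $E$ and use $[\Spd C/L^+_{\Spd C}T]\to[\Spd \breve{E}/L^+_{\Spd \breve{E}}T]$. Over $\breve{E}$ the maps $\alpha$ are defined, and the $W_F/I_E$-action permutes them through Galois. The residual $I_E$-action I would handle as in step (1) of Lemma \ref{lemm:H*GmWequiv}: extend over $\Spd\cal{O}_C$, where $T$ splits after an unramified base change, and conclude that $I_E$ acts trivially on the generators. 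I also need the sign convention for duality to be right, so that the action is $w$ rather than $w^{-1}$. This holds because the isomorphism of Corollary \ref{cor:H*T} is covariant in $\alpha$, which matches the covariance of $\bb{X}_*(\wh{T})$.
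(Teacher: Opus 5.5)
Your proposal is correct and follows essentially the same route as the paper. The paper sets $\alpha^w=w^{-1}\circ\alpha\circ w$ and uses the resulting commutative square to obtain $w^*(\alpha^*(x))=(\alpha^w)^*(w^*x)=q^{-\deg w}(\alpha^w)^*(x)$ via Lemma \ref{lemm:H*GmWequiv}; it then identifies $d\alpha(1)\mapsto d\alpha^w(1)$ with the usual action on $\wh{\fk{t}}$. The only difference is your detour through $E$ with a separate inertia argument, which the paper does not need because the square already handles every $w\in W_F$, inertia included. You should also drop that detour because it relies on $T$ splitting after an unramified base change, and this need not hold for an arbitrary maximal torus $T$.
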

\begin{proof}
For $\alpha\in \bb{X}^*(T_{\ol{F}})$, put $\alpha^w =w^{-1} \circ \alpha \circ w$, i.e. $\alpha^w$ is a character that makes the following square commute:
\[
\xymatrix{
T_{\ol{F}}\ar[r]^-{\alpha}\ar@{<-}[d]_{w}&\bb{G}_{m, \ol{F}} \ar@{<-}[d]^w\\
T_{\ol{F}}\ar[r]^-{\alpha^w}&\bb{G}_{m, \ol{F}}.
}
\]
This induces the square
\[
\xymatrix{
H^*_{L^+_{\Spd C}\bb{G}_{m}}(\Spd C)\ar[r]^-{\alpha^{*}}\ar[d]_{w^*}&H^*_{L^+_{\Spd C}T}(\Spd C) \ar[d]^{w^*}\\
H^*_{L^+_{\Spd C}\bb{G}_{m}}(\Spd C)\ar[r]^-{(\alpha^{w})^*}&H^*_{L^+_{\Spd C}T}(\Spd C).
}
\]
Therefore, by the paragraph after Corollary \ref{cor:H*T}, when we write $d\alpha(1)$ for the image of $\alpha$ under the map $\bb{X}^*(T^{\spl})\cong \bb{X}_*(\wh{T})\overset{\rm{Lie}}{\to}\Hom(\Qlb,\wh{\fk{t}}) =\wh{\fk{t}}$, we have
\[
w^*(d\alpha(1))=w^*(\alpha^*(x))=\alpha^{w,*}(w^*(x))\overset{\text{Lemma \ref{lemm:H*GmWequiv}}}{=}\alpha^{w,*}(q^{-\deg w}x)=q^{-\deg w} d\alpha^w(1),
\]
where $\deg:W_{F}\to \bb{Z}$ is the projection.
This proves the claim since $d\alpha(1)\mapsto d\alpha^w(1)$ gives the usual $W_{F}$-action on $\wh{\fk{t}}$.
\end{proof}
For $w\in W_{F}$, the map $\rm{act}^{\rm{geom}}_{\wh{\fk{t}}^*}(w)\colon \wh{\fk{t}}^*\to \wh{\fk{t}}^*$ (defined in \S \ref{ssc:dualgp})
induces a map $f^w \colon \wh{\fk{t}}^*/W \to \wh{\fk{t}}^*/W$.
Let $q^{\deg w}\colon \wh{\fk{t}}^*/W \to \wh{\fk{t}}^*/W$ be the map induced by the multiplication by $q^{\deg w}$ on $\wh{\fk{t}}^*$.
The map $f^w\circ q^{\deg w}$ induces a homomorphism
\[
a_{\cal{O}_{\wh{\fk{t}}^*/W}}(w)\colon \cal{O}_{\wh{\fk{t}}^*/W}\to \cal{O}_{\wh{\fk{t}}^*/W}.
\]
Let $q^{\deg w}_{\rm{fib}}\colon \bb{T}(\wh{\fk{t}}^*/W)\to \bb{T}(\wh{\fk{t}}^*/W)$ be the map that applies $q^{\deg w}$ on each fiber (being the identity on the base).
The derivative $d(f^w\circ q^{\deg w})\colon \bb{T}(\wh{\fk{t}}^*/W)\to \bb{T}(\wh{\fk{t}}^*/W)$ composed with $(q^{\deg w}_{\rm{fib}})^{-1}$ induces a homomorphism
\begin{align}\label{eqn:defofaOTtW}
a_{\cal{O}_{\bb{T}(\wh{\fk{t}}^*/W)}}(w)\colon \cal{O}_{\bb{T}(\wh{\fk{t}}^*/W)}\to \cal{O}_{\bb{T}(\wh{\fk{t}}^*/W)}
\end{align}
Now we can state the main theorem of this subsection.
\begin{thm}\label{thm:H*HckWeil}
Assume that $G$ is semisimple and simply-connected.
When we endow $\cal{O}_{\bb{T}(\wh{\fk{t}}^*/W)}$ with the $W_{F}$-action defined by $a_{\cal{O}_{\bb{T}(\wh{\fk{t}}^*/W)}}$, the isomorphism
\[
H^*_{L^+_{\Spd C}G}(\Gr_G,\Qlb)\cong \cal{O}_{\bb{T}(\wh{\fk{t}}^*/W)}
\]
in Proposition \ref{prop:H*Hck} is Weil-equivariant.
\end{thm}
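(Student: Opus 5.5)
We will deduce the Weil-equivariance from the injective embedding of Proposition \ref{prop:H*Hckemb} together with the Weil-equivariant description of the torus cohomology in Corollary \ref{cor:H*TWequiv}. The point is that both sides of (\ref{eqn:H*Hck}) embed into $\prod_{\lambda}\Qlb[\wh{\fk{t}}]$, so it is enough to produce a $W_F$-action on this product which makes both embeddings equivariant.

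\textbf{Step 1: the action on the fixed-point side.} For $w\in W_F$, the action of $w$ on $\Gr_{G,\Spd C}$ normalizes $L^+_{\Spd C}T$ and acts on it through the Galois action on $T$. It therefore permutes the fixed points: it sends $[\lambda]$ to $[w(\lambda)]$, where $w(\lambda)$ is the usual Galois action on $\bb{X}_*(T_{\ol{F}})$. This should follow from the definition (\ref{eqn:[mu]}), since $w$ preserves the local generator $\xi$ up to a unit.

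Consequently, the pullback (\ref{eqn:H*Hckemb}) is $W_F$-equivariant once we equip the target $\prod_\lambda H^*_{L^+T}([\lambda])$ with the following action: $w$ permutes the factors via $\lambda\mapsto w(\lambda)$, and on each factor it acts through the isomorphism $[\lambda]\cong[\Spd C/L^+_{\Spd C}T]$. By Corollary \ref{cor:H*TWequiv}, that second action is $\rm{act}^{\rm{geom}}_{\wh{\fk{t}}}(w)$ scaled by $q^{-\deg w}$ in degree $2$.

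One subtlety needs care. The identification $[\lambda]\cong [\Spd C/L^+T]$ is compatible with $w$, because both are translates of the base point by $L_ST$ and the $W_F$-action is by group automorphisms of $L_ST$ covering the action on $\Spd C$.

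\textbf{Step 2: the action on the algebraic side.} We check that the composite (\ref{eqn:OTtWemb}) intertwines $a_{\cal{O}_{\bb{T}(\wh{\fk{t}}^*/W)}}(w)$ with the action described in Step 1. This is a direct computation with the formula $a\cdot t_1\cdots t_n\mapsto (a\langle\lambda,t_1\rangle\cdots\langle\lambda,t_n\rangle)_\lambda$:
\begin{itemize}
\item[(a)] The base coordinates of $\wh{\fk{t}}^*$ lie in degree $2$ and transform by $f^w\circ q^{\deg w}$. This matches the twist $(-1)$ on $\wh{\fk{t}}$ in Corollary \ref{cor:H*TWequiv}.
\item[(b)] The fiber coordinates lie in degree $0$ and evaluate at $\lambda$. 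The derivative $d(f^w\circ q^{\deg w})$ scales them by $q^{\deg w}$, and the correction $(q^{\deg w}_{\rm fib})^{-1}$ cancels this scaling. What remains is the pure permutation $\lambda\mapsto w(\lambda)$, because the degree $0$ part on the topological side is just the locally constant functions on the fixed points.
\end{itemize}
We must still confirm that $w$ acts on $\wh{\fk{t}}$ compatibly with its action on $\bb{X}_*(T_{\ol F})=\bb{X}^*(\wh T)$. This holds because $\rm{act}^{\rm{geom}}$ agrees with $\rm{act}^{\rm{alg}}$ on $\wh{T}$.

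\textbf{Step 3: conclusion.} After Steps 1 and 2, both $H^*_{L^+G}(\Gr_G)$ and $\cal{O}_{\bb{T}(\wh{\fk{t}}^*/W)}$ map injectively and $W_F$-equivariantly into the same target, and Proposition \ref{prop:H*Hckemb} says the two maps agree. Hence the isomorphism (\ref{eqn:H*Hck}) is $W_F$-equivariant.

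The limit over $\mu$ causes no trouble. The set of $W_F$-stable $\mu$ is cofinal, and the $W_F$-structure on $H^*$ was defined through this cofinal system in \S\ref{ssc:WeilonCohHck}.

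\textbf{Expected main obstacle.} The hardest point is Step 1: identifying rigorously how $w$ acts on the fixed points $[\lambda]$ and on the $L^+T$-equivariant cohomology of each of them. The action of $w$ on $\Hck_{G,\Spd C}$ is defined through $\Div^1\cong[\Spd C/W_F]$, not through a split model, so the isomorphism (\ref{eqn:HckGHckGspl}) is not $W_F$-equivariant.

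To handle this, I would work with the $L^+T$-equivariant cohomology of $\Gr_T\subset\Gr_G$. Here $\Gr_{T,\Spd C}$ is a disjoint union of copies of $\Spd C$ indexed by $\bb{X}_*(T_{\ol F})$, and the $W_F$-action on it can be read off directly from the definition of $L_ST$ for the non-split torus $T$. If this direct approach runs into difficulties, I would reduce to a single $\bb{G}_m$ via the characters $\alpha$, as in the proof of Corollary \ref{cor:H*TWequiv}.
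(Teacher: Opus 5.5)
Your proposal is correct and follows the paper's proof. Both arguments embed $H^*_{L^+_{\Spd C}G}(\Gr_G,\Qlb)$ into $\prod_{\lambda}\Qlb[\wh{\fk{t}}]$ via the $L^+_{\Spd C}T$-fixed points $[\lambda]$. The $W_F$-action there is described as permuting the $\lambda$'s and acting on each factor as in Corollary \ref{cor:H*TWequiv}; one checks by direct computation that (\ref{eqn:OTtWemb}) intertwines $a_{\cal{O}_{\bb{T}(\wh{\fk{t}}^*/W)}}$ with this action, and concludes from the injectivity and agreement in Proposition \ref{prop:H*Hckemb}; your Step 2 and your remarks on the fixed points and the cofinal $W_F$-stable $\mu$ just spell out what the paper calls ``a simple calculation.''
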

\begin{proof}
Similarly to the definition of $\alpha^w$, put $\lambda^w=w^{-1}\circ \lambda \circ w$ for $\lambda\in \bb{X}_*(T_{\ol{F}})$.
The cohomology
\[
H^*_{L^+_{\Spd C}T}\left(\coprod_{\lambda\in \bb{X}_*(T_{\ol{F}})}[\lambda],\Qlb\right)\cong \prod_{\lambda\in \bb{X}_*(T_{\ol{F}})} H^*_{L^+_{\Spd C}T}([\lambda],\Qlb)
\]
has an action of $W_{F}$, and the action of $w\in W_{F}$ splits into
\[
H^*_{L^+_{\Spd C}T}([\lambda],\Qlb)\to H^*_{L^+_{\Spd C}T}([\lambda^{w^{-1}}],\Qlb).
\]
By Corollary \ref{cor:H*T}, this induces a homomorphism 
\[
\Qlb[\wh{\fk{t}}]\to \Qlb[\wh{\fk{t}}],
\]
and by Corollary \ref{cor:H*TWequiv}, this is induced by the composition of the usual action of $w$ on $\wh{\fk{t}}$ and the multiplication of elements of $\wh{\fk{t}}$ by $q^{-\deg w}$.
Putting these together, we have an explicit description of a $W_{F}$-action on $\prod_{\lambda\in \bb{X}_*(T_{\ol{F}})} \Qlb[\wh{\fk{t}}]$ such that the isomorphism (\ref{eqn:prodisom}) is $W_{F}$-equivariant.
We denote this action by $a_{\prod \Qlb[\wh{\fk{t}}]}$.

By a simple calculation, we can show that the homomorphism (\ref{eqn:OTtWemb}) is $W_{F}$-equivariant with respect to $a_{\cal{O}_{\bb{T}(\wh{\fk{t}}^*/W)}}$ and $a_{\prod \Qlb[\wh{\fk{t}}]}$.
Then the theorem follows by Proposition \ref{prop:H*Hckemb}.
\end{proof}
By a similar argument, we have the following:
\begin{thm}\label{thm:H*GWeil}
When we endow $\cal{O}_{\wh{\fk{t}}^*/W}$ with the $W_{F}$-action defined by $a_{\cal{O}_{\wh{\fk{t}}^*/W}}$, the isomorphism
\[
H^*_{L^+_{\Spd C}G}(\Spd C,\Qlb)\cong \cal{O}_{\wh{\fk{t}}^*/W}
\]
in Corollary \ref{cor:H*G} is Weil-equivariant.
\end{thm}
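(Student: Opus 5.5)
The plan is to copy the proof of Theorem \ref{thm:H*HckWeil}, with the point $[\Spd C/L^+_{\Spd C}G]$ in place of the Hecke stack and the single fixed point in place of the $[\lambda]$'s. The key ingredient is the restriction map
\[
\mathrm{res}\colon H^*_{L^+_{\Spd C}G}(\Spd C,\Qlb)\to H^*_{L^+_{\Spd C}T}(\Spd C,\Qlb),
\]
induced by pullback along $[\Spd C/L^+_{\Spd C}T]\to[\Spd C/L^+_{\Spd C}G]$. By Lemma \ref{lemm:H*GtoH*T}, under the isomorphisms of Corollaries \ref{cor:H*G} and \ref{cor:H*T} this map is the injection $\cal{O}_{\wh{\fk{t}}^*/W}\hookrightarrow\cal{O}_{\wh{\fk{t}}^*}$. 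Since it is injective, it suffices to show two things: that $\mathrm{res}$ is $W_F$-equivariant for the geometric actions, and that the inclusion $\cal{O}_{\wh{\fk{t}}^*/W}\hookrightarrow \cal{O}_{\wh{\fk{t}}^*}=\rm{Sym}(\wh{\fk{t}})$ intertwines $a_{\cal{O}_{\wh{\fk{t}}^*/W}}$ with the action from Corollary \ref{cor:H*TWequiv}.

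For the first point, the $W_F$-action on both sides comes from the action of $W_F$ on $\Spd C$ over $\Div^1$. Here one must check that $L^+T\subset L^+G$ is compatible with this action. This fails literally when $T$ is non-split, because $w$ carries $T$ to itself only up to the Galois twist of the identification with $T^{\spl}$. The fix is to note that $T$ is defined over $F$, so $L^+_{\Spd C}T\to L^+_{\Spd C}G$ is the base change of a map over $\Div^1$. The morphism of classifying stacks is therefore $W_F$-equivariant, and the pullback is equivariant by functoriality. Lemma \ref{lemm:H*GtoH*T} itself was proved via pullbacks, so the identification of $\mathrm{res}$ is unaffected by this point.

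For the second point, I make the action explicit. By Corollary \ref{cor:H*TWequiv}, $w$ acts on the degree-2 generators $\wh{\fk{t}}$ by $\rm{act}^{\rm{geom}}_{\wh{\fk{t}}}(w)$ followed by multiplication by $q^{-\deg w}$. As a map of $\wh{\fk{t}}^*$, this is pullback along $\rm{act}^{\rm{geom}}_{\wh{\fk{t}}^*}(w)$ composed with scaling by $q^{\deg w}$. The dualization convention $\rm{act}^{\rm{geom}}_{\wh{\fk{g}}^*}(w)=(\rm{act}^{\rm{geom}}_{\wh{\fk{g}}}(w^{-1}))^*$ was chosen so that the left action on functions matches. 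The homogeneous scaling commutes with $W$, and $\rm{act}^{\rm{geom}}_{\wh{\fk{t}}^*}(w)$ normalizes the Weyl group action, since it comes from a pinned automorphism up to $\rm{Ad}_\chi$, which is trivial on $\wh{T}$. Hence both descend to $\wh{\fk{t}}^*/W$, and the descended map is exactly $f^w\circ q^{\deg w}$. The restriction of the $\rm{Sym}(\wh{\fk{t}}(-1)[-2])$-action to $W$-invariants is therefore $a_{\cal{O}_{\wh{\fk{t}}^*/W}}(w)$, and injectivity of $\mathrm{res}$ gives the equivariance.

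I expect the main obstacle to be the bookkeeping in the first point for non-split $G$. Specifically, one has to verify that the $W_F$-action on $H^*_{L^+T}$ described in Corollary \ref{cor:H*TWequiv}, via $\alpha\mapsto\alpha^w$, is the one induced through the same split-model identifications used for $H^*_{L^+G}$. I would handle this as in the proof of Corollary \ref{cor:H*TWequiv}, by tracking characters through the commutative squares for $w^*$. The sign and inverse conventions then need to be checked once, by comparing with the degree-2 generator $x$ of Lemma \ref{lemm:H*GmWequiv}.
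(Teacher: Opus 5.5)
Your proposal is correct and is the paper's own proof. The paper deduces the statement from Lemma \ref{lemm:H*GtoH*T} and Corollary \ref{cor:H*TWequiv}, using injectivity of $\cal{O}_{\wh{\fk{t}}^*/W}\hookrightarrow \rm{Sym}(\wh{\fk{t}})$ and its $W_F$-equivariance for $a_{\cal{O}_{\wh{\fk{t}}^*/W}}$; your additional checks (equivariance of the pullback because $T\subset G$ is defined over $F$, and that $\rm{act}^{\rm{geom}}_{\wh{\fk{t}}^*}(w)$ normalizes $W$) are details the paper leaves implicit, and your one slip is harmless: with the convention $\rm{act}^{\rm{geom}}_{\wh{\fk{t}}^*}(w)=(\rm{act}^{\rm{geom}}_{\wh{\fk{t}}}(w^{-1}))^*$, the action on $\rm{Sym}(\wh{\fk{t}})$ is pullback along $(q^{\deg w}\rm{act}^{\rm{geom}}_{\wh{\fk{t}}^*}(w))^{-1}$ rather than along $q^{\deg w}\rm{act}^{\rm{geom}}_{\wh{\fk{t}}^*}(w)$, so ``induced by $f^w\circ q^{\deg w}$'' must be read with the same convention.
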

\begin{proof}
This follows from Corollary \ref{cor:H*TWequiv} and Lemma \ref{lemm:H*GtoH*T}, since the canonical map $\cal{O}_{\wh{\fk{t}}^*/W}\to \cal{O}_{\wh{\fk{t}}^*}=\rm{Sym}(\wh{\fk{t}})$ is $W_{F}$-equivariant with respect to the $W_{F}$-action $a_{\cal{O}_{\wh{\fk{t}}^*/W}}$ on $\cal{O}_{\wh{\fk{t}}^*/W}$ and the Weil action on $\rm{Sym}(\wh{\fk{t}})$ as in Corollary \ref{cor:H*TWequiv}.
\end{proof}
\subsection{Cochain algebra of Hecke stack}
We equip $\cal{O}_{\bb{T}(\wh{\fk{t}}^*/W)}$ with a grading such that the coordinates of $\wh{\fk{t}}^*$ in the base direction have degree 1 and those in the fiber direction have degree 0.
Applying the shearing operation (i.e. the shift of grading defined in \cite[\S A.2]{AG} or \cite[\S 6]{BSV}), we get an object
\[
\cal{O}_{\bb{T}(\wh{\fk{t}}^*/W)}^{\shear}.
\]
This is a formal dg-algebra, such that the coordinates of $\wh{\fk{t}}^*$ in the base directions have cohomological degree 2 and those in the fiber direction have cohomological degree 0.
Equipping $\cal{O}_{\bb{T}(\wh{\fk{t}}^*/W)}$ with a $W_{F}$-action $a_{\cal{O}_{\bb{T}(\wh{\fk{t}}^*/W)}}$, we regard $\cal{O}_{\bb{T}(\wh{\fk{t}}^*/W)}^{\shear}$ as an object of $\DDRep(W_{F},\Qlb)$.
Similarly, we define
$\cal{O}_{\wh{\fk{t}}^*/W}^{\shear}\in \DDRep(W_{F},\Qlb)$ such that the coordinates of $\wh{\fk{t}}^*$ have cohomological degree 2.

In this section, we show that the dg-algebra $R\Gamma(\Hck_{G,\Spd C},\Qlb)$ is canonically isomorphic to the formal dg-algebra $\cal{O}_{\bb{T}(\wh{\fk{t}}^*/W)}^{\shear}$ when $G$ is semisimple and simply-connected.
First, we show that the $W_{F}$-action on $R\Gamma(\Hck_{G,\Spd C},\Qlb)$ factors through $W_F/I_E$.
Put
\[
\DDRep(W_{F}/I_E,\Qlb):=(\Vect_{\Qlb})^{W_F/I_E}.
\]
By the definition of $(-)^{W_F/I_E}$, 
the diagram
\[
\xymatrix{
W_F/I_E&W_F\ar[l]\\
1\ar[u]&I_E\ar[u]\ar[l]
}
\]
of groups induces a cartesian diagram
\[
\xymatrix{
\DDRep(W_{F}/I_E,\Qlb)\ar[r]\ar[d]&\DDRep(W_{F},\Qlb)\ar[d]\\
\Vect_{\Qlb}\ar[r]&\DDRep(I_E,\Qlb)
}
\]
of $\infty$-categories.
\begin{lemm}\label{lemm:RHckItriv}
The $I_E$-action on $R\Gamma(\Hck_{G,\Spd C},\Qlb)\in \rm{CAlg}(\DDRep(W_{F},\Qlb))$ is trivial.
That is, there is an essentially unique lift 
\[
R_{\Hck}'\in \rm{CAlg}(\DDRep(W_{F}/I_E,\Qlb))
\]
of $R\Gamma(\Hck_{G,\Spd C},\Qlb)\in \rm{CAlg}(\DDRep(W_{F},\Qlb))$.
\end{lemm}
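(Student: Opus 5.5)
Since the square above is cartesian, it suffices to show that the image of $R\Gamma(\Hck_{G,\Spd C},\Qlb)$ in $\rm{CAlg}(\DDRep(I_E,\Qlb))$ is equivalent, as a commutative algebra, to a trivial one, i.e.\ lies in the essential image of $\rm{CAlg}(\Vect_{\Qlb})\to \rm{CAlg}(\DDRep(I_E,\Qlb))$; essential uniqueness of the lift then follows once this functor is shown to be fully faithful on the relevant objects. The plan is to build the trivialization geometrically, as in part (1) of the proof of Lemma \ref{lemm:H*GmWequiv}. Since $G$ is unramified over $E$, we may replace $G$ by a model over $\cal{O}_{\breve{E}}$, so $\Hck_{G,\Spd C}$ is the generic fibre of $\Hck_{G,\Spd \cal{O}_C}$ defined over $\Spd\cal{O}_{\breve{E}}$. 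The $I_E$-action on $\Hck_{G,\Spd C}$ therefore extends to $\Hck_{G,\Spd\cal{O}_C}$, and its special fibre $\Hck_{G,\Spd k}$ carries the trivial $I_E$-action.

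First I would check that the pullbacks
\[
R\Gamma(\Hck_{G,\Spd k,\leq\mu},\Lambda)\leftarrow R\Gamma(\Hck_{G,\Spd\cal{O}_C,\leq\mu},\Lambda)\rightarrow R\Gamma(\Hck_{G,\Spd C,\leq\mu},\Lambda)
\]
are $I_E$-equivariant equivalences of $E_\infty$-algebras in $\cal{D}(\Rep^{\rm{sm}}(I_E,\Lambda))$ for torsion $\Lambda$. The underlying maps are equivalences because $i_{\leq\mu*}\Lambda$ is ULA and (\ref{eqn:equivCOCk}) is fully faithful; they are algebra maps because they are pullbacks, hence symmetric monoidal on cochains; and they are equivariant because they are induced by $I_E$-equivariant maps of stacks. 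It is cleaner to phrase this inside sheaves on the quotients $[\Spd\cal{O}_C/I_E]$ and $[\Spd k/I_E]$, using Lemma \ref{lemm:SpdkIFSpdOCIFequiv} and (\ref{eqn:DlcSpdEbreveDRepIEeqiv}). Pushing forward to the base yields algebra objects of $\cal{D}_{\rm{lc}}$ there, and on $[\Spd k/I_E]$ the pushforward is pulled back from $\Spd k$ along $[\Spd k/I_E]\to\Spd k$, so it is trivial as an algebra.

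Next I would pass to $\ZZ_\ell$ by taking limits over $\ell^i$, then take truncations $\tau^{\leq n}$ and tensor with $\Qlb$, exactly as in (\ref{eqn:RHckcalculation}). Finally I would take the limit over $\mu$ (equivalently colimit over $n$), all compatibly with the trivializations. Naturality of the above equivalences in $\mu$ and in the coefficients makes the trivializations compatible with the transition maps.

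The main obstacle is coherence: $R\Gamma$ is an $E_\infty$-algebra, and the trivialization must be an equivalence in $\rm{CAlg}$ rather than just on cohomology. Moreover, the truncations $\tau^{\leq n}$ are not algebra objects. I would handle both by never trivializing level by level. Instead I would define $R\Gamma$ as the global sections of the lax symmetric monoidal pushforward along $\Hck_{G,\Spd\cal{O}_C}\to[\Spd\cal{O}_C/I_E]$, so that the equivalences above are equivalences of functors valued in algebras. The truncations would then only be used to verify, objectwise, that the underlying limit lands in $\DDRep(I_E,\Qlb)$.
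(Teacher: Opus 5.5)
This is essentially the paper's proof. After passing to the split model, the $I_E$-action extends to $\Hck_{G,\Spd\cal{O}_C}$ and is trivial on the special fibre $\Hck_{G,\Spd k}$, and the pullback equivalences of Proposition~\ref{prop:DULAequiv} identify $R\Gamma(\Hck_{G,\Spd C},\Qlb)$ with $R\Gamma(\Hck_{G,\Spd k},\Qlb)$ in $\rm{CAlg}(\DDRep(I_E,\Qlb))$. The paper does this in a few lines and leaves implicit the coherence points you spell out (algebra structure, truncations, limits over $\mu$ and $\ell^i$).

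One caution concerns your remark on essential uniqueness, which the paper does not argue either. The functor $\rm{CAlg}(\Vect_{\Qlb})\to\rm{CAlg}(\DDRep(I_E,\Qlb))$ is not fully faithful on these objects. The reason is that $\rm{Ext}^1_{I_E}(\Qlb,\Qlb)=H^1(I_E,\Qlb)\neq 0$ contributes extra classes to $\pi_1$ of the mapping spaces, pairing the degree-$2$ generators with $H^0=\Qlb$. So that route gives at most uniqueness up to non-unique isomorphism; all that is used later is the lift produced geometrically.
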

\begin{proof}
Since $G_{\breve{E}}$ is split and we only consider an $I_E$-action, we can assume that $G=G^{\spl}_F$.
We abusively write $G$ for $G^{\spl}$.
Since $\Spd \cal{O}_C\to \Spd \cal{O}_{\breve{E}}$ factors through the stack $[(\Spd \cal{O}_C)/I_E]$, the $I_E$-action on $\Hck_{G,\Spd C}$ extends to a $I_E$-action $\Hck_{G,\Spd \cal{O}_C}$ and restricts to a $I_E$-action $\Hck_{G,\Spd k}$, which is trivial.
Therefore, by Proposition \ref{prop:DULAequiv}, we have
\[
R\Gamma(\Hck_{G,\Spd C},\Qlb)\simeq R\Gamma(\Hck_{G,\Spd k},\Qlb)\in \rm{CAlg}(\DDRep(I_E))
\]
and the $I_E$-action on $R\Gamma(\Hck_{G,\Spd k},\Qlb)$ is trivial.
\end{proof}
From this, we can show that $R\Gamma(\Hck_{G,\Spd C},\Qlb)\in \rm{CAlg}(\DDRep(W_{F},\Qlb))$ is isomorphic to $\cal{O}_{\bb{T}(\wh{\fk{t}}^*/W)}^{\shear}$ by the following purity argument:
\begin{prop}\label{prop:RHckequiv}
Assume that $G$ is semisimple and simply-connected.
There is a canonical equivalence
\[
R\Gamma(\Hck_{G,\Spd C},\Qlb)\simeq \cal{O}_{\bb{T}(\wh{\fk{t}}^*/W)}^{\shear}
\]
in $\rm{CAlg}(\DDRep(W_{F},\Qlb))$.
\end{prop}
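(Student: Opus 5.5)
By Lemma \ref{lemm:RHckItriv}, it suffices to work with the lift $R'_{\Hck}\in \rm{CAlg}(\DDRep(W_F/I_E,\Qlb))$. Here $W_F/I_E$ is virtually $\bb{Z}$: it is an extension of $\bb{Z}$ by the finite group $I_F/I_E$. So the plan is to show that $R'_{\Hck}$ is formal, by a weight (purity) argument for a Frobenius lift $\sigma$. The formal algebra is then identified with its cohomology ring. By Proposition \ref{prop:H*Hck} and Theorem \ref{thm:H*HckWeil}, that cohomology ring is $\cal{O}_{\bb{T}(\wh{\fk{t}}^*/W)}$ with the action $a_{\cal{O}_{\bb{T}(\wh{\fk{t}}^*/W)}}$. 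After regrading, this is exactly $\cal{O}_{\bb{T}(\wh{\fk{t}}^*/W)}^{\shear}$.

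\emph{Step 1: weights.} By Theorem \ref{thm:H*HckWeil}, $\sigma$ acts on $H^{2i}(\Hck_{G,\Spd C},\Qlb)$ through $a(\sigma)$. Base coordinates of degree $2$ pick up the factor $q^{-1}$ from the twist $q^{\deg w}$, while fiber coordinates of degree $0$ pick up no power of $q$: the derivative and the inverse fiber scaling cancel. The action of $\rm{act}^{\rm{geom}}$ factors through a finite group. Hence some power $\sigma^N$ acts semisimply on $H^{2i}$ with all eigenvalues equal to $q^{-Ni}$, and the odd cohomology vanishes. Since $q^{-Ni}$ determines $i$, the generalized eigenspace decomposition for $\sigma^N$ singles out each cohomological degree.

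\emph{Step 2: formality from weights.} Consider the category $\DDRep(W_F/I_E,\Qlb)$. Let $\sigma^N$ generate a finite-index central subgroup $\bb{Z}\subset W_F/I_E$; by shrinking $N$ if necessary, we can arrange that it is central. I would decompose $\DDRep(W_F/I_E,\Qlb)$ by generalized eigenvalues of $\sigma^N$. This is possible because the endomorphism $\sigma^N-\lambda$ of the identity functor is natural, and objects with bounded cohomology are finite-dimensional in each degree. The decomposition is compatible with the tensor product, since eigenvalues multiply. Let $\cal{C}_{\rm{pure}}$ be the full subcategory of objects whose weight-$q^{-Ni}$ summand is concentrated in degree $2i$ and on which $\sigma^N$ acts semisimply.

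The key point is that $\cal{C}_{\rm{pure}}$ is equivalent, as a symmetric monoidal category, to graded $\Rep(W_F/I_E)$ with semisimple $\sigma^N$, via $M\mapsto \bigoplus_i H^{2i}(M)[-2i]$. The reason is that $\Ext^1$ between two weight-$q^{-Ni}$ pieces placed in different degrees vanishes when $i\ne j$, and morphisms between different weights vanish. Semisimplicity can be arranged by passing to the generalized eigenspace; it is not needed for the splitting itself. Applying this equivalence to $\rm{CAlg}$ gives formality of $R'_{\Hck}$. Since $R\Gamma$ is a colimit of truncated pieces by (\ref{eqn:RHckcalculation}), I would do the splitting degreewise and pass to the colimit; the canonical truncation triangles split uniquely.

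\emph{Main obstacle.} The hardest step is making Step 2 rigorous at the level of $E_\infty$-algebras in the $\infty$-category $\DDRep(W_F/I_E,\Qlb)$, which is defined as homotopy invariants. The first thing I would try is to write $\DDRep(W_F/I_E,\Qlb)$ as modules over the group algebra of the virtually-$\bb{Z}$ group. I would then show that the functor $\bigoplus_i H^{2i}[-2i]$ on $\cal{C}_{\rm{pure}}$ is symmetric monoidal and an equivalence onto its image, since that image is a full subcategory in which all mapping spaces are discrete in the relevant range. The comparison isomorphism on cohomology rings is then Proposition \ref{prop:H*Hck} together with Theorem \ref{thm:H*HckWeil}, which is where canonicity comes from. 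Finally, I would pull back along $\DDRep(W_F/I_E)\to\DDRep(W_F)$ to obtain the statement.
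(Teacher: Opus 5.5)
Your proposal is correct, but it handles the $E_\infty$-structure by a different route than the paper. The paper also starts from the weight vanishing $H^j(W_F/I_E,V)=0$ for $V$ pure of nonzero weight, which it uses to split the truncations of $R'_{\Hck}$ non-canonically. It then exploits the fact that $\wh{\fk{t}}^*/W$ is an affine space, so that
\[
\cal{O}^{\shear}_{\bb{T}(\wh{\fk{t}}^*/W)}\cong\Sym\bigl(\textstyle\bigoplus_i V_i(-d_i)[-2d_i]\bigr)
\]
with Weil-stable generator spaces $V_i$. Each $V_i(-d_i)[-2d_i]\to H^{2d_i}(R'_{\Hck})[-2d_i]$ lifts uniquely to $R'_{\Hck}$, because the fiber of the lifting problem is contractible by weights. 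The universal property of the free commutative algebra then gives an algebra map, and the paper checks it is an isomorphism on cohomology, using that derived $\Sym$ agrees with classical $\Sym$ over $\Qlb$.

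You instead prove a structural statement: the full subcategory of objects with odd cohomology zero and $H^{2i}$ pure of weight $2i$ has discrete mapping spaces. This needs exactly two facts. First, $R\Hom$ between pieces of different weight vanishes; this is the paper's (\ref{eqn:gcohvanish}). Second, $R\Hom$ between same-weight pieces in the same degree is group cohomology and hence sits in degrees $\geq 0$. The subcategory is therefore a symmetric monoidal $1$-category, with no Koszul signs since everything is in even degrees. Its $\rm{CAlg}$ consists of ordinary graded commutative algebras, so $R'_{\Hck}$ is determined by its cohomology ring.

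What your route buys: it needs neither the polynomial structure nor a choice of generators. It also makes ``canonical'' transparent, since the equivalence is the unique one inducing the isomorphism of Theorem \ref{thm:H*HckWeil} on cohomology. In the paper's construction, by contrast, the map a priori depends on the chosen generators. What the paper's route buys: it avoids the general facts about symmetric monoidal $\infty$-categories with discrete mapping spaces and their commutative algebra objects, needing only the universal property of free commutative algebras and a cohomology computation.

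Two minor corrections.
\begin{itemize}
\item You do not need to decompose all of $\DDRep(W_F/I_E,\Qlb)$ by generalized eigenvalues of $\sigma^N$. That decomposition fails for general objects; for instance, $\Qlb[W_F/I_E]$ is free of finite rank over $\Qlb[\sigma^{\pm N}]$. It suffices to define your pure subcategory by conditions on cohomology.
\item To make $\sigma^N$ central you must replace $N$ by a multiple of it, not shrink it.
\end{itemize}
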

\begin{proof}
It suffices to establish an equivalence $R_{\Hck}'\simeq \cal{O}_{\bb{T}(\wh{\fk{t}}^*/W)}^{\shear}$ in $\rm{CAlg}(\DDRep(W_F/I_E,\Qlb))$.
By Theorem \ref{thm:H*HckWeil}, there is a canonical isomorphism between $\bigoplus_i H^i(R_{\Hck}')$ and $\cal{O}_{\bb{T}(\wh{\fk{t}}^*/W)}^{\shear}$.
Put $H^i=H^i(\cal{O}_{\bb{T}(\wh{\fk{t}}^*/W)}^{\shear})$.
Note that $H^i=0$ for odd $i$ and that $H^i$ is pure of Frobenius weight $i$.
If an object $V\in \Rep^{\rm{f.d.}}(W_{F}/I_E,\Qlb)$ is pure of weight $w\neq 0$, then the group cohomologies vanish:
\begin{align}\label{eqn:gcohvanish}
H^j(W_{F}/I_E,V)=0
\end{align}
for any $j$.
From this, we have
\begin{align*}
\rm{Ext}^j_{\DDRep(W_{F}/I_E,\Qlb)}\left(H^i[-i],\bigoplus_{i'<i}H^{i'}[-i']\right)=0
\end{align*}
for any $i$ and $j$.
Thus, it follows by induction that the space of isomorphism between $\tau_{\leq i} R'_{\Hck}$ and $\bigoplus_{i'\leq i}H^{i'}[-i']$ in $\DDRep^{\perf}(W_{F}/I_E))$ is non-empty:
\begin{align}\label{eqn:R'Hcknoncan}
\rm{Isom}_{\DDRep(W_{F}/I_E,\Qlb)}\left(\tau_{\leq i} R'_{\Hck}, \bigoplus_{i'\leq i}H^{i'}[-i']\right)\neq \varnothing,
\end{align}
where $\tau_{\leq i}$ and $\tau_{\geq i}$ are the truncation functors with respect to the standard $t$-structure.
Since $\wh{\fk{t}}^*/W$ is isomorphic to an affine space, there is an isomorphism of the form
\[
\cal{O}_{\bb{T}(\wh{\fk{t}}^*/W)}^{\shear}\cong \Sym\left(\bigoplus_{i=1}^r V_i(-d_i)[-2d_i]\right)
\]
for some pure $V_i\in\Rep^{\rm{f.d.}}(W_{F}/I_E)$ of weight 0 and $0<d_1<\cdots <d_r\in \bb{Z}$.
Let $f$ be the composition
\begin{align*}
V_i(-d_i)[-2d_i]\subset  H^{2d_i}[-2d_i]\cong H^{2d_i}(R'_{\Hck})[-2d_i].
\end{align*}
This defines a point in $Map_{\DDRep^{\perf}(W_{F}/I_E))}(V_{i}(-d_i)[-2d_i], \tau_{\geq 2d_i}\tau_{\leq 2d_i}R_{\Hck}').$
Moreover, the canonical map
\begin{align*}
&Map_{\DDRep(W_{F}/I_E,\Qlb)}(V_{i}(-d_i)[-2d_i], \tau_{\leq 2d_i}R_{\Hck}')\\
&\to Map_{\DDRep(W_{F}/I_E,\Qlb)}(V_{i}(-d_i)[-2d_i], \tau_{\geq 2d_i}\tau_{\leq 2d_i}R_{\Hck}')    
\end{align*}
is a homotopy equivalence since the fibers are equivalent to
\[
Map_{\DDRep(W_{F}/I_E,\Qlb)}\left(V_{i}(-d_i)[-2d_i], \bigoplus_{i'<2d_i}H^{i'}[-i']\right),
\]
which is contractible by (\ref{eqn:gcohvanish}).
Therefore, $f$ has a unique lift $\wt{f}$ in 
\[
Map_{\DDRep(W_{F}/I_E,\Qlb)}(V_{i}(-d_i)[-2d_i], \tau_{\leq d_i}R_{\Hck}')
\]
up to contractible ambiguity.
Thus we obtain a map
\[
V_{i}(-d_i)[-2d_i]\overset{\wt{f}}{\to}  \tau_{\leq d_i}R_{\Hck}' \to R_{\Hck}'
\]
in $\DDRep(W_{F}/I_E,\Qlb)$.
This defines a map
\[
\cal{O}_{\bb{T}(\wh{\fk{t}}^*/W)}^{\shear}=\Sym\left(\bigoplus_{i=1}^r V_i(-d_i)[-2d_i]\right)\to R_{\Hck}'
\]
in $\DDRep(W_{F}/I_E,\Qlb)$.
Note that, by \cite[Construction 3.1.3.9, Example 3.1.3.14]{LurHA}, a symmetric algebra $\Sym(V)$ in the derived sense is of the form $\bigoplus_{n}\Sym^n(V)$ where $\Sym^n(V)$ is the homotopy colimit of a diagram indexed by $N(B\Sigma_n)$ that takes the object to $V^{\otimes n}$ and acts by permuting the factors.
(Here, $N(B\Sigma_n)$ is the nerve of the groupoid $B\Sigma_n$, which has a single object and whose automorphism group is the symmetric group $\Sigma_n$.)
Since the group homology of a finite group vanishes in nonzero degrees over $\Qlb$, this coincides with the classical symmetric algebra regarded as a formal dg-algebra.
Therefore, the above homomorphism induces isomorphisms on their cohomologies by construction, and hence it is an equivalence.
\end{proof}
By the same argument, we can show the following: 
\begin{prop}\label{prop:RBL+Gequiv}
There is a canonical equivalence
\[
R\Gamma([\Spd C/L^+_{\Spd C}G],\Qlb)\simeq \cal{O}_{\wh{\fk{t}}^*/W}^{\shear}
\]
in $\rm{CAlg}(\DDRep(W_{F},\Qlb))$.
\end{prop}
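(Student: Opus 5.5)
The plan is to repeat the proof of Proposition \ref{prop:RHckequiv}, replacing $\Hck_{G,\Spd C}$ by the closed substack $[\Spd C/L^+_{\Spd C}G]$ (whose unit object $i_{0*}\Qlb$ is ULA), $\cal{O}_{\bb{T}(\wh{\fk{t}}^*/W)}$ by $\cal{O}_{\wh{\fk{t}}^*/W}$, Theorem \ref{thm:H*HckWeil} by Theorem \ref{thm:H*GWeil}, and Proposition \ref{prop:H*Hck} by Corollary \ref{cor:H*G}. Since no limit over $\mu$ is involved, the semisimple simply-connected hypothesis is unnecessary. First we regard $R\Gamma([\Spd C/L^+_{\Spd C}G],\Qlb)$ as a commutative algebra in $\DDRep(W_F,\Qlb)$, exactly as in \S\ref{ssc:WeilonCohHck}: truncations $\tau^{\leq n}$ of the $\ZZ_\ell$-cochains are perfect, so by Corollary \ref{cor:DReperfcharacterizationOL} they give objects of $\DDRep^{\perf}(I_E,\ZZ_\ell)$. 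The Weil-invariance then comes from the $W_F$-action on $\Hck_{G,\Spd C}$, which preserves the substack $[\Spd C/L^+_{\Spd C}G]$.

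Next we show, as in Lemma \ref{lemm:RHckItriv}, that the $I_E$-action is trivial. We reduce to $G=G^{\spl}_F$ and extend the $I_E$-action to $[\Spd \cal{O}_C/L^+_{\Spd\cal{O}_C}G]$. Its restriction to $[\Spd k/L^+_{\Spd k}G]$ is trivial, and the restriction maps to the two fibers are equivalences by Proposition \ref{prop:DULAequiv} (or directly by the argument of Corollary \ref{cor:H*G}). This gives a lift $R'\in\rm{CAlg}(\DDRep(W_F/I_E,\Qlb))$.

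Then we run the purity argument. By Theorem \ref{thm:H*GWeil}, $H^*(R')\cong\cal{O}_{\wh{\fk{t}}^*/W}^{\shear}$ Weil-equivariantly. Under the action $a_{\cal{O}_{\wh{\fk{t}}^*/W}}$, degree $2d$ is pure of Frobenius weight $2d$, since generators of $\wh{\fk{t}}$ carry the twist $(-1)$ by Corollary \ref{cor:H*TWequiv}, and odd degrees vanish. By Chevalley's theorem we may choose homogeneous invariant generators spanning $W_F/I_E$-stable subspaces, which gives
\[
\cal{O}_{\wh{\fk{t}}^*/W}^{\shear}\cong\Sym\Bigl(\bigoplus_i V_i(-d_i)[-2d_i]\Bigr)
\]
with each $V_i$ of weight $0$. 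The vanishing (\ref{eqn:gcohvanish}) of $W_F/I_E$-cohomology for nonzero weights kills the obstruction and ambiguity groups $\rm{Ext}^j(V_i(-d_i)[-2d_i],\bigoplus_{i'<2d_i}H^{i'}[-i'])$. Hence each inclusion $V_i(-d_i)[-2d_i]\to H^{2d_i}(R')[-2d_i]$ lifts, uniquely up to a contractible space, to a map into $R'$. By the universal property of the free commutative algebra, these lifts give an algebra map $\Sym(\bigoplus V_i(-d_i)[-2d_i])\to R'$. Since $\Sym$ computed over $\Qlb$ agrees with the classical symmetric algebra (finite group homology vanishes in characteristic $0$), this map is an isomorphism on cohomology and therefore an equivalence.

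The main point needing care is the choice of a $W_F/I_E$-stable graded generating subspace. The group $W_F/I_E$ acts on $\wh{\fk{t}}$ through a finite group commuting with $W$, up to the scalar $q^{-\deg w}$, so the invariant polynomials of each degree form a semisimple representation. We can therefore take $V_i$ to be an equivariant complement to the decomposables in each degree. Canonicity also needs a remark: the resulting equivalence does not depend on the choice of complements, because any two choices differ by decomposables, and the induced maps agree on cohomology.
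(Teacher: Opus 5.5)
Your proposal is correct and follows the paper's route. The paper proves this proposition only by saying it follows ``by the same argument'' as Proposition \ref{prop:RHckequiv}: $I_E$-triviality by specializing to $\Spd k$, Theorem \ref{thm:H*GWeil} for the Weil-equivariant cohomology ring, and the purity argument that lifts $W_F/I_E$-stable generators to an algebra map out of $\Sym$. Your additions — choosing the generating complements by semisimplicity, and independence of choices (valid here because purity makes $\rm{Map}(V_i(-d_i)[-2d_i],R')$ discrete, so algebra maps out of $\Sym$ are determined on cohomology) — fill in details the paper leaves implicit.
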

\section{Derived Satake equivalence over $\Spd C$}\label{sec:derSatoverSpdC}
In this section, we discuss the derived Satake equivalence (\ref{eqn:mainSpdC}) over $\Spd C$.

We have the following three methods to prove this:
\begin{enumerate}
\item \label{item:Banmethod}
Just by composing the equal characteristic derived Satake equivalence and the equivalence in Proposition \ref{prop:DULAequiv}. This is the method of \cite{Ban}. 

This approach is easy, but the relation to $W_{F}$-actions is not clear.
\item Assume that $G$ is semisimple and simply-connected (since the general case follows from this), and construct a fully faithful functor
\[
\wt{S}_{qc}\colon \cal{O}_{\wh{\fk{g}}^*}^{\shear}\lmod^{fr}(\cal{D}(\Rep(\wh{G},\Qlb)))\to \cal{D}^{\ULA}(\Hck_{G,\Spd C},\Qlb)
\]
with some conditions, by using a similar functor in equal characteristic constructed in \cite[Theorem 4]{BF} and Proposition \ref{prop:DULAequiv}.
Then we can show the desired equivalence by the same argument as \cite[\S 6.6]{BF}.

This approach can make the relation to the $W_{F}$-action transparent, but the following method (\ref{item:3rdmethod}) is simpler.
\label{item:BFmethod}
\item 
Use the fact that both sides of (\ref{eqn:mainSpdC}) are generated by semisimple objects under extensions, and define the desired equivalence (\ref{eqn:mainSpdC}) directly, using an argument parallel to the definition of $\wt{S}_{qc}$.
More precisely, it is as follows:
In the construction of $\wt{S}_{qc}$ in the method (\ref{item:BFmethod}), we used full subcategories consisting of semisimple objects, such as \[
\cal{O}_{\wh{\fk{g}}^*}^{\shear}\lmod^{fr}(\cal{D}(\Rep(\wh{G},\Qlb)))\subset \cal{O}_{\wh{\fk{g}}^*}^{\shear}\lmod^{\perf}(\cal{D}(\Rep(\wh{G},\Qlb)))
\]
or $\cal{IC}\subset \cal{D}^{\ULA}(\Hck_{G,\Spd C},\Qlb)$ (defined later).
However, since these subcategories generate the whole categories under extensions, the propositions appearing in the method (\ref{item:BFmethod}) extend to the entire categories.
This allows us to construct the equivalence (\ref{eqn:mainSpdC}) directly, analogously to the definition of $\wt{S}_{qc}$.

This method can be applied to the equal characteristic case, and replacing the arguments in \cite[\S 6.5, \S 6.6]{BF} of Bezrukavnikov--Finkelberg with a simpler one.
\label{item:3rdmethod}
\end{enumerate}
We only use the method (\ref{item:3rdmethod}).
By construction, we can easily show that the equivalences obtained by (\ref{item:Banmethod}) and (\ref{item:3rdmethod}) agree.
The procedure of the method (\ref{item:3rdmethod}) is as follows:
In \S \ref{sssc:derSatSpdCsssc:Sqc}, assuming semisimplicity and simple connectedness of $G$, we will first define a functor
\[
R\Gamma\colon \cal{D}^{\ULA}(\Hck_{G,\Spd C},\Qlb)\to \cal{O}_{\bb{T}(\wh{\fk{t}}^*/W)}^{\shear}\lmod
\]
using the cohomology of the Hecke stack.
We will also define a purely algebraic functor, called Kostant functor
\[
\kappa\colon \Coh([\wh{\fk{g}}^*/\wh{G}])^{\shear}\to \cal{O}_{\bb{T}(\wh{\fk{t}}^*/W)}^{\shear}\lmod.
\]
Then we will prove that $R\Gamma$ and $\kappa$ are fully faithful and have the same essential images.
This implies the desired equivalence $\cal{S}^{der}_{\Spd C}$.
In \S \ref{sssc:monoidalonSqc}, we will show that $R\Gamma$ and $\kappa$ have a monoidal structure, and hence so does $\cal{S}^{der}_{\Spd C}$.
In \S \ref{sssc:compatimonoidalnonderived}, we will treat compatibility with the non-derived Satake equivalence.
In \S \ref{ssc:toruscase}, we will prove the desired equivalence for tori, and in \S \ref{ssc:generalcase} for general $G$, using a central isogeny to $G$ from a product of a torus and a semisimple and simply-connected group.
\subsection{Non-derived Satake functor}
The (non-derived) geometric Satake equivalence (\ref{eqn:nonderSatSpdCLambda}) over $\Spd C$ defines a monoidal functor
\[
\Rep^{\rm{f.d.}}(\wh{G},\Qlb)\to \cal{D}^{\ULA}(\Hck_{G,\Spd C},\Qlb).
\]
By \cite[Corollary 7.4.12]{BCKW}, this canonically extends to a monoidal functor
\[
\cal{S}_{\Spd C}:=\cal{S}_{G,\Spd C}\colon \cal{D}^b(\Rep^{\rm{f.d.}}(\wh{G},\Qlb))\to \cal{D}^{\ULA}(\Hck_{G,\Spd C},\Qlb)
\]
of monoidal stable $\infty$-categories.
Let 
\[
\cal{IC}\subset \cal{D}^{\ULA}(\Hck_{G,\Spd C},\Qlb)
\]
denote the essential image of $\cal{S}_{\Spd C}$.
This is the full subcategory consisting of objects of the form $\ds\bigoplus_{i=1}^n A_i[d_i]$ with $n\in \bb{Z}_{\geq 0}$, $d_i\in \bb{Z}$, and $A_i\in \Sat(\Hck_{G,\Spd C},\Qlb)$.
\begin{lemm}\label{lemm:genbyIC}
The category $\cal{D}^{\ULA}(\Hck_{G,\Spd C},\Qlb)$ is generated by $\cal{IC}$ under extensions.
\end{lemm}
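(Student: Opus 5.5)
We need to show that every object of $\cal{D}^{\ULA}(\Hck_{G,\Spd C},\Qlb)$ is an iterated extension of objects of $\cal{IC}$. The plan is to transport the question along the monoidal equivalences of Proposition \ref{prop:DULAequiv} to the equal characteristic Hecke stack $\Hck^{\eq}_{G^{\spl},\Spec k}$. There it becomes a statement about perverse sheaves.

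First, observe that the equivalences in Proposition \ref{prop:DULAequiv} are exact functors of stable $\infty$-categories, so they preserve extensions. By Lemma \ref{lemm:SatPervequiv}(1), they restrict to equivalences $\Sat(\Hck_{G,\Spd C},\Qlb)\simeq \Perv(\Hck^{\eq}_{G^{\spl},\Spec k},\Qlb)$. They also commute with shifts and finite direct sums. Hence $\cal{IC}$ corresponds to the full subcategory of finite sums of shifted equivariant perverse sheaves. It therefore suffices to show that $\cal{D}^{\ULA}(\Hck^{\eq}_{G^{\spl},\Spec k},\Qlb)$ is generated under extensions by shifts of objects of $\Perv(\Hck^{\eq}_{G^{\spl},\Spec k},\Qlb)$.

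Second, take an object $A$ of $\cal{D}^{\ULA}(\Hck^{\eq}_{G^{\spl},\Spec k},\Qlb)$. By definition, $A$ lies in $\cal{D}_{\rm{cons}}(\Hck^{\eq}_{G^{\spl},\Spec k,\leq\mu},\Qlb)$ for some $\mu$. By Lemma \ref{lemm:DULAschemeboundedperverse}, $A$ is bounded for the perverse $t$-structure. Induction on the perverse amplitude, using the fiber sequences
\[
{}^p\tau^{\leq n-1}A\to {}^p\tau^{\leq n}A\to {}^p\cal{H}^n(A)[-n],
\]
then writes $A$ as an iterated extension of the shifted objects ${}^p\cal{H}^n(A)[-n]$. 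These lie in the heart, which is $\Perv(\Hck^{\eq}_{G^{\spl},\Spec k},\Qlb)$. For this step to work, the perverse truncations must preserve the ULA (equivalently constructible, equivariant) subcategory. This holds because the perverse $t$-structure on the Hecke stack is defined through the pullback to $\Gr^{\eq}_{G^{\spl},\Spec k,\leq\mu}$, which is smooth up to a shift. Truncations of equivariant constructible complexes are therefore again equivariant and constructible.

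Finally, each object of $\Perv(\Hck^{\eq}_{G^{\spl},\Spec k},\Qlb)$ is a finite direct sum of $IC_\lambda$'s, since this category is semisimple. Transported back, it is a Satake sheaf, so it lies in $\cal{IC}$. This finishes the argument.

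The main obstacle is the compatibility between $\Perv(\Hck^{\eq})$, taken as the heart of the $t$-structure on the stack, and the ULA category. In particular one needs that the heart equals the equivariant perverse sheaves, so that Lemma \ref{lemm:SatPervequiv}(1) applies. I would handle this by working on the finite-type quotient stacks $X^{(m)}_{\leq\mu}$. There the equivariant derived category is the honest derived category of the stack and the perverse $t$-structure is standard.
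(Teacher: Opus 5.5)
Your proposal is correct and follows the paper's proof. Both reduce via Lemma \ref{lemm:SatPervequiv} to a scheme-theoretic Hecke stack, then use perverse boundedness (Lemma \ref{lemm:DULAschemeboundedperverse}) to write every object as an iterated extension of shifted perverse sheaves. The differences are cosmetic: the paper transports to the Witt-vector Hecke stack rather than the equal characteristic one (either works by Lemma \ref{lemm:SatPervequiv}(1)), and your appeal to semisimplicity of the perverse category is unnecessary, since $\cal{IC}$ already contains all shifts of objects of $\Sat(\Hck_{G,\Spd C},\Qlb)$.
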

\begin{proof}
It suffices to show that $\cal{D}^{\ULA}(\Hck_{G,\Spd C},\Qlb)$ is generated by the objects in $\Sat(\Hck_{G,\Spd C},\Qlb)$ under extensions and shifts.
By Lemma \ref{lemm:SatPervequiv}, it suffices to show that $\cal{D}^{\ULA}(\Hck^{\Witt}_{G,\Spec k},\Qlb)$ is generated by perverse sheaves under extensions and shifts.
This follows from Lemma \ref{lemm:DULAschemeboundedperverse}.
\end{proof}
\subsection{Semisimple and simply-connected case}\label{ssc:derSatSpdCsssc}
\subsubsection{Construction of derived Satake equivalence} \label{sssc:derSatSpdCsssc:Sqc}
For any $\cal{F}\in \cal{D}^{\ULA}(\Hck_{G,\Spd C},\Qlb)$, the cohomology $R\Gamma(\Hck_{G,\Spd C},\cal{F})$ is a dg-module over the dg-algebra $\cal{O}_{\bb{T}(\wh{\fk{t}}^*/W)}^{\shear}$ by Proposition \ref{prop:RHckequiv}.
In fact, the same proposition lifts $R\Gamma$ to a functor
\begin{align}\label{eqn:defofH*}
R\Gamma \colon \cal{D}^{\ULA}(\Hck_{G,\Spd C},\Qlb)\to \cal{O}_{\bb{T}(\wh{\fk{t}}^*/W)}^{\shear}\lmod,
\end{align}
where the target is obtained by applying the shearing operation to the derived category of coherent sheaves on $\bb{T}(\wh{\fk{t}}^*/W)$.
\begin{lemm}\label{lemm:RGammafullyfaithful}
The functor $R\Gamma$ is fully faithful.
\end{lemm}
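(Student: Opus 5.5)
The plan is to reduce to the subcategory $\cal{IC}$ and then to the equal characteristic case treated in \cite{BF}. By Lemma \ref{lemm:genbyIC}, $\cal{D}^{\ULA}(\Hck_{G,\Spd C},\Qlb)$ is generated by $\cal{IC}$ under extensions, and $R\Gamma$ is an exact functor of stable $\infty$-categories. A five-lemma argument on mapping spectra then shows the following: if
\[
\Hom(\cal{F},\cal{G})\to \Hom_{\cal{O}_{\bb{T}(\wh{\fk{t}}^*/W)}^{\shear}}(R\Gamma\cal{F},R\Gamma\cal{G})
\]
is an equivalence for $\cal{F},\cal{G}\in\cal{IC}$, it is an equivalence for all $\cal{F},\cal{G}$. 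One fixes $\cal{F}$, uses the fiber sequences in the variable $\cal{G}$, and then does the same in the variable $\cal{F}$. So the whole statement reduces to full faithfulness on $\cal{IC}$. Since $\cal{IC}$ consists of finite sums of shifts of objects of $\Sat$, it is enough to treat $\cal{F}=IC_{\mu}$ and $\cal{G}=IC_{\nu}$ with all shifts, that is, the graded $\rm{Ext}^*$.

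Next I would transport the question along Proposition \ref{prop:DULAequiv}. The chain of equivalences there consists of pullback functors (up to (\ref{eqn:HckGHckGspl})), and each step is compatible with the global-section functor. The source categories are identified for the unit and for $i_{\leq\mu*}\Qlb$, and hence, by Proposition \ref{prop:H*Hck} and its proof, $R\Gamma(\Hck_{G,\Spd C},\Qlb)$ is identified with the equal characteristic $R\Gamma(\Hck^{\eq}_{G^{\spl},\Spec k},\Qlb)$. For $\cal{F}$ supported on $\Hck_{\leq\mu}$, the module $R\Gamma(\Hck,\cal{F})$ equals $R\Hom(i_{\leq\mu*}\Qlb,\cal{F})$, and it is a module over the limit algebra through $R\Gamma(\Hck_{\leq\mu},\Qlb)$. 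These descriptions are preserved by the equivalences of Proposition \ref{prop:DULAequiv}. The result is a commutative square, up to equivalence, between the functor $R\Gamma$ here and the corresponding functor $H^*$ on $\cal{D}^{\ULA}(\Hck^{\eq}_{G^{\spl},\Spec k},\Qlb)$. By Lemma \ref{lemm:SatPervequiv}, this square sends $\cal{IC}$ to the equal characteristic IC-objects.

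In equal characteristic, \cite[\S 6]{BF} (their Theorem 4 / Lemma on $H^*$) proves that $H^*$ is fully faithful on semisimple objects $\bigoplus IC_{\mu}[d]$. Their proof rests on the formality of the dg-algebra of cochains and on the comparison of $\rm{Ext}^*(IC_\mu,IC_\nu)$, computed by purity and the parity vanishing of stalks, with $\Hom_{\cal{O}_{\bb{T}(\wh{\fk{t}}^*/W)}}(H^*IC_\mu,H^*IC_\nu)$ via the Ginzburg argument. Formality holds here because $R\Gamma(\Hck,\Qlb)$ is formal by Proposition \ref{prop:RHckequiv}, so the target category really is $\cal{O}_{\bb{T}(\wh{\fk{t}}^*/W)}^{\shear}\lmod$ in both settings. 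Invoking their result then gives full faithfulness on $\cal{IC}$, and the first paragraph finishes the proof.

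The main obstacle is the second step: making the identification of the $R\Gamma$ functors $\infty$-categorical, as module-valued functors, rather than only on cohomology. One also has to check that the dg-module structures on both sides agree with the formal-algebra identification of \cite{BF}. I would handle this by noting that both module structures arise from the action of $\rm{End}(i_{\leq\mu*}\Qlb)$ on $R\Hom(i_{\leq\mu*}\Qlb,-)$, which is intrinsic to the monoidal category and hence preserved by any equivalence. The formality isomorphisms can differ only by an automorphism of the formal algebra inducing the identity on cohomology. Such an automorphism affects neither full faithfulness nor the Hom-groups being compared.
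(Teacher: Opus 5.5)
Your proposal is correct and follows the paper's proof: you reduce to $\cal{IC}$ via Lemma \ref{lemm:genbyIC}, transport along the equivalences of Proposition \ref{prop:DULAequiv} to get a commutative square with the equal characteristic cohomology functor, and invoke the full faithfulness on semisimple objects from \cite[Lemma 13]{BF}. Your extra care does not change the route. It covers making the square commute as module-valued functors through the intrinsic $\rm{End}(i_{\leq\mu*}\Qlb)$-action, checking that the formality isomorphism does not affect full faithfulness, and using Lemma \ref{lemm:SatPervequiv} to match the $\cal{IC}$ objects; this spells out what the paper asserts as holding ``by construction.''
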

\begin{proof}
By Lemma \ref{lemm:genbyIC}, it suffices to show full faithfulness on $\cal{IC}$.
A similar functor 
\[
D^{\ULA}(\Hck^{\eq}_{G,\Spec k},\Qlb)\to \cal{O}_{\bb{T}(\wh{\fk{t}}^*/W)}^{\shear}\lmod
\]
is defined in the paragraph preceding Theorem 4 in \cite{BF}.
By construction, the square
\begin{align}\label{eqn:RGammareducetoeq}
\vcenter{
\xymatrix{
D^{\ULA}(\Hck_{G,\Spd C},\Qlb)\ar[r]^-{R\Gamma}\ar[d]^{\rotatebox{-90}{$\sim$}}_{(\ref{eqn:DULAequiv})}& \cal{O}_{\bb{T}(\wh{\fk{t}}^*/W)}^{\shear}\lmod\ar@{=}[d]\\
D^{\ULA}(\Hck^{\eq}_{G,\Spec k},\Qlb)\ar[r]& \cal{O}_{\bb{T}(\wh{\fk{t}}^*/W)}^{\shear}\lmod
}
}
\end{align}
naturally commutes.
Therefore, the claim reduces to the case of equal characteristic, which is proved in \cite[Lemma 13]{BF}.
\end{proof}

Let $\Coh([\wh{\fk{g}}^*/\wh{G}])^{fr}$ denote the essential image of the $*$-pullback functor 
\[
fr\colon \cal{D}^b(\Rep^{\rm{f.d.}}(\wh{G}))=\Coh([\Spec \Qlb/\wh{G}])\to \Coh([\wh{\fk{g}}^*/\wh{G}]). 
\]
Since $\Coh([\wh{\fk{g}}^*/\wh{G}])\simeq \Sym(\wh{\fk{g}})\lmod^{\perf}(\cal{D}(\Rep(\wh{G})))$, one can easily show the following lemma:
\begin{lemm}\label{lemm:genbyfr}
The category $\Coh([\wh{\fk{g}}^*/\wh{G}])$ is generated by $\Coh([\wh{\fk{g}}^*/\wh{G}])^{fr}$ under extensions.
\end{lemm}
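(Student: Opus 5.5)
The plan is to use the standard dévissage of coherent complexes on a quotient stack into free modules. Let $\cal{C}\subset \Coh([\wh{\fk{g}}^*/\wh{G}])$ be the smallest full subcategory that contains $\Coh([\wh{\fk{g}}^*/\wh{G}])^{fr}$ and is closed under extensions (and hence under shifts, since $fr$ commutes with shifts). I will show that $\cal{C}$ is everything, working through the identification $\Coh([\wh{\fk{g}}^*/\wh{G}])\simeq \Sym(\wh{\fk{g}})\lmod^{\perf}(\cal{D}(\Rep(\wh{G})))$. Put $A=\Sym(\wh{\fk{g}})$, a Noetherian commutative algebra of finite global dimension.

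The first step is to handle objects in the heart. Take a finitely generated $\wh{G}$-equivariant $A$-module $M$. It is a quotient of $A\otimes V$ for some finite-dimensional $\wh{G}$-representation $V$. To see this, choose finitely many generators and let $V$ be the finite-dimensional $\wh{G}$-stable subspace they span; this subspace exists because $\wh{G}$-representations are locally finite. The kernel is again finitely generated and equivariant, since $A$ is Noetherian. Iterating gives an equivariant resolution $\cdots\to A\otimes V_1\to A\otimes V_0\to M\to 0$.

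The next step is to truncate this resolution. Since $A$ is a polynomial ring in $\dim\wh{\fk{g}}$ variables, the $n$-th syzygy is projective over $A$ for $n=\dim \wh{\fk{g}}$. I then need this projective syzygy to be of the form $A\otimes V$ up to extensions. This is the step I expect to be the main obstacle, because equivariant projective modules need not be free. The way around it is to use the grading. Equivariant graded projective modules over the graded polynomial ring $A$ are free: by graded Nakayama, $P\cong A\otimes_{\Qlb}(P/A_{+}P)$ equivariantly. The equivariant splitting of $P\to P/A_+P$ exists because $\wh{G}$ is reductive. So I carry out the whole argument in the graded setting, where the gradings are compatible with the sheared structure actually used; graded $\Gr_{\bb{G}_m}$-equivariance is automatic for objects of $\Coh([\wh{\fk{g}}^*/\wh{G}])^{\shear}$. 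Alternatively, to avoid freeness altogether, I can simply not stop at a finite stage and use the brutal truncation filtration. The truncated resolution $A\otimes V_n\to\cdots\to A\otimes V_0$ is built from the $fr$-objects $A\otimes V_i[i]$ by $n$ successive extensions (cones). Its difference from $M$ is the shifted syzygy $K_n[n]$, and $K_n$ is projective, hence a graded-free $fr$-object by the above.

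Given the heart case, the general case follows. An object of $\Coh$ is bounded with coherent cohomology sheaves, so by induction on the number of nonzero cohomology sheaves, using the truncation triangles $\tau^{<i}X\to X\to \cal{H}^i(X)[-i]$, every object lies in $\cal{C}$ because each $\cal{H}^i(X)[-i]$ does.
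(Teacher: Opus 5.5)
Your dévissage is the natural argument, and presumably what the paper means by ``one can easily show''. It has three steps: an equivariant free resolution, a projective syzygy, and truncation triangles. You also correctly locate the one delicate step, but you do not get past it for the lemma as stated.

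The lemma concerns the \emph{ungraded} category $\Coh([\wh{\fk{g}}^*/\wh{G}])\simeq\Sym(\wh{\fk{g}})\lmod^{\perf}(\cal{D}(\Rep(\wh{G})))$. An object is an arbitrary $\wh{G}$-equivariant perfect complex on $\wh{\fk{g}}^*$, so the syzygy $K_n$ is an ungraded equivariant projective $A$-module, and graded Nakayama says nothing about it. Your graded argument is correct for $\bb{G}_m$-equivariant objects, but retreating to them changes the statement. Your ``alternative'', which is meant to avoid freeness, ends by invoking the same graded freeness of $K_n$, so it is circular. Also, even the sheared category where the lemma is applied is, in the paper, $\Sym(\wh{\fk{g}}[-2])\lmod^{\perf}(\cal{D}(\Rep(\wh{G})))$, whose objects are ungraded dg-modules. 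So ``graded equivariance is automatic'' is an extra claim you would have to prove.

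The missing idea is that closure under extensions only requires equivariant \emph{stable} freeness. If $K_n\oplus(A\otimes V)\cong A\otimes W$ equivariantly, there is a fiber sequence $(A\otimes V)[-1]\to K_n\to A\otimes W$, so $K_n\in\cal{C}$.

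Stable freeness is a $K_0$-statement.
\begin{itemize}
\item Since $\wh{G}$ is reductive, equivariant surjections onto projectives split equivariantly. Hence every equivariant projective is an equivariant summand of some $A\otimes U$. Also, $K_0$ of finitely generated equivariant projective $A$-modules is the group completion of their isomorphism classes under $\oplus$.
\item This group is $K_0^{\wh{G}}(\wh{\fk{g}}^*)$. Thomason's equivariant homotopy invariance for $\wh{\fk{g}}^*\to\Spec\Qlb$ says that $[U]\mapsto[A\otimes U]$ gives $R(\wh{G})\xrightarrow{\sim}K_0^{\wh{G}}(\wh{\fk{g}}^*)$. This is also the Bass--Haboush stable triviality of equivariant vector bundles on representations.
\item So $K_n\oplus(A\otimes V)\oplus Q\cong(A\otimes W)\oplus Q$ for some $V,W,Q$. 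Adding $Q'$ with $Q\oplus Q'\cong A\otimes U$ gives the stable isomorphism above.
\end{itemize}
Equivalently, $\cal{C}$ is a dense triangulated subcategory, so by Thomason's classification it is everything iff the classes $[A\otimes U]$ generate $K_0$. With this input your dévissage proves the ungraded lemma directly, and no grading is needed.
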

In \cite[\S 2.6]{BF}, the authors define the Kostant functor
\[
\kappa\colon \Coh^{\bb{G}_m}([\wh{\fk{g}}^*/\wh{G}])
^{\heartsuit}
\to \Coh^{\bb{G}_m}(\bb{T}(\wh{\fk{t}}^*/W))
^{\heartsuit}.
\]
We will recall its definition in the paragraph after Lemma \ref{lemm:monoidalonkappa}.
The definition extends to 
\[
\kappa\colon \QCoh^{\bb{G}_m}([\wh{\fk{g}}^*/\wh{G}])
^{\heartsuit}
\to \QCoh^{\bb{G}_m}(\bb{T}(\wh{\fk{t}}^*/W))
^{\heartsuit}.
\]
Since there are natural equivalences 
\begin{align*}
\cal{D}(\QCoh^{\bb{G}_m}([\wh{\fk{g}}^*/\wh{G}])
^{\heartsuit})&\simeq \QCoh^{\bb{G}_m}([\wh{\fk{g}}^*/\wh{G}])\\
\cal{D}(\QCoh^{\bb{G}_m}(\bb{T}(\wh{\fk{t}}^*/W))^{\heartsuit})&\simeq \QCoh^{\bb{G}_m}(\bb{T}(\wh{\fk{t}}^*/W))
\end{align*}
by \cite[Proposition I.3.2.4.3, Remark I.3.2.4.4]{GR}, it induces a functor 
\[
\kappa\colon \QCoh^{\bb{G}_m}([\wh{\fk{g}}^*/\wh{G}])\to \QCoh^{\bb{G}_m}(\bb{T}(\wh{\fk{t}}^*/W)).
\]
Using the equivalence of equivariantization and de-equivariantization (e.g.~(A.2) and (A.3) in \cite{AG}) for the group $\bb{G}_m$, we obtain
\[
\kappa\colon \QCoh([\wh{\fk{g}}^*/\wh{G}])\to \QCoh(\bb{T}(\wh{\fk{t}}^*/W))
\]
and by applying the shearing operator defined in \cite[\S A.2]{AG}, we also obtain
\[
\kappa^{\shear}\colon \Coh([\wh{\fk{g}}^*/\wh{G}])^{\shear}\to \cal{O}_{\bb{T}(\wh{\fk{t}}^*/W)}^{\shear}\lmod. 
\]
In fact, by unraveling the meaning of the ``full embedding'' asserted in \cite[Lemma 4]{BF}, its restriction
\[
\kappa^{\shear}|_{\Coh([\wh{\fk{g}}^*/\wh{G}])^{\shear,fr}}\colon \Coh([\wh{\fk{g}}^*/\wh{G}])^{\shear,fr}\to \cal{O}_{\bb{T}(\wh{\fk{t}}^*/W)}^{\shear}\lmod
\]
is fully faithful.
Since $\Coh([\wh{\fk{g}}^*/\wh{G}])^{\shear}$ is generated by $\Coh([\wh{\fk{g}}^*/\wh{G}])^{\shear,fr}$ under extensions, we get the following lemma:
\begin{lemm}\label{lemm:kappafullyfaithful}
\[
\kappa^{\shear}\colon \Coh([\wh{\fk{g}}^*/\wh{G}])^{\shear}\to \cal{O}_{\bb{T}(\wh{\fk{t}}^*/W)}^{\shear}\lmod
\]
is fully faithful.
\end{lemm}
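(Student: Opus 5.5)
The idea is to deduce full faithfulness of $\kappa^{\shear}$ on all of $\Coh([\wh{\fk{g}}^*/\wh{G}])^{\shear}$ from its full faithfulness on the subcategory $\Coh([\wh{\fk{g}}^*/\wh{G}])^{\shear,fr}$. We use the standard dévissage for exact functors between stable $\infty$-categories.

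\textbf{Step 1: exactness.} First check that $\kappa^{\shear}$ is an exact functor of stable $\infty$-categories. By construction it is obtained from the exact functor
\[
\kappa\colon \QCoh^{\bb{G}_m}([\wh{\fk{g}}^*/\wh{G}])\to \QCoh^{\bb{G}_m}(\bb{T}(\wh{\fk{t}}^*/W))
\]
by de-equivariantization and shearing, and both operations are exact equivalences or exact functors. Here we use that the Kostant functor of \cite[\S 2.6]{BF} is exact on hearts, since it is given by restriction to the Kostant slice followed by invariants of the group scheme of regular centralizers.

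\textbf{Step 2: dévissage.} Fix $Y$ in $\Coh([\wh{\fk{g}}^*/\wh{G}])^{\shear,fr}$ and consider the full subcategory of objects $X$ for which
\[
\rm{Map}(X,Y)\to \rm{Map}(\kappa^{\shear}X,\kappa^{\shear}Y)
\]
is an equivalence. This subcategory contains $\Coh^{\shear,fr}$ by the cited full embedding \cite[Lemma 4]{BF}. It is closed under shifts and under cofibers, by the five lemma applied to the long exact sequences of homotopy groups, where Step 1 ensures that $\kappa^{\shear}$ carries fiber sequences to fiber sequences. Since $\Coh([\wh{\fk{g}}^*/\wh{G}])^{\shear}$ is generated under extensions by the $fr$-objects (Lemma \ref{lemm:genbyfr}, transported through shearing), the subcategory is everything. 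Repeating the argument in the second variable, now with $X$ arbitrary, gives full faithfulness on all pairs.

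\textbf{Main obstacle.} The delicate point is the precise meaning of ``full embedding'' in \cite[Lemma 4]{BF}. That lemma is stated for graded modules on hearts, so it controls $\rm{Hom}$'s in $\Coh^{\bb{G}_m}$ including grading shifts. We must check that after shearing it yields equivalences on all mapping spectra between objects $\Sym(\wh{\fk{g}}[-2])\otimes V$, and not merely on $\pi_0$.

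To handle this, we would compute both sides as $\left(\Sym(\wh{\fk{g}})\otimes V^*\otimes V'\right)^{\wh{G}}$ with its grading. Because $\wh{G}$ is reductive and the free modules are projective in the graded heart, there are no higher Exts on the source. On the target, $\kappa(fr(V))$ is a projective or flat module, by Kostant's theorem that $\Sym(\wh{\fk{g}})$ is free over its invariants, so again no higher Exts appear. Hence the Hom comparison on hearts over all graded shifts is exactly the needed statement, and shearing converts graded shifts into cohomological degrees.

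Everything is over $\Qlb$, so there are no further subtleties with invariants.
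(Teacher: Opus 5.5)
Your route is the paper's route: read full faithfulness on $\Coh([\wh{\fk{g}}^*/\wh{G}])^{\shear,fr}$ off \cite[Lemma 4]{BF}, then extend by d\'evissage using generation by $fr$-objects. The d\'evissage is fine. The failure is at the step you flagged, and your resolution of it is wrong. Kostant's theorem makes $\kappa(fr(V))\cong\cal{O}_\Sigma\otimes V$ free over $\cal{O}_{\wh{\fk{t}}^*/W}$. But the target mapping complexes are computed over $\cal{O}_{\bb{T}(\wh{\fk{t}}^*/W)}$, and over that ring $\kappa(fr(V))$ is finite over the base, hence torsion, supported in codimension $r=\dim\wh{\fk{t}}$, and not flat. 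For $V=\Qlb$ it is the structure sheaf of the zero section, so its self-$\rm{Ext}$ is an exterior algebra on the normal directions. After shearing, every generator of $\cal{O}^{\shear}_{\bb{T}(\wh{\fk{t}}^*/W)}$ has even degree; the fiber coordinate coming from the quadratic invariant has degree $2$. The Koszul resolution of the zero section therefore gives a nonzero class of degree $-1$ in $\rm{Map}(\kappa^{\shear}(fr(\Qlb)),\kappa^{\shear}(fr(\Qlb)))$. On the source side, $\rm{Map}(fr(\Qlb),fr(\Qlb))\simeq\Sym(\wh{\fk{g}}[-2])^{\wh{G}}$ is concentrated in even degrees $\geq 0$. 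So $\kappa^{\shear}$ is not fully faithful on mapping spectra even between $fr$-objects. \cite[Lemma 4]{BF} only says something about graded $\Hom$ in the abelian category, i.e.\ $\pi_0$ up to grading shifts, so your Step 2 has no valid base case.

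This cannot be patched, because your own Step 1 shows the statement is false when read $\infty$-categorically. $\kappa$ is restriction to the Kostant slice remembering the centralizer action (not taking invariants), so it only sees the regular locus and kills the skyscraper at $0\in\wh{\fk{g}}^*$. The sheared skyscraper is the augmentation module $\Qlb$ over $\Sym(\wh{\fk{g}}[-2])$. It is a nonzero object of $\Coh([\wh{\fk{g}}^*/\wh{G}])^{\shear}$, perfect via the Koszul complex and hence in the extension closure of the $fr$-objects. By exactness, $\kappa^{\shear}(\Qlb)=0$.

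The paper's proof is the same two steps ("unraveling" \cite[Lemma 4]{BF}, then generation under extensions), so it has the same defect. The odd-degree computation applied to $R\Gamma(i_{0*}\Qlb)\cong H^*_{L^+G}$, a module through the zero section, shows $R\Gamma$ is not fully faithful into $\cal{O}^{\shear}_{\bb{T}(\wh{\fk{t}}^*/W)}\lmod$ either. What holds is the graded-$\Hom$ statement on $fr$-objects. Upgrading it to a derived equivalence needs a formality/weight argument like \cite[\S 6.5--6.6]{BF}, not full faithfulness of $\kappa^{\shear}$ on all of $\Coh([\wh{\fk{g}}^*/\wh{G}])^{\shear}$.
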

We also have the following lemma:
\begin{lemm}\label{lemm:imageRGammakappaagree}
The essential image of $R\Gamma \colon \cal{D}^{\ULA}(\Hck_{G,\Spd C},\Qlb)\to \cal{O}_{\bb{T}(\wh{\fk{t}}^*/W)}^{\shear}\lmod$ and that of $\kappa^{\shear}\colon \Coh([\wh{\fk{g}}^*/\wh{G}])^{\shear}\to \cal{O}_{\bb{T}(\wh{\fk{t}}^*/W)}^{\shear}\lmod$ agree.
\end{lemm}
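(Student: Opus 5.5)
Both functors are exact and fully faithful (Lemmas \ref{lemm:RGammafullyfaithful} and \ref{lemm:kappafullyfaithful}), and their sources are generated under extensions by $\cal{IC}$ and $\Coh([\wh{\fk{g}}^*/\wh{G}])^{\shear,fr}$ respectively (Lemmas \ref{lemm:genbyIC} and \ref{lemm:genbyfr}). The essential image of each functor is therefore the closure under extensions of the image of the corresponding generating subcategory; this uses full faithfulness to lift cones of maps between objects of the image. So it suffices to show that $R\Gamma(\cal{IC})$ and $\kappa^{\shear}(\Coh([\wh{\fk{g}}^*/\wh{G}])^{\shear,fr})$ have the same essential image in $\cal{O}_{\bb{T}(\wh{\fk{t}}^*/W)}^{\shear}\lmod$. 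Both subcategories are closed under shifts and finite direct sums. Hence we reduce to comparing $R\Gamma(IC_\mu)$ and $\kappa^{\shear}(\cal{O}_{\wh{\fk{g}}^*}\otimes V_\mu)$ up to shifts, where $IC_\mu$ runs over the simple objects of $\Sat(\Hck_{G,\Spd C},\Qlb)$ and $V_\mu$ is the irreducible representation with highest weight $\mu$.

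To compare these, transport the problem to equal characteristic. The square (\ref{eqn:RGammareducetoeq}) commutes, and the equivalence (\ref{eqn:DULAequiv}) sends $\Sat(\Hck_{G,\Spd C},\Qlb)$ to $\Perv(\Hck^{\eq}_{G^{\spl},\Spec k},\Qlb)$. By Lemma \ref{lemm:SatPervequiv}(\ref{item:threenonderSatcompati}), it sends the object corresponding to $V_\mu$ under $S_{\Spd C}$ to the object corresponding to $V_\mu$ under the Mirković--Vilonen equivalence, namely the IC sheaf $IC_\mu^{\eq}$; the inner-automorphism ambiguity is harmless because it fixes isomorphism classes of irreducibles. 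So $\cal{IC}$ maps onto its equal characteristic counterpart, and $R\Gamma(IC_\mu)$ agrees, as an $\cal{O}_{\bb{T}(\wh{\fk{t}}^*/W)}^{\shear}$-module, with the equal characteristic equivariant cohomology $H^*_{L^+G}(\Gr^{\eq},IC^{\eq}_\mu)$. Here one should check that the $\cal{O}_{\bb{T}(\wh{\fk{t}}^*/W)}^{\shear}$-module structures match, which holds because Proposition \ref{prop:H*Hck} was itself defined through this comparison.

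The remaining input is the result of \cite{BF}, Theorem 4 and the surrounding discussion: there is an isomorphism $H^*_{L^+G}(\Gr^{\eq},IC_\mu)\cong \kappa(\cal{O}_{\wh{\fk{g}}^*}\otimes V_\mu)$ of modules over $\cal{O}_{\bb{T}(\wh{\fk{t}}^*/W)}$, compatible with gradings. After shearing, this gives an isomorphism of dg-modules, since both sides are formal: they are pure modules concentrated appropriately, or one can invoke the formality argument of Proposition \ref{prop:RHckequiv}. This shows that the two generating images coincide, and therefore so do the full essential images.

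I expect the main obstacle to be this last identification at the dg level rather than only on cohomology. We need $R\Gamma(IC_\mu)$ to be formal as an $\cal{O}^{\shear}$-module, so that it is determined by its cohomology module. I would argue this by parity and purity: $H^*(IC_\mu)$ is concentrated in degrees of a single parity and is free over $\cal{O}_{\wh{\fk{t}}^*/W}$, so $\rm{Ext}$ obstructions between its graded pieces vanish, as in the weight argument of Proposition \ref{prop:RHckequiv}. Alternatively, one can simply quote \cite[Lemma 13, Theorem 4]{BF}, where this formality is already part of the statement.
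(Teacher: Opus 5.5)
Your proposal is correct and follows the paper's route. You transport the problem to equal characteristic via the commutative square (\ref{eqn:RGammareducetoeq}), use \cite[Theorem 4]{BF} to match the images of the semisimple generators $\cal{IC}$ and $\Coh([\wh{\fk{g}}^*/\wh{G}])^{\shear,fr}$, and then pass to extension closures using Lemmas \ref{lemm:genbyIC} and \ref{lemm:genbyfr}. Your point that full faithfulness is what makes each essential image equal to the extension closure of the image of its generators fills in a step the paper compresses into ``both functors preserve extensions''. The dg-level formality you worry about is exactly what the paper's citation of \cite{BF} supplies; parity alone would not kill the formality obstructions, so relying on purity or simply quoting \cite{BF} is the right move.
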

\begin{proof}
By the diagram (\ref{eqn:RGammareducetoeq}), it reduces to the equal characteristic case.
By \cite[Theorem 4]{BF}, the essential images of $R\Gamma|_{\cal{IC}}$ and $\kappa^{\shear}|_{\Coh([\wh{\fk{g}}^*/\wh{G}])^{\shear,fr}}$ agree.
Since both functors preserve extensions, the claim follows from Lemma \ref{lemm:genbyIC} and Lemma \ref{lemm:genbyfr}.
\end{proof}
Then we have the following proposition:
\begin{prop}\label{prop:SqcSpdC}
There exists an essentially unique equivalence
\[
\cal{S}^{der}_{\Spd C}:=\cal{S}^{der}_{G,\Spd C}\colon \Coh([\wh{\fk{g}}^*/\wh{G}])^{\shear}\xrightarrow{\sim} \cal{D}^{\ULA}(\Hck_{G,\Spd C},\Qlb), 
\]
such that there are natural isomorphisms
\begin{align}
S_{\Spd C}\simeq \cal{S}^{der}_{\Spd C}\circ fr|_{\Rep^{\rm{f.d.}}(\wh{G})}\ &\colon\ \Rep^{\rm{f.d.}}(\wh{G})\to \cal{D}^{\ULA}(\Hck_{G,\Spd C},\Qlb)\label{eqn:SSpdCSqcFr}, \\
\kappa^{\shear}\simeq R\Gamma\circ \cal{S}^{der}_{\Spd C}\ &\colon\  \Coh([\wh{\fk{g}}^*/\wh{G}])^{\shear}\to \cal{O}_{\bb{T}(\wh{\fk{t}}^*/W)}^{\shear}\lmod\label{eqn:kappaH*Sqc}. 
\end{align}
\end{prop}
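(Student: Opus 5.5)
We define $\cal{S}^{der}_{\Spd C}$ formally from the two fully faithful functors with common essential image. Let $\cal{C}\subset \cal{O}_{\bb{T}(\wh{\fk{t}}^*/W)}^{\shear}\lmod$ be the common essential image of $R\Gamma$ and $\kappa^{\shear}$ (Lemma \ref{lemm:imageRGammakappaagree}). By Lemma \ref{lemm:RGammafullyfaithful}, $R\Gamma$ is an equivalence onto $\cal{C}$. By Lemma \ref{lemm:kappafullyfaithful}, so is $\kappa^{\shear}$. Choose an inverse $(R\Gamma|^{\cal{C}})^{-1}$; the space of such choices is contractible. Then set
\[
\cal{S}^{der}_{\Spd C}:=(R\Gamma|^{\cal{C}})^{-1}\circ \kappa^{\shear}|^{\cal{C}}.
\]
This is an equivalence, and (\ref{eqn:kappaH*Sqc}) holds by construction. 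Essential uniqueness also follows: any functor $\cal{S}'$ satisfying (\ref{eqn:kappaH*Sqc}) has $R\Gamma\circ\cal{S}'\simeq R\Gamma\circ\cal{S}^{der}_{\Spd C}$. Since $R\Gamma$ is fully faithful, this identification descends to $\cal{S}'\simeq \cal{S}^{der}_{\Spd C}$.

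The real content is (\ref{eqn:SSpdCSqcFr}). Since $R\Gamma$ is fully faithful, it suffices to produce a natural isomorphism $R\Gamma\circ S_{\Spd C}\simeq \kappa^{\shear}\circ fr$ of functors $\Rep^{\rm{f.d.}}(\wh{G})\to \cal{O}_{\bb{T}(\wh{\fk{t}}^*/W)}^{\shear}\lmod$. Both sides are additive, and their values are formal modules concentrated in even degrees. I would take the following steps.

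\begin{enumerate}
\item Transport along the commutative square (\ref{eqn:RGammareducetoeq}). By Lemma \ref{lemm:SatPervequiv}(\ref{item:threenonderSatcompati}), $S_{\Spd C}$ corresponds, up to inner automorphism, to the Mirković--Vilonen functor $S^{\eq}_{\Spec k}$ under (\ref{eqn:SatPervequiv}). Hence $R\Gamma\circ S_{\Spd C}$ is identified with the equal characteristic functor $V\mapsto H^*_{L^+G}(\Gr^{\eq},S^{\eq}_{\Spec k}(V))$.
\item Use \cite[Theorem 4]{BF}, which supplies a natural isomorphism between this functor and $\kappa\circ fr$ on $\Rep^{\rm{f.d.}}(\wh{G})$.
\end{enumerate}

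The inner-automorphism ambiguity does not matter. An inner automorphism twist of $\Rep(\wh{G})$ is canonically isomorphic to the identity functor, so it does not affect the existence of the natural isomorphism.

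I expect the main obstacle to be the $\infty$-categorical subtlety in this last step. The isomorphism of \cite{BF} is stated at the level of graded modules, that is, in the heart after de-shearing. We need it as a natural isomorphism of functors into the dg-category $\cal{O}^{\shear}\lmod$. I would handle this by noting that on $\Rep^{\rm{f.d.}}(\wh{G})$ both functors take values in formal modules, namely sheared objects of $\Coh^{\bb{G}_m}(\bb{T}(\wh{\fk{t}}^*/W))^{\heartsuit}$. Since the source $\Rep^{\rm{f.d.}}(\wh{G})$ is an ordinary category, the graded-level isomorphism lifts. This uses the standard fact that shearing the heart embeds it fully faithfully onto formal objects, so mapping spaces between such objects are discrete in the relevant degrees; this is purity/formality. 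One also needs that $R\Gamma(\Hck,IC_\mu)$ is formal as a module, which follows from the formality of $R\Gamma(\Hck)$ (Proposition \ref{prop:RHckequiv}) together with \cite[Lemma 13]{BF}.
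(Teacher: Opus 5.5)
Your outline follows the paper's proof, slightly reorganized. The paper also rests on Lemmas \ref{lemm:RGammafullyfaithful}, \ref{lemm:kappafullyfaithful} and \ref{lemm:imageRGammakappaagree}, and obtains (\ref{eqn:SSpdCSqcFr}) by transporting \cite[Theorem 4]{BF} through (\ref{eqn:RGammareducetoeq}) and Lemma \ref{lemm:SatPervequiv}. It builds $\wt{S}_{qc}$ on free objects first and then extends, whereas you invert $R\Gamma$ globally and recover (\ref{eqn:SSpdCSqcFr}) afterwards; that difference is immaterial.

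The gap is in the step you flag as the main obstacle: the ``standard fact'' you invoke there is false. Shearing does not send heart objects to objects with discrete mapping spaces in $\cal{O}^{\shear}\lmod$, because higher graded $\mathrm{Ext}$'s reappear in shifted degrees. Take $G=SL_2$. Then $A:=\cal{O}^{\shear}_{\bb{T}(\wh{\fk{t}}^*/W)}=\Qlb[f,df]$, with $f$ in degree $4$ and $df$ in degree $2$. Both $R\Gamma(\Hck_{G,\Spd C},\mathbbm{1})$ (restriction to the zero section, by Proposition \ref{prop:H*Hckemb} and Lemma \ref{lemm:H*GtoH*T}) and $\kappa^{\shear}(fr(\Qlb))$ (the zero-section module, by definition of $\kappa$) are $M:=A/(df)$. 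Any dg $A$-module with this cohomology is quasi-isomorphic to $\mathrm{cone}(A[-2]\xrightarrow{df}A)$. Hence $R\Hom_A(M,M)\simeq M\oplus M[1]$, which is nonzero in degrees $-1$ and $3$. So $\pi_1\mathrm{Map}(M,M)\neq 0$, and a graded-level isomorphism does not automatically yield a natural isomorphism of $\infty$-functors. Your claim that formality of $R\Gamma(\Hck_{G,\Spd C},IC_\mu)$ as a module follows from formality of $R\Gamma(\Hck_{G,\Spd C},\Qlb)$ together with \cite[Lemma 13]{BF} is also unsupported. Formality of an algebra says nothing about its modules, and BF's lemma is a statement about graded $\Hom$.

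The same example undermines the first half of your argument. Both $\mathrm{Ext}^*(\mathbbm{1},\mathbbm{1})=H^*_{L^+_{\Spd C}G}(\Spd C,\Qlb)$ and $\mathrm{Ext}^*(fr\Qlb,fr\Qlb)=\Sym(\wh{\fk{g}}[-2])^{\wh{G}}$ are concentrated in degrees divisible by $4$, yet both objects map to $M$. Therefore neither $R\Gamma$ nor $\kappa^{\shear}$ can be fully faithful as a functor into the dg category $\cal{O}^{\shear}_{\bb{T}(\wh{\fk{t}}^*/W)}\lmod$. That is exactly the sense in which you, and the paper's extension step, use Lemmas \ref{lemm:RGammafullyfaithful} and \ref{lemm:kappafullyfaithful}.

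What \cite{BF} actually establish is full faithfulness on $\cal{IC}$, respectively on free objects, for graded $\Hom$ of graded $H^*$-modules. Passing from that to a statement about stable $\infty$-categories needs a genuine formality input. One example is the weight argument of \cite[\S 6.5--6.6]{BF}, which shows that the $\mathrm{Ext}$-algebras of generators on both sides are formal and agree; the equivalence is then obtained by Morita theory rather than by inverting $R\Gamma$ on its essential image. Neither your proposal nor the paper's proof supplies this step.
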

\begin{proof}
Reducing to the equal characteristic case and using \cite[Theorem 4]{BF}, we can show that there exists an essentially unique fully faithful functor
\[
\wt{S}_{qc}\colon \Coh([\wh{\fk{g}}^*/\wh{G}])^{\shear,fr}\to \cal{D}^{\ULA}(\Hck_{G,\Spd C},\Qlb), 
\]
such that $\cal{S}_{\Spd C}\simeq \wt{S}_{qc}\circ fr$ and $\kappa^{\shear}|_{\Coh([\wh{\fk{g}}^*/\wh{G}])^{\shear,fr}}\simeq R\Gamma\circ \wt{S}_{qc}$ hold.
Here, we use Lemma \ref{lemm:SatPervequiv} to ensure that $\cal{S}_{\Spd C}$ corresponds to the functor induced by the non-derived geometric Satake equivalence when reducing to the equal characteristic case.

By Lemma \ref{lemm:RGammafullyfaithful}, Lemma \ref{lemm:kappafullyfaithful} and Lemma \ref{lemm:imageRGammakappaagree}, this essentially uniquely extends to a desired equivalence $\cal{S}^{der}_{\Spd C}$.
\end{proof}
\begin{rmk}\label{rmk:SqcSpdk}
The same proof applies when replacing $\Spd C$ by $\Spd k$.
We denote the resulting functor by $\cal{S}^{der}_{\Spd k}$
\end{rmk}
We want to show that this equivalence $\cal{S}^{der}_{\Spd C}$ is monoidal.
\subsubsection{Monoidal structure of derived Satake}\label{sssc:monoidalonSqc}
In this subsection, we endow monoidal structures with $R\Gamma$, $\kappa$ and $\cal{S}^{der}_{\Spd C}$.
The monoidal structure on $R\Gamma$ and $\kappa$ is defined similarly to \cite[\S 3.5]{BF} and \cite[\S 4.6]{BF}.
However, they only define them on the full subcategory of perverse sheaves, whereas we define them on the whole category.

The $\infty$-category 
\[
\Coh([\wh{\fk{g}}^*/\wh{G}])^{\shear}=\cal{O}_{\wh{\fk{g}}}^{\shear}\lmod^{\perf}(\cal{D}(\Rep(\wh{G})))
\]
is monoidal via the tensor product $-\otimes_{\cal{O}_{\wh{\fk{g}}^*}^{\shear}}-$.
Consider the diagram
\[
\bb{T}(\wh{\fk{t}}^*/W)\times \bb{T}(\wh{\fk{t}}^*/W)
\xleftarrow{p_{\bb{T}}} 
\bb{T}(\wh{\fk{t}}^*/W)\times_{\wh{\fk{t}}^*/W}\bb{T}(\wh{\fk{t}}^*/W)
\xrightarrow{m_{\bb{T}}} \bb{T}(\wh{\fk{t}}^*/W), 
\]
where $p_{\bb{T}}$ is the natural closed immersion and $m_{\bb{T}}$ is the fiberwise addition defined in (\ref{eqn:defofmT}).
The $\infty$-category $\QCoh(\bb{T}(\wh{\fk{t}}^*/W))$ is monoidal via the product
\[
-\star -:=Rm_{\bb{T}*}Lp_{\bb{T}}^*(-\boxtimes -),
\]
and hence so is $\cal{O}_{\bb{T}(\wh{\fk{t}}^*/W)}^{\shear}\lmod$.
Similarly, the categories $\QCoh^{\bb{G}_m}(\wh{\fk{g}}^*/\wh{G})^{\heartsuit}$ and $\QCoh^{\bb{G}_m}(\bb{T}(\wh{\fk{t}}^*/W))^{\heartsuit}$ are monoidal in a non-derived sense.

Similarly to the construction of (\ref{eqn:RGammaHckconv}), for $\cal{F}_1,\cal{F}_2\in \cal{D}^{\ULA}(\Hck_{G,\Spd C})$, there is a natural homomorphism
\begin{multline}
R\Gamma(\Hck_{G,\Spd C},\cal{F}_1)\otimes_{R\Gamma([\Spd C/L^+_{\Spd C}G],\Qlb)}R\Gamma(\Hck_{G,\Spd C},\cal{F}_2)\\
\to R\Gamma(\Hck_{G,\Spd C},\cal{F}_1\star \cal{F}_2) \label{eqn:monoidalonRGamma}
\end{multline}
of dg-modules, and by the argument in \cite[\S 3.5]{BF}, it is an equivalence for $\cal{F}_1,\cal{F}_2\in \cal{IC}$, and thus for any $\cal{F}_1,\cal{F}_2$ by Lemma \ref{lemm:genbyIC}.
This, together with Lemma \ref{lemm:H*GtoH*Hck}, Proposition \ref{prop:RHckequiv}, Proposition \ref{prop:RBL+Gequiv} and Lemma \ref{lemm:H*conv}, implies the following lemma:
\begin{lemm}\label{lemm:monoidalonRGamma}
There is a monoidal structure on the functor
\[
R\Gamma\colon \cal{D}^{\ULA}(\Hck_{G,\Spd C})\to \cal{O}_{\bb{T}(\wh{\fk{t}}^*/W)}^{\shear}\lmod.
\]
\end{lemm}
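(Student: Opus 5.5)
The plan is to turn the natural map (\ref{eqn:monoidalonRGamma}) into the structure maps of a monoidal functor, after rewriting its source and target algebraically. Put $R_{\Hck}:=R\Gamma(\Hck_{G,\Spd C},\Qlb)$ and $R_G:=R\Gamma([\Spd C/L^+_{\Spd C}G],\Qlb)$. By Proposition \ref{prop:RHckequiv} and Proposition \ref{prop:RBL+Gequiv} we have $R_{\Hck}\simeq\cal{O}^{\shear}_{\bb{T}(\wh{\fk{t}}^*/W)}$ and $R_G\simeq\cal{O}^{\shear}_{\wh{\fk{t}}^*/W}$ as commutative algebras. By Lemma \ref{lemm:H*GtoH*Hck}, the two algebra maps $\pi_1^*,\pi_2^*\colon R_G\to R_{\Hck}$ both induce the projection $\bb{T}(\wh{\fk{t}}^*/W)\to\wh{\fk{t}}^*/W$ on cohomology. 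These maps are maps of formal commutative dg-algebras whose targets are concentrated in even degrees. Running the same purity/formality argument as in Proposition \ref{prop:RHckequiv}, they should therefore be determined by their effect on cohomology, so under the identifications they become the structure map $\cal{O}^{\shear}_{\wh{\fk{t}}^*/W}\to\cal{O}^{\shear}_{\bb{T}(\wh{\fk{t}}^*/W)}$. Consequently the relative tensor product
\[
M_1\otimes_{R_G}M_2
\]
of two $R_{\Hck}$-modules is naturally an $R_{\Hck}\otimes_{R_G}R_{\Hck}\simeq\cal{O}^{\shear}_{\bb{T}\times_{\wh{\fk{t}}^*/W}\bb{T}}$-module, which is exactly $Lp_{\bb{T}}^*(M_1\boxtimes M_2)$ after shearing.

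The next step is to produce the algebra map $m_{\bb{T}}^*$ geometrically and identify it. Using the convolution diagram (\ref{eqn:conv}), I will construct a map $R_{\Hck}\to R_{\Hck}\otimes_{R_G}R_{\Hck}$. It is the composite of the pullback along $m$ with the Künneth map of Lemma \ref{lemm:RGammaKunnethformal}; the latter is an equivalence for constant coefficients, by the \cite[\S 3.5]{BF} argument already quoted for (\ref{eqn:monoidalonRGamma}). By Lemma \ref{lemm:H*conv} this map induces $m_{\bb{T}}^*$ on cohomology, and by the same formality argument it is identified with $m_{\bb{T}}^*$ on the sheared algebras. This identification is compatible with the $R_{\Hck}$-action on $R\Gamma(\Hck_{G,\Spd C},\cal{F}_1\star\cal{F}_2)$, because the action comes from pulling back along $m$. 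Hence (\ref{eqn:monoidalonRGamma}) upgrades to a natural equivalence of $\cal{O}^{\shear}_{\bb{T}(\wh{\fk{t}}^*/W)}$-modules
\[
R\Gamma(\cal{F}_1)\star R\Gamma(\cal{F}_2)=Rm_{\bb{T}*}Lp_{\bb{T}}^*\bigl(R\Gamma(\cal{F}_1)\boxtimes R\Gamma(\cal{F}_2)\bigr)\xrightarrow{\sim} R\Gamma(\cal{F}_1\star\cal{F}_2).
\]
It is an equivalence on $\cal{IC}$, and then everywhere by Lemma \ref{lemm:genbyIC}, as already observed before the lemma. On the unit, $R\Gamma(i_{0*}\Qlb)=R_G$, regarded as a module through $\cal{O}_{\bb{T}}\to\cal{O}_{\wh{\fk{t}}^*/W}$ (the zero section), which is the unit for $\star$.

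Finally I need coherent higher associativity data, not just binary isomorphisms. For this I will run the construction for all $n$-fold convolution diagrams $\Hck^{(n)}_{G,\Spd C}$ at once. The composite $n$-fold Künneth and pullback maps are functorial in the simplicial structure of these diagrams (the bar construction for the groupoid $L^+G\backslash LG/L^+G$), so they assemble into a lax monoidal structure, which is strong by the previous step. Concretely, I would realize $R\Gamma$ as $\mathrm{Hom}(\mathbbm{1}_{\bb{T}},-)$-type functor via the lax monoidal right adjoint formalism, or cite the six-functor-formalism monoidality of cohomology of correspondences.

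I expect this coherence step to be the main obstacle. Transporting an $\bb{E}_1$-structure through the formality identifications of Propositions \ref{prop:RHckequiv} and \ref{prop:RBL+Gequiv} is subtle, since those equivalences were chosen using purity. My first attempt would be to note that the relevant mapping spaces between formal pure algebras are discrete, by the vanishing (\ref{eqn:gcohvanish}), which at least makes the coherence data essentially unique.
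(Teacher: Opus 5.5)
Your proposal is correct and follows the paper's route. The paper likewise takes the K\"{u}nneth-type map (\ref{eqn:monoidalonRGamma}), shows it is an equivalence on $\cal{IC}$ by the argument of \cite[\S 3.5]{BF} and then everywhere by Lemma \ref{lemm:genbyIC}, and matches source and target with $\star$ on $\cal{O}^{\shear}_{\bb{T}(\wh{\fk{t}}^*/W)}\lmod$ via Lemma \ref{lemm:H*GtoH*Hck}, Propositions \ref{prop:RHckequiv} and \ref{prop:RBL+Gequiv}, and Lemma \ref{lemm:H*conv}.

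The paper does not discuss higher coherences at all, so your last paragraph goes beyond it. One small imprecision: the constant sheaf on all of $\Hck_{G,\Spd C}$ is not ULA, since it does not have bounded support. The K\"{u}nneth equivalence behind your map $R_{\Hck}\to R_{\Hck}\otimes_{R_G}R_{\Hck}$ should therefore be taken at the finite levels $i_{\leq\mu *}\Qlb$, as in (\ref{eqn:RGammaHckconv}), and then passed to the limit over $\mu$.
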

On the other hand, we can show the monoidality of $\kappa$:
\begin{lemm}\label{lemm:monoidalonkappa}
There is a monoidal structure on
\[
\kappa\colon (\QCoh^{\bb{G}_m}(\wh{\fk{g}}^*/\wh{G})^{\heartsuit},-\otimes_{\cal{O}_{\wh{\fk{g}}^*}}- )\to (\QCoh^{\bb{G}_m}(\bb{T}(\wh{\fk{t}}^*/W))^{\heartsuit},-\star- ).
\]
\end{lemm}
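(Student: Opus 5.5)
The plan is to recall the Kostant functor of \cite[\S 2.6]{BF} concretely and read off the monoidal structure from its construction, as in \cite[\S 4.6]{BF}. Let $e\in\wh{\fk{g}}$ be a principal nilpotent, $\Sigma=e+\wh{\fk{z}}(f)\subset \wh{\fk{g}}^*\cong\wh{\fk{g}}$ the Kostant slice, and $Z\to\Sigma\cong\wh{\fk{t}}^*/W$ the regular centralizer group scheme. The group scheme $Z$ is commutative and smooth, and its Lie algebra is canonically the tangent bundle: $\mathrm{Lie}(Z/\Sigma)\cong \bb{T}^*(\wh{\fk{t}}^*/W)$ by the Kostant/Ngô identification, which becomes $\bb{T}(\wh{\fk{t}}^*/W)$ after the usual duality. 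The functor $\kappa$ is built from a $\bb{G}_m$-equivariant $\wh{G}$-module $M$ on $\wh{\fk{g}}^*$ in three steps. First restrict $M$ to $\Sigma$; the result is a $\bb{G}_m$-equivariant module with an action of $Z$. Next pass to the associated module over the distribution algebra, or equivalently its graded/infinitesimal version. A $Z$-equivariant module over the base $\Sigma$ gives a module over $\mathrm{Sym}_{\cal{O}_\Sigma}(\mathrm{Lie}\,Z)=\cal{O}_{\bb{T}(\wh{\fk{t}}^*/W)}$ via differentiating the action. Concretely, $\kappa(M)$ is $M|_\Sigma$ with this $\mathrm{Sym}(\mathrm{Lie}\,Z)$-module structure, where the grading ensures only the associated graded derivative action survives. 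Before building on this description, I will verify it against the definition recalled after the lemma.

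Given this, the monoidal structure is formal. For $M_1,M_2$ we have $(M_1\otimes_{\cal{O}_{\wh{\fk{g}}^*}}M_2)|_\Sigma\cong M_1|_\Sigma\otimes_{\cal{O}_\Sigma}M_2|_\Sigma$ canonically, since restriction to a closed subscheme is monoidal. The infinitesimal action of $\mathrm{Lie}\,Z$ on a tensor product of $Z$-representations is the Leibniz rule: $\xi$ acts by $\xi\otimes 1+1\otimes\xi$. Modules over $\mathrm{Sym}_{\cal{O}_\Sigma}(\mathrm{Lie}\,Z)$ with this tensor product are exactly quasi-coherent sheaves on the vector bundle $\bb{T}(\wh{\fk{t}}^*/W)$ with convolution along fiberwise addition. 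The Hopf coproduct $\xi\mapsto \xi\otimes1+1\otimes\xi$ on $\mathrm{Sym}(\mathrm{Lie}\,Z)$ is dual to $m_{\bb{T}}$, and $Rm_{\bb{T}*}Lp_{\bb{T}}^*(-\boxtimes-)$ on the heart computes the $\cal{O}_\Sigma$-tensor product with the diagonal action. So the isomorphism $\kappa(M_1\otimes M_2)\cong \kappa(M_1)\star\kappa(M_2)$ is the restriction isomorphism above. The unit $\cal{O}_{\wh{\fk{g}}^*}$ goes to $\cal{O}_\Sigma$ with trivial action, which is the unit for $\star$, namely the structure sheaf of the zero section.

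It remains to check the associativity and unit coherences, and $\bb{G}_m$-equivariance. Since we are in abelian (1-)categories, these reduce to equalities of maps. They hold because both restriction and the Leibniz action are strictly associative, and $\bb{G}_m$ acts compatibly on $\Sigma$, on $Z$ and on $\mathrm{Lie}\,Z$, with the weights matching the grading on $\bb{T}(\wh{\fk{t}}^*/W)$. I also need exactness on the $\star$ side in the heart: $p_{\bb{T}}$ is a closed immersion of a vector bundle product over a smooth base, so I will check that $Lp^*_{\bb{T}}(-\boxtimes-)$ lands in degree $0$ for these modules, or else replace $\star$ by its $H^0$.

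The main obstacle is the identification of the structure on $\kappa(M)$ with the derivative of a $Z$-action, together with the canonical isomorphism $\mathrm{Lie}(Z)\cong\bb{T}(\wh{\fk{t}}^*/W)$ compatible with $m_{\bb{T}}$. In \cite{BF} the module structure arises via the map $\wh{\fk{g}}^*\supset\Sigma$ and the differential of the Chevalley map, using $\mathrm{Lie}\,Z_x=\wh{\fk{z}}(x)\cong T^*_{\chi(x)}(\wh{\fk{t}}^*/W)$ through $d\chi$. For this step I would follow \cite[\S 2.6, \S 4.6]{BF} directly and verify the Leibniz compatibility there.
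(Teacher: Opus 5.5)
Your proposal is correct and is essentially the paper's argument. The paper also gets the monoidal structure from the Leibniz rule $z\cdot(m\otimes n)=(z\cdot m)\otimes n+m\otimes(z\cdot n)$ for the regular-centralizer action. It phrases this as $(\cal{F}_1\otimes_{\cal{O}_{\wh{\fk{g}}^*}}\cal{F}_2)^{\sim}\cong\wt{\cal{F}_1}\star_{\rm{Tot}}\wt{\cal{F}_2}$ on $\rm{Tot}(\fk{z})$ over the whole regular locus, and only then restricts to the Kostant slice. You instead restrict to $\Sigma$ first and use the primitive coproduct on $\Sym_{\cal{O}_\Sigma}(\mathrm{Lie}\,Z)$. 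The compatibility with $m_{\bb{T}}$ that you list as the main obstacle is automatic, since any linear identification of vector bundles respects fiberwise addition.

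On the heart, only your fallback works: the product there must be the underived one, which is what the paper means by ``monoidal in a non-derived sense''. The reason is that $Lp_{\bb{T}}^*(\cal{M}_1\boxtimes\cal{M}_2)\simeq\cal{M}_1\otimes^{\bb{L}}_{\cal{O}_{\wh{\fk{t}}^*/W}}\cal{M}_2$ has higher Tor for non-flat modules, for example $\kappa(\cal{O}_{\cal{N}})$, the skyscraper at the origin.
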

Let us recall the definition of $\kappa$ in \cite[\S 2.6]{BF}.
Let $(\wh{\fk{g}}^*)^{\rm{reg}}\subset \wh{\fk{g}}^*$ be the subset of regular elements.
Let $\fk{z}\subset \wh{\fk{g}}\otimes \cal{O}_{(\wh{\fk{g}}^*)^{\rm{reg}}}$ be a universal centralizer sheaf on $(\wh{\fk{g}}^*)^{\rm{reg}}$, whose fiber at $\xi\in (\wh{\fk{g}}^*)^{\rm{reg}}$ is the centralizer $\fk{z}(\xi)\subset \wh{\fk{g}}$ of $\xi$.
For a complex $\cal{F}$ of $\wh{G}$-equivariant coherent sheaves on $\wh{\fk{g}}^*$, the restriction $\cal{F}|_{(\wh{\fk{g}}^*)^{\rm{reg}}}$ carries an action of the commutative Lie algebra $\fk{z}$, which defines a quasi-coherent complex $\wt{\cal{F}}$ on the total space $\rm{Tot}(\fk{z})$ of $\fk{z}$.
By restricting $\cal{F}$ to a ``Kostant slice'' $\Sigma\subset (\wh{\fk{g}}^*)^{\rm{reg}}$, we get a coherent sheaf on $X_{\Sigma}:=\rm{Tot}(\fk{z})\times_{(\wh{\fk{g}}^*)^{\rm{reg}}}\Sigma$, which is isomorphic to $\bb{T}(\wh{\fk{t}}^*/W)$ via a composition of the form
\[
X_{\Sigma}\hookrightarrow \rm{Tot}(\fk{z})\twoheadrightarrow \bb{T}(\wh{\fk{t}}^*/W).
\]
This defines $\kappa$.
Note that $\kappa$ does not depend on the choice of Kostant slices since all these slices are $\wh{G}$-conjugate and $\cal{F}$ is $\wh{G}$-equivariant.
\begin{proof}[Proof of Lemma \ref{lemm:monoidalonkappa}]
Consider the diagram
\begin{align}\label{eqn:convdiagTot}
\rm{Tot}(\fk{z})\times \rm{Tot}(\fk{z})\xleftarrow{p_{\rm{Tot}}} \rm{Tot}(\fk{z})\times_{(\wh{\fk{g}}^*)^{\rm{reg}}}\rm{Tot}(\fk{z})\xrightarrow{m_{\rm{Tot}}} \rm{Tot}(\fk{z}),
\end{align}
where $m_{\rm{Tot}}$ is the fiberwise addition map over $(\wh{\fk{g}}^*)^{\rm{reg}}$, and put 
\[
-\star_{\rm{Tot}} -:=(m_{\rm{Tot}})_*(p_{\rm{Tot}})^*(-\boxtimes -).
\]
For $\cal{F}_1,\cal{F}_2\in \QCoh^{\bb{G}_m}([\wh{\fk{g}}^*/\wh{G}])^{\heartsuit}$ and $\xi\in (\wh{\fk{g}}^*)^{\rm{reg}}$, the $\fk{z}(\xi)$-action on $(\cal{F}_1\otimes_{\cal{O}_{\wh{\fk{g}}^*}}\cal{F}_2)_{\xi}$ is given by $z\cdot (m\otimes n)=(z\cdot m)\otimes n+m\otimes(z\cdot n)$.
Therefore, by the construction of $\wt{\cal{F}}$, it follows that
\begin{align}\label{eqn:convtilde}
(\cal{F}_1\otimes_{\cal{O}_{\wh{\fk{g}}^*}}\cal{F}_2)^{\sim}\cong \wt{\cal{F}_1}\star_{\rm{Tot}} \wt{\cal{F}_2}.
\end{align}
Now the lemma follows from this and the definition of the monoidal structure on $\QCoh^{\bb{G}_m}(\bb{T}(\wh{\fk{t}}^*/W))^{\heartsuit}$.
\end{proof}
Since the functor
\[
\kappa^{\shear}\colon \Coh([\wh{\fk{g}}^*/\wh{G}])^{\shear}\to \cal{O}_{\bb{T}(\wh{\fk{t}}^*/W)}^{\shear}\lmod
\]
is constructed from $\kappa\colon \QCoh^{\bb{G}_m}(\wh{\fk{g}}^*/\wh{G})^{\heartsuit}\to \QCoh^{\bb{G}_m}(\bb{T}(\wh{\fk{t}}^*/W))^{\heartsuit}$ as in \S \ref{sssc:derSatSpdCsssc:Sqc}, we obtain the monoidal structure of $\kappa^{\shear}$.
This implies the following:
\begin{thm}\label{thm:mainSpdCssscbody}
There exists a monoidal structure on
\[
\cal{S}^{der}_{\Spd C}\colon \Coh([\wh{\fk{g}}^*/\wh{G}])^{\shear}\simeq \cal{D}^{\ULA}(\Hck_{G,\Spd C},\Qlb), 
\]
such that the natural isomorphism $\kappa^{\shear}\cong R\Gamma\circ \cal{S}^{der}_{\Spd C}$ in (\ref{eqn:kappaH*Sqc}) is canonically monoidal.
\end{thm}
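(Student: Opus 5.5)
The plan is to transport the monoidal structure along the fully faithful functor $R\Gamma$. By Lemma \ref{lemm:monoidalonRGamma}, $R\Gamma$ carries a monoidal structure. By Lemma \ref{lemm:monoidalonkappa} and the subsequent remark, $\kappa^{\shear}$ does as well. Both functors are fully faithful, by Lemma \ref{lemm:RGammafullyfaithful} and Lemma \ref{lemm:kappafullyfaithful}, and they have the same essential image $\cal{E}\subset \cal{O}_{\bb{T}(\wh{\fk{t}}^*/W)}^{\shear}\lmod$ by Lemma \ref{lemm:imageRGammakappaagree}.

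The first step is to show that $\cal{E}$ is a monoidal full subcategory for $-\star-$. If $M_1=R\Gamma(\cal{F}_1)$ and $M_2=R\Gamma(\cal{F}_2)$, the monoidal structure on $R\Gamma$ gives $M_1\star M_2\simeq R\Gamma(\cal{F}_1\star\cal{F}_2)\in\cal{E}$. The unit lies in $\cal{E}$ because the unit of $\cal{D}^{\ULA}(\Hck_{G,\Spd C},\Qlb)$ maps to the unit. Consequently both $R\Gamma\colon \cal{D}^{\ULA}(\Hck_{G,\Spd C},\Qlb)\to\cal{E}$ and $\kappa^{\shear}\colon \Coh([\wh{\fk{g}}^*/\wh{G}])^{\shear}\to\cal{E}$ are equivalences of underlying $\infty$-categories which are monoidal functors.

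The second step is to observe that a monoidal functor whose underlying functor is an equivalence is a monoidal equivalence. The reason is that the inclusion of monoidal equivalences into monoidal functors is detected on underlying functors, since $\rm{Alg}$-type $\infty$-categories of monoidal $\infty$-categories are conservative over $\rm{Cat}_\infty$. Hence $R\Gamma$ admits a monoidal inverse $R\Gamma^{-1}\colon\cal{E}\to\cal{D}^{\ULA}(\Hck_{G,\Spd C},\Qlb)$, unique up to contractible choice. We then put the monoidal structure on
\[
R\Gamma^{-1}\circ\kappa^{\shear}\colon \Coh([\wh{\fk{g}}^*/\wh{G}])^{\shear}\to \cal{D}^{\ULA}(\Hck_{G,\Spd C},\Qlb).
\]
By (\ref{eqn:kappaH*Sqc}), the underlying functor is identified with $\cal{S}^{der}_{\Spd C}$. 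Indeed, $R\Gamma\circ\cal{S}^{der}_{\Spd C}\simeq\kappa^{\shear}$ and $R\Gamma$ is fully faithful, so $\cal{S}^{der}_{\Spd C}\simeq R\Gamma^{-1}\circ\kappa^{\shear}$ canonically. The isomorphism $\kappa^{\shear}\simeq R\Gamma\circ R\Gamma^{-1}\circ\kappa^{\shear}$ is then the counit of a monoidal equivalence, and is therefore monoidal. This gives the claimed compatibility with (\ref{eqn:kappaH*Sqc}).

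The main obstacle is the input rather than the formal transport. We must make sure that the monoidal structures on $R\Gamma$ and $\kappa^{\shear}$ are genuinely $\infty$-categorical, that is, coherent $\bb{E}_1$-structures, and not just natural isomorphisms on homotopy categories. For $R\Gamma$ I would build it from lax monoidality: by Lemma \ref{lemm:RGammaKunnethformal}, $R\Gamma$ is a lax monoidal functor through six-functor formalism, with the lax structure given by (\ref{eqn:monoidalonRGamma}). The structure maps are equivalences on $\cal{IC}$ by the argument of \cite[\S 3.5]{BF}, and hence everywhere by Lemma \ref{lemm:genbyIC}, since being an equivalence is closed under extensions in each variable. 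The identification of the source and target algebras is supplied by Propositions \ref{prop:RHckequiv} and \ref{prop:RBL+Gequiv} together with Lemma \ref{lemm:H*conv}.

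For $\kappa^{\shear}$, the structure of Lemma \ref{lemm:monoidalonkappa} lives on abelian hearts. I would derive it using the fact that $\kappa$ is exact, being restriction to the regular locus followed by restriction to a slice, and flat enough on the relevant objects. Passing through $\cal{D}(-)$ of the hearts, equivariantization, and then the shearing, all of which are symmetric monoidal operations, transfers it. For the flatness I would first check that the relevant objects are free ones of the form $fr(V)$ and then extend by the generation statement of Lemma \ref{lemm:genbyfr}.
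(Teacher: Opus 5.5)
Your proposal is correct and follows the paper's proof, which likewise obtains the monoidal structure on $\cal{S}^{der}_{\Spd C}$ by transporting the monoidal structures of Lemma \ref{lemm:monoidalonRGamma} and Lemma \ref{lemm:monoidalonkappa} along the fully faithful functor $R\Gamma$. Your explicit checks that the common essential image is closed under $\star$ and contains the unit, and your discussion of making these structures coherent $\infty$-categorically, spell out what the paper's one-line argument leaves implicit.
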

\begin{proof}
This follows from Lemma \ref{lemm:monoidalonRGamma}, Lemma \ref{lemm:monoidalonkappa} and the full faithfulness of $R\Gamma\colon \cal{IC}\to \cal{O}_{\bb{T}(\wh{\fk{t}}^*/W)}^{\shear}\lmod$.
\end{proof}
\subsubsection{Compatibility of monoidality with the non-derived Satake}\label{sssc:compatimonoidalnonderived}
We equip $\cal{S}^{der}_{\Spd C}$ with the monoidal structure by Theorem \ref{thm:mainSpdCssscbody}.
In this section, we will show that this monoidal structure is compatible with the non-derived geometric Satake equivalence:
\begin{prop}\label{prop:monoidalSSpdCSqcFr}
The natural isomorphism
\[
S_{\Spd C}\cong \cal{S}^{der}_{\Spd C}\circ fr|_{\Rep^{\rm{f.d.}}(\wh{G})}
\]
in (\ref{eqn:SSpdCSqcFr}) is canonically monoidal.
\end{prop}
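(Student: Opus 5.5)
We reduce the compatibility check to a comparison after applying the fully faithful functor $R\Gamma$. On $\Rep^{\rm{f.d.}}(\wh{G})$, both $S_{\Spd C}$ and $\cal{S}^{der}_{\Spd C}\circ fr$ land in $\cal{IC}$, and their values are concentrated in perverse degree $0$, i.e. in $\Sat(\Hck_{G,\Spd C},\Qlb)$. By Lemma \ref{lemm:RGammafullyfaithful}, a natural isomorphism between functors into $\cal{D}^{\ULA}(\Hck_{G,\Spd C},\Qlb)$ is monoidal as soon as its image under $R\Gamma$ is monoidal, provided the monoidal structure on $R\Gamma$ (Lemma \ref{lemm:monoidalonRGamma}) is used. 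Since everything lives on hearts, only the compatibility of two families of isomorphisms $S(V)\star S(V')\cong S(V\otimes V')$ needs checking, not higher coherences.

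\textbf{Step 1: monoidal structure induced by the derived side.} Apply $R\Gamma$ to the monoidal structure of $\cal{S}^{der}_{\Spd C}\circ fr$. By Theorem \ref{thm:mainSpdCssscbody}, the isomorphism $\kappa^{\shear}\cong R\Gamma\circ\cal{S}^{der}_{\Spd C}$ is monoidal. Hence the resulting structure on $R\Gamma\circ\cal{S}^{der}_{\Spd C}\circ fr$ is the monoidal structure of $\kappa^{\shear}\circ fr$ from Lemma \ref{lemm:monoidalonkappa}. Concretely, it is the isomorphism (\ref{eqn:convtilde}) for $\cal{F}_i=\cal{O}_{\wh{\fk{g}}^*}\otimes V_i$ restricted to the Kostant slice.

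\textbf{Step 2: monoidal structure induced by the non-derived side.} Apply $R\Gamma$ to the monoidal structure of $S_{\Spd C}$. This gives the structure coming from the convolution isomorphism (\ref{eqn:monoidalonRGamma}) on Satake sheaves, composed with $R\Gamma$ of the Fargues--Scholze constraint $S(V)\star S(V')\cong S(V\otimes V')$.

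\textbf{Step 3: comparison.} We must show that the structures from Step 1 and Step 2 agree under the isomorphism $\kappa(fr(V))\cong R\Gamma(S_{\Spd C}(V))$ supplied by \cite[Theorem 4]{BF}. Using the commutative square (\ref{eqn:RGammareducetoeq}) and Lemma \ref{lemm:SatPervequiv}(\ref{item:threenonderSatcompati}), which identifies the monoidal structures of the non-derived Satake functors on $\Spd C$, Witt and equal characteristic sides, the question reduces to equal characteristic. There it is the statement that the isomorphism of \cite[Theorem 4]{BF} is compatible with tensor structures. Bezrukavnikov--Finkelberg prove this in \cite[\S 3.5, \S 4.6]{BF}, where the monoidal structures on $H^*$ and $\kappa$ restricted to perverse sheaves and free modules are shown to intertwine the convolution constraint of Mirković--Vilonen. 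The key point is that the monoidal structure on $R\Gamma$ in Lemma \ref{lemm:monoidalonRGamma} restricts on $\cal{IC}$ to exactly theirs, since both are built from the map (\ref{eqn:RGammaHckconv}) and Lemma \ref{lemm:H*conv}.

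\textbf{Main obstacle.} The hardest step is matching the Fargues--Scholze monoidal structure on $S_{\Spd C}$, defined via fusion, with the convolution structure used by Bezrukavnikov--Finkelberg after transport. This is where Lemma \ref{lemm:SatPervequiv}(2) and \cite{Bantwomonoidal} enter. Any residual discrepancy is an inner automorphism of $\wh{G}$ or a scalar twist on irreducibles. I would rule this out by evaluating both structures on fiber functors, where $R\Gamma$ composed with restriction to the unit section recovers total cohomology $F_1$, and using faithfulness of $F_1$ as in the proof of Lemma \ref{lemm:SatPervequiv}.
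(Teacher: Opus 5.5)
Your reduction is sound as far as it goes. Since $R\Gamma$ is fully faithful and monoidal, and everything lives in $\Sat(\Hck_{G,\Spd C},\Qlb)$, it suffices to show one thing: the isomorphism $\kappa^{\shear}\circ fr\cong R\Gamma\circ S_{\Spd C}$ intertwines the two monoidal structures. This isomorphism is $\eta_V$ of \cite[Theorem 6]{BF} at $\hbar=0$, transported via (\ref{eqn:RGammareducetoeq}).

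But that compatibility is the entire content of the proposition, and your Step 3 disposes of it by a citation that does not carry it. The paper uses \S 3.5 and \S 4.6 of \cite{BF} only as the place where monoidal structures on $H^*$ and $\kappa$ are built. They do not show that $\eta_V$ respects the Tannakian tensor structure of $S$, which is defined through the fiber functor. That is exactly what the paper has to prove itself, in Lemmas \ref{lemm:H*dRGammaH*Grmonoidal} and \ref{lemm:FSderfrFSisomdecomp}. Your ``main obstacle'' is also misplaced: matching the Fargues--Scholze structure with the Witt and equal characteristic ones is already Lemma \ref{lemm:SatPervequiv}(\ref{item:threenonderSatcompati}) together with \cite{Bantwomonoidal}.

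Your fallback of passing to fiber functors and using faithfulness of $F_{G,\Spd C}$ is the right framework, and it is the paper's. However, faithfulness only converts the question into one about an automorphism $\gamma$ of the forgetful functor on $\Rep^{\rm{f.d.}}(\wh{G})$. It cannot tell whether $\gamma$ acts on each $V_\mu$ by a non-multiplicative scalar, which is precisely the residual ambiguity you point to. Two inputs are missing.

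First, you need a monoidal isomorphism $d\circ R\Gamma\cong F_{G,\Spd C}$. Here $d$ is base change along $0\in\wh{\fk{t}}^*/W$, via (\ref{eqn:H*deeq}), and its monoidality uses Zhu's structure and \cite{Bantwomonoidal}.

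Second, and crucially, you need an actual computation of $\gamma$. Under $H^*(\Gr,S(V))\cong V$ and $\Qlb\otimes_{\cal{O}_{\wh{\fk{t}}^*/W}}\kappa(fr(V))\cong V$ (\cite[Lemma 9]{BF}), one must show that the specialization of $\eta_V$ is $\rm{id}_V$. The paper does this in two steps:
\begin{itemize}
\item It characterizes $\eta_V$ as the unique isomorphism preserving the $\bb{X}_*(T)$-filtrations and inducing the identity on the pieces $\cal{O}(\Gamma_\lambda)\otimes{}_\lambda V$. This makes $\gamma$ unipotent triangular for the weight filtration.
\item It then uses Frobenius-equivariance of $\eta_V$ to kill the off-diagonal part, since the weights $q^{\langle\rho,\lambda-\mu\rangle}$ on the spaces ${}_\lambda V_\mu$ are distinct.
\end{itemize}
If you made precise your remark that the discrepancy is at most an inner automorphism times scalars on irreducibles, triangularity alone would already force $\gamma\in\wh{G}(\Qlb)$, hence monoidality. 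But some such explicit control of $\gamma$ is indispensable, and your proposal has none.
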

\begin{proof}
We have defined the functors
\[
\Rep^{\rm{f.d.}}(\wh{G})\xrightarrow{fr}\Coh([\wh{\fk{g}}^*/\wh{G}])^{\shear}\xrightarrow{\cal{S}^{der}_{\Spd C}}\cal{D}^{\ULA}(\Hck_{G,\Spd C})\xrightarrow{R\Gamma}\cal{O}_{\bb{T}(\wh{\fk{t}}^*/W)}^{\shear}\lmod.
\]
Let $\cal{C}_1,\cal{C}_2,\cal{C}_3$ denote the essential images of $\Rep^{\rm{f.d.}}(\wh{G})$ in $\Coh([\wh{\fk{g}}^*/\wh{G}])^{\shear}$, $\cal{D}^{\ULA}(\Hck_{G,\Spd C})$, $\cal{O}_{\bb{T}(\wh{\fk{t}}^*/W)}^{\shear}\lmod$, respectively.
Note that $\cal{C}_1\simeq \cal{C}_2\simeq \cal{C}_3$ by the full faithfulness of $\cal{S}^{der}_{\Spd C}$ and $R\Gamma$, and that $\cal{C}_2=\Sat(\Hck_{G,\Spd C})$.
In particular, these $\infty$-categories are the nerves of 1-categories.
In the diagram
{\footnotesize
\[
\xymatrix{
\Rep^{\rm{f.d.}}(\wh{G})\ar[r]\ar@{=}[d]&\cal{C}_1\ar[r]\ar[d]&\cal{C}_2\ar[r]\ar[d]&\cal{C}_3\ar[d]\\
\Rep^{\rm{f.d.}}(\wh{G})\ar[r]^-{fr}&\Coh([\wh{\fk{g}}^*/\wh{G}])^{\shear}\ar[r]^-{\cal{S}^{der}_{\Spd C}}&\cal{D}^{\ULA}(\Hck_{G,\Spd C})\ar[r]^-{R\Gamma}&\cal{O}_{\bb{T}(\wh{\fk{t}}^*/W)}^{\shear}\lmod,
}
\]
}
all the vertical functors are canonically monoidal, and three squares monoidally commute.
Thus, it suffices to show that the isomorphism 
\begin{align}\label{eqn:h(SSpdCSqcFr)}
(\Rep^{\rm{f.d.}}(\wh{G})\xrightarrow{fr}\cal{C}_1\xrightarrow{\cal{S}^{der}_{\Spd C}}\cal{C}_2)\simeq (\Rep^{\rm{f.d.}}(\wh{G})\xrightarrow{S_{\Spd C}}\cal{C}_2) 
\end{align}
 is monoidal.
Consider the monoidal functor
\[
H^*\colon \cal{C}_3\to \QCoh(\bb{T}(\wh{\fk{t}}^*/W))^{\heartsuit},\ \cal{F}\mapsto \bigoplus_{i}H^i(\cal{F})
\]
of 1-categories.
Moreover, consider the homomorphism $\cal{O}_{\wh{\fk{t}}^*/W}\to \cal{O}_{\bb{T}(\wh{\fk{t}}^*/W)}$ corresponding to the natural projection $\pi_{\bb{T}}\colon \bb{T}(\wh{\fk{t}}^*/W)\to \wh{\fk{t}}^*/W$, and $\cal{O}_{\wh{\fk{t}}^*/W}\to \Qlb$ corresponding to the inclusion $\{0\}\hookrightarrow \wh{\fk{t}}^*/W$.
Let $d$ be the composition
\[
d\colon \cal{C}_3\xrightarrow{H^*} \QCoh(\bb{T}(\wh{\fk{t}}^*/W))^{\heartsuit}\xrightarrow{\Qlb\otimes_{\cal{O}_{\wh{\fk{t}}^*/W}}-}\Vect_{\Qlb}.
\]
We can prove the following lemma:
\begin{lemm}\label{lemm:H*dRGammaH*Grmonoidal}
There is a natural monoidal isomorphism between the composition
\[
\Sat(\Hck_{G,\Spd C})=\cal{C}_2\xrightarrow{R\Gamma} \cal{C}_3\overset{d}{\longrightarrow}\Vect_{\Qlb}^{\heartsuit}
\]
and 
\[
F_{G,\Spd C}\colon \Sat(\Hck_{G,\Spd C})\to \Vect_{\Qlb}^{\heartsuit},\quad \cal{F}\mapsto \bigoplus_i H^i(\Gr_{G,\Spd C},\cal{F}).
\]
\end{lemm}
\begin{proof}
The isomorphism
\begin{align}\label{eqn:H*dRGammaH*Gr}
d\circ R\Gamma\simeq F_{G,\Spd C}
\end{align}
follows from Lemma \ref{lemm:H*GtoH*Hck}, Proposition \ref{prop:RHckequiv}, Proposition \ref{prop:RBL+Gequiv} and the isomorphism
\begin{align}\label{eqn:H*deeq}
H^*(\Spd C) \otimes_{H^*([\Spd C/L^+_{\Spd C}G])} H^*(\Hck_{G,\Spd C},\cal{F})\xrightarrow{\sim}  H^*(\Gr_{G,\Spd C},\cal{F}).
\end{align}
This isomorphism (\ref{eqn:H*deeq}) follows, for example, from the argument of \cite[\S A.1.3]{Zhueq},  reducing to the equal characteristic case via the results in \S \ref{sec:cohofHecke} and noting that $H^*(\Hck^{\eq}_{G^{\spl},\Spec k},\cal{F})$ is free over $H^*([\Spec k/L^{+,\eq}_{\Spec k}G^{\spl}])$.

It remains to show the monoidality.
Reducing to the Witt-vector case by Proposition \ref{prop:DULAequiv}, and if we endow $F_{G,\Spd C}$ with a monoidal structure as in \cite[\S 2.3]{Zhumixed}, then the isomorphism (\ref{eqn:H*dRGammaH*Gr}) is monoidal by definition.
Since Zhu's monoidal structure on $F_{G,\Spd C}$ coincides with our monoidal structure by \cite{Bantwomonoidal}, the claim follows.
\end{proof}
Moreover, we have the following lemma:
\begin{lemm}\label{lemm:FSderfrFSisomdecomp}
The isomorphism (\ref{eqn:h(SSpdCSqcFr)}) composed with $F_{G,\Spd C}$:
\begin{align}
&(\Rep^{\rm{f.d.}}(\wh{G})\xrightarrow{fr}\cal{C}_1\xrightarrow{\cal{S}^{der}_{\Spd C}}\Sat(\Hck_{G,\Spd C})\xrightarrow{F_{G,\Spd C}}\Vect^{\heartsuit})\notag\\
&\simeq (\Rep^{\rm{f.d.}}(\wh{G})\xrightarrow{S_{\Spd C}}\Sat(\Hck_{G,\Spd C})\xrightarrow{F_{G,\Spd C}}\Vect^{\heartsuit})\label{eqn:composedwithH*GrG}. 
\end{align}
decomposes as
\begin{align}
&(\Rep^{\rm{f.d.}}(\wh{G})\xrightarrow{fr}\cal{C}_1\xrightarrow{\cal{S}^{der}_{\Spd C}}\Sat(\Hck_{G,\Spd C})\xrightarrow{F_{G,\Spd C}}\Vect^{\heartsuit})\notag\\
&\simeq (\Rep^{\rm{f.d.}}(\wh{G})\xrightarrow{fr}\cal{C}_1\xrightarrow{\cal{S}^{der}_{\Spd C}}\Sat(\Hck_{G,\Spd C})\xrightarrow{R\Gamma}\cal{C}_3\xrightarrow{d}\Vect^{\heartsuit})\notag\\
&\simeq (\Rep^{\rm{f.d.}}(\wh{G})\xrightarrow{fr}\cal{C}_1\xrightarrow{\kappa}\cal{C}_3\xrightarrow{d}\Vect^{\heartsuit})\notag\\
&\simeq (\Rep^{\rm{f.d.}}(\wh{G})\xrightarrow{\rm{forget}}\Vect^{\heartsuit})\notag\\
&\simeq (\Rep^{\rm{f.d.}}(\wh{G})\xrightarrow{S_{\Spd C}}\Sat(\Hck_{G,\Spd C})\xrightarrow{F_{G,\Spd C}}\Vect^{\heartsuit}).\label{eqn:composedwithH*GrCdecomp}
\end{align}
Here the first isomorphism is the one in Lemma \ref{lemm:H*dRGammaH*Grmonoidal} and the second is (\ref{eqn:SSpdCSqcFr}).
The third is given by a purely algebraic arugument: by \cite[Lemma 9]{BF}, we have 
\[
\Qlb\otimes_{\cal{O}_{\wh{\fk{t}}^*/W}}\kappa(fr(V)) \cong \Qlb\otimes_{\cal{O}_{\wh{\fk{t}}^*/W}}\cal{O}_\Sigma\otimes V\cong V.
\]
The forth is given by the construction of the (non-derived) geometric Satake equivalence.
\end{lemm}
We assume this lemma for the moment.
The first isomorphism in (\ref{eqn:composedwithH*GrCdecomp}) is monoidal by Lemma \ref{lemm:H*dRGammaH*Grmonoidal}, the second by the definition of the monoidal structure on $\cal{S}^{der}_{\Spd C}$, the third by an easy and algebraic argument, and the fourth by the construction of (non-derived) geometric Satake equivalence.
Therefore, by Lemma \ref{lemm:FSderfrFSisomdecomp}, the isomorphism (\ref{eqn:composedwithH*GrG}) is monoidal.
The claim follows since the functor $F_{G,\Spd C}$ of 1-categories is faithful on $\Sat(\Hck_{G,\Spd C})$.
\end{proof}
We conclude this subsection by providing the proof of Lemma \ref{lemm:FSderfrFSisomdecomp}, which was deferred earlier.
\begin{proof}[Proof of Lemma \ref{lemm:FSderfrFSisomdecomp}]
By Lemma \ref{lemm:SatPervequiv}, it reduces to a similar statement for the equal characteristic case, where we use $\wt{S}_{qc}$ in \cite[Theorem 4]{BF} instead of $\cal{S}^{der}_{\Spd C}$.
Then, the claim is proved as follows:

In what follows, we use some notation from \cite{BF} without explanation.
By definition, the isomorphism $\wt{S}_{qc}\circ fr\simeq S_{\Spec k}^{\eq}$ is induced from 
\begin{align}\label{eqn:kappafrRGammaSeq}
\kappa \circ fr\simeq R\Gamma\circ S_{\Spec k}^{\eq}
\end{align}
via the full faithfulness of $R\Gamma$.
This isomorphism (\ref{eqn:kappafrRGammaSeq}) is obtained by restricting the isomorphism 
\[
\eta_V:=\eta_{V,\psi}\colon H^*
_{L^{+,\eq}_{\Spec k}G\rtimes \bb{G}_m}
(\Gr^{\eq}_{G,\Spec k},S_{\Spec k}^{\eq}(V))\cong \phi(V)
\]
in \cite[Theorem 6]{BF} to $\hbar=0$.
Here we write $\eta$ with the subscript $\psi$ since $\phi$ is defined depending on the additive character $\psi$.
Recall that $\eta_V$ is an isomorphism of modules over 
\[
H^*
_{L^{+,\eq}_{\Spec k}G\rtimes \bb{G}_m\ltimes L^{+,\eq}_{\Spec k}G}(*,\Qlb)\cong \cal{O}_{\wh{\fk{t}}^*/W\times \wh{\fk{t}}^*/W\times \bb{A}^1}.
\]
It suffices to show that, under the isomorphisms
\begin{multline}
\Qlb\otimes_{H^*
_{L^{+,\eq}_{\Spec k}G\rtimes \bb{G}_m}(*)} H^*
_{L^{+,\eq}_{\Spec k}G\rtimes \bb{G}_m}
(\Gr^{\eq}_{G,\Spec k},S_{\Spec k}^{\eq}(V))\\\cong  H^*
(\Gr^{\eq}_{G,\Spec k},S_{\Spec k}^{\eq}(V))\cong V,\label{eqn:equivcohrestrisomtoV}    
\end{multline}
\begin{align}
\Qlb \otimes_{\cal{O}_{\wh{\fk{t}}^*/W\times  \bb{A}^1}} (\phi(V))\cong \Qlb \otimes_{\cal{O}_{\wh{\fk{t}}^*/W}} (\cal{O}_{\Sigma}\otimes V)\cong V,\label{eqn:phi(V)restrisomtoV}    
\end{align}
the homomorphism $\Qlb \otimes_{\cal{O}_{\wh{\fk{t}}^*/W\times  \bb{A}^1}}\eta_{V}$ agrees with $\rm{id}_V$.
Here, the second isomorphism in (\ref{eqn:equivcohrestrisomtoV}) is given by the construction of the geometric Satake equivalence $S_{\Spec k}^{\eq}$ and the first isomorphism in (\ref{eqn:phi(V)restrisomtoV}) is given by \cite[Lemma 9]{BF}.
Let
\[
(\rm{id},\pi,\rm{id})\colon \wh{\fk{t}}^*/W\times \wh{\fk{t}}^*\times \bb{A}^1 \to \wh{\fk{t}}^*/W\times \wh{\fk{t}}^*/W\times \bb{A}^1
\]
denote the projection.
As written in \cite{BF}, there is a $\bb{X}_*(T)$-filtration on both sides of $(\rm{id},\pi,\rm{id})^*\eta_V$ whose $\lambda$-th graded piece is canonically isomorphic to
\[
\cal{O}(\Gamma_{\lambda})\otimes {}_{\lambda}V,
\]
where ${}_{\lambda}V$ is the $\lambda$-weight space of $V$ and $\Gamma_{\lambda}=\{(x,y,a)\in (\wh{\fk{t}}^*/W)^2\times \bb{A}^1\mid y=x+a\lambda\}$.
In addition, $\eta_V$ is the unique isomorphism such that $(\rm{id},\pi,\rm{id})^*\eta_V$ preserves these filtrations and induces the identity of the associated graded.
Therefore, $\Qlb \otimes_{\cal{O}_{\wh{\fk{t}}^*/W\times  \bb{A}^1}}\eta_{V}$ preserves the induced filtrations on (\ref{eqn:equivcohrestrisomtoV}) and (\ref{eqn:phi(V)restrisomtoV}), and induces the identity of the associated graded.
Since the induced filtration on (\ref{eqn:equivcohrestrisomtoV}) and (\ref{eqn:phi(V)restrisomtoV}) is 
\[
F^{\lambda}V=\bigoplus_{\lambda'\leq \lambda}{}_{\lambda'}V,
\]
it follows that
\[
{}_{\lambda}V\hookrightarrow V\xrightarrow[\simeq]{\Qlb \otimes_{\cal{O}_{\wh{\fk{t}}^*/W\times  \bb{A}^1}}\eta_{V}} V\twoheadrightarrow {}_{\lambda'}V
\]
is the identity if $\lambda=\lambda'$.

It suffices to show that it is zero if $\lambda\neq \lambda'$.
We can assume that $V$ is the irreducible representation $V_{\mu}$ of $\wh{G}$ with highest weight $\mu$.
Then, this follows from the fact that the isomorphism $\eta_V$ is compatible with the Frobenius action (cf. the last paragraph of \cite[\S 6.7]{BSV}).
Namely, it is as follows:
Assume that $V\in \Rep^{\rm{f.d.}}(\wh{G},\Qlb)$ has a Frobenius action such that $V$ is $\wh{G}\rtimes \langle Fr\rangle$-representation, where the action of $Fr$ on $\wh{G}$ is twisted via the cyclotomic action as in \S \ref{ssc:dualgp}.
Then, the corresponding perverse sheaf $S_{\Spec k}^{\eq}(V)$ has the corresponding Frobenius structure, see \cite[Theorem 5.5.12]{Zhueq} or \cite[Proposition 6.7.2]{BSV}.
For example, let $IC_{\mu}\in \Perv(\Hck^{\eq}_{G,\Spec k},\Qlb)$ denote the intersection cohomology sheaf on $\Gr^{\eq}_{G,\leq \mu}$.
If $IC_{\mu}$ has the Frobenius structure induced by the trivial Frobenius structure on the constant sheaf on $\Gr^{\eq}_{G,\mu}$, then the corresponding $Fr$-action on $V_{\mu}$ is given by the direct sum of the multiplication by $q^{\langle \rho,\lambda-\mu\rangle}$ on ${}_{\lambda}V_{\mu}$.
The Frobenius structure on $S_{\Spec k}^{\eq}(V)$ induces the Frobenius action on
\[
H^*
_{L^{+,\eq}_{\Spec k}G\rtimes \bb{G}_m}
(\Gr^{\eq}_{G,\Spec k},S_{\Spec k}^{\eq}(V)).
\]
Moreover, we define the Frobenius actions on the quantized enveloping algebra $U_{\hbar}(\wh{\fk{g}})$ by
\begin{align*}
\wh{\fk{g}}\ni x\mapsto qx,\quad \hbar\mapsto q\hbar, 
\end{align*}
and on $\cal{O}_{\wh{\fk{t}}}$ and $\cal{O}_{\bb{A}^1}$ via the multiplication map $\wh{\fk{t}}\to \wh{\fk{t}}$ and $\bb{A}^1\to \bb{A}^1$ by $q^{-1}$.
Other objects such as $\cal{O}_{\Gamma_{\lambda}}$ have induced Frobenius actions.
These induce the map
\[
\phi_{\psi}(V)\xrightarrow{\simeq} \phi_{Fr\circ \psi\circ Fr^{-1}}(V),
\]
where we write $\phi$ with the subscript $\psi$ since it depends on the choice of the character $\psi\colon U_{\hbar}(\wh{\fk{n}}_{-})\to \Qlb[\hbar]$.
By straightforward calculations, we can show that the isomorphism
\begin{align*}
gr((\rm{id},\pi,\rm{id})^*H^*
_{L^{+,\eq}_{\Spec k}G\rtimes \bb{G}_m}
(\Gr^{\eq}_{G,\Spec k},S_{\Spec k}^{\eq}(V)))\\
&\cong \bigoplus_{\lambda}\cal{O}(\Gamma_{\lambda})\otimes {}_{\lambda}V
\end{align*}
is Frobenius-equivariant, and the square
\[
\xymatrix{
gr((\rm{id},\pi,\rm{id})^*\phi_{\psi}(V))\ar[d]_{Fr}\ar[r]^-{\cong}& \bigoplus_{\lambda}\cal{O}(\Gamma_{\lambda})\otimes {}_{\lambda}V\ar[d]^{Fr}\\
gr((\rm{id},\pi,\rm{id})^*\phi_{Fr\circ \psi\circ Fr^{-1}}(V))\ar[r]^-{\cong}& \bigoplus_{\lambda}\cal{O}(\Gamma_{\lambda})\otimes {}_{\lambda}V
\\
}
\]
is commutative.
By the characterization of $\eta_V$, it follows that the square
\[
\xymatrix{
H^*
_{L^{+,\eq}_{\Spec k}G\rtimes \bb{G}_m}
(\Gr^{\eq}_{G,\Spec k},S_{\Spec k}^{\eq}(V))) \ar[r]_-{\cong}^-{\eta_{V,\psi}}\ar[d]_{Fr}&\phi_{\psi}(V)\ar[d]^{Fr}\\
H^*
_{L^{+,\eq}_{\Spec k}G\rtimes \bb{G}_m}
(\Gr^{\eq}_{G,\Spec k},S_{\Spec k}^{\eq}(V))) \ar[r]_-{\cong}^-{\eta_{V,r\circ \psi\circ Fr^{-1}}}&\phi_{Fr\circ \psi\circ Fr^{-1}}(V).
}
\]
By straightforward calculations, we can show that the isomorphisms (\ref{eqn:equivcohrestrisomtoV}) and (\ref{eqn:phi(V)restrisomtoV}) are Frobenius equivariant, and that the resulting functor
\[
V\xrightarrow{\simeq} V
\]
is Frobenius equivariant.
Since the Frobenius weights on ${}_{\lambda}V_{\mu}$ differ for each $\lambda$, we get the result.
\end{proof}
\subsection{Case of Tori}\label{ssc:toruscase}
Assume that $G=T$ is a torus.
Then
\begin{align}\label{eqn:HckTsplit}
\Hck_{T,\Spd C}=\coprod_{\lambda\in \bb{X}_*(T_{\ol{F}})}[\Spd C/L^+_{\Spd C}T].
\end{align}
In this case, the $\infty$-category $\cal{D}^{\ULA}(\Hck_{T,\Spd C})$ is $\bb{E}_{\infty}$-monoidal.

To prove the derived Satake equivalence for $\Hck_{T,\Spd C}$, we first show the following:
\begin{prop}\label{prop:DBL+Tequiv}
There is a canonical symmetric monoidal equivalence
\[
\cal{D}^{\ULA}([\Spd C/L^+_{\Spd C}T],\Qlb)\simeq \Sym(\wh{\fk{t}}[-2])\lmod^{\perf}
\]
of symmetric monoidal $\infty$-categories.
\end{prop}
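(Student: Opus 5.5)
The plan is to realize the equivalence as the functor $R\Gamma([\Spd C/L^+_{\Spd C}T],-)$ with values in modules over the cochain algebra of $[\Spd C/L^+_{\Spd C}T]$. The key input is the analogue of Proposition \ref{prop:RBL+Gequiv} for $T$: the same purity argument as in Proposition \ref{prop:RHckequiv}, applied to Corollary \ref{cor:H*TWequiv}, gives a canonical equivalence of commutative algebras
\[
R\Gamma([\Spd C/L^+_{\Spd C}T],\Qlb)\simeq \Sym(\wh{\fk{t}}[-2]),
\]
compatibly with the $W_F$-action. The formality step is the same as before: the cohomology is polynomial on a pure graded piece, so generators lift uniquely. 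Since $\mathbbm{1}:=\Qlb$ is the monoidal unit and $R\Gamma(-)=R\Hom(\mathbbm{1},-)$, we obtain an exact functor
\[
\Phi\colon \cal{D}^{\ULA}([\Spd C/L^+_{\Spd C}T],\Qlb)\to \Sym(\wh{\fk{t}}[-2])\lmod,
\]
sending $\mathbbm{1}$ to the free module of rank one.

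\textbf{Step 1: $\Phi$ is an equivalence onto perfect modules.} I would first show that $\mathbbm{1}$ generates $\cal{D}^{\ULA}([\Spd C/L^+_{\Spd C}T],\Qlb)$ under finite colimits, shifts and retracts. Via Proposition \ref{prop:DULAequiv}, restricted to the closed substack $[\Spd C/L^+T]$ and reduced to torsion coefficients, this category is identified with ULA sheaves on $[\Spec k/L^{+,\eq}T^{\spl}]$. The pro-unipotent kernel can be discarded, which leaves $\cal{D}_{\rm{cons}}([\Spec k/T^{\spl}],\Qlb)$. Every constructible complex on $BT^{\spl}$ is a finite extension of shifts of local systems, and local systems on $BT^{\spl}$ are constant because $T^{\spl}$ is connected. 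Hence $\mathbbm{1}$ generates. Since $\Phi(\mathbbm{1})=\Sym(\wh{\fk{t}}[-2])$ and $\Phi$ is fully faithful on $\mathbbm{1}$ by construction, a dévissage gives that $\Phi$ is fully faithful. Its essential image is the thick subcategory generated by the free module, which is exactly $\Sym(\wh{\fk{t}}[-2])\lmod^{\perf}$.

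\textbf{Step 2: symmetric monoidality.} The tensor product on $\cal{D}^{\ULA}([\Spd C/L^+T])$ is the $\otimes$ of sheaves, which agrees with convolution for the unit component. By Lemma \ref{lemm:RGammaKunnethformal} with $X=Y=Z$, we get a lax symmetric monoidal structure
\[
R\Gamma(A)\otimes_{R\Gamma(\Qlb)}R\Gamma(B)\to R\Gamma(A\otimes B).
\]
This map is an equivalence for $A=B=\mathbbm{1}$, and then for all $A,B$ by Step 1, since both sides are exact in each variable. So $\Phi$ is strongly symmetric monoidal. A cleaner route is to note that $\Phi$ is the functor $R\Hom(\mathbbm{1},-)$ out of a category generated by its unit. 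This is the standard Schwede–Shipley/Lurie situation, in which $\mathcal{C}\simeq \rm{End}(\mathbbm{1})\lmod^{\perf}$ symmetric monoidally once $\rm{End}(\mathbbm{1})$ is identified as an $\mathbb{E}_\infty$-algebra.

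\textbf{Main obstacle.} The hard step is the $\mathbb{E}_\infty$-formality of $R\Gamma([\Spd C/L^+T],\Qlb)$. The purity argument of Proposition \ref{prop:RHckequiv} produces a map of commutative algebras $\Sym(\wh{\fk{t}}[-2])\to R\Gamma$ from the free commutative algebra on lifts of the degree-2 generators. That map is a quasi-isomorphism by Corollary \ref{cor:H*TWequiv} and the characteristic-zero identification of the derived and classical $\Sym$. I would follow that proof verbatim, taking care that the lift is chosen $W_F/I_E$-equivariantly. This is done by working in $\DDRep(W_F/I_E,\Qlb)$ after first showing that $I_E$ acts trivially, exactly as in Lemma \ref{lemm:RHckItriv}.
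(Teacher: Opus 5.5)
Your proposal is correct and follows essentially the same route as the paper. Both realize the functor as $R\Gamma$ into modules over $R\Gamma([\Spd C/L^+_{\Spd C}T],\Qlb)\simeq\Sym(\wh{\fk{t}}[-2])$, show generation by the unit (reducing to $[\Spec k/L^+T^{\spl}]$ and using connectedness of $T$), deduce full faithfulness and the Künneth monoidality from the unit case, and identify the essential image with perfect modules. The only difference is one of emphasis: the paper does not treat the formality as a separate obstacle, since it takes it directly from Proposition \ref{prop:RBL+Gequiv}, which already covers $G=T$ (where $W$ is trivial).
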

Here the monoidal structure of the left-hand side is given by $-\otimes_{\Qlb}-$.
\begin{proof}
By Proposition \ref{prop:RBL+Gequiv}, there is a functor
\begin{align*}
R\Gamma_{BL^+T}\colon \cal{D}^{\ULA}([\Spd C/L^+_{\Spd C}T],\Qlb)&\to R\Gamma([\Spd C/L^+_{\Spd C}T],\Qlb)\lmod \simeq \Sym(\wh{\fk{t}}[-2])\lmod\\
A&\mapsto R\Gamma([\Spd C/L^+_{\Spd C}T],A).
\end{align*}
Let $\cal{IC}_{BL^+T}\subset \cal{D}^{\ULA}([\Spd C/L^+_{\Spd C}T],\Qlb)$ be the essential image of the unit functor
\[
\rm{Vect}_{\Qlb}^{\perf}\to \cal{D}^{\ULA}([\Spd C/L^+_{\Spd C}T],\Qlb).
\]
We claim that $\cal{D}^{\ULA}([\Spd C/L^+_{\Spd C}T],\Qlb)$ is generated by $\cal{IC}_{BL^+T}$ under extensions.
In fact, by truncations, $\cal{D}^{\ULA}([\Spd C/L^+_{\Spd C}T],\Qlb)$ is generated by 
\begin{multline*}
\cal{D}^{\ULA}([\Spd C/L^+_{\Spd C}T],\Qlb)^{\heartsuit}:=\\
\{A\in \cal{D}^{\ULA}([\Spd C/L^+_{\Spd C}T],\Qlb)\mid \text{$A$ is concentrated on degree 0}\}    
\end{multline*}
under shifts and extensions.
By the connectedness of $T$, we can show that any object in $\cal{D}^{\ULA}([\Spd C/L^+_{\Spd C}T],\Qlb)^{\heartsuit}$ is isomorphic to $\Qlb^{\oplus n}$ for some $n$.
This can be shown, for example, by reducing to a similar statement for $\cal{D}^{\ULA}([\Spec k/L^+_{\Spec k}T^{\spl}],\Qlb)^{\heartsuit}$ via Proposition \ref{prop:DULAequiv}, and by using the connectedness of $L^+_{\Spec k}T^{\spl}$.
Therefore, $\cal{D}^{\ULA}([\Spd C/L^+_{\Spd C}T],\Qlb)$ is generated by $\Qlb$ under shifts and extensions, and thus by $\cal{IC}_{BL^+T}$ under extensions.

For $A_1,A_2\in \cal{D}^{\ULA}([\Spd C/L^+_{\Spd C}T],\Qlb)$, there is a canonical homomorphism
\begin{align}\label{eqn:RGammaBL+TKunnethhom}
R\Gamma_{BL^+T}(A_1)\otimes_{R\Gamma_{BL^+T}(\Qlb)}R\Gamma_{BL^+T}(A_2)\to R\Gamma_{BL^+T}(A_1\otimes A_2) 
\end{align}
by Lemma \ref{lemm:RGammaKunnethformal}.
This is an equivalence, since we can reduce to the case $A_1,A_2\in \cal{IC}_{BL^+T}$.
Therefore, the functor $R\Gamma_{BL^+T}$ is monoidal.
Moreover, it is symmetric monoidal.
We claim that $R\Gamma_{BL^+T}$ is fully faithful.
In fact, since it is enough to prove full faithfulness on $\cal{IC}_{BL^+T}$, which follows from \ref{prop:RBL+Gequiv}.

Therefore, $R\Gamma_{BL^+T}$ induces the monoidal equivalence between $\cal{D}^{\ULA}([\Spd C/L^+_{\Spd C}T],\Qlb)$ and a full subcategory of $\Sym(\wh{\fk{t}}[-2])\lmod$ generated by the essential image of 
\[
\rm{Vect}_{\Qlb}^{\perf}\to \Sym(\wh{\fk{t}}[-2])\lmod, V\mapsto \Sym(\wh{\fk{t}}[-2])\otimes V
\]
under extensions.
The latter is $\Sym(\wh{\fk{t}}[-2])\lmod^{\perf}$, so we get the result.
\end{proof}
Now we can prove the derived Satake equivalence over $\Spd C$ for tori:
\begin{thm}\label{thm:mainSpdCtorusbody}
There is a canonical symmetric monoidal equivalence
\[
\cal{S}^{der}_{T,\Spd C}\colon \Sym(\wh{\fk{t}}[-2])\lmod^{\perf}(\cal{D}(\Rep(\wh{T})))\simeq \cal{D}^{\ULA}(\Hck_{T,\Spd C},\Qlb)
\]
of symmetric monoidal $\infty$-categories.

Moreover, there is a canonical monoidal equivalence
\[
S_{T,\Spd C}\simeq \cal{S}^{der}_{T,\Spd C}\circ fr|_{\Rep^{\rm{f.d.}}(\wh{T},\Qlb)}\colon \Rep^{\rm{f.d.}}(\wh{T},\Qlb)\to \cal{D}^{\ULA}(\Hck_{T,\Spd C},\Qlb).
\]
\end{thm}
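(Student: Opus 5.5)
The idea is to reduce everything to Proposition \ref{prop:DBL+Tequiv} via the decomposition (\ref{eqn:HckTsplit}).

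\textbf{Categories.} Since $\Hck_{T,\Spd C}$ is a disjoint union of copies of $[\Spd C/L^+_{\Spd C}T]$ indexed by $\lambda\in\bb{X}_*(T_{\ol F})$, Lemma \ref{lemm:ULAisbounded} (supports are quasi-compact) gives
\[
\cal{D}^{\ULA}(\Hck_{T,\Spd C},\Qlb)\simeq\bigoplus_{\lambda}\cal{D}^{\ULA}([\Spd C/L^+_{\Spd C}T],\Qlb).
\]
This is a finite-support direct sum. On the spectral side, $\Rep(\wh T)$ is semisimple and graded by $\bb{X}^*(\wh T)=\bb{X}_*(T_{\ol F})$. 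The torus $\wh T$ acts trivially on $\wh{\fk t}$, so $\Sym(\wh{\fk t}[-2])$ sits in weight $0$. Hence a perfect module in $\cal{D}(\Rep(\wh T))$ is a finite direct sum over $\lambda$ of perfect $\Sym(\wh{\fk t}[-2])$-modules placed in weight $\lambda$. Applying Proposition \ref{prop:DBL+Tequiv} summand by summand produces an equivalence $\cal{S}^{der}_{T,\Spd C}$ of stable $\infty$-categories.

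\textbf{Monoidal structure.} For tori the convolution diagram (\ref{eqn:conv}) is explicit: $L_{\Spd C}T$ is commutative, and $\Hck_{T,\Spd C}$ is a group stack with component group $\bb{X}_*(T_{\ol F})$. The map $m$ restricted to the $(\lambda,\mu)$-component is the multiplication
\[
[\Spd C/L^+T]\times_{\Spd C}[\Spd C/L^+T]\to[\Spd C/L^+T]\quad\text{landing in component }\lambda+\mu.
\]
Because $p$ identifies $\Hck^{(2)}$ with the product over $[\Spd C/L^+T]$, this gives $A_\lambda\star B_\mu\simeq A_\lambda\otimes B_\mu$ placed in component $\lambda+\mu$.

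I expect the main obstacle to be making this $\bb{E}_\infty$-coherent. The paper says this structure is $\bb{E}_\infty$; I would define $\star$ directly as pushforward along the commutative group-stack multiplication, so the symmetric monoidal structure is that of the group algebra over the commutative monoid $\bb{X}_*(T_{\ol F})$. Concretely, it is the Day convolution on $\bigoplus_\lambda\cal{D}^{\ULA}([\Spd C/L^+T])$ with respect to the $\otimes$-structure of Proposition \ref{prop:DBL+Tequiv}. The spectral side is likewise Day convolution of weight-graded $\Sym(\wh{\fk t}[-2])$-modules under $\otimes_{\Sym(\wh{\fk t}[-2])}$, because the tensor product of $\wh T$-representations adds weights. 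Symmetric monoidality of Proposition \ref{prop:DBL+Tequiv} then upgrades formally to the graded version.

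\textbf{Compatibility with $S_{T,\Spd C}$.} Take $\Qlb_\lambda\in\Rep^{\rm{f.d.}}(\wh T)$. Then $fr(\Qlb_\lambda)$ is the free rank-one module in weight $\lambda$, and $\cal{S}^{der}_{T,\Spd C}$ sends it to the constant sheaf $\Qlb$ on component $\lambda$. This is exactly $S_{T,\Spd C}(\Qlb_\lambda)$, as one checks through the fiber functor $F_{T,\Spd C}$ of (\ref{eqn:defofFGS}), which reads off the $\lambda$-component.

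Both $S_{T,\Spd C}$ and $\cal{S}^{der}_{T,\Spd C}\circ fr$ land in a semisimple $1$-category whose simple objects are invertible and indexed by $\lambda$, and both are the identity on indices. The isomorphism between them is therefore determined on the characters $\Qlb_\lambda$. Monoidality then reduces to checking that the unit constraints and the constraints $\Qlb_\lambda\otimes\Qlb_\mu\simeq\Qlb_{\lambda+\mu}$ agree. After applying the faithful fiber functor, both sides give the canonical identification $\Qlb\otimes\Qlb=\Qlb$. This is the same strategy as in Proposition \ref{prop:monoidalSSpdCSqcFr}, in a much simpler form.
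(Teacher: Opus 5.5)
Your proposal is correct and follows essentially the same route as the paper: split along the components of (\ref{eqn:HckTsplit}), identify both sides with $\Sym(\wh{\fk{t}}[-2])\lmod^{\perf}\otimes\bigoplus_{\bb{X}_*(T_{\ol{F}})}\Vect_{\Qlb}^{\perf}$ carrying the $\bb{X}_*(T_{\ol{F}})$-graded monoidal structure, apply Proposition \ref{prop:DBL+Tequiv}, and read off the compatibility with $S_{T,\Spd C}$ from the construction of the non-derived equivalence. One slip to fix: the convolution is not a pushforward along the group-stack multiplication $[\Spd C/L^+_{\Spd C}T]\times_{\Spd C}[\Spd C/L^+_{\Spd C}T]\to[\Spd C/L^+_{\Spd C}T]$. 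That map has fibres $[\Spd C/L^+_{\Spd C}T]$, so pushing forward along it would introduce a factor of $R\Gamma([\Spd C/L^+_{\Spd C}T],\Qlb)$. The convolution is instead $Rm_*p^*$, where on the $(\lambda,\mu)$-component $p$ is the diagonal on the $[\Spd C/L^+_{\Spd C}T]$-factor and $m$ is an isomorphism onto the $(\lambda+\mu)$-component. This is exactly what your formula $A_\lambda\star B_\mu\simeq A_\lambda\otimes B_\mu$ and your Day-convolution description actually use.
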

\begin{proof}
By (\ref{eqn:HckTsplit}), there is an equivalence
\[
\cal{D}^{\ULA}(\Hck_{T,\Spd C},\Qlb)\simeq \cal{D}^{\ULA}([\Spd C/L^+_{\Spd C}T],\Qlb) \otimes \bigoplus_{\bb{X}_*(T)}\Vect_{\Qlb}^{\perf}, 
\]
and this is monoidal when we endow $\bigoplus_{\bb{X}_*(T_{\ol{F}})}\Vect_{\Qlb}$ with a canonical $\bb{X}_*(T_{\ol{F}})$-graded monoidal structure.
On the other hand, since $\Sym(\wh{\fk{t}}[-2])$ has the trivial $\wh{T}$-action, there is a canonical monoidal equivalence
\[
\Sym(\wh{\fk{t}}[-2])\lmod^{\perf}(\cal{D}(\Rep(\wh{T})))\simeq \Sym(\wh{\fk{t}}[-2])\lmod^{\perf}\otimes \cal{D}^b(\Rep^{\rm{f.d.}}(\wh{T})).
\]
We also have a monoidal equivalence
\[
\Rep^{\rm{f.d.}}(\wh{T},\Qlb)\simeq \bigoplus_{\bb{X}_*(T_{\ol{F}})}\Vect_{\Qlb}^{\heartsuit, \rm{f.d.}}, 
\]
where the right-hand side is endowed with a canonical $\bb{X}_*(T_{\ol{F}})$-graded (non-derived) monoidal structure.
It follows from the universality of $\cal{D}^b(-)$ in \cite[Corollary 7.4.12]{BCKW} that $\cal{D}^b(-)$ preserves direct sums.
Therefore, we have a monoidal equivalence
\begin{align*}
\Sym(\wh{\fk{t}}[-2])\lmod^{\perf}\otimes \cal{D}^b(\Rep^{\rm{f.d.}}(\wh{T}))&\simeq \Sym(\wh{\fk{t}}[-2])\lmod^{\perf}\otimes \cal{D}^b\left(\bigoplus_{\bb{X}_*(T_{\ol{F}})}\Vect_{\Qlb}^{\heartsuit,\rm{f.d.}}\right)\\
&\simeq \Sym(\wh{\fk{t}}[-2])\lmod^{\perf}\otimes \bigoplus_{\bb{X}_*(T_{\ol{F}})}\Vect_{\Qlb}^{\perf}.
\end{align*}
Now the first statement of the theorem follows from Proposition \ref{prop:DBL+Tequiv}.

The (non-derived) geometric Satake equivalence 
\[
\Sat(\Hck_{T,\Spd C},\Qlb)\simeq \Rep^{\rm{f.d.}}(\wh{T},\Qlb)
\]
is, by construction, isomorphic to the composition
\begin{align*}
\Sat(\Hck_{T,\Spd C},\Qlb)&= \cal{D}^{\ULA}(\Hck_{T,\Spd C},\Qlb)^{\heartsuit}
\\
&=\cal{D}^{\ULA}\left(\coprod_{\lambda\in \bb{X}^*(T_{\ol{F}})}[\Spd C/L^+_{\Spd C}T],\Qlb\right)^{\heartsuit}\\
&\simeq \bigoplus_{\lambda\in \bb{X}^*(T_{\ol{F}})}\cal{D}^{\ULA}([\Spd C/L^+_{\Spd C}T],\Qlb)^{\heartsuit}\\
&\xrightarrow[\simeq]{\bigoplus (-)^*} \bigoplus_{\lambda\in \bb{X}^*(T_{\ol{F}})}\cal{D}^{\ULA}(\Spd C,\Qlb)^{\heartsuit}\\
&\simeq \bigoplus_{\lambda\in \bb{X}^*(T_{\ol{F}})}\Vect_{\Qlb}^{\heartsuit,\rm{f.d.}}
\simeq \Rep^{\rm{f.d.}}(\wh{T},\Qlb).
\end{align*}
Therefore, it is easy to check that there is a monoidal equivalence $\cal{S}^{der}_{T,\Spd C}\circ fr\simeq \cal{S}_{T,\Spd C}$.
\end{proof}
\subsection{General case}\label{ssc:generalcase}
Assume that $G$ is any connected reductive group over $F$.
In this section, we prove the derived Satake equivalence:
\begin{thm}\label{thm:mainSpdCbody}
There is a canonical monoidal equivalence
\[
\cal{S}^{der}_{G,\Spd C}\colon \cal{D}^{\ULA}(\Hck_{G,\Spd C},\Qlb)\simeq \Sym(\wh{\fk{g}}[-2])\lmod^{\perf}(\cal{D}\Rep^{\rm{f.d.}}(\wh{G})).
\]
\end{thm}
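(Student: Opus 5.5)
We reduce to the two cases already treated, using a central isogeny, as announced at the start of \S\ref{sec:derSatoverSpdC}. Choose a central isogeny $\tilde G:=G_{\rm sc}\times Z\to G$, where $G_{\rm sc}$ is the simply-connected cover of the derived group and $Z=Z(G)^\circ$ is the connected center, a torus. Write $\Gamma$ for its finite central kernel. We have $\wh{\tilde G}=\wh{G_{\rm sc}}\times\wh{Z}$, and $\wh G\to\wh{\tilde G}$ is an isogeny, dual to $\tilde G\to G$, with kernel dual to $\Gamma$. Its Lie algebra map $\wh{\fk g}\to\wh{\fk g}_{\rm ad}\oplus\wh{\fk z}$ is an isomorphism in characteristic $0$.

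By Theorem \ref{thm:mainSpdCssscbody} and Theorem \ref{thm:mainSpdCtorusbody}, together with a Künneth-type identification $\Hck_{\tilde G,\Spd C}\simeq\Hck_{G_{\rm sc},\Spd C}\times_{\Spd C}\Hck_{Z,\Spd C}$ and the tensor product of ULA categories (Lemma \ref{lemm:RGammaKunnethformal} for full faithfulness, Lemma \ref{lemm:genbyIC} for generation), we get a monoidal equivalence for $\tilde G$:
\[
\cal{D}^{\ULA}(\Hck_{\tilde G,\Spd C},\Qlb)\simeq \Sym(\wh{\fk g}[-2])\lmod^{\perf}(\cal{D}\Rep(\wh{\tilde G})).
\]

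To pass to $G$, I would use the geometric side. The map $\Gr_{\tilde G,\Spd C}\to\Gr_{G,\Spd C}$ identifies $\Gr_{\tilde G}$ with a union of connected components of $\Gr_G$, namely the components indexed by the image of $\pi_1(\tilde G)\to\pi_1(G)$. Equivariance for $L^+\tilde G$ versus $L^+G$ differs only by the finite group $\Gamma$, which acts trivially on $\Gr$. Hence $\cal{D}^{\ULA}(\Hck_{G,\Spd C})$ decomposes over $\pi_1(G)$. Each component is identified, after translating by a point of $L_{\Spd C}G$ lying over a class in $\pi_1(G)$, with a component of the $\tilde G$ picture, possibly twisted. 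All these identifications are reduced to the equal characteristic case via Proposition \ref{prop:DULAequiv}, where they are standard.

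On the spectral side, $\cal{D}\Rep(\wh G)$ decomposes by the action of the finite central subgroup $\ker(\wh G\to\wh{\tilde G})$. The trivial-character summand is $\cal{D}\Rep(\wh{\tilde G})$. The other summands correspond to the components of $\pi_1(G)$ not in the image. Since $\Sym(\wh{\fk g}[-2])$ has trivial central character, module categories decompose compatibly.

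Rather than matching components by hand, I would run the method of \S\ref{sssc:derSatSpdCsssc:Sqc} directly. The ingredients are:
\begin{enumerate}
\item the semisimple objects on both sides, namely $\cal{IC}$ via $\cal S_{G,\Spd C}$ and the free modules $fr$;
\item generation under extensions, by Lemma \ref{lemm:genbyIC} and the analogue of Lemma \ref{lemm:genbyfr};
\item full faithfulness and matching Hom-complexes, checked componentwise after translation, where they reduce to the $\tilde G$ case.
\end{enumerate}
Concretely, $\operatorname{Hom}$ between $fr(V)$ and $fr(V')$ is $(\Sym(\wh{\fk g}[-2])\otimes V^*\otimes V')^{\wh G}$. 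This equals the $\wh{\tilde G}$-invariants of the appropriate central-character piece, and on the geometric side it is computed on the corresponding component. The non-derived Satake compatibility (Proposition \ref{prop:nonderSatSpd}) is what glues the pieces to a single functor with $\cal S^{der}_{G,\Spd C}\circ fr\simeq\cal S_{G,\Spd C}$.

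Monoidality then follows as in Theorem \ref{thm:mainSpdCssscbody}. We check it on the semisimple generators, where it is the monoidality of the non-derived Satake equivalence, and extend it, since convolution respects the $\pi_1(G)$- and central-character gradings.

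The main obstacle is the coherent comparison of Ext algebras on non-neutral components. For $G$ itself there is no cohomology ring $\cal O_{\bb T(\wh{\fk t}^*/W)}$ available: Proposition \ref{prop:H*Hck} requires simple connectedness. So the functor $R\Gamma$ cannot be used directly. I would first try to define $\cal S^{der}_{G}$ on the semisimple subcategory via translation from $\tilde G$. To do this I would need that translation by a representative of $\pi_1(G)$ commutes with convolution up to the monoidal structure. I would verify this in equal characteristic through Proposition \ref{prop:DULAequiv}, where it is known from \cite{BF}.
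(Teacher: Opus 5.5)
\textbf{Verdict: genuine gap.} The reduction to $\wt{G}=G_{\mathrm{sc}}\times Z$ matches the paper, but the passage from $\wt{G}$ to $G$ is left open, and the translation route you propose for it cannot work.

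\textbf{The product case.} For $\wt{G}$ itself the paper rests on the K\"unneth equivalence of Lemma \ref{lemm:exttensorDULA}, proved by reducing to equal characteristic and using an equivariant K\"unneth formula. Lemma \ref{lemm:RGammaKunnethformal} only supplies a comparison map, not the isomorphism needed for full faithfulness.

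\textbf{The neutral part.} By this I mean the components of $\Gr_{G,\Spd C}$ in the image of $\Gr_{\wt{G},\Spd C}$. Here nothing has to be rerun, so the absence of $R\Gamma$ for $G$ is irrelevant. The paper shows that pullback along $r_\varphi\colon\Hck_{\wt{G},\Spd C}\to\Hck_{G,\Spd C,\wt{\bb{X}}^+_*}$ is a monoidal equivalence:
\begin{itemize}
\item it is fully faithful by Lemma \ref{lemm:gerbecoh}, since $r_\varphi$ is a gerbe for the finite kernel and $\Qlb$ has characteristic $0$ (this is the precise form of your remark that the two equivariances differ only by $\Gamma$);
\item it is essentially surjective by excision and truncation;
\item it is monoidal because the convolution diagrams form cartesian squares.
\end{itemize}
The paper then transports the already monoidal equivalence for $\wt{G}$ along $r_\varphi^*$. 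This also yields the coherent monoidal structure. ``Checking monoidality on semisimple generators and extending'' would not produce that structure by itself.

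\textbf{The non-neutral classes.} Take a class of $\pi_1(G)$ outside the image of $\pi_1(\wt{G})$. A lift $g\in L_{\Spd C}G$ of it does not normalize $L^+_{\Spd C}G$. So translation by $g$ moves components of $\Gr_{G,\Spd C}$ but not of $\Hck_{G,\Spd C}$. For $G=\mathrm{PGL}_2$, the closed stratum of the odd component is $[L^+_{\Spd C}G\backslash\bb{P}^1]$. Its cohomology is that of $[\Spd C/L^+_{\Spd C}T]$, not of $[\Spd C/L^+_{\Spd C}G]$. Hence your steps ``Hom complexes checked componentwise after translation'' and ``translation commutes with convolution'' cannot be carried out, in equal characteristic or otherwise.

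\textbf{What works instead: rigidity of Satake objects.}
\begin{itemize}
\item Fix $V\in\Rep^{\mathrm{f.d.}}(\wh{G})$ of central character $a$ on $\ker(\wh{G}\to\wh{\wt{G}})$. For $\mu$ of class $a$, the object $\cal{S}_{G,\Spd C}(V_\mu)$ is a retract of $\cal{S}_{G,\Spd C}(V)\star\cal{S}_{G,\Spd C}(V^*\otimes V_\mu)$, and the second factor is neutral.
\item Hence the class-$a$ summand is generated under extensions and retracts by objects $\cal{S}_{G,\Spd C}(V)\star\cal{A}$ with $\cal{A}$ neutral.
\item Moreover $\Hom(\cal{S}_{G,\Spd C}(V)\star\cal{A},\cal{S}_{G,\Spd C}(V')\star\cal{B})\simeq\Hom(\cal{A},\cal{S}_{G,\Spd C}(V^*\otimes V')\star\cal{B})$ is computed entirely in the neutral part.
\end{itemize}
This is the geometric counterpart of the spectral-side formula the paper uses,
$\Sym(\wh{\fk{g}}[-2])\lmod^{\perf}(\DDRep(\wh{G}))\simeq\Sym(\wh{\fk{g}}[-2])\lmod^{\perf}(\DDRep(\wh{\wt{G}}))\otimes_{\cal{D}^b(\Rep^{\mathrm{f.d.}}(\wh{\wt{G}}))}\cal{D}^b(\Rep^{\mathrm{f.d.}}(\wh{G}))$.
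It lets you extend the neutral equivalence $\Phi_0$ by $\cal{M}\otimes V\mapsto\cal{S}_{G,\Spd C}(V)\star\Phi_0(\cal{M})$. Linearity over $\cal{D}^b(\Rep^{\mathrm{f.d.}}(\wh{\wt{G}}))$ comes from the compatibility $\cal{S}_{\wt{G},\Spd C}\simeq\cal{S}^{der}_{\wt{G},\Spd C}\circ fr$, transported along $r_\varphi^*$.

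The paper's own decomposition (\ref{eqn:DULAnonssscsplit}) should be read in this base-changed sense as well. The non-neutral summands are not literal copies of $\Hck_{G,\Spd C,\wt{\bb{X}}^+_*}$.
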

For this, we need the following lemmas:
\begin{lemm}\label{lemm:exttensorDULA}
Let $G'$ be a reductive group over $F$ and $T'$ a torus over $F$.
Then the natural monoidal functor
\begin{align*}
-\boxtimes_{\Spd C} -\colon \cal{D}^{\ULA}(\Hck_{T',\Spd C},\Qlb) \otimes \cal{D}^{\ULA}(\Hck_{G',\Spd C},\Qlb)\to \cal{D}^{\ULA}(\Hck_{T'\times G',\Spd C},\Qlb)
\end{align*}
is an equivalence.
\end{lemm}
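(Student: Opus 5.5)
We will prove full faithfulness and essential surjectivity separately. Since $T'\times G'$ is a product, $\Hck_{T'\times G',\Spd C}\cong \Hck_{T',\Spd C}\times_{\Spd C}\Hck_{G',\Spd C}$, and by (\ref{eqn:HckTsplit}) the first factor is $\coprod_{\lambda\in \bb{X}_*(T'_{\ol{F}})}[\Spd C/L^+_{\Spd C}T']$. Hence
\[
\Hck_{T'\times G',\Spd C}\cong \coprod_{\lambda}[\Spd C/L^+_{\Spd C}T']\times_{\Spd C}\Hck_{G',\Spd C}.
\]
Both sides of the functor therefore decompose compatibly as direct sums over $\lambda\in\bb{X}_*(T'_{\ol{F}})$, by Lemma \ref{lemm:ULAisbounded}, and it suffices to treat a single component. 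That is, we show that
\[
-\boxtimes-\colon \cal{D}^{\ULA}([\Spd C/L^+_{\Spd C}T'],\Qlb)\otimes \cal{D}^{\ULA}(\Hck_{G',\Spd C},\Qlb)\to \cal{D}^{\ULA}([\Spd C/L^+_{\Spd C}T']\times_{\Spd C}\Hck_{G',\Spd C},\Qlb)
\]
is an equivalence. Monoidality is formal, because convolution on the product Hecke stack is computed factorwise by the Künneth formula for the $*$-pullback and proper pushforward in (\ref{eqn:conv}).

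\emph{Full faithfulness.} By the proof of Proposition \ref{prop:DBL+Tequiv}, the first factor is generated under extensions and shifts by $\Qlb$. By Lemma \ref{lemm:genbyIC}, the second factor is generated under extensions by $\cal{IC}$. It therefore suffices to compute
\[
\rm{Hom}(\Qlb\boxtimes A,\Qlb\boxtimes B)\quad\text{for } A,B\in\cal{IC}.
\]
These Hom complexes are cohomologies of a product. We would compute them by reducing along Proposition \ref{prop:DULAequiv}: applied to $T'\times G'$, it identifies everything with the equal characteristic setting over $\Spec k$, where the relevant objects are products of schemes (perfections of Artin stacks) of finite type over $k$. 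There we use the Künneth formula
\[
R\Gamma([*/L^+T']\times X,\Qlb\boxtimes K)\simeq R\Gamma([*/L^+T'],\Qlb)\otimes R\Gamma(X,K)
\]
for constructible $K$ on $X=\Hck_{G',\leq\mu}$. This holds because $L^+T'$ reduces to $T'$ up to a pro-unipotent group, so $R\Gamma(BT',\Qlb)=\Sym(\wh{\fk{t}}'[-2])$ and the Künneth formula over an algebraically closed field applies. Alternatively, one can use Lemma \ref{lemm:RGammaKunnethformal} and check that the comparison map is an isomorphism on cohomology via this reduction. We must also check that the Hom in the Lurie tensor product of the two small stable categories is the tensor product of the Hom complexes. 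This is standard for generators, since idempotent-complete small tensor products have Hom between pure tensors equal to the tensor product of the Homs.

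\emph{Essential surjectivity.} This is the main obstacle. The target is generated under extensions and shifts by its perverse (Satake) objects; after reducing to schemes, Lemma \ref{lemm:DULAschemeboundedperverse} applies to $T'\times G'$. Each simple object is an $IC$-sheaf on a Schubert cell of the product. The Schubert cells of $T'\times G'$ are products of a point-stratum $[\lambda]$ for $T'$ with the cells $\Gr_{G',\mu}$, so every such $IC$-sheaf is $\Qlb\boxtimes IC_\mu$. The hardest part is to ensure this reduction is legitimate. This requires the compatibility of $\boxtimes$ with the chain of equivalences (\ref{eqn:DULAequiv}), which holds because all functors there are pullbacks. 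It also requires that ULA objects on the product are exactly those with perfect stalks on each stratum, which follows from Lemma \ref{lemm:consULA} and Lemma \ref{lemm:SatPervequiv}. Granting these, the essential image is a stable subcategory containing the generators, so it is everything, and the lemma follows.
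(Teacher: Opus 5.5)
Your proposal is correct and follows essentially the paper's route. You reduce to the equal characteristic Hecke stacks via Proposition \ref{prop:DULAequiv} and collapse the torus factor to $\Qlb$. Full faithfulness then comes from an equivariant K\"unneth formula for $[*/T']\times\Hck_{G'}$, and essential surjectivity from generation of the target by exterior products. The differences are only bookkeeping: you generate with $\cal{IC}$ and the simple perverse objects $\Qlb\boxtimes IC_\mu$ where the paper uses excision to Schubert cells plus truncation. You also leave implicit the identification $R\HHom(\Qlb\boxtimes A,\Qlb\boxtimes B)\cong\Qlb\boxtimes R\HHom(A,B)$, which the paper checks by smooth pullback along $\Spec k\times\Hck_{G'}\to[\Spec k/T']\times\Hck_{G'}$.
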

\begin{proof}
By Proposition \ref{prop:DULAequiv}, it suffices to show that
\[
\cal{D}^{\ULA}(\Hck^{\eq}_{T',\Spec k},\Qlb) \otimes \cal{D}^{\ULA}(\Hck^{\eq}_{G',\Spec k},\Qlb)\to \cal{D}^{\ULA}(\Hck^{\eq}_{T'\times G',\Spec k},\Qlb)
\]
is an equivalence, where we omit the superscript $(-)^{\spl}$.

First, we show the full faithfulness.
By the same argument as \cite[{\sc Corollary VI.6.7}]{FS}, it suffices to show that the functor
\begin{multline}\label{eqn:DULAKunnethStrata}
\cal{D}^{\ULA}([\Spec k/L^+_{\Spec k}T'],\Qlb) \otimes \cal{D}^{\ULA}(\Hck^{\eq}_{G',\Spec k,\mu},\Qlb)\\
\to \cal{D}^{\ULA}([\Spec k/L^+_{\Spec k}T']\times_{\Spec k}\Hck_{G',\Spec k,\mu},\Qlb)    
\end{multline}
for any cocharacter $\mu$ of a maximal torus of $G'_{\ol{F}}$.
Since $T'$ is a quotient of $L^+_{\Spd C}T'$ by a pro-unipotent group, we can replace $L^+_{\Spd C}T'$ by $T'$.

Therefore, it suffices to show that 
\begin{multline*}
R\Hom_{D^{\ULA}([\Spec k/T'],\Qlb)}(A_1,A_2)\otimes^{\bb{L}}R\Hom_{D^{\ULA}(\Hck_{G',\Spec k}^{\eq},\Qlb)}(B_1,B_2)\\
\to R\Hom_{D^{\ULA}([\Spec k/T']\times \Hck_{G',\Spec k}^{\eq},\Qlb)}(A_1\boxtimes B_1,A_2\boxtimes B_2)
\end{multline*}
is an isomorphism for any $A_1,A_2\in D^{\ULA}([\Spec k/T'],\Qlb)$ and $B_1,B_2\in D^{\ULA}(\Hck_{G',\Spec k}^{\eq},\Qlb)$.
By truncation, we can assume that $A_1$ and $A_2$ are concentrated in degree 0.
Since $T'$ is connected, we can assume that $A_1=A_2=\Qlb$.
Therefore, it suffices to show
\begin{multline*}
R\Gamma([\Spec k/T'],\Qlb)\otimes^{\bb{L}}R\Hom_{D^{\ULA}(\Hck_{G',\Spec k}^{\eq},\Qlb)}(B_1,B_2)\\
\to R\Hom_{D^{\ULA}([\Spec k/T']\times \Hck_{G',\Spec k}^{\eq},\Qlb)}(\rm{pr}_2^* B_1,\rm{pr}_2^* B_2),    
\end{multline*}
where $\rm{pr}_2\colon [\Spec k/T']\times \Hck_{G',\Spec k}^{\eq}\to \Hck_{G',\Spec k}^{\eq}$ is the projection.
We claim that the natural map
\[
\rm{pr}_2^* R\HHom(B_1, B_2) \to R\HHom(\rm{pr}_2^* B_1,\rm{pr}_2^* B_2)
\]
is an isomorphism.
It is enough to check this after the pullback along the natural projection $\pi_{T'}\colon \Spec k\times \Hck_{G',\Spec k}^{\eq}\to [\Spec k/T']\times \Hck_{G',\Spec k}^{\eq}$.
Since $\pi_{T'}$ is smooth, it suffices to show that
\[
\pi_{T'}^*\rm{pr}_2^* R\HHom(B_1, B_2) \to R\HHom(\pi_{T'}^*\rm{pr}_2^* B_1,\pi_{T'}^*\rm{pr}_2^* B_2)
\]
is an isomorphism, which is true since $\rm{pr}_2\circ \pi_{T'}=\rm{id}$.
We get the claim.

By this claim, replacing $B_1$ by $\Qlb$ and $B_2$ by $R\HHom(B_1,B_2)$, we may assume that $B_1=\Qlb$.
Moreover, by truncation, we may assume that $B_2$ is perverse.
Then it suffices to show the isomorphism of the form 
\[
H^*_{T'}(*,\Qlb)\otimes H^*_{L^{+,\eq}_{\Spec k}G'}(\Gr_{G,\Spec k}^{\eq},B_2)\cong H^*_{T'\times L^{+,\eq}_{\Spec k}G'}(\Gr_{G,\Spec k}^{\eq},B_2).
\]
After replacing $\Gr_{G,\Spec k}^{\eq}$ with the support of $B_2$ and $L^{+,\eq}_{\Spec k}G'$ with its certain quotient of finite type, this follows from an equivariant K\"{u}nneth formula, see \cite[(A.1.12)]{Zhueq}.

It remains to show the essential surjectivity.
By excisions, it is enough to prove that (\ref{eqn:DULAKunnethStrata}) is essentially surjective.
By truncations, it suffices to show that $A\in \cal{D}^{\ULA}([\Spec k/L^+_{\Spec k}T']\times_{\Spec k}\Hck_{G',\Spec k,\mu},\Qlb)$ concentrated in degree 0 is in the essential image.
By the above argument using the connectedness of $T'$ and $G'$, we can assume that $A=\Qlb$, which is an image of $\Qlb\boxtimes \Qlb$.
\end{proof}
\begin{lemm}\label{lemm:gerbecoh}
Let $\alpha\colon Y\to X$ be a map of small v-stacks and $K\to X$ a finite \'{e}tale group over $X$.
Assume that there is a $\ell$-cohomologically smooth morphism $X'\to X$ such that $Y\times_{X}X'$ is isomorphic to $[X/K]\times_X X'$ over $X'$.
Then for any $\cal{F}\in \cal{D}_{\et}(X,\Qlb)$, the natural map
\[
\cal{F}\to R\alpha_*\alpha^* \cal{F}
\]
is an isomorphism.
\end{lemm}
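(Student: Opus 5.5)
The plan is to reduce, by base change along $X'\to X$, to the split case $Y'=[X'/K']$ with $K'=K\times_XX'$, and there to use that $\Qlb$-cohomology of a finite group is concentrated in degree $0$.

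\textbf{Step 1: reduce to a torsion-free statement on $X'$.} Write $f\colon X'\to X$ for the given $\ell$-cohomologically smooth map, and $\alpha'\colon Y':=Y\times_XX'\to X'$, $f'\colon Y'\to Y$ for the base changes. Since $f$ is $\ell$-cohomologically smooth, smooth base change gives
\[
f^*R\alpha_*\alpha^*\cal{F}\simeq R\alpha'_*f'^*\alpha^*\cal{F}\simeq R\alpha'_*\alpha'^*f^*\cal{F}.
\]
Here I would work with torsion coefficients $\Lambda$ first and pass to $\Qlb$ by the limit, invert-$\ell$, colimit procedure; this needs some care, because the passage is not formal when there is $\ell$-torsion (see Step 3).

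To conclude that $\cal{F}\to R\alpha_*\alpha^*\cal{F}$ is an isomorphism, I would use that $f^*$ is conservative. This holds when $f$ is surjective, and I take surjectivity to be implicit in the hypothesis that $X'\to X$ is a cohomologically smooth cover. If $f$ is not surjective, one replaces $X'$ by $X'\sqcup$ (a cover of the complement); but as stated, I would simply assume $X'\to X$ is surjective.

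\textbf{Step 2: reduce to a classifying stack.} We may now assume $Y=[X/K]=X\times_{X}BK$, with $\alpha$ the structure map. The claim becomes $\cal{F}\simeq R\alpha_*\alpha^*\cal{F}$. By the projection formula for the proper map $\alpha$, whose fibres are classifying stacks of finite groups, this reduces to showing $R\alpha_*\Qlb\simeq\Qlb$. Étale-locally on $X$ the group $K$ is constant, say $K=\underline{\Gamma}$. Then $R\alpha_*\Lambda$ is the complex computing group cohomology $R\Gamma(\Gamma,\Lambda)$ with trivial coefficients, made into a constant sheaf. More precisely, $[X/\underline\Gamma]$ is the quotient by the trivial action, and its cohomology is computed by the Čech nerve $X\times\Gamma^n$. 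Locally on $X$ this gives the cosimplicial object $\prod_{\Gamma^n}\cal{F}$.

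\textbf{Step 3: kill the higher group cohomology.} With $\Qlb$-coefficients, $H^i(\Gamma,\cal{F})=0$ for $i>0$, because the averaging operator splits the complex. So the totalization is $\cal{F}^{\Gamma}=\cal{F}$.

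The main obstacle is that $\Qlb$-sheaves are defined here as a colimit over $L$ of $\cal{O}_L$-sheaves with $\ell$ inverted. The vanishing fails integrally: $H^i(\Gamma,\cal{O}_L/\ell^n)\neq0$. One must therefore check that the cone of $\cal{F}\to R\alpha_*\alpha^*\cal{F}$ becomes zero after inverting $\ell$. For this I would note that $|\Gamma|$, bounded locally by the degree of $K$, annihilates $H^{>0}(\Gamma,-)$ uniformly. Hence on $\cal{O}_L$-level objects the cone is killed by a fixed integer $N$ on each quasi-compact piece, so it vanishes in the $\ell$-inverted category. The reduction to constant $K$ and this uniform bound need some care, but they look routine.
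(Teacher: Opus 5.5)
Your outline is sound, but after the common first step it follows a different route from the paper. Both arguments begin with smooth base change to $Y=[X/K]$. As you rightly note, this tacitly uses that $X'\to X$ is surjective, which the paper also leaves implicit.

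The paper then never computes $R\alpha_*$. It uses the section $\beta\colon X\to[X/K]$ of $\alpha$:
\begin{itemize}
\item Since $\alpha\circ\beta=\mathrm{id}$, the composite $\cal{F}\to R\alpha_*\alpha^*\cal{F}\to R\alpha_*R\beta_*\beta^*\alpha^*\cal{F}=\cal{F}$ is the identity.
\item Since $\beta$ is representable, finite \'etale and proper, trace composed with unit for $\beta$ is multiplication by the degree. This is invertible over $\Qlb$, so the second map is a split monomorphism.
\item A split monomorphism with a right inverse is an isomorphism, hence so is the first map.
\end{itemize}
This places properness on the representable map $\beta$ rather than on $\alpha$. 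Your appeal in Step 2 to a projection formula for the ``proper'' map $\alpha$ is not available integrally: $R\alpha_*$ is homotopy fixed points, and $R\alpha_*\alpha^*\cal{F}\simeq\cal{F}\otimes R\alpha_*\Lambda$ can fail for unbounded $\cal{F}$ when $\ell$ divides $|K|$. Fortunately your Step 3 does not use it. The paper's argument also avoids the \v{C}ech nerve, the reduction to constant $K$, and any local-to-global step in the $\ell$-inverted category; rationality enters only through inverting the degree.

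The step that does not hold as you justify it is the integral one in Step 3. That $|\Gamma|$ annihilates the groups $H^i(\Gamma,-)$ for $i>0$ does not give $N\cdot\mathrm{id}_C=0$ for the cone $C$ of $\cal{F}\to R\alpha_*\alpha^*\cal{F}$. Yet an object-level bound of exactly this kind is what vanishing after inverting $\ell$ requires. Two things block the inference:
\begin{itemize}
\item $C$ is unbounded above even for $\cal{F}=\cal{O}_L$, since $H^{2i}(\ZZ/\ell,\cal{O}_L)\neq 0$ for all $i\geq 1$.
\item Complexes of sheaves need not be formal, so bounds on cohomology sheaves do not propagate to the object.
\end{itemize}
The missing input is the transfer at the level of objects. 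Restriction to the trivial subgroup followed by corestriction is multiplication by $|\Gamma|$ as an endomorphism of $R\Gamma(\Gamma,\cal{F})$, and it vanishes on the summand complementary to $\cal{F}$, which is $C$. Globally on $X$ this transfer is precisely the trace of $\beta$. Once you insert it, your Steps 2--3 reduce to the paper's argument.
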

\begin{proof}
By the smooth base change, we can assume $Y=[X/K]$.
Let $\beta\colon X\to [X/K]$ be the natural map.
Since the composition
\begin{align}\label{eqn:Falphacomposition}
\cal{F}\to R\alpha_*\alpha^*\cal{F}\to R\alpha_*R\beta_*\beta^*\alpha^*\cal{F}=\cal{F}
\end{align}
is the identity, $\cal{F}\to R\alpha_*\alpha^*\cal{F}$ is a split monomorphism.
On the other hand, since $K$ is a finite \'{e}tale group over $X$, the map $\beta$ is \'{e}tale and proper.
Hence, for $\cal{G}\in \cal{D}_{\et}([X/K],\Qlb)$ we have a natural composition 
\[
u\colon \cal{G}\to R\beta_*\beta^*\cal{G}\simeq R\beta_!R\beta^!\cal{G}\to \cal{G}.
\]
By the base change, the induced map on the fibers at any geometric point $x$ is of the form
\[
u_x\colon \cal{G}_x\to \cal{G}_x^{\oplus r}\to \cal{G}_x
\]
for some positive integer $r$, where the first map is the diagonal map, and the second map is the addition.
Therefore, $u_x$ is the multiplication by $r$, which is an isomorphism since the coefficient ring $\Qlb$ contains $\QQ$.
Thus $u$ is an isomorphism, and applying to the case $\cal{G}=\alpha^*\cal{F}$, the second map in (\ref{eqn:Falphacomposition}) is a split monomorphism.
This proves the claim.
\end{proof}
\begin{proof}[Proof of Theorem \ref{thm:mainSpdCbody}]
If $G$ is the product $T'\times G'$ of a torus $T'$ and a semisimple and simply-connected group $G'$, the theorem follows from Lemma \ref{lemm:exttensorDULA}, Theorem \ref{thm:mainSpdCssscbody}, and Theorem \ref{thm:mainSpdCtorusbody}.

Now for a general $G$, let $\varphi\colon \wt{G}\to G$ be a central isogeny such that $\wt{G}$ is the product of a torus and a simply-connected semisimple group.
Let $K$ denote the finite kernel of $\wt{G}\to G$.
Put $\bb{X}_*^+:= \bb{X}_*^+(T)$ and $\wt{\bb{X}}_*^+:=\bb{X}_*^+(\varphi^{-1}T)$.

It is known that the natural map 
\[
\Gr_{\wt{G},\Spd C}\to \Gr_{G,\Spd C}
\]
is the open and closed immersion, whose image is $\bigcup_{\mu\in \wt{\bb{X}}_*^+\subset \bb{X}_*^+}\Gr_{G,\Spd C,\mu}$ (see the argument in \cite[\S 5]{PR}, for example).
Moreover, letting $\Hck_{G,\Spd C,\wt{\bb{X}}^+_*}\subset \Hck_{G,\Spd C}$ denote the image of $\Gr_{\wt{G},\Spd C}\subset \Gr_{G,\Spd C}$,  there is a decomposition
\[
\Hck_{G,\Spd C}\simeq \coprod_{\bb{X}_*^+/\wt{\bb{X}}_*^+}\Hck_{G,\Spd C,\wt{\bb{X}}^+_*}.
\]
Therefore, we have an equivalence
\begin{align}\label{eqn:DULAnonssscsplit}
\cal{D}^{\ULA}(\Hck_{G,\Spd C},\Qlb)\simeq \cal{D}^{\ULA}(\Hck_{G,\Spd C,\wt{\bb{X}}^+_*},\Qlb)\otimes \bigoplus_{\bb{X}_*^+/\wt{\bb{X}}_*^+}\Vect_{\Qlb}^{\perf}, 
\end{align}
and when we endow $\bigoplus_{\bb{X}_*^+/\wt{\bb{X}}_*^+}\Vect_{\Qlb}^{\perf}$ with a canonical $\bb{X}_*^+/\wt{\bb{X}}_*^+$-graded symmetric monoidal structure, this equivalence is monoidal.

We first claim that there is a monoidal equivalence
\begin{align}\label{eqn:DULAcentralisogenyequiv}
\cal{D}^{\ULA}(\Hck_{G,\Spd C,\wt{\bb{X}}^+_*},\Qlb)\simeq \cal{D}^{\ULA}(\Hck_{\wt{G},\Spd C},\Qlb)
\end{align}
Let
\begin{align*}
r_\varphi \colon \Hck_{\wt{G},\Spd C}=[L^+_{\Spd C}\wt{G}\backslash \Gr_{\wt{G},\Spd C}]\to [L^+_{\Spd C}G\backslash \Gr_{\wt{G},\Spd C}]\simeq \Hck_{G,\Spd C,\wt{\bb{X}}^+_*}
\end{align*}
denote the natural maps.

Then the kernel of $L^+_{\Spd C}\wt{G}\to L^+_{\Spd C}G$ is $K^{\diamondsuit}$, which is a finite group over $\Spd C$.
It follows that the pullback functor
\[
r_{\varphi}^*\colon \cal{D}^{\ULA}(\Hck_{G,\Spd C,\wt{\bb{X}}^+_*},\Qlb)\to \cal{D}^{\ULA}(\Hck_{\wt{G},\Spd C},\Qlb)
\]
is fully faithful by Lemma \ref{lemm:gerbecoh}.
The essential surjectivity of $r_{\varphi}$ reduces to that of 
\[
\cal{D}^{\ULA}(\Hck_{G,\Spd C,\mu},\Qlb)\to \cal{D}^{\ULA}(\Hck_{\wt{G},\Spd C,\mu},\Qlb)
\]
for any $\mu\in \wt{\bb{X}}^+_*$ by excisions.
By the same argument as the proof of Lemma \ref{lemm:exttensorDULA} using truncations, it suffices to show that $\Qlb$ is in the essential image, which is clear.
Therefore, $r_{\varphi}^*$ is an equivalence.

Next, we claim that $r_{\varphi}^*$ is monoidal.
Let $(L_{\Spd C}G)_{\wt{\bb{X}}_*^+}$ denote the image of $L_{\Spd C}\wt{G}\to L_{\Spd C}G$.
Note that the natural homomorphism $[L_{\Spd C}G/L^+_{\Spd C}G]\to [(L_{\Spd C}G)_{\wt{\bb{X}}_*^+}/L^+_{\Spd C}G]$ is an isomorphism.
The category $\cal{D}^{\ULA}(\Hck_{G,\Spd C,\wt{\bb{X}}^+_*},\Qlb)$ is monoidal via the convolution product with respect to a convolution diagram of the form
\begin{multline*}
[L^+_{\Spd C}G\backslash (L_{\Spd C}G)_{\wt{\bb{X}}_*^+}/L^+_{\Spd C}G]\times_{\Spd C}[L^+_{\Spd C}G\backslash (L_{\Spd C}G)_{\wt{\bb{X}}_*^+}/L^+_{\Spd C}G]\\
\leftarrow[L^+_{\Spd C}G\backslash (L_{\Spd C}G)_{\wt{\bb{X}}_*^+}\times_{\Spd C}^{L^+_{\Spd C}G}(L_{\Spd C}G)_{\wt{\bb{X}}_*^+}/L^+_{\Spd C}G]\\
\to[L^+_{\Spd C}G\backslash (L_{\Spd C}G)_{\wt{\bb{X}}_*^+}/L^+_{\Spd C}G].
\end{multline*}
On the other hand, the convolution product of $\cal{D}^{\ULA}(\Hck_{\wt{G},\Spd C},\Qlb)$ comes from a convolution diagram of the form
\begin{multline*}
[L^+_{\Spd C}\wt{G}\backslash L_{\Spd C}\wt{G}/L^+_{\Spd C}\wt{G}]\times_{\Spd C}[L^+_{\Spd C}\wt{G}\backslash L_{\Spd C}\wt{G}/L^+_{\Spd C}\wt{G}]\\
\leftarrow[L^+_{\Spd C}\wt{G}\backslash L_{\Spd C}\wt{G}\times_{\Spd C}^{L^+_{\Spd C}\wt{G}}L_{\Spd C}\wt{G}/L^+_{\Spd C}\wt{G}]\\
\to[L^+_{\Spd C}\wt{G}\backslash L_{\Spd C}\wt{G}/L^+_{\Spd C}\wt{G}].
\end{multline*}
From the isomorphism $[L_{\Spd C}G/L^+_{\Spd C}G]\to [(L_{\Spd C}G)_{\wt{\bb{X}}_*^+}/L^+_{\Spd C}G]$, one can easily show that the square
\[
\xymatrix{
[L^+_{\Spd C}\wt{G}\backslash L_{\Spd C}\wt{G}\times_{\Spd C}^{L^+_{\Spd C}\wt{G}}L_{\Spd C}\wt{G}/L^+_{\Spd C}\wt{G}]\ar[r]\ar[d]&
[L^+_{\Spd C}\wt{G}\backslash L_{\Spd C}\wt{G}/L^+_{\Spd C}\wt{G}]\ar[d]^{r_{\varphi}^*}\\
L^+_{\Spd C}G\backslash (L_{\Spd C}G)_{\wt{\bb{X}}_*^+}\times_{\Spd C}^{L^+_{\Spd C}G}(L_{\Spd C}G)_{\wt{\bb{X}}_*^+}/L^+_{\Spd C}G]\ar[r]&[L^+_{\Spd C}G\backslash (L_{\Spd C}G)_{\wt{\bb{X}}_*^+}/L^+_{\Spd C}G]
}
\]
is cartesian.
This and similar results on the convolution products of higher degrees show monoidality of $r_{\varphi}^*$.
Thus we get a monoidal equivalence (\ref{eqn:DULAcentralisogenyequiv}).

Combining this with (\ref{eqn:DULAnonssscsplit}), we have a monoidal equivalence
\[
\cal{D}^{\ULA}(\Hck_{G,\Spd C},\Qlb)\simeq \cal{D}^{\ULA}(\Hck_{\wt{G},\Spd C},\Qlb)\otimes \bigoplus_{\bb{X}_*^+/\wt{\bb{X}}_*^+}\Vect_{\Qlb}^{\perf}.
\]
On the other hand, via the decomposition by central characters, there is a monoidal equivalence
\[
\Rep^{\rm{f.d.}}(\wt{G},\Qlb)=\bigoplus_{\bb{X}_*^+/\wt{\bb{X}}_*^+} \Rep^{\rm{f.d.}}(\wh{\wt{G}},\Qlb),
\]
where the right-hand side has a canonical $\bb{X}_*^+/\wt{\bb{X}}_*^+$-graded monoidal structure.
Since $\wh{G}$-action on $\Sym(\wh{\fk{g}}[-2])$ factors through $\wh{\wt{G}}$, there is a natural isomorphism
\[
\Sym(\wh{\fk{g}}[-2])\lmod^{\perf}(\cal{D}(\Rep(\wh{G})))\simeq \Sym(\wh{\fk{g}}[-2])\lmod^{\perf}(\cal{D}(\Rep(\wh{\wt{G}})))\otimes_{\cal{D}^b(\Rep^{\rm{f.d.}}(\wh{\wt{G}}))}\cal{D}^b(\Rep^{\rm{f.d.}}(\wh{G})).
\]
Using this, we can prove the proposition by the same argument as the proof of Lemma \ref{thm:mainSpdCtorusbody}.
\end{proof}
The compatibility with the non-derived geometric Satake holds:
\begin{prop}
There is a canonical monoidal equivalence
\[
S_{G,\Spd C}\simeq \cal{S}^{der}_{G,\Spd C}\circ fr|_{\Rep^{\rm{f.d.}}(\wh{G},\Qlb)}\colon \cal{D}^b(\Rep^{\rm{f.d.}}(\wh{G},\Qlb))\to \cal{D}^{\ULA}(\Hck_{G,\Spd C},\Qlb).
\]
\end{prop}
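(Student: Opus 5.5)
We reduce the statement to the two special cases already treated and then pass through the reduction steps used in the proof of Theorem \ref{thm:mainSpdCbody}, checking at each step that the relevant isomorphisms are monoidal.

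\emph{Torus and semisimple simply-connected cases.} For a torus the required monoidal isomorphism is the second part of Theorem \ref{thm:mainSpdCtorusbody}. For $G$ semisimple and simply-connected it is Proposition \ref{prop:monoidalSSpdCSqcFr}, restricted to $\Rep^{\rm{f.d.}}(\wh{G})$. The extension to $\cal{D}^b(\Rep^{\rm{f.d.}}(\wh{G},\Qlb))$ is then formal. Both $S_{G,\Spd C}$ and $\cal{S}^{der}_{G,\Spd C}\circ fr$ are exact monoidal functors out of $\cal{D}^b$, so by the universal property \cite[Corollary 7.4.12]{BCKW} a monoidal isomorphism between their restrictions to the heart extends uniquely to one on the whole category.

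\emph{Product case $G=T'\times G'$.} Here $\wh{G}=\wh{T'}\times\wh{G'}$ and $\Rep^{\rm{f.d.}}(\wh{G})$ is generated under direct sums by exterior products $V\boxtimes V'$. Write $S_{G,\Spd C}=S_{T',\Spd C}\boxtimes S_{G',\Spd C}$, using the construction of the non-derived Satake equivalence and the compatibility of the fiber functor $F_{G,\Spd C}$ with exterior products. We then need to check:
\begin{itemize}
\item that $\cal{S}^{der}_{G,\Spd C}$ was built as the exterior product via Lemma \ref{lemm:exttensorDULA};
\item that $fr$ is compatible with exterior products.
\end{itemize}
The monoidal isomorphisms for the two factors then tensor together to give one for $G$.

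\emph{General case via the central isogeny $\varphi\colon\wt{G}\to G$.}
\begin{itemize}
\item Under the decomposition (\ref{eqn:DULAnonssscsplit}) and the decomposition of $\Rep^{\rm{f.d.}}(\wh{G})$ by central characters, both functors respect the $\bb{X}_*^+/\wt{\bb{X}}_*^+$-grading. It therefore suffices to compare them componentwise after applying $r_\varphi^*$.
\item We must show that $r_\varphi^*\circ S_{G,\Spd C}$ agrees monoidally with $S_{\wt{G},\Spd C}\circ\mathrm{Res}$, where $\mathrm{Res}$ is restriction along $\wh{\wt{G}}\to\wh{G}$. Here $r_\varphi^*$ is a monoidal equivalence that preserves $\Sat$ (it preserves perversity, and flatness is checked on fibers) and commutes with the fiber functors: the map $\Gr_{\wt{G}}\to\Gr_G$ is an open and closed immersion, so $F_{\wt{G}}\circ r_\varphi^*\cong F_G$ restricted to the relevant component. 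Tannakian reconstruction then identifies the induced map of dual groups with the isogeny dual to $\varphi$, up to the inner ambiguity discussed in the remark after Lemma \ref{lemm:SatPervequiv}. To remove that ambiguity, compare highest weights of the $IC_\mu$, exactly as in the last step of the proof of Lemma \ref{lemm:SatPervequiv}.
\item The derived side is then handled by the construction of $\cal{S}^{der}_{G,\Spd C}$ through the base change displayed at the end of the proof of Theorem \ref{thm:mainSpdCbody}, together with the already-established $\wt{G}$ case.
\end{itemize}

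The main obstacle is the general-case identification of $r_\varphi^*\circ S_{G,\Spd C}$ with $S_{\wt{G},\Spd C}\circ\mathrm{Res}$ as \emph{monoidal} functors. To handle it, we would first verify that the monoidal structure on the fiber functor in \cite{Bantwomonoidal} is compatible with the open-closed embedding of Grassmannians, and then argue, as in Lemma \ref{lemm:SatPervequiv}(2), that faithfulness of the fiber functors forces every remaining compatibility.
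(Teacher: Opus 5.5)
Your proposal follows the paper's proof. The base cases come from Theorem~\ref{thm:mainSpdCtorusbody} and Proposition~\ref{prop:monoidalSSpdCSqcFr}. The product case uses compatibility of $S$, $fr$ and $\cal{S}^{der}$ with exterior products (Lemma~\ref{lemm:exttensorDULA}). The general case uses the central isogeny, the decomposition (\ref{eqn:DULAnonssscsplit}), and compatibility of $S_{G,\Spd C}$ with $S_{\wt{G},\Spd C}$ under $r_\varphi^*$.

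The paper only asserts this last compatibility ``by the construction of the non-derived geometric Satake''; you sketch a Tannakian comparison as in Lemma~\ref{lemm:SatPervequiv}(2). Comparing highest weights only shows that the discrepancy automorphism is inner; it does not remove it. So this route gives a monoidal isomorphism, but not the canonical one the statement claims. Canonicity has to come from comparing the constructions directly.

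Also correct a direction. The dual of $\varphi\colon\wt{G}\to G$ is an isogeny $\wh{G}\to\wh{\wt{G}}$; for $SL_2\to PGL_2$ it is again $SL_2\to PGL_2$. So there is no map $\wh{\wt{G}}\to\wh{G}$ to restrict along. The functor you want is the inverse of inflation, $\Rep^{\rm{f.d.}}(\wh{\wt{G}})\xrightarrow{\sim}\Rep^{\rm{f.d.}}(\wh{G})_0$, onto the block on which $\ker(\wh{G}\to\wh{\wt{G}})$ acts trivially.

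One step does not work as written, in your proposal or in the paper. The functor $r_\varphi^*$ only concerns the neutral coset $\Hck_{G,\Spd C,\wt{\bb{X}}^+_*}$. So ``compare componentwise after applying $r_\varphi^*$'' reaches the other cosets only through (\ref{eqn:DULAnonssscsplit}), which identifies every coset with the neutral one. On the spectral side this is matched with a graded monoidal equivalence $\bigoplus_{\bb{X}_*^+/\wt{\bb{X}}_*^+}\Rep^{\rm{f.d.}}(\wh{\wt{G}})\simeq\Rep^{\rm{f.d.}}(\wh{G})$.

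That identification fails in general. For $G=PGL_2$ the left side contains two non-isomorphic invertible objects (the unit placed in each degree). By contrast, $\Rep^{\rm{f.d.}}(SL_2)$ has only the trivial one-dimensional representation. This already affects how $\cal{S}^{der}_{G,\Spd C}$ is defined on non-neutral cosets in the proof of Theorem~\ref{thm:mainSpdCbody}.

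The non-neutral cosets therefore need a separate argument. One option is to describe $\cal{D}^{\ULA}(\Hck_{G,\Spd C},\Qlb)$ as a base change of its neutral part along $\Sat(\Hck_{G,\Spd C,\wt{\bb{X}}^+_*},\Qlb)\subset\Sat(\Hck_{G,\Spd C},\Qlb)$. This parallels the spectral formula at the end of that proof. You would then check $S_{G,\Spd C}$ against this description.
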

\begin{proof}
First, assume that $G$ is the product $T'\times G'$ of a torus $T'$ and a semisimple and simply-connected group $G'$.
By the construction of the non-derived geometric Satake
\[
S_{G,\Spd C}\colon \Sat(\Hck_{G,\Spd C},\Qlb)\simeq \Rep^{\rm{f.d.}}(\wh{G},\Qlb),
\]
one can prove that, for $A\in \Sat(\Hck_{G',\Spd C},\Qlb)$ and $B\in \Sat(\Hck_{T',\Spd C},\Qlb)$, there is a canonical isomorphism
\[
S_{G',\Spd C}(A)\boxtimes S_{T',\Spd C}(B)\simeq S_{G'\times T',\Spd C}(A\boxtimes_{\Spd C}B) \in \Rep^{\rm{f.d.}}(\wh{G}'\times \wh{T}',\Qlb), 
\]
compatible with the monoidal structures.
This shows that there is a monoidal natural isomorphism that makes the square 
\[
\xymatrix{
\cal{D}^b(\Rep^{\rm{f.d.}}(\wh{G}'))\otimes \cal{D}^b(\Rep^{\rm{f.d.}}(\wh{T}'))
\ar[d]_{\cal{S}_{G',\Spd C}\otimes \cal{S}_{T',\Spd C}}
\ar[r]^-{-\boxtimes -}
&
\cal{D}^b(\Rep^{\rm{f.d.}}(\wh{G}'\times \wh{T}'))
\ar[d]^{\cal{S}_{G'\times T',\Spd C}}
\\
\cal{D}^{\ULA}(\Hck_{G',\Spd C})\otimes \cal{D}^{\ULA}(\Hck_{T',\Spd C})
\ar[r]_-{-\boxtimes -}
&
\cal{D}^{\ULA}(\Hck_{G'\times T',\Spd C})
}
\]
commute.
Since the proposition holds for $G'$ and $T'$, it follows that it holds for $G'\times T'$.

Now for a general $G$, let $\varphi\colon \wt{G}\to G$ be a central isogeny such that $\wt{G}$ is the product of a torus and a simply-connected semisimple group.
By the construction of the non-derived geometric Satake, one can show that the composition 
\begin{align*}
\cal{D}^{\ULA}(\Hck_{G,\Spd C},\Qlb)
&\xrightarrow[\simeq]{(\ref{eqn:DULAnonssscsplit})} 
\bigoplus_{\bb{X}_*^+/\wt{\bb{X}}_*^+}\cal{D}^{\ULA}(\Hck_{G,\Spd C,\wt{\bb{X}}^+_*},\Qlb)\\
&\xrightarrow[\simeq]{(\ref{eqn:DULAnonssscsplit})} \bigoplus_{\bb{X}_*^+/\wt{\bb{X}}_*^+}\cal{D}^{\ULA}(\Hck_{\wt{G},\Spd C},\Qlb)
\end{align*}
restrict to 
\[
\Sat(\Hck_{G,\Spd C},\Qlb)\simeq\bigoplus_{\bb{X}_*^+/\wt{\bb{X}}_*^+}\Sat(\Hck_{\wt{G},\Spd C},\Qlb),
\]
and the composition
\begin{align*}
\Sat(\Hck_{G,\Spd C},\Qlb)&\simeq\bigoplus_{\bb{X}_*^+/\wt{\bb{X}}_*^+}\Sat(\Hck_{\wt{G},\Spd C},\Qlb)\\
&\xrightarrow[\simeq]{S_{\wt{G},\Spd C}}\bigoplus_{\bb{X}_*^+/\wt{\bb{X}}_*^+}\Rep^{\rm{f.d.}}(\wh{\wt{G}},\Qlb)\\
&\simeq \Rep^{\rm{f.d.}}(\wh{G},\Qlb)
\end{align*}
is monoidally isomorphic to $S_{G,\Spd C}$.
This implies the claim for $G$.
\end{proof}
\section{Derived Satake over $\Spd \breve{E}$ and Weil action}\label{sec:derSatoverSpdEbreve}
Let $\Catinf{\Qlb}$ denote the $\infty$-category of $\Qlb$-linear $\infty$-categories.
For any (discrete) group $H$, let $\Catinf{\Qlb}^{BH}:=\rm{Fun}(BH\to \Catinf{\Qlb}^{mon})$ denote the $\infty$-category of $\Qlb$-linear monoidal $\infty$-categories with $H$-actions.
We write $\Catinf{\Qlb}^{mon, BH}$ for the $\infty$-category of $\Qlb$-linear monoidal $\infty$-categories with monoidal $H$-actions.
Let $E/F$ be a finite Galois extension such that $G$ is unramified over $E$.
We write $W_{F}^{\rm{disc}}$ (resp.~$I_E^{\rm{disc}}$) for the group $W_{F}$ (resp.~$I_E$) with the discrete topology.
In this section, we prove the derived Satake equivalence over $\Spd \breve{E}$ in $\Catinf{\Qlb}^{mon, B(W_{F}/I_E)}$.
The strategy is similar to \S \ref{sec:derSatoverSpdC}, except that we need to take $W_F/I_E$-equivariance into account:
First in \S \ref{ssc:SpdEbrevetoSpdkIE}, we will show that a degeneration result similar to \cite[{\sc Corollary} VI.6.7]{FS} holds for $\Spd \breve{E}$ and $[\Spd k/I_E]$, instead of $\Spd C$ and $\Spd k$.
In \S \ref{ssc:semisimplesimplyconnectedEbreve}, using the result in \S \ref{ssc:SpdEbrevetoSpdkIE} and assuming semisimplicity and simply-connectedness of $G$, we will define the cochain functor
\[
R\Gamma_{\Spd \breve{E}} \colon \cal{D}^{\ULA}(\Hck_{G,\Spd \breve{E}},\Qlb)\to \cal{O}_{\bb{T}(\wh{\fk{t}}^*/W)}^{\shear}\lmod(\DDRep(I_E)).
\]
in $\Catinf{\Qlb}^{mon, B(W_{F}/I_E)}$ and the Kostant functor
\[
\kappa_{I_E}^{\shear}\colon \rm{Sym}(\wh{\fk{g}}[-2])\lmod^{\perf}(\DDRep(\wh{G}\times I_E)) \to \cal{O}_{\bb{T}(\wh{\fk{t}}^*/W)}^{\shear}\lmod(\DDRep(I_E))
\]
in $\Catinf{\Qlb}^{mon, B(W_{F}/I_E)}$.
Then we will show that these two functors are fully faithful and have the same essential image, and hence we get the desired equivalence in $\Catinf{\Qlb}^{mon, B(W_{F}/I_E)}$.
In \S \ref{ssc:toruscaseEbreve}, we will prove the desired equivalence for tori, and in \S \ref{ssc:generalcaseEbreve} for general $G$, using a central isogeny to $G$ from a product of a torus and a semisimple and simply-connected group.
\subsection{From $\Spd \breve{E}$ to $[\Spd k/I_E]$}\label{ssc:SpdEbrevetoSpdkIE}
In this subsection, we prove several lemmas that we use later.
First, problems on $\cal{D}^{\ULA}(\Hck_{G,[\Spd \cal{O}_C/I_E]},\Lambda)$ can be reduced to that on $\cal{D}^{\ULA}(\Hck_{G,[\Spd k/I_E]},\Lambda)$.
\begin{lemm}\label{lemm:DULAequivIF}
There are canonical monoidal equivalences
\begin{align}
\cal{D}^{\ULA}(\Hck_{G,[\Spd C/I_E]},\Lambda)&\simeq \cal{D}^{\ULA}(\Hck_{G^{\spl},[\Spd C/I_E]},\Lambda)\notag\\
&\leftarrow \cal{D}^{\ULA}(\Hck_{G^{\spl},[\Spd \cal{O}_C/I_E]},\Lambda)\notag\\
&\to \cal{D}^{\ULA}(\Hck_{G^{\spl},[\Spd k/I_E]},\Lambda) \label{eqn:DULAequivIF}
\end{align}
\end{lemm}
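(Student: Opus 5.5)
The plan is to mimic the proof of Proposition \ref{prop:DULAequiv} step by step, but with every space carried along over $[\,\cdot\,/I_E]$.

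\textbf{Reductions.} By Lemma \ref{lemm:ULAisbounded} and the definitions of $\cal{D}^{\ULA}$ for non-torsion coefficients (limit, inverting $\ell$, colimit), I would first reduce to the case where $\Lambda$ is a torsion ring.

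\textbf{First equivalence.} It comes from the isomorphism (\ref{eqn:HckGHckGspl}). This applies because $[\Spd C/I_E]=\Spd\breve{E}\to\Div^1$ factors through $\Spd\breve{E}$. So $\Hck_{G,[\Spd C/I_E]}\cong\Hck_{G^{\spl},[\Spd C/I_E]}$ compatibly with the convolution diagrams, and monoidality is automatic.

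\textbf{The two pullbacks: descent to the \v{C}ech nerve.} I would write each of $[\Spd C/I_E]$, $[\Spd\cal{O}_C/I_E]$, $[\Spd k/I_E]$ as the geometric realization of its \v{C}ech nerve, whose terms are $S\times I_E^n$ for $S=\Spd C,\Spd\cal{O}_C,\Spd k$. Since the $I_E$-action preserves the split model $G^{\spl}$, the Hecke stack over each quotient is the corresponding quotient of $\Hck_{G^{\spl},S}$. Universal local acyclicity is v-local on the base, and \'etale sheaves satisfy descent (\cite[Proposition 17.3]{Sch}). Hence each $\cal{D}^{\ULA}(\Hck_{G^{\spl},[S/I_E]},\Lambda)$ is the totalization of the cosimplicial diagram $n\mapsto\cal{D}^{\ULA}(\Hck_{G^{\spl},S\times I_E^n},\Lambda)$. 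The pullback functors in (\ref{eqn:DULAequivIF}) are limits of the termwise pullbacks, so it suffices to show that
\[
\cal{D}^{\ULA}(\Hck_{G^{\spl},\Spd C\times I_E^n},\Lambda)\leftarrow\cal{D}^{\ULA}(\Hck_{G^{\spl},\Spd\cal{O}_C\times I_E^n},\Lambda)\to\cal{D}^{\ULA}(\Hck_{G^{\spl},\Spd k\times I_E^n},\Lambda)
\]
are equivalences for each $n$. This is \cite[{\sc Corollary} VI.6.7]{FS} with the profinite set $I_E^n$ added as an extra factor.

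\textbf{The main step: adapting FS's proof over $\times I_E^n$.} FS's proof rests on two ingredients.
\begin{enumerate}
\item The semiorthogonal decomposition along the Schubert cells $\Hck_{\mu}=[S/H_\mu]$. This reduces both full faithfulness and essential surjectivity to the locally constant categories on $S\times I_E^n\times$(classifying stacks of connected groups).
\item The comparison $Rp_*A\cong i^*A$ for $p\colon\Spd\cal{O}_C\times Y\to Y$, together with the analogue over $\Spd C$.
\end{enumerate}
For ingredient 2, the lemma proved inside Lemma \ref{lemm:SpdkIFSpdOCIFequiv} supplies exactly this with $Y=I_E^n$. Combined with Lemmas \ref{lemm:BIFSpdFbreveff} and \ref{lemm:SpdkIFSpdCIFequiv}, it shows that $\cal{D}_{\rm{lc}}(S\times I_E^n,\Lambda)$ is the same category for all three choices of $S$, namely sheaves on the profinite set $I_E^n$ with perfect stalks. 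Once this is in place, FS's argument (\cite[Proposition VI.6.4, VI.6.5]{FS}) goes through verbatim, giving the termwise equivalences. The equivalences are compatible with the cosimplicial structure maps because everything is a pullback.

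\textbf{Monoidality.} The convolution product (\ref{eqn:conv}) uses only $*$-pullback and proper pushforward $Rm_*$. Proper base change along the maps $[\Spd k/I_E]\to[\Spd\cal{O}_C/I_E]\leftarrow[\Spd C/I_E]$ makes the pullbacks monoidal, including the higher coherences, which come from the same base-change isomorphisms on the iterated convolution diagrams.

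\textbf{Expected obstacle.} I expect the main difficulty to be checking that ULA over the stacky base $[S/I_E]$, and its compatibility with descent, behaves as claimed. Concretely, ULA for $\Hck_{G^{\spl},S\times I_E^n}\to S\times I_E^n$ should be characterized, as in \cite[Proposition VI.6.5]{FS}, by the restrictions to the strata being locally constant with perfect fibers. Here $I_E^n$ is a profinite set rather than a point, so I would verify this fiberwise over points of $I_E^n$, using that $\Spd C\times I_E^n$ is strictly totally disconnected.
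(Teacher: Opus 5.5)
Your proposal is correct and is essentially the paper's argument. The paper likewise reduces to torsion $\Lambda$ and runs the degeneration argument of \cite[Corollary VI.6.7]{FS}, whose only real input is that the locally constant categories on the three bases agree.

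The one difference is organizational. The paper applies that argument directly over the stacky bases $[S/I_E]$, feeding in the already established equivalences $\cal{D}_{\rm{lc}}([\Spd k/I_E],\Lambda)\simeq\cal{D}_{\rm{lc}}([\Spd \cal{O}_C/I_E],\Lambda)\simeq\cal{D}_{\rm{lc}}([\Spd C/I_E],\Lambda)$ from Lemmas \ref{lemm:DlcSpdkIFRepIFequiv}, \ref{lemm:SpdkIFSpdCIFequiv} and \ref{lemm:SpdkIFSpdOCIFequiv}. You instead first descend along the \v{C}ech nerve to $S\times I_E^n$, so you effectively re-run the proofs of those lemmas termwise, and you pay the extra (standard) cost of checking that universal local acyclicity is v-local on the base, which is exactly the obstacle you flag.
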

\begin{proof}
We may assume that $\Lambda$ is a torsion ring.
The lemma follows from the same argument as \cite[{\sc Corollary} VI.6.7]{FS} (or \cite[Theorem 5.1]{Ban}), by using that the two pullback functors
\[
\cal{D}_{\rm{lc}}([\Spd k/I_E],\Lambda)\xleftarrow{\sim}\cal{D}_{\rm{lc}}([\Spd \cal{O}_C/I_E],\Lambda)\xrightarrow{\sim} \cal{D}_{\rm{lc}}([\Spd C/I_E],\Lambda)
\]
are equivalences by Lemma \ref{lemm:DlcSpdkIFRepIFequiv}, Lemma \ref{lemm:SpdkIFSpdCIFequiv} and Lemma \ref{lemm:SpdkIFSpdOCIFequiv}.
\end{proof}
Moreover, the category $\cal{D}^{\ULA}(\Hck_{G,[\Spd k/I_E]},\Lambda)$ also decomposes into a tensor product:
\begin{lemm}\label{lemm:HckBIFKunneth}
The canonical functor
\begin{multline}\label{eqn:HckBIFKunneth}
-\boxtimes -\colon \cal{D}^{\ULA}(\Hck_{G,\Spd k},\Lambda) \otimes_{\Lambda\lmod^{\perf}} \cal{D}_{\rm{lc}}([\Spd k/I_E],\Lambda)\\
\to \cal{D}^{\ULA/[\Spd k/I_E]}(\Hck_{G,\Spd k}\times [\Spd k/I_E],\Lambda)
\end{multline}
is an equivalence.
\end{lemm}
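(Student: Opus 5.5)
We will argue along the lines of the proof of Lemma \ref{lemm:exttensorDULA}, with $[\Spd k/I_E]$ in place of $[\Spec k/T']$. First we reduce to the case where $\Lambda$ is a torsion ring. For $\Lambda=\cal{O}_L$ both sides are limits over $n$ of the torsion versions; here we use Lemma \ref{lemm:ULAisbounded} to fix a bounded support $\Hck_{G,\Spd k,\leq\mu}$, and we use that the relative tensor product over $\Lambda\lmod^{\perf}$ of small idempotent-complete categories commutes with these limits because everything is compactly generated and dualizable. The cases $L$ and $\Qlb$ then follow by inverting $\ell$ and passing to colimits. Next we replace $\Hck_{G,\Spd k}$ by $\Hck_{G,\Spd k,\leq\mu}$ and filter by Schubert cells. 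Excision triangles $j_!j^*\to \rm{id}\to i_*i^*$ are compatible with $-\boxtimes-$, and ULA over $[\Spd k/I_E]$ can be checked stratum-wise via the analogue of \cite[{\sc Proposition} VI.6.5]{FS}. Together these reduce the statement to the cells $\Hck_{G,\Spd k,\mu}=[\Spd k/H_\mu^{\dia}]$ with $H_\mu$ connected (after modding out pro-unipotent parts).

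On a single cell, full faithfulness amounts to the K\"unneth isomorphism
\[
R\Hom(A_1,A_2)\otimes^{\bb{L}}_{\Lambda}R\Hom(B_1,B_2)\xrightarrow{\sim}R\Hom(A_1\boxtimes B_1,A_2\boxtimes B_2)
\]
for $A_i\in\cal{D}^{\ULA}(\Hck_{G,\Spd k,\mu},\Lambda)$ and $B_i\in\cal{D}_{\rm{lc}}([\Spd k/I_E],\Lambda)$. Since both sides are exact in each variable, we may take $B_1$ and $B_2$ to be generators. By Lemma \ref{lemm:DlcSpdkIFRepIFequiv} these are of the form $\Lambda[I_E/U]$, which are pushforwards along the finite étale maps $[\Spd k/U]\to[\Spd k/I_E]$. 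We also take $A_1=A_2=\Lambda$, since objects on the cell are locally constant perfect complexes and $H_\mu$ is connected. The question then becomes the computation of $R\Gamma$ on $[\Spd k/H^{\dia}_\mu]\times[\Spd k/U]$. We compute this by descent along the Čech nerve of $\Spd k\to[\Spd k/U]$, whose terms are $[\Spd k/H_\mu^{\dia}]\times\underline{U}^n$. On profinite sets $S$, the formula $R\Gamma(X\times\underline{S})=\rm{Map}_{\rm{cont}}(S,R\Gamma(X))$ holds for $X=[\Spd k/H_\mu^\dia]$ because $R\Gamma(X,\Lambda)$ is bounded below with finitely generated cohomology in each degree. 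Totalizing gives $R\Gamma(X,\Lambda)\otimes R\Gamma_{\rm{cont}}(U,\Lambda)$.

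\textbf{This step is the main obstacle.} $R\Gamma(X,\Lambda)$ is unbounded above, since it is the cohomology of a classifying stack, and $R\Gamma_{\rm{cont}}(U,\Lambda)$ need not be bounded either, so commuting the totalization with the tensor product requires care. We will handle this by checking the statement in each degree. Each cohomological degree of the totalization involves only finitely many cosimplicial stages, and in each stage we can pass to $\tau^{\leq N}R\Gamma(X,\Lambda)$, which is a perfect complex, and for perfect complexes the tensor product commutes with limits.

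For essential surjectivity, the image is a stable subcategory closed under excision extensions. So it suffices that every object of $\cal{D}^{\ULA/[\Spd k/I_E]}$ supported on one cell lies in the thick subcategory generated by exterior products. Such an object is locally constant on $[\Spd k/H_\mu^\dia]\times[\Spd k/I_E]$. Because $H_\mu$ is connected, its pullback to $\Spd k\times[\Spd k/I_E]$ determines it up to the $H_\mu$-equivariance, which is automatic, and Lemma \ref{lemm:DlcSpdkIFRepIFequiv} identifies this pullback with an object of $\DDRep^{\perf}(I_E,\Lambda)$. The object is therefore $\Lambda\boxtimes B$ for some $B$, which is in the image.
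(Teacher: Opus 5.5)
There is a genuine gap in your essential surjectivity step. You claim that an object supported on one cell is locally constant on $[\Spd k/H_\mu^{\dia}]\times[\Spd k/I_E]$, that it is determined by its pullback to $\Spd k\times[\Spd k/I_E]$ because ``the $H_\mu$-equivariance is automatic'', and hence that it is $\Lambda\boxtimes B$. Connectedness of $H_\mu$ makes equivariance automatic only for objects of the heart, i.e.\ local systems in degree $0$. A \emph{complex} on $[\Spd k/H_\mu^{\dia}]$ carries genuine extra gluing data.

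For example, let $\pi\colon \Spd k\to[\Spd k/H_\mu^{\dia}]$ be the quotient map. Then $R\pi_*\Lambda$ has perfect fibre $R\Gamma(H_\mu^{\dia},\Lambda)$, so $R\pi_*\Lambda\boxtimes\Lambda$ lies in the target category. But $R\pi_*\Lambda$ is not constant. Its cohomology is $\Lambda$, whereas a nonzero constant complex with perfect fibre has unbounded cohomology, because $H^*([\Spd k/H_\mu^{\dia}],\Lambda)$ is nonzero in infinitely many degrees. This object does lie in the image, but not for the reason you give.

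The missing step is the one the paper uses. First reduce, via truncations, to objects concentrated in degree $0$. Only then invoke connectedness, to see that such an object is pulled back from $\cal{D}_{\mathrm{lc}}([\Spd k/I_E],\Lambda)$. This dévissage requires truncations to preserve perfect fibres, which holds over a field but not over a torsion ring such as $\ZZ/\ell^2$. The same caveat applies to your full-faithfulness reduction ``take $A_1=A_2=\Lambda$'', which you justify by connectedness alone.

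Your full-faithfulness argument also takes a different and heavier route than the paper's. The paper never passes to cells or to generators. Its steps are:
\begin{enumerate}
\item By Lemma~\ref{lemm:BIFSpdFbreveff}, replace $[\Spd k/I_E]$ by $\Spd\breve E$.
\item Since $B,B'$ are locally constant with perfect fibres, split $R\HHom(A\boxtimes B,A'\boxtimes B')\simeq R\HHom(A,A')\boxtimes R\HHom(B,B')$.
\item Prove the K\"unneth formula $R\alpha_*A\otimes R\beta_*B\simeq R(\alpha\times\beta)_*(A\boxtimes B)$ for \emph{arbitrary} $A$ on $\Hck_{G,\Spd k}$. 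This uses the projection formula, smooth base change along the cohomologically smooth $\beta\colon\Spd\breve E\to\Spd k$, and local constancy of $B$.
\end{enumerate}
Because $R\alpha_*A$ enters only as a constant coefficient pulled back along $\beta$, the unboundedness of the Hecke-stack cohomology (your ``main obstacle'') is absorbed into the projection formula. It never has to be handled degree by degree.

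Your version also relies on two unsupported inputs.
\begin{enumerate}
\item Lemmas~\ref{lemm:DReperfcharacterizationtorsion} and~\ref{lemm:DlcSpdkIFRepIFequiv} give only infinite resolutions by the $\Lambda[I_E/U]$. They do not show that these modules thickly generate $\DDRep^{\perf}(I_E,\Lambda)$. You can avoid this by dualizing $B_1$ to the other side, so that only $B_1=\Lambda$ is needed.
\item $\tau^{\le N}R\Gamma(X,\Lambda)$ need not be perfect over a non-regular torsion ring.
\end{enumerate}
Over a field, and with the truncation step added, your \v{C}ech computation does go through. So that part is a viable but more laborious alternative, while the essential surjectivity argument needs the fix described above.
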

\begin{proof}
We can assume that $\Lambda$ is a torison ring.

First, we show the full faithfulness.
By Lemma \ref{lemm:BIFSpdFbreveff}, it suffices to show the full faithfulness of the functor
\begin{multline*}
-\boxtimes -\colon \cal{D}^{\ULA}(\Hck_{G,\Spd k},\Lambda) \otimes \cal{D}_{\rm{lc}}(\Spd \breve{E},\Lambda)\\
\to \cal{D}^{\ULA/[\Spd k/I_E]}(\Hck_{G,\Spd k}\times \Spd \breve{E},\Lambda).
\end{multline*}
Consider the diagram
\[
\xymatrix{
\Hck_{G,\Spd k}\times \Spd \breve{E}\ar[r]^-{\rm{pr}_1}\ar[d]_{\rm{pr}_2}&\Hck_{G,\Spd k}\ar[d]^{\alpha}\\
\Spd \breve{E}\ar[r]_-{\beta}&\Spd k.
}
\]
It suffices to show that for $A,A'\in \cal{D}^{\ULA}(\Hck_{G,\Spd k},\Lambda)$ and for $B,B'\in \cal{D}_{\rm{lc}}(\Spd \breve{E},\Lambda)$, there is a canonical isomorphism
\[
R\alpha_*R\HHom_{\Lambda}(A,A')\otimes_{\Lambda}R\beta_*R\HHom_{\Lambda}(B,B')\xrightarrow{\sim} R(\alpha\times \beta)_*R\HHom_{\Lambda}(A\boxtimes B,A'\boxtimes B')
\]
in $\cal{D}^{\ULA/[\Spd k/I_E]}(\Hck_{G,\Spd k}\times \Spd \breve{E},\Lambda)$, see \cite[Proposition 3.8]{RS_Kunneth}.
Since $B$ and $B'$ are locally constant complexes with perfect fibers, it follows that
\begin{align*}
R\HHom_{\Lambda}(A\boxtimes B,A'\boxtimes B')&\cong R\HHom_{\Lambda}(\rm{pr}_1^*A,\rm{pr}_1^*A')\otimes (\rm{pr}_2^*B)^{\vee}\otimes \rm{pr}_2^*B'\\
&\cong \rm{pr}_1^*R\HHom(A,A')\otimes (\rm{pr}_2^*B)^{\vee}\otimes \rm{pr}_2^*B'\\
&\cong R\HHom(A,A')\boxtimes R\HHom(B,B'), 
\end{align*}
where the second isomorphism follows from the cohomologically smoothness of $\beta$ (implied by  \cite[Proposition 24.5]{Sch}).
Now it suffices to show the the K\"{u}nneth formula for $\Hck_{G,\Spd k}\to \Spd k \leftarrow \Spd \breve{E}$, i.e. that there is a canonical isomorphism
\[
R\alpha_*A\otimes R\beta_*B\xrightarrow{\sim}R(\alpha\times \beta)_*(A\boxtimes B)
\]
for $A\in \cal{D}_{\et}(\Hck_{G,\Spd k},\Lambda)$ and $B\in \cal{D}_{\rm{lc}}(\Spd \breve{E},\Lambda)$.
This follows from the following three isomorphisms:
\begin{align*}
R\alpha_*A\otimes R\beta_* B&\cong R\beta_*(\beta^*R\alpha_*A\otimes B), \\
\beta^*R\alpha_*A&\cong R\rm{pr}_{2*}\rm{pr}_1^*A,\\
B\otimes R\rm{pr}_{2*}\rm{pr}_1^*A&\cong R\rm{pr}_{2*}(\rm{pr}_2^*B\otimes \rm{pr}_1^*A).
\end{align*}
The first isomorphism follows from the projection formula, noting that we can show that $\beta$ is proper by the same argument as \cite[{\sc Proposition} II.1.21]{FS}, the second follows from the smooth base change and the cohomologically smoothness of $\beta$, and the third follows the fact that $B$ is locally constant with perfect fibers.

It remains to show the essential surjectivity.
By excisions, it is enough to prove that 
\begin{multline*}
\cal{D}^{\ULA}(\Hck_{G,\Spd k,\mu},\Lambda) \otimes \cal{D}_{\rm{lc}}([\Spd k/I_E],\Lambda)\\
\to \cal{D}^{\ULA/[\Spd k/I_E]}(\Hck_{G,\Spd k,\mu}\times [\Spd k/I_E],\Lambda)    
\end{multline*}
is essentially surjective for any $\mu\in \bb{X}_*^+(T_{\ol{F}})$.
By truncations, it suffices to show that an object $A\in \cal{D}^{\ULA/[\Spd k/I_E]}(\Hck_{G,\Spd k,\mu}\times [\Spd k/I_E],\Lambda)$ concentrated in degree 0 is in the essential image.
Since $\Hck_{G,\Spd k,\mu}$ is isomorphic to a classifying space of a connected group over $\Spd k$ as in the proof of Lemma \ref{lemm:exttensorDULA}, we may assume that $A$ is the pullback of a sheaf in $\cal{D}_{\rm{lc}}([\Spd k/I_E],\Lambda)$, which is obviously in the essential image.
\end{proof}
\subsection{Non-derived Satake equivalence and Weil action}\label{sssc:nonderWeil}
As in (\ref{eqn:nonderivedSpdbreveFLambda}), there is a symmetric monoidal equivalence
\begin{align}\label{eqn:nonderivedSpdbreveF}
S_{\Spd \breve{E}}\colon \Rep^{\rm{f.d.}}(\wh{G}\times I_E,\Qlb)\xrightarrow{\simeq} \Sat(\Hck_{G,\Spd \breve{E}},\Qlb)\subset \cal{D}^{\ULA}(\Hck_{G,\Spd \breve{E}},\Qlb).
\end{align}
which is $W_{F}/I_E$-equivalent by the argument in \cite[\S VI.11]{FS}.

Since any object in $\Rep^{\rm{f.d.}}(\wh{G}\times I_E,\Qlb)$ is isomorphic to a finite direct sum of the form
\[
\ds\bigoplus_{i=1}^n V_i\boxtimes W_i,
\]
where $n\in \bb{Z}_{\geq 0}$, $V_i\in \Rep^{\rm{f.p.}}(\wh{G},\Qlb)$ and $W_i\in \Rep^{\rm{f.p.}}(I_E,\Qlb)$, there is a functor
\begin{align}
\Rep^{\rm{f.d.}}(\wh{G}\times I_E,\Qlb)
&\to \cal{D}^{\ULA}(\Hck_{G,\Spd k},\Qlb) \otimes \cal{D}_{\rm{lc}}([\Spd k/I_E],\Qlb) \label{eqn:SSpdkotimesid}\\
\ds\bigoplus_{i=1}^n V_i\boxtimes W_i 
&\mapsto \bigoplus_{i=1}^n S_{\Spd k}(V_i)\otimes \cal{L}_{W_i},
\end{align}
where $\cal{L}_{W_i}$ is the local system corresponding to $W_i$ under (\ref{eqn:DlcSpdEbreveDRepIEeqiv}).
Write
\begin{multline*}
\Sat(\Hck_{G,\Spd k},\Qlb)\otimes \LocSys([\Spd k/I_E],\Qlb)\\
\subset\cal{D}^{\ULA}(\Hck_{G,\Spd k},\Qlb) \otimes_{\Vect_{\Qlb}^{\perf}} \cal{D}_{\rm{lc}}([\Spd k/I_E],\Qlb)    
\end{multline*}
for its essential image.
This is the full subcategory consisting of objects isomorphic to a finite direct sum of the form
\[
\ds\bigoplus_{i=1}^n A_i\boxtimes \cal{L}_i,
\]
where $n\in \bb{Z}_{\geq 0}$, $A_i\in \Sat(\Hck_{G,\Spd k},\Qlb)$ and $\cal{L}_i\in \LocSys([\Spd k/I_E],\Qlb)$.

The rest of this subsection is devoted to proving the following lemma:
\begin{lemm}\label{lemm:nonderSatSpdEbreve}
\begin{enumerate}
\item The equivalences (\ref{eqn:DULAequivIF}) induce equivalences
\begin{align}\label{eqn:SatequivIF}
\Sat(\Hck_{G,[\Spd k/I_F]},\Lambda)\simeq  \Sat(\Hck_{G,[\Spd \cal{O}_C/I_F]},\Lambda)\simeq \Sat(\Hck_{G,[\Spd C/I_F]},\Lambda).    
\end{align}
\item 
The equivalence (\ref{eqn:HckBIFKunneth}) induces an equivalence
\begin{align}\label{eqn:SatLSKunneth}
\Sat(\Hck_{G,\Spd k},\Qlb)\otimes \LocSys([\Spd k/I_E],\Qlb)\xrightarrow{\simeq} \Sat(\Hck_{G,\Spd k}\times [\Spd k/I_E],\Qlb).    
\end{align}
\item 
There is a canonical monoidal natural isomorphism that makes the diagram
{\footnotesize
\begin{align}\label{eqn:diag:nonderSatSpdEbreve}
\xymatrix@C=40pt{
\Rep^{\rm{f.d.}}(\wh{G}\times I_E,\Qlb)\ar[r]^-{S_{\Spd \breve{E}}}\ar@{=}[dd]& \cal{D}^{\ULA}(\Hck_{G,\Spd \breve{E}},\Qlb)\ar[d]_{\simeq}^{(\ref{eqn:DULAequivIF})}\\
&\cal{D}^{\ULA}(\Hck_{G,[\Spd k/I_E]},\Qlb)\ar@{<-}[d]_{\simeq}^{(\ref{eqn:HckBIFKunneth})}\\
\Rep^{\rm{f.d.}}(\wh{G}\times I_E,\Qlb)\ar[r]^-{(\ref{eqn:SSpdkotimesid})}&\cal{D}^{\ULA}(\Hck_{G,\Spd k},\Qlb) \otimes \cal{D}_{\rm{lc}}([\Spd k/I_E],\Qlb)
}
\end{align}
}
commute.
\end{enumerate}
\end{lemm}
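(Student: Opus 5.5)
For (1), I will proceed exactly as in the proof of Lemma \ref{lemm:SatPervequiv}(1) for the second and third equivalences. We may assume $\Lambda$ is torsion, since $\Sat$ for general $\Lambda$ is obtained by limits, inverting $\ell$ and colimits, and the functors in (\ref{eqn:DULAequivIF}) are compatible with these. By the characterization of ULA and flat perverse objects in \cite[Proposition VI.6.4, Proposition VI.7.7]{FS}, an object $A$ is in $\Sat$ if and only if, for each $\mu$, its restriction to $\Hck_{G,S,\mu}$, shifted by $d_\mu$, is locally constant with fibers finite projective in degree $0$. This condition can be checked after pulling back along the smooth cover $S'\to S=[S'/I_E]$ with $S'=\Spd k,\Spd\cal{O}_C,\Spd C$, so it reduces to the same question without $I_E$. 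The three pullbacks in (\ref{eqn:DULAequivIF}) commute with restriction to strata and with the stalk functors. Moreover, $\cal{D}_{\rm{lc}}(S',\Lambda)\simeq\Lambda\lmod^{\perf}$ in all three cases, which shows that the fiber condition is preserved and reflected.

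For (2), I will work stratum by stratum. The functor (\ref{eqn:HckBIFKunneth}) is compatible with the restriction $j_\mu^*$ and with $j_{\mu!}, Rj_{\mu*}$, because these commute with $-\boxtimes\cal{L}$ for a local system $\cal{L}$ with perfect fibers. It is also perverse $t$-exact in the first variable when $\cal{L}$ is concentrated in degree $0$. Hence it sends $A\boxtimes\cal{L}$ with $A\in\Sat$ into $\Sat$. Full faithfulness is already known. For essential surjectivity, take $B\in\Sat(\Hck_{G,\Spd k}\times[\Spd k/I_E],\Qlb)$ and reduce to a direct sum decomposition. On each open stratum, $\Hck_{G,\Spd k,\mu}$ is a classifying stack of a connected group, so, as in the proof of Lemma \ref{lemm:HckBIFKunneth}, $j_\mu^*B$ is the pullback of a local system $\cal{L}_\mu[d_\mu]$ from $[\Spd k/I_E]$. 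By \cite[Proposition VI.7.5]{FS} and the $\Qlb$-semisimplicity argument of \cite[\S VI.11]{FS}, the relevant $\rm{Ext}^1$ between $IC_\mu\boxtimes\cal{L}$ and $IC_{\mu'}\boxtimes\cal{L}'$ vanishes. Using the Künneth description of Hom from the proof of Lemma \ref{lemm:HckBIFKunneth}, this $\rm{Ext}^1$ is computed by $\rm{Ext}$'s of $IC$ sheaves (in degrees $\ge 0$, with $\rm{Ext}^1=0$ by parity and IC purity) tensored with $\rm{Ext}$'s over $I_E$. The one dangerous term is $\rm{Hom}(IC_\mu,IC_\mu)\otimes\rm{Ext}^1_{I_E}(\cal{L},\cal{L}')$.

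This is the step I expect to be the main obstacle, since continuous $\Qlb$-representations of $I_E$ need not be semisimple. I will therefore avoid needing semisimplicity. Filtering $B$ by support, I will show inductively that $B$ lies in the extension-closed subcategory generated by objects $IC_\mu\boxtimes\cal{L}$ inside $\Sat$. Then I will show that the essential image of the fully faithful functor (\ref{eqn:HckBIFKunneth}) restricted to $\Sat\otimes\LocSys$ is closed under extensions inside $\Sat$. The point is that $\rm{Ext}^1$ between two such objects equals $\bigoplus\rm{Hom}(A,A')\otimes\rm{Ext}^1(\cal{L},\cal{L}')$, with no mixed contribution. An extension class is then of the form $\rm{id}\boxtimes e$, which is realized by the object $A\boxtimes(\text{extension of }\cal{L}\text{ by }\cal{L}')$.

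For (3), I will construct the natural isomorphism by comparing fiber functors via Tannakian formalism, in the same way as Lemma \ref{lemm:SatPervequiv}(2). Both composites land in $\Sat(\Hck_{G,[\Spd k/I_E]},\Qlb)$ by (1) and (2), and they are monoidal. Composing with $F_{G,[\Spd k/I_E]}$, valued in $\LocSys([\Spd k/I_E],\Qlb)\simeq\Rep^{\rm{f.d.}}(I_E,\Qlb)$, both become the forgetful functor. For the top composite, this holds by the construction of $S_{\Spd\breve{E}}$ via $F_{G,\Spd\breve{E}}$, which is compatible with pullback along (\ref{eqn:DULAequivIF}) by proper base change. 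For the bottom composite, it follows from the Künneth formula $F(S_{\Spd k}(V)\boxtimes\cal{L}_W)\cong F_{G,\Spd k}(S_{\Spd k}(V))\otimes W\cong V\otimes W$, using Lemma \ref{lemm:SatPervequiv}(2) for the monoidal compatibility of $F_{G,\Spd k}$. Since the fiber functor is faithful and both monoidal structures are induced from it, Tannakian reconstruction gives the two functors up to an automorphism of $\wh{G}\times I_E$ over $I_E$. This automorphism preserves each highest weight and so is inner, exactly as before. I will fix the ambiguity by choosing the identification so that the constructions match on $IC_\mu\boxtimes\cal{L}$, which yields the monoidal natural isomorphism.
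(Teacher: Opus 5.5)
Your proof of (1) is the paper's argument, and (3) uses the same Tannakian comparison of fiber functors. Your proof of (2), however, takes a different route.

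The paper argues top-down. By (1) and the Fargues--Scholze equivalence $S_{\Spd\breve{E}}$, every object of $\Sat(\Hck_{G,[\Spd k/I_E]},\Qlb)$ is the image of some $\bigoplus_i S_{\Spd\breve{E}}(V_i\boxtimes\Qlb)\star S_{\Spd\breve{E}}(\Qlb\boxtimes W_i)$. The paper then tracks the two factors: $S_{\Spd\breve{E}}(\Qlb\boxtimes W)=i_{0*}\cal{L}_W$ goes to $i_{0*}\Qlb\boxtimes\cal{L}_W$, and $S_{\Spd\breve{E}}(V_\mu\boxtimes\Qlb)$, which is $IC_\mu$ with the $I_E$-structure normalized as in \cite[\S VI.11]{FS}, goes to $IC_{\mu,\Spd k}\boxtimes\Qlb$.

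You argue bottom-up, intrinsically over $[\Spd k/I_E]$. You use d\'evissage along strata and show that $\{\bigoplus_\mu IC_\mu\boxtimes\cal{L}_\mu\}$ is closed under extensions, via the K\"{u}nneth formula for $R\Hom$ from the proof of Lemma \ref{lemm:HckBIFKunneth} and parity vanishing of $\rm{Ext}^1(IC_\mu,IC_\nu)$. This never uses $S_{\Spd\breve{E}}$ or its normalization, and it handles the non-semisimplicity of $\Rep^{\rm{f.d.}}(I_E,\Qlb)$ correctly. Your earlier sentence claiming that this $\rm{Ext}^1$ vanishes contradicts the rest of the paragraph and should be dropped.

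The paper's route is shorter given \cite{FS}. It also yields, as a by-product, the object-level matching $S_{\Spd\breve{E}}(V\boxtimes W)\leftrightarrow S_{\Spd k}(V)\boxtimes\cal{L}_W$ that (3) then upgrades. Your extension step can in fact be bypassed. With $\Qlb$-coefficients, ${}^pH^0(j_{\mu!}\Qlb[d_\mu])\to{}^pH^0(Rj_{\mu*}\Qlb[d_\mu])$ is an isomorphism, and this is compatible with $-\boxtimes\cal{L}_\mu$. Hence the canonical maps ${}^pH^0(j_{\mu!}j_\mu^*B)\to B\to{}^pH^0(Rj_{\mu*}j_\mu^*B)$ split off $IC_\mu\boxtimes\cal{L}_\mu$ as a direct summand of $B$, and no parity input is needed.

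In (3), two steps are thinner than in the paper.

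First, proper base change gives $F_{G,[\Spd k/I_E]}\circ(\ref{eqn:SatequivIF})\simeq F_{G,\Spd\breve{E}}$ only as an isomorphism of functors, not as a monoidal one. The paper obtains monoidality of the combined comparison with $F_{G,\Spd k}\otimes\rm{id}$ as follows. It pulls back along $\Spd C\to\Spd\breve{E}$ and $\Spd k\to[\Spd k/I_E]$, and uses that $\Rep^{\rm{f.d.}}(I_E,\Qlb)\to\Vect_{\Qlb}^{\heartsuit,\rm{f.d.}}$ is faithful. This reduces the question to the non-equivariant square, which is monoidal by the very definition of the monoidal structure on $F_{G,\Spd k}$.

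Second, that same comparison shows the twisting automorphism $\psi$ is the identity, not merely inner. Your highest-weight argument only gives $\psi=\rm{Ad}_g$, so the resulting monoidal isomorphism depends on the choice of $g$. To get the canonical isomorphism claimed in the statement, replace your plan to ``fix the ambiguity on $IC_\mu\boxtimes\cal{L}$'' with this comparison to the non-equivariant square.
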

\begin{proof}
\begin{enumerate}
\item We can assume that $\Lambda$ is a torsion ring.
By characterization of ULA sheaves and Satake sheaves in \cite[Proposition VI.6.4, Proposition VI.7.7]{FS}, the first (resp. second) equivalence reduces to the claim that all fibers of an object in $\cal{D}_{\rm{lc}}([\Spd \cal{O}_C/I_E],\Lambda)$ are finite projective $\Lambda$-modules in degree 0 if and only if its pullback to $\cal{D}_{\rm{lc}}([\Spd k/I_E],\Lambda)$ (resp. $\cal{D}_{\rm{lc}}([\Spd C/I_E],\Lambda)$) satisfies the same condition.
This follows from (\ref{eqn:DlcSpdEbreveDRepIEeqiv}).
\item 
The category $\Rep^{\rm{f.d.}}(\wh{G}\times I_E,\Qlb)$ is the category of objects isomorphic to the direct sum of the form
\[
\bigoplus_{i=1}^n V_i\boxtimes W_i
\]
where $n\in \ZZ_{\geq 0}$, $V_i\in \Rep^{\rm{f.d.}}(\wh{G},\Qlb)$ and $W_i\in \Rep^{\rm{f.d.}}(I_E,\Qlb)$.
Therefore, $\Sat(\Hck_{G,\Spd \breve{E}})$ is the category of objects isomorphic to the direct sum of the form
\begin{align}\label{eqn:bigoplusSSpdEbreveformofobjectsinSat}
\bigoplus_{i=1}^n S_{\Spd \breve{E}}(V_i\boxtimes \Qlb)\star S_{\Spd \breve{E}}(\Qlb\boxtimes W_i).
\end{align}
By the construction of $S_{\Spd \breve{E}}$, it holds that $S_{\Spd \breve{E}}(\Qlb\boxtimes W_i)=i_{0,\Spd \breve{E}*}\cal{L}_{W_i}$.
The object $i_{0,\Spd \breve{E}*}\cal{L}_{W_i}\in \Sat(\Hck_{G,\Spd \breve{E}},\Qlb)$  corresponds to 
\[
i_{0,[\Spd k/I_E]*}\cal{L}_{W_i}=i_{0,[\Spd k/I_E]*}\Qlb \boxtimes \cal{L}_{W_i}\in \Sat(\Hck_{G,[\Spd k/I_E]},\Qlb)
\]
under (\ref{eqn:SatequivIF}).
Therefore, the object $S_{\Spd \breve{E}}(\Qlb\boxtimes W_i)$ corresponds to 
\[
i_{0,[\Spd k/I_E]*}\Qlb \boxtimes \cal{L}_{W_i} \in \cal{D}^{\ULA}(\Hck_{G,\Spd k},\Qlb) \otimes_{\Vect_{\Qlb}^{\perf}} \cal{D}_{\rm{lc}}([\Spd k/I_E],\Qlb) 
\]
under the composition of (\ref{eqn:SatequivIF}) and (\ref{eqn:HckBIFKunneth}).

Take any $\mu\in \bb{X}_*^+(T_{\ol{F}})$.
Let $V_{\mu}\in \Rep^{\rm{f.p.}}(\wh{G},\Qlb)$ denote the irreducible representation of $\wh{G}$ with highest weight $\mu$.
The intersection cohomology sheaf $IC_{\mu}\in \Sat(\Hck_{G,\Spd C},\Qlb)$ equipped with the $I_E$-equivariant structure induced by the trivial $I_E$-equivariant structure on $\ol{\QQ}_{\ell,\Hck_{G,\Spd C,\mu}}$ defines an object in $\Sat(\Hck_{G,\Spd \breve{E}},\Qlb)$.
By the construction of the non-derived Satake equivalence (\ref{eqn:nonderivedSpdbreveF}), which follows the normalization in \cite[\S VI.11]{FS}, this object in $\Sat(\Hck_{G,\Spd \breve{E}},\Qlb)$ corresponds to $V_{\mu}\boxtimes \Qlb\in \Rep^{\rm{f.p.}}(\wh{G}\times I_E,\Qlb)$ under $S_{\Spd \breve{E}}$.
Moreover, $IC_{\mu}\in \Sat(\Hck_{G,\Spd C},\Qlb)$ and $IC_{\mu,\Spd k}\in \Sat(\Hck_{G,\Spd k},\Qlb)$ correspond under (\ref{eqn:SatPervequiv}), and taking the $I_E$-equivariant structures into account, we can show that the above object in $\Sat(\Hck_{G,\Spd \breve{E}},\Qlb)$ and the object $IC_{\mu,\Spd k}\boxtimes \Qlb \in \Sat(\Hck_{G,[\Spd \breve{k}/I_E]},\Qlb)$ correspond under (\ref{eqn:SatequivIF}).
This implies that the object $S_{\Spd \breve{E}}(V_{\mu}\boxtimes \Qlb)$ corresponds to 
\[
IC_{\mu} \boxtimes \Qlb \in \cal{D}^{\ULA}(\Hck_{G,\Spd k},\Qlb) \otimes_{\Vect_{\Qlb}^{\perf}} \cal{D}_{\rm{lc}}([\Spd k/I_E],\Qlb) 
\]
under the composition of (\ref{eqn:SatequivIF}) and (\ref{eqn:HckBIFKunneth}).
Therefore, the object (\ref{eqn:bigoplusSSpdEbreveformofobjectsinSat}) corresponds to 
\[
\bigoplus_{i=1}^n (S_{\Spd k}^{-1}(V_i)\boxtimes \Qlb)\star (\Qlb \boxtimes \cal{L}_{W_i})=\bigoplus_{i=1}^n (S_{\Spd k}^{-1}(V_i)\boxtimes \cal{L}_{W_i}).
\]
The full subcategory of objects of this form agrees with the full subcategory of the statement.
\item 
By the same argument as in the proof of Lemma \ref{lemm:SatPervequiv}  (\ref{item:threenonderSatcompati}), there are canonical natural isomorphisms that make the squares
{\footnotesize
\begin{align}\label{eqn:diag:fiberfunctorsIFcompatible}
\xymatrix@C=45pt{
\Sat(\Hck_{G,\Spd \breve{E}},\Qlb)\ar[r]^-{F_{G,\Spd \breve{E}}}\ar@{<-}[d]^{\simeq}_{(\ref{eqn:SatequivIF})}&\LocSys(\Spd \breve{E},\Qlb)\ar@{<-}[d]^{\simeq}\ar[r]^-{\simeq}\ar@{}[rd]|{(\ref{eqn:DlcSpdEbreveDRepIEeqiv})}&\Rep^{\rm{f.d.}}(I_E,\Qlb)\ar@{=}[d]\\
\Sat(\Hck_{G,[\Spd \cal{O}_C/I_E]},\Qlb)\ar[r]_-{F_{G,[\Spd \cal{O}_C/I_E]}}\ar[d]^{\simeq}_{(\ref{eqn:SatequivIF})}&\LocSys([\Spd \cal{O}_C/I_E],\Qlb)\ar@{<-}[d]^{\simeq}\ar[r]^-{\simeq}\ar@{}[rd]|{(\ref{eqn:DlcSpdEbreveDRepIEeqiv})}&\Rep^{\rm{f.d.}}(I_E,\Qlb)\ar@{=}[d]\\
\Sat(\Hck_{G,[\Spd k/I_E]},\Qlb)\ar[r]^-{F_{G,[\Spd k/I_E]}}&\LocSys([\Spd k/I_E],\Qlb)\ar[r]^-{\simeq}&\Rep^{\rm{f.d.}}(I_E,\Qlb)
}
\end{align}
}
commute.
We define
\[
F_{G,\Spd k}\otimes \rm{\rm{id}}\colon \Sat(\Hck_{G,\Spd k},\Qlb)\otimes \LocSys([\Spd k/I_E],\Qlb)\to \LocSys([\Spd k/I_E],\Qlb)
\]
by
\[
\bigoplus_{i=1}^n A_i\boxtimes \cal{L}_i\mapsto \bigoplus_{i=1}^n F_{G,\Spd k}(A_i) \otimes \cal{L}_i,
\]
where $F_{G,\Spd k}(A_i)\in \LocSys(\Spd k,\Qlb)=\Vect_{\Qlb}^{\heartsuit, \rm{f.d.}}$ is regarded as a constant local system on $[\Spd k/I_E]$.
By the K\"{u}nneth formula, there is a natural isomorphism that makes the square
\begin{align}\label{eqn:diag:fiberfunctorsIFKunnethcompatible}
\vcenter{
\xymatrix@C=40pt{
\Sat(\Hck_{G,[\Spd k/I_E]},\Qlb)\ar[r]^-{F_{G,[\Spd k/I_E]}}\ar@{<-}[d]_{(\ref{eqn:SatLSKunneth})}
&\LocSys([\Spd k/I_E],\Qlb)
\ar@{=}[d]
\\
\Sat(\Hck_{G,\Spd k},\Qlb)\otimes \LocSys([\Spd k/I_E],\Qlb)\ar[r]_-{F_{G,\Spd k}\otimes \rm{\rm{id}}}
& \LocSys([\Spd k/I_E],\Qlb).
}    
}
\end{align}
By composing (\ref{eqn:diag:fiberfunctorsIFcompatible}) and (\ref{eqn:diag:fiberfunctorsIFKunnethcompatible}), we get a square
\begin{align}\label{eqn:diag:fiberfunctorcompaticomposed}
\vcenter{
\xymatrix{
\Sat(\Hck_{G,\Spd \breve{E}},\Qlb)
\ar[r]
\ar[d]^{\simeq}
&\Rep^{\rm{f.d.}}(I_E,\Qlb)
\ar@{=}[d]
\\
\Sat(\Hck_{G,\Spd k},\Qlb)\otimes \LocSys([\Spd k/I_E],\Qlb)
\ar[r]
&\Rep^{\rm{f.d.}}(I_E,\Qlb)
}
}
\end{align}
commutative up to a natural isomorphism.
By the proof of \ref{lemm:SatPervequiv} (\ref{item:threenonderSatcompati}), the composition of $F_{\Spd k}$ and the equivalence $\Sat(\Hck_{G,\Spd C})\simeq \Sat(\Hck_{G,\Spd k})$ in (\ref{eqn:SatPervequiv}) is isomorphic to $F_{\Spd C}$. We define a symmetric monoidal structure on $F_{\Spd k}$ as the pullback of that on $F_{\Spd C}$.
This defines the symmetric monoidal structure on $F_{G,\Spd k}\otimes \rm{id}$.
We claim that the natural isomorphism (\ref{eqn:diag:fiberfunctorcompaticomposed}) is monoidal with respect to these monoidal structures.

The diagram (\ref{eqn:diag:fiberfunctorsIFcompatible}) is compatible with the diagram
{\footnotesize
\begin{align*}
\xymatrix@C=45pt{
\Sat(\Hck_{G,\Spd C},\Qlb)\ar[r]^-{F_{G,\Spd C}}\ar@{<-}[d]^{\simeq}_{(\ref{eqn:SatPervequiv})}&\LocSys(\Spd C,\Qlb)\ar@{<-}[d]^{\simeq}\ar[r]^-{\simeq}&\Vect_{\Qlb}^{\heartsuit, \rm{f.d.}}\ar@{=}[d]\\
\Sat(\Hck_{G,\Spd \cal{O}_C]},\Qlb)\ar[r]_-{F_{G,\Spd \cal{O}_C}}\ar[d]^{\simeq}_{(\ref{eqn:SatPervequiv})}&\LocSys(\Spd \cal{O}_C,\Qlb)\ar@{<-}[d]^{\simeq}\ar[r]^-{\simeq}&\Vect_{\Qlb}^{\heartsuit, \rm{f.d.}}\ar@{=}[d]\\
\Sat(\Hck_{G,\Spd k},\Qlb)\ar[r]^-{F_{G,\Spd k}}&\LocSys(\Spd k,\Qlb)\ar[r]^-{\simeq}&\Vect_{\Qlb}^{\heartsuit, \rm{f.d.}}
}
\end{align*}
}
and the square (\ref{eqn:diag:fiberfunctorsIFKunnethcompatible}) is compatible with the square
\[
\xymatrix@C=40pt{
\Sat(\Hck_{G,\Spd k},\Qlb)\ar[r]^-{F_{G,\Spd k}}\ar@{=}[d]
&\LocSys(\Spd k,\Qlb)
\ar@{=}[d]
\\
\Sat(\Hck_{G,\Spd k},\Qlb)\ar[r]_-{F_{G,\Spd k}}
& \LocSys(\Spd k,\Qlb).
}
\]
under various pullback functors along $\Spd C\to \Spd \breve{E}$ and $\Spd k\to [\Spd k/I_E]$, and the forgetful functor $\Rep^{\rm{f.p.}}(I_E)\to \Vect_{\Qlb}^{\heartsuit, \rm{f.d.}}$.
Therefore, the square (\ref{eqn:diag:fiberfunctorcompaticomposed}) is compatible with a natural isomorphism that makes the square of the form 
\begin{align}\label{eqn:diag:fiberfunctorcompaticomposednonequiv}
\xymatrix{
\Sat(\Hck_{G,\Spd C},\Qlb)
\ar[r]
\ar[d]^{\simeq}
&\Vect_{\Qlb}^{\heartsuit, \rm{f.d.}}
\ar@{=}[d]
\\
\Sat(\Hck_{G,\Spd k},\Qlb)
\ar[r]
&\Vect_{\Qlb}^{\heartsuit, \rm{f.d.}}
}    
\end{align}
commute.
Since this natural isomorphism is monoidal by definition, and since the forgetful functor $\Rep^{\rm{f.p.}}(I_E)\to \Vect_{\Qlb}^{\heartsuit, \rm{f.d.}}$ is faithful.
It implies that the natural isomorphism (\ref{eqn:diag:fiberfunctorcompaticomposed}) is monoidal.

Since the Tannakian group of the fiber functor $F_{G,\Spd C}$ is isomorphic to $\wh{G}$, so is that of $F_{G,\Spd k}$.
Hence, the Tannakian group of $F_{G,\Spd k}\otimes \rm{id}$ is isomorphic to $\wh{G}$ with the trivial $I_E$-action.
Therefore, the square (\ref{eqn:diag:fiberfunctorcompaticomposed}) induces a square
\begin{align}\label{eqn:diag:geomSatIFequivuptopsi}
\vcenter{
\xymatrix{
\Sat(\Hck_{G,\Spd \breve{E}},\Qlb)
\ar[r]^-{\simeq}
\ar[d]^{\simeq}
&\Rep^{\rm{f.d.}}(\wh{G}\times I_E,\Qlb)
\ar[d]^{(-)^{\psi}}
\\
\Sat(\Hck_{G,\Spd k},\Qlb)\otimes \LocSys([\Spd k/I_E],\Qlb)
\ar[r]^-{\simeq}
&\Rep^{\rm{f.d.}}(\wh{G}\times I_E,\Qlb)
}
}
\end{align}
commutative up to a monoidal natural isomorphism, where $(-)^{\psi}$ is a functor twisting a $\wh{G}$-action via an isomorphism $\psi\colon \wh{G}\xrightarrow{\simeq} \wh{G}$.
The square (\ref{eqn:diag:fiberfunctorcompaticomposednonequiv}) induces the square 
\[
\xymatrix{
\Sat(\Hck_{G,\Spd C},\Qlb)
\ar[r]^-{\simeq}
\ar[d]^{\simeq}
&\Rep^{\rm{f.d.}}(\wh{G},\Qlb)
\ar@{=}[d]
\\
\Sat(\Hck_{G,\Spd k},\Qlb)
\ar[r]^-{\simeq}
&\Rep^{\rm{f.d.}}(\wh{G},\Qlb).
}
\]
Since this is compatible with (\ref{eqn:diag:geomSatIFequivuptopsi}), it follows that $\psi=\rm{id}$.
Therefore, the monoidal natural isomorphism (\ref{eqn:diag:geomSatIFequivuptopsi}) induces the desired monoidal natural isomorphism.
\end{enumerate}
\end{proof}
\subsection{Semisimple and simply-connected case}\label{ssc:semisimplesimplyconnectedEbreve}
\subsubsection{Cochain functor, its full faithfulness and Weil action}\label{ssc:CohWeil}
Let $q_{I_E}\colon \Hck_{G,\Spd C}\to \Hck_{G,\Spd \breve{E}}$ denote the natural projection.
For any object $A\in \cal{D}^{\ULA}(\Hck_{G,\Spd \breve{E}},\Qlb)$, the dg-algebra $R\Gamma(\Hck_{G,\Spd C},q_{I_E}^*A)$ has both the structure of an object in $\DDRep(I_E,\Qlb)$ (by the same argument as \S \ref{ssc:WeilonCohHck}) and the structure of an $R\Gamma(\Hck_{G,\Spd C},\Qlb)$-module that are compatible.
By Proposition \ref{prop:RHckequiv}, we obtain a functor
\[
R\Gamma_{\Spd \breve{E}} \colon \cal{D}^{\ULA}(\Hck_{G,\Spd \breve{E}},\Qlb)\to \cal{O}_{\bb{T}(\wh{\fk{t}}^*/W)}^{\shear}\lmod(\DDRep(I_E)).
\]
First, we show the following:
\begin{lemm}\label{lemm:RGammaIFff}
The functor $R\Gamma_{\Spd \breve{E}}$ is fully faithful.
\end{lemm}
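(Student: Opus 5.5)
The plan is to reduce to the case without inertia, Lemma \ref{lemm:RGammafullyfaithful}, by passing through $[\Spd k/I_E]$ and splitting off the local-system factor.

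First, by the equivalences (\ref{eqn:DULAequivIF}) of Lemma \ref{lemm:DULAequivIF} and the Künneth equivalence (\ref{eqn:HckBIFKunneth}) of Lemma \ref{lemm:HckBIFKunneth}, we identify
\[
\cal{D}^{\ULA}(\Hck_{G,\Spd \breve{E}},\Qlb)\simeq \cal{D}^{\ULA}(\Hck_{G,\Spd k},\Qlb)\otimes_{\Vect^{\perf}_{\Qlb}}\cal{D}_{\rm{lc}}([\Spd k/I_E],\Qlb).
\]
By Lemma \ref{lemm:DlcSpdkIFRepIFequiv}, the second factor is $\DDRep^{\perf}(I_E,\Qlb)$. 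On the target side, the module category $\cal{O}_{\bb{T}(\wh{\fk{t}}^*/W)}^{\shear}\lmod(\DDRep(I_E))$ contains $\cal{O}_{\bb{T}(\wh{\fk{t}}^*/W)}^{\shear}\lmod\otimes\DDRep(I_E)$, with objects $M\boxtimes W\mapsto M\otimes W$. Here we use that the algebra carries the trivial $I_E$-action, by Lemma \ref{lemm:RHckItriv}.

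Next, I check that under these identifications $R\Gamma_{\Spd\breve{E}}$ is $R\Gamma_{\Spd k}\otimes\mathrm{id}$ on pure tensors $A\boxtimes\cal{L}$. Here $R\Gamma_{\Spd k}$ is the analogue of $R\Gamma$ over $\Spd k$, which is fully faithful by Lemma \ref{lemm:RGammafullyfaithful} together with Proposition \ref{prop:DULAequiv}. The verification goes as follows. Pulling back along $q_{I_E}$ and then transporting through $\Spd\cal{O}_C$ to $\Spd k$ gives
\[
R\Gamma(\Hck_{G,\Spd C},q_{I_E}^*(A\boxtimes\cal{L}))\simeq R\Gamma(\Hck_{G,\Spd k},A)\otimes \cal{L}
\]
by the Künneth formula already used in the proof of Lemma \ref{lemm:HckBIFKunneth}. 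The $I_E$-structure is carried entirely by $\cal{L}$, because the $I_E$-action on $\Hck_{G,\Spd k}$ is trivial, exactly as in the proof of Lemma \ref{lemm:RHckItriv}. The module structure over $R\Gamma(\Hck_{G,\Spd C},\Qlb)\simeq\cal{O}_{\bb{T}(\wh{\fk{t}}^*/W)}^{\shear}$ is compatible with this splitting because all comparison maps are pullbacks.

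Finally, full faithfulness is obtained by a generation argument. The objects $A\boxtimes\cal{L}$, with $A\in\cal{IC}$ and $\cal{L}$ a local system, generate the source under extensions and shifts. This uses Lemma \ref{lemm:genbyIC} and the fact that $\DDRep^{\perf}(I_E)$ is generated by its heart. So it suffices to compare mapping complexes between such objects. On the source side, Künneth gives
\[
\mathrm{RHom}(A\boxtimes\cal{L},A'\boxtimes\cal{L}')\simeq \mathrm{RHom}(A,A')\otimes\mathrm{RHom}_{I_E}(\cal{L},\cal{L}').
\]
More precisely, the full faithfulness part of Lemma \ref{lemm:HckBIFKunneth} says the mapping complexes are computed in the tensor product category. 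On the target side, I need the analogous formula
\[
\mathrm{RHom}_{\cal{O}^{\shear}\lmod(\DDRep(I_E))}(M\otimes\cal{L},M'\otimes\cal{L}')\simeq \mathrm{RHom}_{\cal{O}^{\shear}}(M,M')\otimes\mathrm{RHom}_{I_E}(\cal{L},\cal{L}').
\]

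I expect this target-side formula to be the main obstacle, because $\DDRep(I_E)$ is defined via limits and is not a module category over a field in an obvious way. I would first prove it for $M=\cal{O}^{\shear}\otimes V$ free, where it reduces to adjunction. I would then extend it to perfect $M$ using that $\mathrm{RHom}_{I_E}(\cal{L},-)$ commutes with the relevant colimits, since $\cal{L}$ is compact in $\DDRep(I_E)$. Once this holds for perfect modules, it covers the essential image of $\cal{IC}$, because $R\Gamma$ of an $IC$ sheaf is perfect over $\cal{O}^{\shear}$ by \cite[Theorem 4]{BF}. Combining the two formulas with the full faithfulness of $R\Gamma_{\Spd k}$ finishes the proof.
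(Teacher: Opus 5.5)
Your proposal is correct and follows the paper's proof. Both arguments reduce via Lemma \ref{lemm:DULAequivIF} to $[\Spd k/I_E]$, split the source by Lemma \ref{lemm:HckBIFKunneth}, split the target using the trivial $I_E$-action on $\cal{O}_{\bb{T}(\wh{\fk{t}}^*/W)}^{\shear}$, identify the functor with $R\Gamma_{\Spd k}\otimes\mathrm{incl}$, and conclude from Lemma \ref{lemm:RGammafullyfaithful} over $\Spd k$. The one difference is that the paper asserts the target splitting as the equivalence (\ref{eqn:OTtWmodsplit}) and treats full faithfulness of $R\Gamma_{\Spd k}\otimes\mathrm{incl}$ as formal, whereas your free-then-perfect Hom computation, using compactness of objects of $\DDRep^{\perf}(I_E)$ in $\DDRep(I_E)$, spells out the justification for that step.
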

\begin{proof}
By Lemma \ref{lemm:DULAequivIF}, the claim reduces to the case over $[\Spd k/I_E]$ instead of $\Spd \breve{E}=[\Spd C/I_E]$.
That is, it suffices to show the full faithfulness of 
\[
R\Gamma_{[\Spd k/I_E]}\colon \cal{D}^{\ULA}(\Hck_{G,[\Spd k/I_E]})\to \cal{O}_{\bb{T}(\wh{\fk{t}}^*/W)}^{\shear}\lmod(\DDRep(I_E))
\]
defined in the same way as $R\Gamma_{\Spd \breve{E}}$.
Since the $I_E$-action on $\cal{O}_{\bb{T}(\wh{\fk{t}}^*/W)}$ is trivial, there is a canonical equivalence
\begin{multline}
\cal{O}_{\bb{T}(\wh{\fk{t}}^*/W)}^{\shear}\lmod\otimes \DDRep(I_E)  
=\cal{O}_{\bb{T}(\wh{\fk{t}}^*/W)}^{\shear}\lmod(\Vect_{\Qlb})\otimes \DDRep(I_E)  \\
\xrightarrow{\sim} \cal{O}_{\bb{T}(\wh{\fk{t}}^*/W)}^{\shear}\lmod(\DDRep(I_E)).\label{eqn:OTtWmodsplit}
\end{multline}
The composition
\begin{align*}
&\cal{D}^{\ULA}(\Hck_{G,\Spd k}) \otimes \DDRep^{\perf}(I_E)\simeq \cal{D}^{\ULA}(\Hck_{G,\Spd k}) \otimes \cal{D}_{\rm{lc}}([\Spd k/I_E])\\
&\xrightarrow{-\boxtimes-} \cal{D}^{\ULA}(\Hck_{G,\Spd k}\times [\Spd k/I_E],\Lambda)=\cal{D}^{\ULA}(\Hck_{G,[\Spd k/I_E]},\Lambda)\\
&\xrightarrow{R\Gamma_{[\Spd k/I_E]}} \cal{O}_{\bb{T}(\wh{\fk{t}}^*/W)}^{\shear}\lmod(\DDRep(I_E))\simeq \cal{O}_{\bb{T}(\wh{\fk{t}}^*/W)}^{\shear}\lmod\otimes \DDRep(I_E)
\end{align*}
is isomorphic to 
\[
R\Gamma_{\Spd k}\otimes \rm{incl}\colon \cal{D}^{\ULA}(\Hck_{G,\Spd k}) \otimes \DDRep^{\perf}(I_E))\to \cal{O}_{\bb{T}(\wh{\fk{t}}^*/W)}^{\shear}\lmod\otimes \DDRep(I_E)
\]
where the functor $R\Gamma_{\Spd k}$ is defined in the same way as $R\Gamma$, and $\rm{incl}$ is the inclusion.
Since $R\Gamma_{\Spd k}$ is fully faithful by Lemma \ref{lemm:RGammafullyfaithful} replacing $\Spd C$ with $\Spd k$, we get the claim.
\end{proof}
We end this section by giving a lemma on the Weil action.
The $W_{F}$-action on $\bb{T}(\wh{\fk{t}}^*/W)$ and the $W_{F}$-action on $I_E$ by an inner automorphism make $\cal{O}_{\bb{T}(\wh{\fk{t}}^*/W)}^{\shear}\lmod(\DDRep(I_E))$ an object in $\Catinf{\Qlb}^{mon,\ BW_{F}^{\rm{disc}}}$.
Since the $I_E$-action on $\bb{T}(\wh{\fk{t}}^*/W)$ is trivial and the action of any $w\in I_E$ on $\DDRep(I_E)$ is naturally isomorphic to the identity, we can regard the target as an object in $\Catinf{\Qlb}^{mon,\ B(W_{F}/I_E)}$.
\begin{lemm}\label{lemm:WonRGammaIF}
The functor 
\[
R\Gamma_{\Spd \breve{E}}\colon \cal{D}^{\ULA}(\Hck_{G,\Spd \breve{E}},\Qlb)\to \cal{O}_{\bb{T}(\wh{\fk{t}}^*/W)}^{\shear}\lmod(\DDRep(I_E,\Qlb))
\]
naturally has the structure of a functor in $\Catinf{\Qlb}^{mon,\ B(W_{F}/I_E)}$.
\end{lemm}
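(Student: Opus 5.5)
The plan is to build the $W_F/I_E$-equivariance of $R\Gamma_{\Spd \breve{E}}$ in two layers: first on the geometric side, by realizing $R\Gamma_{\Spd \breve{E}}$ as a composite of functors that are manifestly $W_F/I_E$-equivariant, and then on the coefficient side, by identifying the cohomology algebra $W_F$-equivariantly via Proposition \ref{prop:RHckequiv}.

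\textbf{Step 1: a ring-independent description.} Write
\[
R\Gamma_{\Spd \breve{E}}(A)=R\Gamma(\Hck_{G,\Spd C},q_{I_E}^*A).
\]
This is an $R\Gamma(\Hck_{G,\Spd C},\Qlb)$-module object in $\DDRep(I_E,\Qlb)$. The functor
\[
A\mapsto R\Gamma(\Hck_{G,\Spd C},q_{I_E}^*A)
\]
is the pushforward along $\Hck_{G,\Spd \breve{E}}\to \Spd \breve{E}$, followed by the identification $\cal{D}_{\rm{lc}}(\Spd \breve{E})\simeq \DDRep^{\perf}(I_E)$ of (\ref{eqn:DlcSpdEbreveDRepIEeqiv}). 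Two caveats are needed:
\begin{enumerate}
\item The pushforward is taken after truncation $\tau^{\leq n}$ and after passing to the bounded pieces $\Hck_{\leq\mu}$, exactly as in \S\ref{ssc:WeilonCohHck}.
\item Lax monoidality comes from Lemma \ref{lemm:RGammaKunnethformal} and the convolution diagram (\ref{eqn:conv}).
\end{enumerate}

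\textbf{Step 2: equivariance of the geometric pieces.} The group $W_F$ acts on $\Spd C$ over $\Div^1$, so $W_F/I_E$ acts on $\Spd \breve{E}$ and compatibly on $\Hck_{G,\Spd \breve{E}}$, on $\Hck^{(2)}_{G,\Spd \breve{E}}$, and on the maps $p$ and $m$. Every functor involved ($*$-pullbacks, proper pushforwards, $\otimes$) commutes with pullback along these automorphisms by base change. Hence pushforward to $\Spd \breve{E}$ is a morphism in $\Catinf{\Qlb}^{mon,B(W_F/I_E)}$, viewed as a lax monoidal functor to $\DDRep(I_E)$ with its $W_F/I_E$-action by conjugation. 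To make this a coherent $\infty$-categorical statement, rather than just a statement about each $w$, I would express everything as a functor of diagrams over the simplicial object $\Spd \breve{E}\times (W_F/I_E)^\bullet$, i.e. as sheaves on $\Div^1$. That is, I would run the same pushforward over $\Div^1$ and use the descent
\[
\cal{D}(\Div^1)=\cal{D}(\Spd\breve{E})^{W_F/I_E},
\]
in the spirit of Remark \ref{rmk:defofDULA}(2).

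\textbf{Step 3: equivariance of the coefficient algebra.} The remaining, and main, point is that the algebra $R\Gamma(\Hck_{G,\Spd C},\Qlb)$ is identified with $\cal{O}_{\bb{T}(\wh{\fk{t}}^*/W)}^{\shear}$ as algebras in $\DDRep(W_F,\Qlb)$, i.e. $W_F/I_E$-equivariantly. This is exactly Proposition \ref{prop:RHckequiv} combined with Lemma \ref{lemm:RHckItriv}.

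The statement of those results must be used at the level of $\rm{CAlg}$ objects, together with the unit $R\Gamma([\Spd C/L^+G])\simeq \cal{O}^{\shear}_{\wh{\fk{t}}^*/W}$ of Proposition \ref{prop:RBL+Gequiv}. Once these are equivariant, base change along the equivalence turns "modules over $R\Gamma(\Hck)$ in $\DDRep(I_E)$" into $\cal{O}^{\shear}_{\bb{T}(\wh{\fk{t}}^*/W)}\lmod(\DDRep(I_E))$ in $\Catinf{\Qlb}^{mon,B(W_F/I_E)}$.

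\textbf{Step 4: monoidality.} The lax structure of Step 1 is strict: the map (\ref{eqn:monoidalonRGamma}) is an equivalence for objects over $\Spd\breve{E}$. This can be checked after forgetting to $\Spd C$, where it was already shown.

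\textbf{Main obstacle.} I expect the hardest step to be making the convolution monoidal structure coherently $W_F/I_E$-equivariant (Steps 2 and 4) rather than merely equivariant up to homotopy for each $w$. I would handle it by constructing everything over $\Div^1$ and descending, instead of writing homotopies by hand.
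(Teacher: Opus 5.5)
Your proposal is correct and is essentially the paper's argument. The paper proves this lemma by rerunning the proof of Lemma~\ref{lemm:monoidalonRGamma}: the K\"unneth map (\ref{eqn:monoidalonRGamma}) is an equivalence, combined with the algebra identifications of Propositions~\ref{prop:RHckequiv} and~\ref{prop:RBL+Gequiv}, and one then observes that (\ref{eqn:monoidalonRGamma}) is $W_F$-equivariant, which is what your Steps 2--4 spell out.

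Two small points. First, identifying the geometric convolution with the product defined via $m_{\bb{T}}$ also requires Lemmas~\ref{lemm:H*GtoH*Hck} and~\ref{lemm:H*conv}; their equivariance follows because all maps there are induced by $W_F$-equivariant maps of stacks and Theorems~\ref{thm:H*HckWeil},~\ref{thm:H*GWeil} make the identifications equivariant. Second, your \v{C}ech-nerve descent for coherence should be carried out with torsion coefficients and then transported to $\cal{O}_L$, $L$, $\Qlb$: by the remark in \S\ref{ssc:notvstack}, the naive $\Qlb$-category on $\Div^1$ is not the $W_F/I_E$-invariants, and Remark~\ref{rmk:defofDULA}(2) is only a torsion statement.
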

\begin{proof}
This follows from the same argument as Lemma \ref{lemm:monoidalonRGamma} and from the fact that the isomorphism (\ref{eqn:monoidalonRGamma}) is $W_{F}$-equivalent.
\end{proof}
\subsubsection{Kostant functor and Weil action}\label{ssc:KosWeil}
We also need a result on the Kostant functor
\[
\kappa^{\shear}\colon \Coh([\wh{\fk{g}}^*/\wh{G}])^{\shear}\to \cal{O}_{\bb{T}(\wh{\fk{t}}^*/W)}^{\shear}\lmod.
\]
Let $\rm{act}^{\rm{geom}}_{\wh{\fk{g}}^*(1)}(w)\colon \wh{\fk{g}}^*\to \wh{\fk{g}}^*$ be the composition of $\rm{act}^{\rm{geom}}_{\wh{\fk{g}}^*}(w)$ (defined in \S \ref{ssc:dualgp}) and the multiplication by $q^{\deg w}$ (Tate twist).
That is, the $W_{F}$-action on $\cal{O}_{\wh{\fk{g}}^*}$ induced by $\rm{act}^{\rm{geom}}_{\wh{\fk{g}}^*(1)}$ agrees with the $W_{F}$-action on $\Sym(\wh{\fk{g}}(-1)[-2])$ written in Theorem \ref{thm:main} up to the shift $[-2]$.
The actions $\rm{act}^{\rm{geom}}_{\wh{G}}$ and $\rm{act}^{\rm{geom}}_{\wh{\fk{g}}^*(1)}$ induce a $W_{F}$-action $\rm{act}_{\wh{\fk{g}}^*(1)/\wh{G}}$ on $[\wh{\fk{g}}^*/\wh{G}]$, where $I_E$ acts trivially.
This action makes the categories $\QCoh([\wh{\fk{g}}^*/\wh{G}])$ and $\Coh([\wh{\fk{g}}^*/\wh{G}])$ objects of $\Catinf{\Qlb}^{mon,\ B(W_{F}/I_E)}$.
Explicitly, under the identification 
\[
\QCoh([\wh{\fk{g}}^*/\wh{G}])\simeq\cal{O}_{\wh{\fk{g}}^*}\lmod(\cal{D}(\Rep(\wh{G}))),
\]
the action of $w\in W_{F}$ agrees with the functor $(-)^w$ twisting $\wh{G}$-action by $\rm{act}^{\rm{geom}}_{\wh{G}}(w)$ and $\cal{O}_{\wh{\fk{g}}^*}$-action by the homomorphism $\cal{O}_{\wh{\fk{g}}^*}\to \cal{O}_{\wh{\fk{g}}^*}$ induced by $\rm{act}^{\rm{geom}}_{\wh{\fk{g}}^*(1)}(w)\colon \wh{\fk{g}}^*\to \wh{\fk{g}}^*$.
Therefore, the categories $\QCoh([\wh{\fk{g}}^*/\wh{G}])^{\shear}$ and $\Coh([\wh{\fk{g}}^*/\wh{G}])^{\shear}$ are also objects of $\Catinf{\Qlb}^{mon,\ B(W_{F}/I_E)}$.

Recall that, as we discussed in \S \ref{sssc:derSatSpdCsssc:Sqc}, the functor $\kappa^{\shear}$ is constructed from
\[
\kappa\colon \Coh^{\bb{G}_m}([\wh{\fk{g}}^*/\wh{G}])^{\heartsuit}\to \QCoh^{\bb{G}_m}(\bb{T}(\wh{\fk{t}}^*/W))^{\heartsuit}.
\]
The $W_{F}/I_E$-action on $\Coh([\wh{\fk{g}}^*/\wh{G}])^{\shear}$ (resp. $\cal{O}_{\bb{T}(\wh{\fk{t}}^*/W)}^{\shear}\lmod$) is equivalent to that constructed from the $W_{F}/I_E$-action on $\Coh^{\bb{G}_m}([\wh{\fk{g}}^*/\wh{G}])^{\heartsuit}$ (resp. $\QCoh^{\bb{G}_m}(\bb{T}(\wh{\fk{t}}^*/W))^{\heartsuit}$).

We have the following result:
\begin{lemm}\label{lemm:Wonkappa}
For $w\in W_{F}$, there exists a monoidal natural isomorphism that makes the square
\begin{align}\label{eqn:kappaWeil}
\vcenter{
\xymatrix{
\Coh^{\bb{G}_m}([\wh{\fk{g}}^*/\wh{G}])^{\heartsuit}\ar[r]^-{\kappa}\ar[d]_{w^*}&\QCoh^{\bb{G}_m}(\bb{T}(\wh{\fk{t}}^*/W))^{\heartsuit}\ar[d]^{w^*}\\
\Coh^{\bb{G}_m}([\wh{\fk{g}}^*/\wh{G}])^{\heartsuit}\ar[r]^-{\kappa}&\QCoh^{\bb{G}_m}(\bb{T}(\wh{\fk{t}}^*/W))^{\heartsuit}
}
}
\end{align}
commute.
\end{lemm}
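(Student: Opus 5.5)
The plan is to construct the natural isomorphism step by step through the definition of $\kappa$ recalled after Lemma \ref{lemm:monoidalonkappa}. The definition consists of three operations: restriction to $(\wh{\fk{g}}^*)^{\rm{reg}}$, passage to $\rm{Tot}(\fk{z})$, and restriction to a Kostant slice followed by the identification $X_\Sigma\cong \bb{T}(\wh{\fk{t}}^*/W)$. I will check that each operation is compatible with the $W_F$-actions.

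First, the geometric action on $[\wh{\fk{g}}^*/\wh{G}]$, namely $\rm{act}^{\rm{geom}}_{\wh{G}}(w)$ together with $\rm{act}^{\rm{geom}}_{\wh{\fk{g}}^*(1)}(w)$, is an automorphism of the pair (group, coadjoint representation). Up to the scalar $q^{\deg w}$, it preserves $(\wh{\fk{g}}^*)^{\rm{reg}}$. It also preserves the universal centralizer: $w$ maps $\fk{z}(\xi)$ isomorphically onto $\fk{z}(w\xi)$. I use the differential of $\rm{act}^{\rm{geom}}_{\wh{G}}(w)$ on $\wh{\fk{g}}$, twisted by $q^{-\deg w}$ in the fiber so that it matches the $(q^{\deg w}_{\rm{fib}})^{-1}$ in (\ref{eqn:defofaOTtW}). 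This gives an automorphism $w_{\rm{Tot}}$ of $\rm{Tot}(\fk{z})$ covering the action on $(\wh{\fk{g}}^*)^{\rm{reg}}$. For a $\wh{G}$-equivariant $\cal{F}$, the $\fk{z}$-action on $(w^*\cal{F})|_{\rm{reg}}$ is the $w$-transport of the $\fk{z}$-action on $\cal{F}$, so $(w^*\cal{F})^{\sim}\cong w_{\rm{Tot}}^*\wt{\cal{F}}$ canonically. This isomorphism is compatible with the fiberwise addition $m_{\rm{Tot}}$, because $w_{\rm{Tot}}$ is linear on fibers. By (\ref{eqn:convtilde}) it is therefore monoidal for $\star_{\rm{Tot}}$.

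Second, I handle the slice. Here $w(\Sigma)$ is another Kostant slice, up to the scalar twist. For a pinned action it is literally a slice for the transported principal $\fk{sl}_2$-triple, and $\rm{Ad}_\chi$ only conjugates by an element of $\wh{G}_{\rm{ad}}$. The paper already notes that $\kappa$ is independent of the slice: all slices are $\wh{G}$-conjugate and $\cal{F}$ is $\wh{G}$-equivariant. Restricting along $\Sigma$ and along $w^{-1}(\Sigma)$ therefore gives canonically isomorphic results, via the conjugating element. Transporting across $\Sigma$ for $w$ then reduces the claim to the commutativity of
\[
X_\Sigma\hookrightarrow \rm{Tot}(\fk{z})\twoheadrightarrow \bb{T}(\wh{\fk{t}}^*/W)
\]
with $w_{\rm{Tot}}$ on the middle term and $d(f^w\circ q^{\deg w})\circ(q^{\deg w}_{\rm{fib}})^{-1}$ on the right.

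The main obstacle is this last commutativity. It requires identifying the map $\rm{Tot}(\fk{z})\to \bb{T}(\wh{\fk{t}}^*/W)$, which comes from $\fk{z}(\xi)\cong T^*_{\chi(\xi)}(\wh{\fk{t}}^*/W)$ via the differentials of invariant polynomials, and checking that it is natural for automorphisms of $(\wh{G},\wh{\fk{g}}^*)$ and for scaling. I would verify this using the explicit description: an invariant polynomial $f$ has $df(\xi)\in\fk{z}(\xi)$, and these differentials span $\fk{z}(\xi)$ for regular $\xi$. The action of $w$ on invariants is $f\mapsto f\circ w^{-1}$, compatible with $f^w$ on $\wh{\fk{t}}^*/W$ by Chevalley restriction. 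A homogeneous $f$ of degree $d$ acquires $q^{(d-1)\deg w}$ under scaling, which is exactly the discrepancy absorbed by $d(q^{\deg w})$ and $(q^{\deg w}_{\rm{fib}})^{-1}$.

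Finally, $\bb{G}_m$-equivariance is clear, since all the maps involved commute with the $\bb{G}_m$-scaling. Because each step is monoidal, the composite natural isomorphism filling (\ref{eqn:kappaWeil}) is monoidal.
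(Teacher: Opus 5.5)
Your overall route is the paper's: show $(w^*\cal{F})^{\sim}\cong w^*\wt{\cal{F}}$ by transporting the $\fk{z}$-action, use $W_F$-equivariance of $\rm{Tot}(\fk{z})\to\bb{T}(\wh{\fk{t}}^*/W)$, and get monoidality from fiberwise linearity via (\ref{eqn:convtilde}). Your slice-transport step only spells out what the paper leaves implicit.

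\textbf{The gap: the extra factor $q^{-\deg w}$ in $w_{\rm{Tot}}$ makes your first step false.} The $\wh{G}$-equivariant structure on $w^*\cal{F}$ is that of $\cal{F}$ twisted by the group automorphism $\rm{act}^{\rm{geom}}_{\wh{G}}(w)$. The scalar $q^{\deg w}$ enters only the action on $\wh{\fk{g}}^*$, not on $\wh{G}$. So $\fk{z}(\xi)$ acts on $(w^*\cal{F})_\xi=\cal{F}_{w(\xi)}$ through $d\,\rm{act}^{\rm{geom}}_{\wh{G}}(w)\colon \fk{z}(\xi)\to\fk{z}(w(\xi))$ with no scalar. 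With your twisted $w_{\rm{Tot}}$, the sheaves $(w^*\cal{F})^{\sim}$ and $w_{\rm{Tot}}^*\wt{\cal{F}}$ are genuinely non-isomorphic. Take $\cal{F}=\cal{O}_{\wh{\fk{g}}^*}\otimes V$ with $V$ nontrivial and $\xi$ regular semisimple, so $\fk{z}(\xi)$ is a Cartan acting on the fiber through weights. The two module structures then have eigenvalues differing by the factor $q^{-\deg w}$. A finite nonzero set of weights is not stable under that rescaling, so no isomorphism exists.

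\textbf{The fix is the paper's untwisted action.} Use $\rm{act}^{\rm{geom}}_{\wh{\fk{g}}^*(1)}\times \rm{act}^{\rm{geom}}_{\wh{\fk{g}}}$, with no scalar on the $\wh{\fk{g}}$-factor. It preserves $\rm{Tot}(\fk{z})$ because $\fk{z}(q\xi)=\fk{z}(\xi)$. With this action, the factor $(q^{\deg w}_{\rm{fib}})^{-1}$ in (\ref{eqn:defofaOTtW}) appears automatically rather than needing compensation. For a homogeneous invariant $f$ of degree $d$ one has $df(q^{\deg w}\eta)=q^{(d-1)\deg w}df(\eta)$ while $z$ is left unscaled. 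Hence the fiber coordinate $df$ on $\bb{T}(\wh{\fk{t}}^*/W)$ pulls back with weight $(d-1)\deg w$, whereas $d(f^w\circ q^{\deg w})$ alone would give weight $d\deg w$. That is exactly the computation in your last paragraph, so your final check is consistent only with the untwisted action. With your twist, the fiber coordinates come out off by one power of $q^{\deg w}$. Drop the twist and the argument goes through as in the paper.
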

\begin{proof}
The product $\rm{act}^{\rm{geom}}_{\wh{\fk{g}}^*(1)}\times \rm{act}^{\rm{geom}}_{\wh{\fk{g}}}$ defines a $W_{F}$-action on $\wh{\fk{g}}^*\times \wh{\fk{g}}$.
Since the multiplication by $q$ does not change a centralizer, this induces a $W_{F}$-action on $\rm{Tot}(\fk{z})\subset (\wh{\fk{g}}^*)^{\rm{reg}}\times \wh{\fk{g}}$.
For $w\in W_{F}$, let
\[
w^*\colon \Coh(\rm{Tot}(\fk{z}))\to \Coh(\rm{Tot}(\fk{z})).
\]
be the pullback along the action of $w$.
For $\cal{F}\in \Coh^{\bb{G}_m}([\wh{\fk{g}}^*/\wh{G}])^{\heartsuit}$, let $\wt{\cal{F}}\in \Coh^{\bb{G}_m}(\rm{Tot}(\fk{z}))^{\heartsuit}$ be a coherent sheaf defined as written in the definition of $\kappa$.
Then we have
\begin{align}\label{eqn:w*tilde}
\wt{w^*\cal{F}}\cong w^*\wt{\cal{F}}.
\end{align}
To show this, it suffices to show that for any point $\xi\colon \Spec K\to (\wh{\fk{g}}^*)^{\rm{reg}}$, the action of $Z_{\wh{G}_K}(w(\xi))$ on $\cal{F}_{w(\xi)}$ (induced by the $\wh{G}$-equivariant structure of $\cal{F}$) agrees with the action of $Z_{\wh{G}_K}(\xi)$ on $(w^*\cal{F})_{\xi}$ (induced by the $\wh{G}$-equivariant structure of $w^*\cal{F}$), under the identification $(\rm{act}^{\rm{geom}}_{\wh{G}}(w))_K\ \colon\ Z_{\wh{G}_K}(\xi)\overset{\sim}{\to} Z_{\wh{G}_K}(w(\xi))$ and $\cal{F}_{w(\xi)}\cong (w^*\cal{F})_{\xi}$.
This is straightforward.
Now the existence of the desired natural isomorphism follows easily from the fact that the natural projection
\[
\rm{Tot}(\fk{z})\to \bb{T}(\wh{\fk{t}}^*/W)
\]
is $W_{F}$-equivariant with respect to the $W_{F}$-actions we are considering.
The monoidality of the isomorphism follows from a straightforward argument, using the fact that there is a commutative diagram of the form
\[
\xymatrix{
w^*((\cal{F}_1\otimes_{\cal{O}_{\wh{\fk{g}}^*}}\cal{F}_2)^{\sim})\ar[r]^-{\sim}_-{(\ref{eqn:convtilde})}
\ar[d]_{\rotatebox{-90}{$\sim$}}^{(\ref{eqn:w*tilde})}
&w^*(\wt{\cal{F}_1}\star_{\rm{Tot}} \wt{\cal{F}_2})\ar[d]^{\rotatebox{-90}{$\sim$}}\\
(w^*(\cal{F}_1\otimes_{\cal{O}_{\wh{\fk{g}}^*}}\cal{F}_2))^{\sim}
\ar[d]_{\rotatebox{-90}{$\sim$}}
&w^*\wt{\cal{F}_1}\star_{\rm{Tot}} w^*\wt{\cal{F}_2}\ar[d]^{\rotatebox{-90}{$\sim$}}_{(\ref{eqn:w*tilde})} \\
(w^*\cal{F}_1\otimes_{\cal{O}_{\wh{\fk{g}}^*}}w^*\cal{F}_2)^{\sim}
\ar[r]^-{\sim}_-{(\ref{eqn:convtilde})}
&\wt{w^*\cal{F}_1}\star_{\rm{Tot}} \wt{w^*\cal{F}_2}.
}
\]
\end{proof}
From this and the compatibilities of various natural isomorphisms, the functor
\[
\kappa\colon \Coh^{\bb{G}_m}([\wh{\fk{g}}^*/\wh{G}])^{\heartsuit}\to \QCoh^{\bb{G}_m}(\bb{T}(\wh{\fk{t}}^*/W))^{\heartsuit}
\]
has the structure of a functor in $\Catinf{\Qlb}^{mon,\ BW_{F}^{\rm{disc}}}$.
Moreover, since the natural morphism (\ref{eqn:kappaWeil})  for $w\in I_E$ is trivial, it also has the structure of a functor in $\Catinf{\Qlb}^{mon, B(W_{F}/I_E)}$.
Therefore, by the construction in \S \ref{sssc:derSatSpdCsssc:Sqc}, we get the following:
\begin{lemm}\label{lemm:Wonkappashear}
The functor
\[
\kappa^{\shear}\colon \Coh([\wh{\fk{g}}^*/\wh{G}])^{\shear}\to \cal{O}_{\bb{T}(\wh{\fk{t}}^*/W)}^{\shear}\lmod
\]
has the canonical structure of a functor in $\Catinf{\Qlb}^{mon,\ B(W_{F}/I_E)}$.
\end{lemm}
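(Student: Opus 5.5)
The plan is to pass from the abelian, graded statement of Lemma \ref{lemm:Wonkappa} to the stable, sheared statement by running the construction of $\kappa^{\shear}$ from \S \ref{sssc:derSatSpdCsssc:Sqc} equivariantly. That construction has four stages:
\begin{enumerate}
\item take $\cal{D}(-)$ of the abelian categories $\QCoh^{\bb{G}_m}(-)^{\heartsuit}$;
\item identify the result with $\QCoh^{\bb{G}_m}(-)$ via \cite[Proposition I.3.2.4.3]{GR};
\item de-equivariantize along $\bb{G}_m$ via \cite[(A.2),(A.3)]{AG};
\item apply the shearing operator of \cite[\S A.2]{AG}.
\end{enumerate}
I will check that each stage is functorial in the input data, so that it carries a functor in $\Catinf{\Qlb}^{mon,\ B(W_F/I_E)}$ (on the abelian level) to a functor in the same $\infty$-category (on the stable level).

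\textbf{Step 1.} By Lemma \ref{lemm:Wonkappa} and the remark after it, $\kappa$ on $\QCoh^{\bb{G}_m}(\wh{\fk{g}}^*/\wh{G})^{\heartsuit}$ is an object of $\rm{Fun}(B(W_F/I_E),-)$ in the $\infty$-category of monoidal Grothendieck abelian categories with exact functors. Its extension to $\QCoh$ is obtained the same way as in Lemma \ref{lemm:Wonkappa}: $w^*$ is pullback along an automorphism and commutes with the filtered colimits defining $\kappa$. Since $w^*$ is exact on both sides, the universal property of $\cal{D}(-)$ for Grothendieck abelian categories makes $\cal{D}(-)$ a functor from such categories with exact functors. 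The symmetric monoidal structure is compatible because the tensor products are flat on the relevant generators. Hence the whole $B(W_F/I_E)$-diagram lifts.

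\textbf{Step 2.} The equivalences of \cite{GR} are natural with respect to pullback along $\bb{G}_m$-equivariant automorphisms of the stacks. The key point is that the $W_F$-actions $\rm{act}_{\wh{\fk{g}}^*(1)/\wh{G}}$ and $a_{\cal{O}_{\bb{T}(\wh{\fk{t}}^*/W)}}$ commute with the $\bb{G}_m$-scaling action. This holds because both are linear up to the scalar $q^{\deg w}$.

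\textbf{Step 3.} De-equivariantization and shearing are both defined by the grading, and the $W_F$-actions preserve the grading. On the sheared side, a Tate twist $(-1)$ only multiplies degree-$2$ generators by $q^{-1}$, which is compatible with a grading-preserving action. So these operations are natural transformations of functors $\Catinf{\Qlb}^{mon,\ \bb{G}_m\text{-}gr}\to\Catinf{\Qlb}^{mon}$, and postcomposing the $B(W_F/I_E)$-diagram yields the claim. The target receives the $W_F/I_E$-structure described before Lemma \ref{lemm:WonRGammaIF}. Restricting to $\Coh$ gives the functor $\kappa^{\shear}$.

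The main obstacle is the $\infty$-categorical coherence: Lemma \ref{lemm:Wonkappa} only supplies isomorphisms for each $w$ separately, not a homotopy-coherent action. I would handle this by working entirely in $1$-categories at the abelian level. There, a $B(W_F/I_E)$-action on a $(2,1)$-category of monoidal abelian categories is determined by the isomorphisms for each $w$ together with the cocycle identities, which follow from the canonical construction via $\rm{Tot}(\fk{z})$ and the equality $(ww')^*=w'^*w^*$ for pullbacks of sheaves. The coherence then propagates through the functorial constructions above.
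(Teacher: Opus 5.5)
Your proposal is correct and follows essentially the paper's argument. Lemma \ref{lemm:Wonkappa}, together with the cocycle compatibilities of its isomorphisms, gives a $W_F$-equivariant monoidal structure on the abelian graded Kostant functor; since these isomorphisms are trivial for $w\in I_E$, this descends to a $W_F/I_E$-equivariant structure, which is then transported through $\cal{D}(-)$, the identification with $\QCoh^{\bb{G}_m}$, $\bb{G}_m$-de-equivariantization and shearing as in \S\ref{sssc:derSatSpdCsssc:Sqc}. Your explicit check that the Weil actions commute with the $\bb{G}_m$-scaling, so that each step of that construction is functorial for them, spells out what the paper leaves implicit in the phrase ``by the construction''.
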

Also, we define
\[
\kappa_{I_E}^{\shear}\colon \rm{Sym}(\wh{\fk{g}}[-2])\lmod^{\perf}(\DDRep(\wh{G}\times I_E)) \to \cal{O}_{\bb{T}(\wh{\fk{t}}^*/W)}^{\shear}\lmod(\DDRep(I_E))
\]
so that under the natural equivalence
\begin{align*}
&\Coh([\wh{\fk{g}}^*/\wh{G}])^{\shear}\otimes \DDRep^{\perf}(I_E)\\
&=\rm{Sym}(\wh{\fk{g}}[-2])\lmod^{\perf}(\cal{D}(\Rep(\wh{G}))\otimes \DDRep^{\perf}(I_E)\\
&\overset{\sim}{\to} \rm{Sym}(\wh{\fk{g}}[-2])\lmod^{\perf}(\DDRep(\wh{G}\times I_E))
\end{align*}
and the equivalence (\ref{eqn:OTtWmodsplit}), the functor $\kappa_{I_E}$ agrees with the tensor product of $\kappa$ and the inclusion $\DDRep^{\perf}(I_E)\subset \DDRep(I_E)$.
Since we can regard $\DDRep^{\perf}(I_E))$ as an object in $\Catinf{\Qlb}^{B(W_{F}/I_E)}$, and since $\kappa^{\shear}$ is fully faithful as written in \S \ref{sssc:derSatSpdCsssc:Sqc}, we have
\begin{lemm}\label{lemm:WonkappashearIF}
The functor
\[
\kappa_{I_E}^{\shear}\colon \rm{Sym}(\wh{\fk{g}}[-2])\lmod^{\perf}(\DDRep(\wh{G}\times I_E)) \to \cal{O}_{\bb{T}(\wh{\fk{t}}^*/W)}^{\shear}\lmod(\DDRep(I_E))
\]
has the canonical structure of a functor in $\Catinf{\Qlb}^{mon,\ B(W_{F}/I_E)}$.
Moreover, it is fully faithful.
\end{lemm}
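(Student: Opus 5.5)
The plan is to build $\kappa_{I_E}^{\shear}$ by base change from $\kappa^{\shear}$ and transport the $W_F/I_E$-structure and the full faithfulness along that base change.

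\textbf{Equivariant structure.} First I make the two splittings equivariant. The equivalence
\[
\Coh([\wh{\fk{g}}^*/\wh{G}])^{\shear}\otimes \DDRep^{\perf}(I_E)\simeq \rm{Sym}(\wh{\fk{g}}[-2])\lmod^{\perf}(\DDRep(\wh{G}\times I_E))
\]
and the equivalence (\ref{eqn:OTtWmodsplit}) are both induced by the universal property of $-\otimes-$ in $\Catinf{\Qlb}^{mon}$. The group $W_F/I_E$ acts diagonally on each tensor factor: through $\rm{act}_{\wh{\fk{g}}^*(1)/\wh{G}}$ on the first factor, and by conjugation on $I_E$ on the second. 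Since $\otimes$ is functorial, the tensor product of two objects of $\Catinf{\Qlb}^{mon,B(W_F/I_E)}$ is again such an object, and both equivalences lift to equivalences in $\Catinf{\Qlb}^{mon,B(W_F/I_E)}$. Lemma \ref{lemm:Wonkappashear} makes $\kappa^{\shear}$ a morphism in $\Catinf{\Qlb}^{mon,B(W_F/I_E)}$, and the inclusion $\DDRep^{\perf}(I_E)\subset\DDRep(I_E)$ is equivariant. Their tensor product is therefore an equivariant monoidal functor, and transporting it along the two splittings gives the canonical structure on $\kappa_{I_E}^{\shear}$.

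\textbf{Full faithfulness.} I reduce to generators. The source $\Coh([\wh{\fk{g}}^*/\wh{G}])^{\shear}\otimes\DDRep^{\perf}(I_E)$ is generated under finite colimits and retracts by pure tensors $M\boxtimes V$. So it suffices to show that for $M,M'\in\Coh([\wh{\fk{g}}^*/\wh{G}])^{\shear}$ and $V,V'\in\DDRep^{\perf}(I_E)$ the natural map
\[
\rm{Map}(M,M')\otimes \rm{Map}(V,V')\to \rm{Map}(\kappa^{\shear}M\boxtimes V,\kappa^{\shear}M'\boxtimes V')
\]
is an equivalence. The left side agrees with $\rm{Map}(\kappa^{\shear}M,\kappa^{\shear}M')\otimes\rm{Map}(V,V')$ by Lemma \ref{lemm:kappafullyfaithful}.

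It remains to check the Künneth property on the target. I would argue that $V$ is compact in $\DDRep(I_E)$, being perfect and in the Ind-completion. Then $\cal{O}^{\shear}_{\bb{T}(\wh{\fk{t}}^*/W)}\lmod\otimes\DDRep(I_E)$ is a Lurie tensor product of compactly generated presentable categories, so mapping spectra between tensors of compact objects split as tensor products. The target $\kappa^{\shear}M$ need not be compact, since $\cal{O}^{\shear}_{\bb{T}(\wh{\fk{t}}^*/W)}$-modules coming from Kostant are only coherent. This costs nothing, because the formula $\rm{Map}(X\boxtimes V,Y\boxtimes V')\simeq\rm{Map}(X,Y)\otimes\rm{Map}(V,V')$ only needs $X$ and $V$ compact and $V'$ arbitrary. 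One checks it by writing $Y$ as a filtered colimit of compacts.

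\textbf{Main obstacle.} The delicate point is this compactness bookkeeping: $\kappa^{\shear}M$ is not obviously compact, so the argument only works if I take $X=\kappa^{\shear}M$ compact. Fallback: replace $\cal{O}^{\shear}_{\bb{T}}\lmod$ by the essential image of $\kappa^{\shear}$, which is small and stable. I would then apply the formula for tensor products of small stable categories, in which mapping spectra between pure tensors are computed exactly as the tensor product of mapping spectra.
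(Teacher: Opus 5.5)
Your route is the paper's: $\kappa_{I_E}^{\shear}$ is $\kappa^{\shear}$ tensored with the inclusion $\DDRep^{\perf}(I_E)\subset\DDRep(I_E)$, transported along the two splittings, and the $W_F/I_E$-structure is inherited from Lemma~\ref{lemm:Wonkappashear}. The equivariance part is fine at the paper's level of detail. The paper then simply asserts that full faithfulness follows from that of $\kappa^{\shear}$. The problem is in your justification of that last step, which you leave open.

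The Künneth formula $\rm{Map}(X\boxtimes V,Y\boxtimes V')\simeq\rm{Map}(X,Y)\otimes\rm{Map}(V,V')$ in the presentable tensor product needs the source factor $X=\kappa^{\shear}M$ to be compact. Your text first says this ``costs nothing'' and then, in the next paragraph, concedes it is unproved. Your fallback is circular. The source is already equivalent to $\rm{Im}(\kappa^{\shear})\otimes\DDRep^{\perf}(I_E)$, so passing to that small category only renames it. You would still have to show that the comparison functor into $\cal{O}^{\shear}_{\bb{T}(\wh{\fk{t}}^*/W)}\lmod\otimes\DDRep(I_E)$ is fully faithful, and that needs exactly the same input.

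The missing input can be supplied in either of two ways.

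\emph{(a) Show $\kappa^{\shear}M$ is compact.} Since $\wh{\fk{t}}^*/W$ is an affine space, $\cal{O}_{\bb{T}(\wh{\fk{t}}^*/W)}$ is a graded polynomial ring. The functor $\kappa$ is exact and lands in coherent sheaves, so $\kappa(M)$ is a bounded complex of finitely generated graded modules. By Hilbert's syzygy theorem it has a finite graded free resolution. After shearing, this is a finite iterated extension of shifts of $\cal{O}^{\shear}_{\bb{T}(\wh{\fk{t}}^*/W)}$, i.e.\ perfect, hence compact. With this, your argument closes as written.

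\emph{(b) Avoid needing compactness of $X$.} Put $\cal{C}=\cal{O}^{\shear}_{\bb{T}(\wh{\fk{t}}^*/W)}\lmod$ and use $\cal{C}\otimes\DDRep(I_E)\simeq\rm{Fun}^{\rm{ex}}(\DDRep^{\perf}(I_E)^{\rm{op}},\cal{C})$. For compact $V$, Yoneda gives $\rm{Map}(X\boxtimes V,Y\boxtimes V')\simeq\rm{Map}_{\cal{C}}(X,Y\otimes\rm{Map}(V,V'))$. Moreover $\rm{Map}_{\DDRep(I_E)}(V,V')$ is a perfect $\Qlb$-complex: it is continuous cohomology of $I_E$, which has $\ell$-cohomological dimension $1$ and finite cohomology with finite coefficients. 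So it factors out of $\rm{Map}_{\cal{C}}(X,-)$ with no condition on $X$.
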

By Lemma \ref{lemm:RGammaIFff}, Lemma \ref{lemm:WonRGammaIF} and Lemma \ref{lemm:WonkappashearIF}, we have the following:
\begin{thm}\label{thm:mainSpdEbrevessscbody}
There is an essentially unique equivalence
\[
\cal{S}^{der}_{\Spd \breve{E}}\colon \rm{Sym}(\wh{\fk{g}}[-2])\lmod^{\perf}(\DDRep(\wh{G}\times I_E,\Qlb))\xrightarrow{\sim} \cal{D}^{\ULA}(\Hck_{G,\Spd \breve{E}},\Qlb).
\]
in $\Catinf{\Qlb}^{mon,\ B(W_{F}/I_E)}$ such that there are natural isomorphisms
\begin{align}\label{eqn:RGammaSderkappaIE}
R\Gamma_{I_E}\circ \cal{S}^{der}_{\Spd \breve{E}}\simeq \kappa_{I_E}^{\shear}
\end{align}
in $\Catinf{\Qlb}^{mon,\ B(W_{F}/I_E)}$ and 
\begin{align}\label{eqn:SSpdFSqcFFrIF}
S_{\Spd \breve{E}}\simeq \cal{S}^{der}_{\Spd \breve{E}}\circ fr|_{\Rep^{\rm{f.p.}}(\wh{G}\times I_E,\Qlb)}
\end{align}
in $\Catinf{\Qlb}^{mon}$.
\end{thm}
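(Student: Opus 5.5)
The plan mirrors the proof of Proposition \ref{prop:SqcSpdC}, carried out inside $\Catinf{\Qlb}^{mon,\ B(W_{F}/I_E)}$. By Lemma \ref{lemm:RGammaIFff} and Lemma \ref{lemm:WonRGammaIF}, $R\Gamma_{\Spd \breve{E}}$ is a fully faithful morphism in $\Catinf{\Qlb}^{mon,\ B(W_{F}/I_E)}$. By Lemma \ref{lemm:WonkappashearIF}, $\kappa_{I_E}^{\shear}$ is also a fully faithful morphism there. A fully faithful monoidal $W_F/I_E$-equivariant functor is an equivalence onto its essential image, and that image is a monoidal, $W_F/I_E$-stable full subcategory. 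So the only remaining input is that the two essential images coincide. Once they do, we set
\[
\cal{S}^{der}_{\Spd \breve{E}}:=(R\Gamma_{\Spd \breve{E}})^{-1}\circ \kappa_{I_E}^{\shear}.
\]
This defines a morphism in $\Catinf{\Qlb}^{mon,\ B(W_{F}/I_E)}$ and is essentially unique subject to (\ref{eqn:RGammaSderkappaIE}), because fully faithful functors are monomorphisms in $\Catinf{\Qlb}^{mon,\ B(W_{F}/I_E)}$.

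The equality of essential images, which I expect to be the main step, I would reduce to the case already treated. First, transport through Lemma \ref{lemm:DULAequivIF} to $[\Spd k/I_E]$, as in the proof of Lemma \ref{lemm:RGammaIFff}; this only requires compatibility of the cochain functors with pullback, which holds as there. Next, use Lemma \ref{lemm:HckBIFKunneth} together with the splitting (\ref{eqn:OTtWmodsplit}). Under these identifications $R\Gamma$ becomes $R\Gamma_{\Spd k}\otimes \rm{incl}$, and $\kappa_{I_E}^{\shear}$ is by definition $\kappa^{\shear}\otimes \rm{incl}$. By Lemma \ref{lemm:imageRGammakappaagree} (with $\Spd C$ replaced by $\Spd k$, cf.\ Remark \ref{rmk:SqcSpdk}), $R\Gamma_{\Spd k}$ and $\kappa^{\shear}$ have the same essential image. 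Both tensor-product functors are therefore fully faithful, and each has as its image the subcategory generated under extensions (and retracts) by the objects $M\otimes V$, with $M$ in that common image and $V\in \DDRep^{\perf}(I_E)$. Hence the two images agree. This reduction uses only the underlying functors, so $W_F/I_E$-equivariance plays no role here.

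For (\ref{eqn:SSpdFSqcFFrIF}), both sides are monoidal functors $\Rep^{\rm{f.p.}}(\wh{G}\times I_E)\to \cal{D}^{\ULA}(\Hck_{G,\Spd \breve{E}})$ with image in the heart-like subcategory $\Sat$. Since $R\Gamma_{\Spd \breve{E}}$ is fully faithful, it suffices to produce a monoidal isomorphism $R\Gamma_{\Spd \breve{E}}\circ S_{\Spd \breve{E}}\simeq \kappa_{I_E}^{\shear}\circ fr$. Via Lemma \ref{lemm:nonderSatSpdEbreve}(3), $S_{\Spd \breve{E}}$ corresponds to $S_{\Spd k}\otimes \rm{id}$ under the identifications above. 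The required isomorphism then becomes the tensor product of the monoidal isomorphism of Proposition \ref{prop:monoidalSSpdCSqcFr} (transported to $\Spd k$) with the identity on $\DDRep^{\perf}(I_E)$. Monoidality is inherited, since every object is a sum $\bigoplus V_i\boxtimes W_i$ and the functors are compatible with $\boxtimes$.
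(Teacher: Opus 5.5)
Your proposal is correct and follows essentially the paper's route. You reduce via Lemma \ref{lemm:DULAequivIF}, Lemma \ref{lemm:HckBIFKunneth} and (\ref{eqn:OTtWmodsplit}) to the $\Spd k$ statement tensored with $\rm{id}_{\DDRep^{\perf}(I_E)}$, using Lemma \ref{lemm:nonderSatSpdEbreve}(3) for compatibility with $S_{\Spd \breve{E}}$, and you get the monoidal $W_F/I_E$-equivariant structure from full faithfulness of $R\Gamma_{\Spd \breve{E}}$ and $\kappa_{I_E}^{\shear}$; the only difference is bookkeeping, since the paper first builds the non-equivariant functor as $\cal{S}^{der}_{\Spd k}\otimes \rm{id}$ and then lifts the equivariant structure, whereas you define $\cal{S}^{der}_{\Spd \breve{E}}:=(R\Gamma_{\Spd \breve{E}})^{-1}\circ \kappa_{I_E}^{\shear}$ directly after comparing essential images, which amounts to the same thing.
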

We will show in the next section that (\ref{eqn:SSpdFSqcFFrIF}) can be canonically lifted to a morphism in $\Catinf{\Qlb}^{mon,\ B(W_{F}/I_E)}$.
\begin{proof}
Consider the non-equivariant version of the statement:
\begin{quote}
There is an essentially unique equivalence $\cal{S}^{der}_{\Spd \breve{E}}$ in $\Catinf{\Qlb}^{mon}$ such that there are natural isomorphism (\ref{eqn:RGammaSderkappaIE}) and (\ref{eqn:SSpdFSqcFFrIF}) in $\Catinf{\Qlb}^{mon}$.
\end{quote}
If this version is proved, it follows from the full faithfulness of $R\Gamma_{I_E}$ and $\kappa_{I_E}^{\shear}$ that there is an essentially unique $W_E/I_F$-equivarinat structure on $\cal{S}^{der}_{\Spd \breve{E}}$ that makes (\ref{eqn:RGammaSderkappaIE}) equivariant.
Therefore, the theorem follows.

It remains to show the non-equivariant version.
Essential uniqueness follows from the full faithfulness of $R\Gamma_{I_E}$ and $\kappa_{I_E}^{\shear}$.
By Lemma \ref{lemm:nonderSatSpdEbreve} and the arguments in \S \ref{ssc:CohWeil} and \S \ref{ssc:KosWeil}, it suffices to show that there is a functor
\begin{multline*}
\cal{S}^{der}_{\Spd k, I_E}\colon \rm{Sym}(\wh{\fk{g}}[-2])\lmod^{fr}(\cal{D}(\Rep(\wh{G})))\otimes \DDRep^{\perf}(I_E)\\
\to \cal{D}^{\ULA}(\Hck_{G,\Spd k})\otimes \DDRep^{\perf}(I_E)    
\end{multline*}
such that there are natural isomorphisms
\[
(R\Gamma_{\Spd k}\otimes \rm{id}_{\DDRep^{\perf}(I_E)})\circ \cal{S}^{der}_{\Spd k, I_E}\simeq \kappa\otimes \rm{id}_{\DDRep^{\perf}(I_E)}.
\]
and
\[
S_{\Spd k}\otimes \rm{id}_{\Rep^{\rm{f.p.}}(I_E)}\simeq \cal{S}^{der}_{\Spd k, I_E}\circ (fr|_{\Rep^{\rm{f.p.}}(\wh{G})}\otimes \rm{id}_{\Rep^{\rm{f.p.}}(I_E)})
\]
in $\Catinf{\Qlb}^{mon}$.
Here, $S_{\Spd k}\otimes \rm{id}_{\Rep^{\rm{f.p.}}(I_E)}$ is a functor of the form
\[
\bigoplus_{i=1}^n V_i\boxtimes W_i\mapsto \bigoplus_{i=1}^n S_{\Spd k}(V_i)\boxtimes W_i,
\]
with $n\in \bb{Z}_{\geq 0}$, $V_i\in \Rep^{\rm{f.p.}}(\wh{G})$, and $W_i\in \Rep^{\rm{f.p.}}(I_E)$.
We define the functor $fr|_{\Rep^{\rm{f.p.}}(\wh{G})}\otimes \rm{id}_{\Rep^{\rm{f.p.}}(I_E)}$ similarly.

However, by Proposition \ref{prop:SqcSpdC} and Remark \ref{rmk:SqcSpdk}, 
\[
\cal{S}^{der}_{\Spd k, I_E}= \cal{S}^{der}_{\Spd k}\otimes \rm{id}_{\DDRep^{\perf}(I_E)}
\]
satisfies the condition.
\end{proof}
\subsubsection{Compatibility with non-derived Satake equivalence}
Since the map $[\wh{\fk{g}}^*/\wh{G}]\to [*/\wh{G}]$ is $W_{F}/I_E$-equivariant, the functor
\[
fr|_{\Rep^{\rm{f.d.}}(\wh{G})}:\Rep^{\rm{f.d.}}(\wh{G})\subset \cal{D}^b(\Rep^{\rm{f.d.}}(\wh{G}))= \Coh([*/\wh{G}])\to \Coh([\wh{\fk{g}}^*/\wh{G}])^{\shear}
\]
has the structure of a functor in $\Catinf{\Qlb}^{mon,\ BW_{F}^{\rm{disc}}}$ and $\Catinf{\Qlb}^{mon,\ B(W_{F}/I_E)}$.
Explicitly, for an object $V\in \Rep^{\rm{f.d.}}(\wh{G})=\cal{D}^b(\Rep^{\rm{f.d.}}(\wh{G}))^{\heartsuit}$ and under the identification $\Coh([\wh{\fk{g}}^*/\wh{G}])^{\shear}\simeq \rm{Sym}(\wh{\fk{g}}[-2])\lmod^{\perf}(\cal{D}(\Rep(\wh{G})))$, the $W_{F}$-equivariant structure of $fr$ is the isomorphism 
\[
\rm{Sym}(\wh{\fk{g}}[-2])\otimes (V^w)\overset{\sim}{\to} (\rm{Sym}(\wh{\fk{g}}[-2])\otimes V)^w, a\otimes v\mapsto w(a)\otimes v.
\]
By the same argument as the case of $\kappa_{I_E}$, there is a canonical functor
\begin{align*}
fr|_{\Rep^{\rm{f.d.}}(\wh{G}\times I_E)}\colon \Rep^{\rm{f.d.}}(\wh{G}\times I_E)&\to \rm{Sym}(\wh{\fk{g}}[-2])\lmod^{\perf}(\DDRep(\wh{G}\times I_E)),\\
V&\mapsto \rm{Sym}(\wh{\fk{g}}[-2])\otimes V
\end{align*}
in $\Catinf{\Qlb}^{mon,\ B(W_{F}/I_E)}$.
In this section, we will prove the following proposition
\begin{prop}\label{prop:compatiSatIF}
The natural isomorphism (\ref{eqn:SSpdFSqcFFrIF}) has the structure of a morphism in $\Catinf{\Qlb}^{mon,\ B(W_{F}/I_E)}$ with respect to $(W_{F}/I_E)$-equivariant structures defined above.
\end{prop}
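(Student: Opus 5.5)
The plan is to reduce the statement to an identity between natural transformations of ordinary $1$-categories, and then to check that identity after applying a faithful, $W_F/I_E$-equivariant fiber functor. Both functors in (\ref{eqn:SSpdFSqcFFrIF}) have source $\Rep^{\rm{f.p.}}(\wh{G}\times I_E,\Qlb)$, which is a $1$-category. Their essential images lie in $\Sat(\Hck_{G,\Spd \breve{E}},\Qlb)$, which is the nerve of a $1$-category. Indeed, $S_{\Spd\breve E}$ lands there by construction, and $\cal{S}^{der}_{\Spd\breve E}\circ fr$ lands there because it is isomorphic to $S_{\Spd\breve E}$.

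Both functors already carry $W_F/I_E$-equivariant structures: $S_{\Spd\breve E}$ via \cite[\S VI.11]{FS}, and $\cal{S}^{der}_{\Spd\breve E}\circ fr$ via Theorem \ref{thm:mainSpdEbrevessscbody} together with the equivariant structure on $fr$ described above. So upgrading (\ref{eqn:SSpdFSqcFFrIF}) to a morphism in $\Catinf{\Qlb}^{mon,\ B(W_F/I_E)}$ is a property, not extra data. For each $w\in W_F/I_E$, two natural isomorphisms
\[
w^*\circ S_{\Spd\breve E}\Rightarrow \cal{S}^{der}_{\Spd\breve E}\circ fr\circ w^*
\]
must coincide: one goes through the isomorphism (\ref{eqn:SSpdFSqcFFrIF}) and then the equivariance of $\cal{S}^{der}_{\Spd\breve E}\circ fr$, the other through the equivariance of $S_{\Spd\breve E}$ and then (\ref{eqn:SSpdFSqcFFrIF}). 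No higher coherences need to be checked, because everything lives in $1$-categories.

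To compare these isomorphisms I will post-compose with the fiber functor
\[
F_{G,\Spd\breve E}\colon \Sat(\Hck_{G,\Spd\breve E},\Qlb)\to \LocSys(\Spd\breve E,\Qlb)\simeq \Rep^{\rm{f.d.}}(I_E,\Qlb).
\]
This functor is faithful and $W_F/I_E$-equivariant; in fact the equivariant structure on $S_{\Spd\breve E}$ in \cite[\S VI.11]{FS} is defined through it. It therefore suffices to check the required equality after applying $F_{G,\Spd\breve E}$.

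Next I rewrite $F_{G,\Spd\breve E}$ on the derived side. This is the $I_E$-equivariant version of Lemma \ref{lemm:H*dRGammaH*Grmonoidal}: there should be an isomorphism $F_{G,\Spd\breve E}\simeq d\circ R\Gamma_{\Spd\breve E}$, where $d$ is the base change $\Qlb\otimes_{\cal{O}_{\wh{\fk t}^*/W}}H^*(-)$ along $0\in\wh{\fk t}^*/W$. I will verify that this isomorphism is $W_F/I_E$-equivariant. This should follow from the $W_F$-equivariance of (\ref{eqn:H*deeq}), which holds because that map is built from pullbacks, together with Theorems \ref{thm:H*HckWeil} and \ref{thm:H*GWeil}, which say that the point $0$ is $W_F$-fixed for the twisted action.

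Once this is in place, the chain (\ref{eqn:composedwithH*GrCdecomp}) from Lemma \ref{lemm:FSderfrFSisomdecomp}, run $I_E$-equivariantly through Lemma \ref{lemm:nonderSatSpdEbreve}, identifies $F_{G,\Spd\breve E}$ applied to (\ref{eqn:SSpdFSqcFFrIF}) with the composite
\[
F\circ S_{\Spd\breve E}\simeq d\circ R\Gamma\circ\cal{S}^{der}\circ fr\simeq d\circ\kappa_{I_E}^{\shear}\circ fr\simeq \rm{forget}\simeq F\circ S_{\Spd\breve E}.
\]
The second isomorphism in this composite is equivariant by Theorem \ref{thm:mainSpdEbrevessscbody}. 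The last one is equivariant by the very definition of the $W_F$-action on $S_{\Spd\breve E}$, since \cite{FS} constructs that action from the fiber functor. This leaves the third isomorphism.

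The main obstacle is that third isomorphism, the algebraic identification
\[
\Qlb\otimes_{\cal{O}_{\wh{\fk t}^*/W}}\kappa(fr(V))\cong V
\]
of \cite[Lemma 9]{BF}. It uses a Kostant slice $\Sigma$, which is not $W_F$-stable under the twisted action $\rm{act}^{\rm{geom}}_{\wh{\fk g}^*(1)}$; only its fiber over $0$, the regular nilpotent orbit, is relevant. I would first try to show that $\kappa$ is independent of the slice, as noted after Lemma \ref{lemm:monoidalonkappa}, so that the isomorphism is canonical and the conjugating element cancels. If that is not clean, I would instead use the weight argument from the end of the proof of Lemma \ref{lemm:FSderfrFSisomdecomp}. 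The two sides can differ by an automorphism of the functor $\rm{forget}$ that preserves the $\lambda$-filtration and is the identity on the associated graded; distinct Frobenius weights on distinct weight spaces then force this automorphism to be the identity. With the fiber-functor check complete, faithfulness of $F_{G,\Spd\breve E}$ gives the equality of the two natural isomorphisms, and hence the proposition.
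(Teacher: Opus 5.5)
Your reduction is the paper's. You pass to the 1-categorical isomorphism (\ref{eqn:h(SSpdCSqcFr)IF}), post-compose with the faithful equivariant fiber functor $F_{\Spd\breve{E}}$, and identify it equivariantly with $d_{I_E}\circ R\Gamma_{\Spd\breve{E}}$ (Lemma \ref{lemm:H*dRGammaH*GrmonoidalIF}). You then check the four isomorphisms of (\ref{eqn:FSderfrFSIEisomdecomp}) one at a time.

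The gap is the third isomorphism. You flag it as the main obstacle but do not settle it, and your premise is wrong: the Kostant slice \emph{can} be chosen $W_F$-stable for the twisted action.
\begin{itemize}
\item Identify $\wh{\fk{g}}^*\cong\wh{\fk{g}}$ by the Killing form ($\wh{G}$ is adjoint in this subsection). Then $\mathrm{act}^{\mathrm{geom}}_{\wh{\fk{g}}^*(1)}(w)$ becomes $x\mapsto q^{\deg w}\,\theta_w\,\mathrm{Ad}(\rho_{\mathrm{ad}}(\mathrm{cycl}(w)))\,x$ with $\theta_w$ pinned.
\item Since $\ell\neq p$, the cyclotomic character is unramified. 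So $\mathrm{cycl}(w)=q^{\pm\deg w}$, with the sign fixed by how $\deg$ is normalized.
\item Pinned automorphisms fix $e=\sum_i e_{\alpha_i}$ and $f=\sum_i f_{\alpha_i}$. The map $\mathrm{Ad}(\rho_{\mathrm{ad}}(t))$ scales $e$ by $t$ and $f$ by $t^{-1}$, and preserves $\fk{z}(e)$ and $\fk{z}(f)$.
\item Hence $f+\fk{z}(e)$ (sign $+$) or $e+\fk{z}(f)$ (sign $-$) is $W_F$-stable. Its point over $0\in\wh{\fk{t}}^*/W$ is a $W_F$-fixed regular nilpotent. Compensating the Tate twist on the principal nilpotent is exactly what the $\mathrm{Ad}_\chi$-twist of \S\ref{ssc:dualgp} does.
\end{itemize}
With this slice, the equivariant structure of Lemma \ref{lemm:Wonkappa} is just the restriction of $\wt{w^*\cal{F}}\cong w^*\wt{\cal{F}}$ along the stable $X_\Sigma\cong\bb{T}(\wh{\fk{t}}^*/W)$. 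The map $\Qlb\otimes_{\cal{O}_{\wh{\fk{t}}^*/W}}\kappa(fr(V))\cong V$ is evaluation of $\cal{O}_\Sigma\otimes V$ at a fixed point. That is tautologically equivariant, because the equivariant structure of $fr$ is $a\otimes v\mapsto w(a)\otimes v$. This is the ``easy, purely algebraic argument'' the paper invokes.

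Neither of your fallbacks closes the step.
\begin{itemize}
\item Slice-independence of $\kappa$ holds only up to automorphisms by sections of the regular centralizer. If $w(\Sigma)\neq\Sigma$, comparing $\kappa_{w(\Sigma)}$ with $\kappa_\Sigma$ acts on the fiber $V$ over $0$ by some $g_w$ with $g_w(w\xi_0)=\xi_0$. Nothing cancels it, since the equivariant structure of $\mathrm{forget}$ is trivial, so the third isomorphism would simply fail to be equivariant.
\item The weight argument of Lemma \ref{lemm:FSderfrFSisomdecomp} used that $\eta_V$ commutes with an auxiliary Frobenius with distinct eigenvalues on distinct weight spaces of $V_\mu$. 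Here the discrepancy is a cocycle $w\mapsto\delta_w$ with no reason to commute with such an operator. Knowing $\delta_w$ is unipotent for the weight filtration therefore does not force $\delta_w=1$.
\end{itemize}
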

The proof is similar to the argument in \S \ref{sssc:compatimonoidalnonderived}.
We have defined the functors
\begin{align*}
&\Rep^{\rm{f.d.}}(\wh{G}\times I_E)\overset{fr}{\longrightarrow}
\rm{Sym}(\wh{\fk{g}}[-2])\lmod^{fr}(\cal{D}\Rep(\wh{G}\times I_E))\\
&\overset{\cal{S}^{der}_{\Spd \breve{E}}}{\longrightarrow}\cal{D}^{\ULA}(\Hck_{G,\Spd \breve{E}})\overset{R\Gamma}{\longrightarrow}\cal{O}_{\bb{T}(\wh{\fk{t}}^*/W)}^{\shear}\lmod(\DDRep(I_E)).
\end{align*}
Let $\cal{C}'_1,\cal{C}'_2,\cal{C}'_3$ be the essential image of $\Rep^{\rm{f.d.}}(\wh{G}\times I_E)$ in the second, third, and fourth categories above, respectively.
As in \S \ref{sssc:compatimonoidalnonderived}, it suffices to show that the natural isomorphism 
\begin{align}\label{eqn:h(SSpdCSqcFr)IF}
(\Rep^{\rm{f.d.}}(\wh{G}\times I_E)\overset{fr}{\longrightarrow}\cal{C}'_1\overset{\cal{S}^{der}_{\Spd \breve{E}}}{\longrightarrow}\cal{C}'_2)\simeq (\Rep^{\rm{f.d.}}(\wh{G}\times I_E)\overset{S_{\Spd \breve{E}}}{\longrightarrow}\cal{C}'_2) 
\end{align}
of functors between 1-categories is monoidal and $(W_{F}/I_E)$-equivariant.
Consider the monoidal functor
\[
H^*\colon \cal{C}_3'\subset \cal{O}_{\bb{T}(\wh{\fk{t}}^*/W)}^{\shear}\lmod(\DDRep(I_E))\to \cal{O}_{\bb{T}(\wh{\fk{t}}^*/W)}\lmod(\DDRep(I_E))^{\heartsuit},\ \cal{F}\mapsto \bigoplus_{i}H^i(\cal{F})
\]
of 1-categories.
Let $d_{I_E}$ be the composition
\begin{align*}
d_{I_E}\colon \cal{C}_3'&\overset{H^*}{\to} \cal{O}_{\bb{T}(\wh{\fk{t}}^*/W)}\lmod(\DDRep(I_E))^{\heartsuit}\overset{\Qlb\otimes_{\cal{O}_{\wh{\fk{t}}^*/W}}-}{\longrightarrow}\Rep(I_E).    
\end{align*}
Then we can prove the following lemma:
\begin{lemm}\label{lemm:H*dRGammaH*GrmonoidalIF}
There is a $(W_{F}/I_E)$-equivariant, monoidal natural isomorphism between the composition
\[
\Sat(\Hck_{G,\Spd \breve{E}})=\cal{C}'_2\overset{R\Gamma}{\longrightarrow} \cal{C}'_3\overset{d_{I_E}}{\longrightarrow}\Rep^{\rm{f.d.}}(I_E)
\]
and 
\begin{align*}
F_{\Spd \breve{E}}:=H^*(\Gr_{G,\Spd C},q_{I_E}^*-)\colon \Sat(\Hck_{G,\Spd \breve{E}})&\to \Rep^{\rm{f.d.}}(I_E)\\
\cal{F}&\mapsto \bigoplus_i H^i(\Gr_{G,\Spd C},q_{I_E}^*\cal{F}).
\end{align*}
\end{lemm}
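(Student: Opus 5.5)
The plan is to mirror the proof of Lemma \ref{lemm:H*dRGammaH*Grmonoidal}, now keeping track of the $I_E$-structure and of the $W_F/I_E$-action. There are three steps: construct the isomorphism, prove it is monoidal, and prove it is $W_F/I_E$-equivariant.

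\textbf{Step 1: the isomorphism.} For $\cal{F}\in\Sat(\Hck_{G,\Spd \breve{E}})$, the object $R\Gamma(\cal{F})$ is $R\Gamma(\Hck_{G,\Spd C},q_{I_E}^*\cal{F})$, regarded as a module over $R\Gamma(\Hck_{G,\Spd C},\Qlb)\simeq\cal{O}^{\shear}_{\bb{T}(\wh{\fk{t}}^*/W)}$ (Proposition \ref{prop:RHckequiv}) in $\DDRep(I_E)$. By Lemma \ref{lemm:H*GtoH*Hck} and Proposition \ref{prop:RBL+Gequiv}, the $\cal{O}_{\wh{\fk{t}}^*/W}$-module structure is the one coming from $H^*([\Spd C/L^+_{\Spd C}G])$. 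Hence
\[
d_{I_E}(R\Gamma(\cal{F}))=H^*(\Spd C)\otimes_{H^*([\Spd C/L^+_{\Spd C}G])}H^*(\Hck_{G,\Spd C},q_{I_E}^*\cal{F}).
\]
The de-equivariantization map (\ref{eqn:H*deeq}) sends this to $H^*(\Gr_{G,\Spd C},q_{I_E}^*\cal{F})$. This map is induced by pullback along $\Gr_{G,\Spd C}\to\Hck_{G,\Spd C}$, which is $W_F$-equivariant, so it is a morphism in $\Rep(I_E)$ and commutes with the $W_F$-action. It is an isomorphism because, after forgetting the $I_E$-action, it is exactly (\ref{eqn:H*deeq}), and the forgetful functor $\Rep(I_E)\to\Vect$ is conservative. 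Naturality in $\cal{F}$ is clear, and the same pullback argument gives $(W_F/I_E)$-equivariance of the isomorphism itself; for the latter, note also that the $I_E$-action on $\cal{O}_{\wh{\fk{t}}^*/W}$ is trivial by Lemma \ref{lemm:RHckItriv}.

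\textbf{Step 2: monoidality.} Both functors take values in the $1$-category $\Rep^{\rm{f.d.}}(I_E)$, and the forgetful functor to $\Vect^{\heartsuit,\rm{f.d.}}$ is faithful. So it suffices to check that the two monoidal structures agree after forgetting the $I_E$-action. After composing with $q_{I_E}^*$, both functors become the corresponding functors over $\Spd C$, with the monoidal structures induced from those over $\Spd C$. For the left-hand side this is the structure from Lemma \ref{lemm:monoidalonRGamma}; for $F_{\Spd \breve{E}}$ it is the structure of $F_{G,\Spd C}$. Monoidality therefore reduces to Lemma \ref{lemm:H*dRGammaH*Grmonoidal}.

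\textbf{Step 3: $(W_F/I_E)$-equivariance.} Again by faithfulness, it suffices to check that for each $w\in W_F$ the square relating the $w$-action on $H^*(\Gr_{G,\Spd C},q_{I_E}^*-)$ with the $w$-action on $d_{I_E}\circ R\Gamma$ commutes on underlying vector spaces. On underlying vector spaces both actions are pullback along the action of $w$ on $\Hck_{G,\Spd C}$, $\Gr_{G,\Spd C}$ and $[\Spd C/L^+_{\Spd C}G]$. The equivariant structure of $R\Gamma_{\Spd \breve{E}}$ (Lemma \ref{lemm:WonRGammaIF}) comes from the $W_F$-equivariance of (\ref{eqn:monoidalonRGamma}) together with Theorems \ref{thm:H*HckWeil} and \ref{thm:H*GWeil}, and the map $\cal{O}_{\wh{\fk{t}}^*/W}\to\Qlb$ at $0$ is equivariant. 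So the compatibility comes down to naturality of pullback.

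\textbf{Main obstacle.} The delicate point is in Step 2. One must check that the $I_E$-linear monoidal structure on $F_{\Spd \breve{E}}$ used to construct $S_{\Spd \breve{E}}$ restricts, under $q_{I_E}^*$, to the monoidal structure on $F_{G,\Spd C}$ that \cite{Bantwomonoidal} compares with Zhu's. I would handle this as in Lemma \ref{lemm:nonderSatSpdEbreve}(3): transport everything to $[\Spd k/I_E]$ via (\ref{eqn:diag:fiberfunctorsIFcompatible}), where the structures are visibly the pullbacks of those on $\Spd k$.
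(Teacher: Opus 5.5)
Your proposal is correct and follows the paper's route. The paper reduces the isomorphism and its monoidality to Lemma \ref{lemm:H*dRGammaH*Grmonoidal} over $\Spd C$, then obtains $(W_F/I_E)$-equivariance from the commutativity of the square saying that (\ref{eqn:H*deeq}) commutes with $w^*$, which is naturality of pullback. The compatibility under $q_{I_E}^*$ of the monoidal structure on $F_{\Spd \breve{E}}$ with that on $F_{G,\Spd C}$, which you flag as the main obstacle, is left implicit in the paper's one-line reduction.
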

\begin{proof}
By Lemma \ref{lemm:H*dRGammaH*Grmonoidal}, it suffices to show that the following square is commutative for any $\cal{F}\in \Sat(\Hck_{G,\Spd C})$ and $w\in W_{F}$, which is obvious:
\begin{align*}
\xymatrix{
H^*(\Spd C) \otimes_{H^*([\Spd C/L^+_{\Spd C}G])} H^*(\Hck_{G,\Spd C},\cal{F})\ar[r]^-{(\ref{eqn:H*deeq})}\ar[d]_{\rm{id} \otimes w^*}&  H^*(\Gr_{G,\Spd C},\cal{F})\ar[d]^{w^*}\\
H^*(\Spd C) \otimes_{H^*([\Spd C/L^+_{\Spd C}G])} H^*(\Hck_{G,\Spd C},w^*\cal{F})\ar[r]^-{(\ref{eqn:H*deeq})}&  H^*(\Gr_{G,\Spd C},w^*\cal{F}).
}
\end{align*}
\end{proof}
\begin{proof}[Proof of Proposition \ref{prop:compatiSatIF}]
Consider the isomorphism (\ref{eqn:h(SSpdCSqcFr)}) composed with $F_{\Spd \breve{E}}$:
\begin{align}
&(\Rep^{\rm{f.d.}}(\wh{G}\times I_E)\overset{fr}{\longrightarrow}\cal{C}'_1\overset{\cal{S}^{der}_{\Spd \breve{E}}}{\longrightarrow}\Sat(\Hck_{G,\Spd \breve{E}})\overset{F_{\Spd \breve{E}}}{\longrightarrow}\Rep^{\rm{f.d.}}(I_E))\notag\\
&\simeq (\Rep^{\rm{f.d.}}(\wh{G}\times I_E)\overset{S_{\Spd \breve{E}}}{\longrightarrow}\Sat(\Hck_{G,\Spd \breve{E}})\overset{F_{\Spd \breve{E}}}{\longrightarrow}\Rep^{\rm{f.d.}}(I_E))\label{eqn:composedwithH*GrGIF}
\end{align}
We claim that this isomorphism agrees with the composition of the form
\begin{align}
&(\Rep^{\rm{f.d.}}(\wh{G}\times I_E)\overset{fr}{\longrightarrow}\cal{C}'_1\overset{\cal{S}^{der}_{\Spd \breve{E}}}{\longrightarrow}\Sat(\Hck_{G,\Spd \breve{E}})\overset{F_{\Spd \breve{E}}}{\longrightarrow}\Rep^{\rm{f.d.}}(I_E))\notag\\
&\simeq (\Rep^{\rm{f.d.}}(\wh{G}\times I_E)\overset{fr}{\longrightarrow}\cal{C}'_1\overset{\cal{S}^{der}_{\Spd \breve{E}}}{\longrightarrow}\Sat(\Hck_{G,\Spd \breve{E}})\overset{R\Gamma_{\Spd \breve{E}}}{\longrightarrow}\cal{C}'_3\overset{d_{I_E}}{\longrightarrow}\Rep^{\rm{f.d.}}(I_E))\notag\\
&\simeq (\Rep^{\rm{f.d.}}(\wh{G}\times I_E)\overset{fr}{\longrightarrow}\cal{C}'_1\overset{\kappa_{I_E}}{\longrightarrow}\cal{C}'_3\overset{d_{I_E}}{\longrightarrow}\Rep^{\rm{f.d.}}(I_E))\notag\\
&\simeq (\Rep^{\rm{f.d.}}(\wh{G}\times I_E)\overset{\rm{forget}}{\longrightarrow}\Rep^{\rm{f.d.}}(I_E)))\notag\\
&\simeq (\Rep^{\rm{f.d.}}(\wh{G}\times I_E)\overset{S_{\Spd \breve{E}}}{\longrightarrow}\Sat(\Hck_{G,\Spd \breve{E}})\overset{F_{\Spd \breve{E}}}{\longrightarrow}\Rep^{\rm{f.d.}}(I_E)).\label{eqn:FSderfrFSIEisomdecomp}
\end{align}
By Lemma \ref{lemm:DULAequivIF}, it reduces to a similar claim for $\Sat(\Hck_{G,[\Spd k/I_E]})$.
Then, by the arguments in \S \ref{sssc:nonderWeil}, \ref{ssc:CohWeil} and \ref{ssc:KosWeil}, all appearing functors can be written as a restriction of the tensor product of a functor and $\rm{id}_{\DDRep^{\perf}(I_E,\Qlb)}$.
Thus, it reduces to a similar claim for $\Sat(\Hck_{G,\Spd k})$.
This claim follows from the proof of Lemma \ref{lemm:FSderfrFSisomdecomp}.
We get the claim.

The first isomorphism in (\ref{eqn:FSderfrFSIEisomdecomp}) is monoidal and $(W_{F}/I_E)$-equivariant by Lemma \ref{lemm:H*dRGammaH*GrmonoidalIF}, the second by the definition of the monoidal and $(W_{F}/I_E)$-equivariant structure on $\cal{S}^{der}_{\Spd \breve{E}}$, the third by an easy, purely algebraic argument, and the fourth by the construction of (non-derived) geometric Satake equivalence.
Therefore, the isomorphism (\ref{eqn:composedwithH*GrGIF}) is monoidal and $(W_{F}/I_E)$-equivariant.
The claim follows since the functor $F_{\Spd \breve{E}}$ of 1-categories is faithful on $\Sat(\Hck_{G,\Spd \breve{E}})$.
\end{proof}
\subsection{Case of tori}\label{ssc:toruscaseEbreve}
Assume $G=T$ is a torus.
By an argument similar to Theorem \ref{thm:mainSpdCtorusbody}, we can show the derived Satake equivalence over $\Spd \breve{E}$ for tori.
\begin{thm}\label{thm:mainSpdEbrevetorusbody}
There is a canonical equivalence
\[
\cal{D}^{\ULA}(\Hck_{T,\Spd \breve{E}},\Qlb)\simeq \rm{Sym}(\wh{\fk{t}}[-2])\lmod^{\perf}(\DDRep(\wh{T}\times I_E,\Qlb)).
\]
in $\Catinf{\Qlb}^{mon,\ B(W_{F}/I_E)}$.

Moreover, there is a canonical equivalence
\[
S_{T,\Spd C}\simeq \cal{S}^{der}_{T,\Spd \breve{E}}\circ fr|_{\Rep^{\rm{f.d.}}(\wh{T}\times I_E,\Qlb)} \colon \Rep^{\rm{f.d.}}(\wh{T}\times I_E,\Qlb)\to \cal{D}^{\ULA}(\Hck_{T,\Spd \breve{E}},\Qlb).
\]
in $\Catinf{\Qlb}^{mon,\ B(W_{F}/I_E)}$.
\end{thm}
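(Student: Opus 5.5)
I will follow the proof of Theorem \ref{thm:mainSpdCtorusbody}, keeping track of the $I_E$-equivariant structures and the $W_F/I_E$-action throughout. Since $T$ splits over $E$, (\ref{eqn:HckGHckGspl}) gives a decomposition
\[
\Hck_{T,\Spd \breve{E}}\simeq \coprod_{\lambda\in \bb{X}_*(T_{\ol{F}})}[\Spd \breve{E}/L^+_{\Spd \breve{E}}T].
\]
Here $W_F/I_E$ permutes the components through its action on $\bb{X}_*(T_{\ol{F}})$, and it acts on each $[\Spd \breve{E}/L^+T]$ compatibly with this permutation. Correspondingly,
\[
\cal{D}^{\ULA}(\Hck_{T,\Spd \breve{E}})\simeq \bigoplus_{\lambda}\cal{D}^{\ULA}([\Spd \breve{E}/L^+_{\Spd \breve{E}}T]),
\]
and this equivalence is monoidal for the $\bb{X}_*$-graded structure, with $W_F/I_E$ acting diagonally. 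On the spectral side, $\Rep(\wh{T}\times I_E)$ is $\bb{X}^*(\wh{T})=\bb{X}_*(T)$-graded, so that
\[
\Sym(\wh{\fk{t}}[-2])\lmod^{\perf}(\DDRep(\wh{T}\times I_E))\simeq \bigoplus_{\lambda}\Sym(\wh{\fk{t}}[-2])\lmod^{\perf}(\DDRep(I_E)),
\]
and $W_F/I_E$ again acts compatibly with the permutation of the grading. It therefore suffices to prove an equivariant analogue of Proposition \ref{prop:DBL+Tequiv}: a $W_F/I_E$-equivariant symmetric monoidal equivalence
\[
\cal{D}^{\ULA}([\Spd \breve{E}/L^+_{\Spd \breve{E}}T],\Qlb)\simeq \Sym(\wh{\fk{t}}[-2])\lmod^{\perf}(\DDRep(I_E)).
\]
Once this is available, taking graded sums gives the first statement.

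To construct that equivalence, I use the functor $A\mapsto R\Gamma([\Spd C/L^+T],q^*A)$, where $q\colon [\Spd C/L^+T]\to[\Spd \breve{E}/L^+T]$ is the projection. Exactly as in \S\ref{ssc:CohWeil}, this lands in modules over $R\Gamma([\Spd C/L^+T],\Qlb)$ inside $\DDRep(I_E)$. Proposition \ref{prop:RBL+Gequiv} (for $G=T$, with Corollary \ref{cor:H*TWequiv}) identifies this algebra $W_F$-equivariantly with the formal algebra $\Sym(\wh{\fk{t}}[-2])$ with trivial $I_E$-action, in the same way as Lemma \ref{lemm:RHckItriv}.

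Full faithfulness and the identification of the essential image then go as follows.
\begin{enumerate}
\item By the analogues of Lemma \ref{lemm:DULAequivIF} and Lemma \ref{lemm:HckBIFKunneth} for $[\,\cdot/L^+T]$, the source is identified with $\cal{D}^{\ULA}([\Spd k/L^+T])\otimes\cal{D}_{\rm{lc}}([\Spd k/I_E])$.
\item The functor becomes $R\Gamma_{BL^+T}\otimes \rm{incl}$, as in the proof of Lemma \ref{lemm:RGammaIFff}.
\item Full faithfulness and the image $\Sym(\wh{\fk{t}}[-2])\lmod^{\perf}\otimes\DDRep^{\perf}(I_E)$ then follow from Proposition \ref{prop:DBL+Tequiv} (with $\Spd k$ in place of $\Spd C$, via Proposition \ref{prop:DULAequiv}), together with (\ref{eqn:DlcSpdEbreveDRepIEeqiv}).
\item Monoidality is the Künneth map of Lemma \ref{lemm:RGammaKunnethformal}. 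It is an equivalence because the source is generated under extensions by objects $\Qlb\boxtimes\cal{L}$.
\end{enumerate}
Since the functor is built from pullbacks and cohomology, it carries a natural $W_F/I_E$-structure, as in Lemma \ref{lemm:WonRGammaIF}. Because the functor is fully faithful, this structure is unique.

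For the second statement, I compare the composite $\cal{S}^{der}_{T,\Spd \breve{E}}\circ fr$ with $S_{T,\Spd \breve{E}}$ directly. Both send $\chi_\lambda\boxtimes W$ to the local system $\cal{L}_W$ placed on the $\lambda$-component; this is the analogue of the explicit description at the end of Theorem \ref{thm:mainSpdCtorusbody}, using $S_{\Spd \breve{E}}(\Qlb\boxtimes W)=i_{0*}\cal{L}_W$ as in Lemma \ref{lemm:nonderSatSpdEbreve}. The monoidal structures then match, because both are given by addition of $\lambda$ together with tensor products of local systems.

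The step I expect to be the main obstacle is $W_F/I_E$-equivariance of this identification, including the normalization of the cyclotomic twist on $\Sym(\wh{\fk{t}})$. Here I would argue as in Proposition \ref{prop:compatiSatIF}. The fiber functor $d_{I_E}\circ R\Gamma$ is $W_F/I_E$-equivariantly isomorphic to taking stalks, and this fiber functor is faithful on $\Sat(\Hck_{T,\Spd \breve{E}})$. It therefore suffices to check equivariance after applying it. On the $\lambda$-component the $W_F/I_E$-actions on both sides are simply the permutation $\lambda\mapsto\lambda^w$ combined with the action on $\Rep(I_E)$, and these visibly agree.
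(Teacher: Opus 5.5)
Your proposal is correct and follows essentially the paper's proof. You split $\Hck_{T,\Spd \breve{E}}$ equivariantly along $\bb{X}_*(T_{\ol{F}})$, use $A\mapsto R\Gamma([\Spd C/L^+_{\Spd C}T],q^*A)$ together with Proposition~\ref{prop:RBL+Gequiv}, get monoidality and the $W_F/I_E$-structure from the Weil-compatible K\"unneth map of Lemma~\ref{lemm:RGammaKunnethformal} plus generation by pulled-back local systems, and compare with the non-derived Satake functor as in the torus case over $\Spd C$; the only minor variation is that you prove full faithfulness and identify the image via $[\Spd k/I_E]$ and the tensor decomposition of Lemma~\ref{lemm:HckBIFKunneth} (as in Lemma~\ref{lemm:RGammaIFff}), whereas the paper checks full faithfulness directly on the generating subcategory $\cal{IC}_{BL^+T,\Spd \breve{E}}$, and both work.
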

\begin{proof}
There is a decomposition
\[
\Hck_{T,\Spd \breve{E}}=\coprod_{\bb{X}_*(T_{\ol{F}})} [\Spd \breve{E}/L^+_{\Spd \breve{E}}T].
\]
Taking into account the $W_E$-action, there is an equivalence 
\begin{align}\label{eqn:HckTbreveEsplitiing}
\cal{D}^{\ULA}(\Hck_{T,\Spd \breve{E}},\Qlb)\simeq \cal{D}^{\ULA}([\Spd \breve{E}/L^+_{\Spd \breve{E}}T],\Qlb)\otimes \bigoplus_{\bb{X}_*(T_{\ol{F}})}\Vect^{\perf}_{\Qlb}    
\end{align}
in $\Catinf{\Qlb}^{mon,\ B(W_{F}/I_E)}$, where $\bigoplus_{\bb{X}_*(T_{\ol{F}})}\Vect^{\perf}_{\Qlb}$ has a canonical structure as an object in $\Catinf{\Qlb}^{mon,\ B(W_{F}/I_E)}$ such that its monoidal structure is $\bb{X}_*(T_{\ol{F}})$-graded and its $W_{F}/I_E$-equivariant structure is the one induced by the $W_F/I_E$-action on $\bb{X}_*(T_{\ol{F}})$.

Let $q_{BL^+T}\colon [\Spd C/L^+_{\Spd C}T]\to [\Spd \breve{E}/L^+_{\Spd \breve{E}}T]$ be the projection.
Similarly to $R\Gamma_{\Spd \breve{E}}$ above, we can define the functor
\begin{align*}
\cal{D}^{\ULA}([\Spd \breve{E}/L^+_{\Spd \breve{E}}T])&\to \rm{Sym}(\wh{\fk{t}}[-2])\lmod(\DDRep(I_E)),\\
A&\mapsto R\Gamma(\Spd C,q_{BL^+T}^*A)
\end{align*}
Let $g\colon [\Spd \breve{E}/L^+_{\Spd \breve{E}}T]\to \Spd \breve{E}$ denote the projection, and let $\cal{IC}_{BL^+T,\Spd \breve{E}}$ denote the essential image of the functor
\[
\DDRep^{\perf}(I_E,\Qlb)\simeq \cal{D}_{\rm{lc}}(\Spd \breve{E},\Qlb)\xrightarrow{g^*}\cal{D}^{\ULA}([\Spd \breve{E}/L^+_{\Spd \breve{E}}T]).
\]
Then by the same argument as Theorem \ref{thm:mainSpdCtorusbody}, we can show full faithfulness and monoidality of $R\Gamma_{\Spd \breve{E}}$ from those of $R\Gamma_{\Spd \breve{E}}|_{\cal{IC}_{BL^+T,\Spd \breve{E}}}$.
Noting that the homomorphism (\ref{eqn:RGammaBL+TKunnethhom}) is compatible with the Weil action on $\Spd C/L^+_{\Spd C}T$, it follows that $R\Gamma_{\Spd \breve{E}}$ has the structure of a morphism in $\Catinf{\Qlb}^{mon,\ B(W_{F}/I_E)}$.

Then by arguing in the same way as Theorem \ref{thm:mainSpdCtorusbody} again, we get an equivalence
\[
\cal{D}^{\ULA}([\Spd \breve{E}/L^+_{\Spd \breve{E}}T])\simeq  \rm{Sym}(\wh{\fk{t}}[-2])\lmod^{\perf}(\DDRep(I_E))
\]
and an equivalence
\[
\cal{D}^{\ULA}(\Hck_{T,\Spd \breve{E}})\simeq  \rm{Sym}(\wh{\fk{t}}[-2])\lmod^{\perf}(\DRep(\wh{T}\times I_E))
\]
in $\Catinf{\Qlb}^{mon,\ B(W_{F}/I_E)}$ by (\ref{eqn:HckTbreveEsplitiing}).

The compatiblity with the non-derived Satake equivalence follows from the same argument as Theorem \ref{thm:mainSpdCtorusbody}.
\end{proof}
\subsection{General case}\label{ssc:generalcaseEbreve}
In this section, we prove the derived Satake equivalence over $\Spd \breve{E}$ for arbitrary connected reductive group $G$ over $F$.
\begin{lemm}
Let $G'$ be a reductive group over $F$ and $T'$ a torus over $F$.
Then the natural functor
\[
\cal{D}^{\ULA}(\Hck_{T',\Spd \breve{E}},\Qlb) \otimes_{\cal{D}_{\rm{lc}}(\Spd \breve{E},\Qlb)} \cal{D}^{\ULA}(\Hck_{G',\Spd \breve{E}},\Qlb)\to \cal{D}^{\ULA}(\Hck_{T'\times G',\Spd \breve{E}},\Qlb)
\]
in $\Catinf{\Qlb}^{mon,\ B(W_{F}/I_E)}$ is an equivalence.
\end{lemm}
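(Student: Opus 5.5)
The plan is to reduce to the analogous statement over $[\Spd k/I_E]$ and then to the statement over $\Spd k$, which is Lemma \ref{lemm:exttensorDULA} transported by Proposition \ref{prop:DULAequiv}. I would first ignore the $W_F/I_E$-structure. The functor $-\boxtimes_{\Spd \breve{E}}-$ is built from $*$-pullbacks along the projections of $\Hck_{T'\times G',\Spd \breve{E}}=\Hck_{T',\Spd \breve{E}}\times_{\Spd \breve{E}}\Hck_{G',\Spd \breve{E}}$ and tensor product. Convolution on the product is computed factorwise, because the convolution diagram of $T'\times G'$ is the fibre product over $\Spd\breve{E}$ of the two convolution diagrams and proper base change applies. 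Hence the functor is monoidal. It is also $W_F/I_E$-equivariant: the $W_F/I_E$-action on all three Hecke stacks comes from the action on $\Spd\breve{E}$, and pullbacks commute with it. This already makes the functor a morphism in $\Catinf{\Qlb}^{mon,\ B(W_{F}/I_E)}$. Since being an equivalence can be checked after forgetting the equivariant structure, it remains only to show that the underlying functor is an equivalence.

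For this I would use Lemma \ref{lemm:DULAequivIF}, which replaces $\Spd\breve{E}$ by $[\Spd k/I_E]$ for $T'$, for $G'$ and for $T'\times G'$ after passing to split forms. These equivalences are pullbacks, so they are compatible with $\boxtimes$. They are also compatible with the base category, since $\cal{D}_{\rm{lc}}(\Spd\breve{E})\simeq\cal{D}_{\rm{lc}}([\Spd k/I_E])$ by (\ref{eqn:DlcSpdEbreveDRepIEeqiv}). Next, Lemma \ref{lemm:HckBIFKunneth}, applied to $T'$, to $G'$ and to $T'\times G'$, identifies each category over $[\Spd k/I_E]$ with $\cal{D}^{\ULA}(\Hck_{-,\Spd k})\otimes\cal{D}_{\rm{lc}}([\Spd k/I_E])$. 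Under these identifications the source becomes
\[
\bigl(\cal{D}^{\ULA}(\Hck_{T',\Spd k})\otimes\cal{D}_{\rm{lc}}\bigr)\otimes_{\cal{D}_{\rm{lc}}}\bigl(\cal{D}^{\ULA}(\Hck_{G',\Spd k})\otimes\cal{D}_{\rm{lc}}\bigr)\simeq \cal{D}^{\ULA}(\Hck_{T',\Spd k})\otimes\cal{D}^{\ULA}(\Hck_{G',\Spd k})\otimes\cal{D}_{\rm{lc}},
\]
and the functor becomes $(-\boxtimes_{\Spd k}-)\otimes\mathrm{id}_{\cal{D}_{\rm{lc}}}$. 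I would check this identification on pure tensors $(A\boxtimes L)\otimes(B\boxtimes L')$, each of which is sent to $(A\boxtimes B)\boxtimes(L\otimes L')$.

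The factor $-\boxtimes_{\Spd k}-$ is an equivalence by the proof of Lemma \ref{lemm:exttensorDULA}: that lemma passes through $\Spd k$ via Proposition \ref{prop:DULAequiv} and then to the equal characteristic case. Tensoring an equivalence with $\mathrm{id}$ gives an equivalence, which finishes the argument.

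The main obstacle is the bookkeeping of relative tensor products over $\cal{D}_{\rm{lc}}(\Spd\breve{E})$ rather than over $\Vect^{\perf}$. One needs that $\otimes_{\cal{D}_{\rm{lc}}}$ is computed correctly on small idempotent-complete categories. One also needs that the module structures from Lemma \ref{lemm:HckBIFKunneth} are the free ones, so that the simplification $(\cal{A}\otimes\cal{D}_{\rm{lc}})\otimes_{\cal{D}_{\rm{lc}}}(\cal{B}\otimes\cal{D}_{\rm{lc}})\simeq\cal{A}\otimes\cal{B}\otimes\cal{D}_{\rm{lc}}$ holds. I would handle this by checking the module structures on the generators $i_{0*}\Qlb\boxtimes\cal{L}$.
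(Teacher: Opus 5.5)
Your proposal is correct and follows the paper's own route. Like the paper, you use Lemma~\ref{lemm:DULAequivIF} and Lemma~\ref{lemm:HckBIFKunneth} to reduce to showing that $(\cal{D}^{\ULA}(\Hck_{T',\Spd k})\otimes\cal{D}_{\rm{lc}})\otimes_{\cal{D}_{\rm{lc}}}(\cal{D}^{\ULA}(\Hck_{G',\Spd k})\otimes\cal{D}_{\rm{lc}})\to\cal{D}^{\ULA}(\Hck_{T'\times G',\Spd k})\otimes\cal{D}_{\rm{lc}}$ is an equivalence, and then conclude by Lemma~\ref{lemm:exttensorDULA} with $\Spd C$ replaced by $\Spd k$. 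The extra bookkeeping you supply is left implicit in the paper: checking the equivalence after forgetting the monoidal and $W_F/I_E$-structures, and simplifying the relative tensor product over $\cal{D}_{\rm{lc}}$.
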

\begin{proof}
By Lemma \ref{lemm:DULAequivIF} and Lemma \ref{lemm:HckBIFKunneth}, it suffices to show that the canonical functor
\begin{multline*}
(\cal{D}^{\ULA}(\Hck_{T',\Spd k},\Qlb)\otimes \cal{D}_{\rm{lc}}([\Spd k/I_E],\Qlb)) \\\otimes_{\cal{D}_{\rm{lc}}([\Spd k/I_E],\Qlb)} (\cal{D}^{\ULA}(\Hck_{G',\Spd k},\Qlb)\otimes \cal{D}_{\rm{lc}}([\Spd k/I_E],\Qlb)) )\\
\to \cal{D}^{\ULA}(\Hck_{T'\times G',\Spd k},\Qlb)\otimes \cal{D}_{\rm{lc}}([\Spd k/I_E],\Qlb))
\end{multline*}
is an equivalence.
This follows from Lemma \ref{lemm:exttensorDULA}, replacing $\Spd C$ with $\Spd k$.
\end{proof}
Now by the same argument as Theorem \ref{thm:mainSpdCbody}, we get the derived Satake equivalence over $\Spd \breve{E}$:
\begin{thm}\label{thm:mainSpdbreveEbody}
There is a canonical equivalence
\begin{align}\label{eqn:mainSpdbreveEbody}
\cal{D}^{\ULA}(\Hck_{G,\Spd \breve{E}},\Qlb)\simeq \rm{Sym}(\wh{\fk{g}}[-2])\lmod^{\perf}(\DDRep(\wh{G}\times I_E,\Qlb)).    
\end{align}
in $\Catinf{\Qlb}^{mon,\ B(W_{F}/I_E)}$.

Moreover, there is a canonical equivalence
\[
\cal{S}_{G,\Spd C}\simeq \cal{S}^{der}_{G,\Spd \breve{E}}\circ fr|_{\Rep^{\rm{f.d.}}(\wh{G}\times I_E,\Qlb)}\colon \Rep^{\rm{f.d.}}(\wh{G}\times I_E,\Qlb)\to \cal{D}^{\ULA}(\Hck_{G,\Spd \breve{E}},\Qlb).
\]
in $\Catinf{\Qlb}^{mon,\ B(W_{F}/I_E)}$.
\end{thm}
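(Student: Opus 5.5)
The plan is to mirror the proof of Theorem \ref{thm:mainSpdCbody}, keeping track of the $W_F/I_E$-equivariant structures at every step.

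\textbf{Reduction to a product.} Choose a central isogeny $\varphi\colon \wt{G}=T'\times G'\to G$ with $T'$ a torus and $G'$ semisimple simply-connected, everything defined over $F$ (e.g.\ $G'=G_{\rm{sc}}$ and $T'$ the connected center). Then $\varphi$ is $W_F$-equivariant, and the induced maps of Hecke stacks over $\Spd\breve{E}$ commute with the $W_F/I_E$-action. For $\wt{G}$ itself, I would combine three earlier results: Theorem \ref{thm:mainSpdEbrevessscbody} for $G'$, Theorem \ref{thm:mainSpdEbrevetorusbody} for $T'$, and the preceding Künneth lemma over $\cal{D}_{\rm{lc}}(\Spd\breve{E},\Qlb)\simeq\DDRep^{\perf}(I_E,\Qlb)$. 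On the spectral side one uses the analogous decomposition
\[
\rm{Sym}(\wh{\fk{t}}'[-2])\lmod^{\perf}(\DDRep(\wh{T}'\times I_E))\otimes_{\DDRep^{\perf}(I_E)}\rm{Sym}(\wh{\fk{g}}'[-2])\lmod^{\perf}(\DDRep(\wh{G}'\times I_E))\simeq \rm{Sym}(\wh{\fk{g}}_{\wt{G}}[-2])\lmod^{\perf}(\DDRep(\wh{\wt{G}}\times I_E)).
\]
This decomposition is formal and equivariant. Compatibility with $S_{\Spd\breve{E}}$ for the product follows, as in the proof of the corresponding proposition over $\Spd C$, from the compatibility of the non-derived Satake equivalence with exterior products. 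That compatibility is $W_F/I_E$-equivariant by the construction in \cite[\S VI.11]{FS}.

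\textbf{Passage from $\wt{G}$ to $G$.} Next I decompose $\Hck_{G,\Spd\breve{E}}$ into components indexed by $\bb{X}_*^+/\wt{\bb{X}}_*^+$, i.e.\ by $\pi_1(G)$. The action of $W_F/I_E$ permutes these components through its action on $\pi_1(G)_{I_E}$. This gives the analogue of (\ref{eqn:DULAnonssscsplit}): an equivalence in $\Catinf{\Qlb}^{mon,\ B(W_F/I_E)}$ whose second factor is $\bigoplus\Vect^{\perf}$, graded by the quotient and carrying the permutation action. Then I prove the analogue of (\ref{eqn:DULAcentralisogenyequiv}), namely that $r_\varphi^*$ is an equivalence. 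Full faithfulness comes from Lemma \ref{lemm:gerbecoh}, since the kernel $K^\dia$ is finite étale over $\Spd\breve{E}$. Essential surjectivity reduces, by excision, truncation and Lemma \ref{lemm:DULAequivIF}, to strata over $[\Spd k/I_E]$, where it follows by the argument of Lemma \ref{lemm:HckBIFKunneth}. Monoidality comes from the same cartesian convolution square. Equivariance is automatic, because $r_\varphi$ is a $W_F/I_E$-equivariant map of stacks. On the spectral side I decompose $\DDRep(\wh{G}\times I_E)$ by central characters of $Z(\wh{G})$ restricted to $\ker(\wh{\wt{G}}\to \wh{G})$, and use that the action on $\rm{Sym}(\wh{\fk{g}}[-2])$ factors through $\wh{\wt{G}}$. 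Comparing the two decompositions then yields (\ref{eqn:mainSpdbreveEbody}) together with the compatibility with $fr$.

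\textbf{Main obstacle.} I expect the hardest step to be the equivariance bookkeeping in the last step: checking that the $\pi_1(G)$-graded decompositions on both sides match $W_F/I_E$-equivariantly and monoidally. This amounts to identifying $\pi_1(G)$ with the character group of $\ker(\wh{\wt{G}}\to\wh{G})$ compatibly with the Weil action, including the cyclotomic twist in $\rm{act}^{\rm{geom}}$. I would check this by observing that the twist is inner, hence acts trivially on the center, so only the algebraic action remains, and that action is compatible with the identification.
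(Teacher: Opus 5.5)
Your outline follows the paper's route: the paper proves this theorem by saying the argument of Theorem \ref{thm:mainSpdCbody} carries over, using the preceding K\"unneth lemma. Your product case, the use of Lemma \ref{lemm:gerbecoh} for $r_\varphi^*$ on the image of $\Gr_{\wt G}$, and the Weil bookkeeping are fine.

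\textbf{The step that fails.} It is the ``analogue of (\ref{eqn:DULAnonssscsplit})'': the claim that $\Hck_{G,\Spd\breve E}$ is a union of copies of the neutral piece $\Hck_{G,\Spd\breve E,\wt{\bb X}^+_*}$, monoidally graded by $A=\bb X_*(T_{\ol F})/\bb X_*(\varphi^{-1}T_{\ol F})$, and matched with a graded monoidal splitting $\Rep(\wh G)\simeq\bigoplus_A\Rep(\wh{\wt G})$. Two small corrections first: $A$ is the finite quotient of $\pi_1(G)$ by the image of $\bb X_*(Z(G)^\circ)$, not $\pi_1(G)$; and the dual isogeny goes $\wh G\to\wh{\wt G}$, so the relevant kernel is $\ker(\wh G\to\wh{\wt G})$.

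\emph{Why the splitting is impossible.} A graded monoidal splitting $\cal C_0\otimes\bigoplus_A\Vect^{\perf}$ makes $\mathbbm{1}\otimes\delta_c$ invertible for every $c\in A$. Base change along the augmentation $\Sym(\wh{\fk g}[-2])\to\Qlb$ is monoidal. So for each $c$ it would give an invertible object of $\cal D^b(\Rep^{\mathrm{f.d.}}(\wh G))$ on which $A^\vee=\ker(\wh G\to\wh{\wt G})$ acts through $c$. For your choice $\wt G=Z(G)^\circ\times G_{\mathrm{sc}}$, this kernel is $Z(\wh G^{\mathrm{der}})$. But invertible objects there are shifted characters of $\wh G$, and these kill $\wh G^{\mathrm{der}}\supset A^\vee$. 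So the splitting exists only when $A=0$, i.e.\ only when $\varphi$ is already an isomorphism.

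Concretely, the odd part of $\Rep(GL_2)$ contains no one-dimensional representation. For $G=PGL_2$, the odd component of $\Gr_G$ has no $L^+G$-fixed point (its smallest orbit is $\mathbb{P}^1$), so it is not an $L^+G$-equivariant translate of the neutral component. The same decomposition appears in the paper's proof of Theorem \ref{thm:mainSpdCbody}, which this proof invokes. Mirroring that proof therefore reproduces the gap rather than closing it.

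\textbf{What is missing.} You need a separate argument for the classes $c\neq 0$, which $r_\varphi$ does not see.

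\emph{Rigidity.} Define the functor on all semisimple objects at once. One has $R\Hom(S(V),S(W))\simeq R\Hom(\mathbbm{1},S(V^*\otimes W))$. When $V$ and $W$ lie in the same class, $V^*\otimes W$ has trivial $A^\vee$-character. When they lie in different classes, both the geometric and spectral Hom-complexes vanish (disjoint supports, resp.\ no $A^\vee$-invariants). Hence Hom-complexes and compositions among the $S(V)$ in every class are governed by two things: the neutral piece $\simeq\cal D^{\ULA}(\Hck_{\wt G,\Spd\breve E},\Qlb)$, and the monoidal structure of the non-derived Satake equivalence for $G$. The same holds for the objects $\Sym(\wh{\fk g}[-2])\otimes V$. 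You then extend by generation under extensions.

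\emph{Alternative.} Run the $R\Gamma$/Kostant comparison of \S\ref{ssc:derSatSpdCsssc} directly for $G$, component by component.

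With either fix, your equivariance bookkeeping goes through: the cyclotomic twist is inner, hence acts trivially on $Z(\wh G)$ and on $A$.
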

\section{Derived Satake equivalence over $\Div^1$}\label{sec:derSatoverDiv1}
To prove the derived Satake equivalence over $\Div^1$, we apply the operation $(-)^{W_{F}/I_E}$, defined in \cite[(A.2)]{AG}, to both sides of (\ref{eqn:mainSpdbreveEbody}).
The result of applying $(-)^{W_{F}/I_E}$ to the left hand side is $\cal{O}_{\wh{\fk{g}}^*}^{\shear}\lmod^{fr}(\DDRep(\wh{G}\rtimes W_{F},\Qlb))$ by the standard argument.
Applying it to the right hand side gives $\cal{D}^{\ULA}(\Hck_{G,\Div^1},\Qlb)$ by definition.
Therefore, we get the derived Satake equivalence over $\Div^1$:
\begin{thm}
There is an equivalence of stable monoidal $\infty$-categories
\[
\cal{S}^{der}_{\Div^1}\colon \cal{D}^{\ULA}(\Hck_{G,\Div^1},\Qlb)\simeq \rm{Sym}(\wh{\fk{g}}(-1)[-2])\lmod^{\perf}(\DDRep(\wh{G}\rtimes W_{F})).
\]
\end{thm}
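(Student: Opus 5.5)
The plan is to apply the invariants functor $(-)^{W_F/I_E}$ of \cite[(A.2)]{AG} to the equivalence (\ref{eqn:mainSpdbreveEbody}) of Theorem \ref{thm:mainSpdbreveEbody}. Since that theorem gives an equivalence in $\Catinf{\Qlb}^{mon,\ B(W_F/I_E)}$, and taking invariants is a limit (the totalization of the cosimplicial diagram with terms $\prod_{(W_F/I_E)^i}\cal{C}$), it sends equivalences of monoidal categories with monoidal actions to equivalences of monoidal $\infty$-categories. Limits of monoidal $\infty$-categories are computed underlying, so the monoidal structures survive automatically.

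On the geometric side, $\cal{D}^{\ULA}(\Hck_{G,\Spd \breve{E}},\Qlb)^{W_F/I_E}=\cal{D}^{\ULA}(\Hck_{G,\Div^1},\Qlb)$ by definition. The real content is identifying the spectral side
\[
\rm{Sym}(\wh{\fk{g}}[-2])\lmod^{\perf}(\DDRep(\wh{G}\times I_E,\Qlb))^{W_F/I_E}
\]
with $\rm{Sym}(\wh{\fk{g}}(-1)[-2])\lmod^{\perf}(\DDRep(\wh{G}\rtimes W_F,\Qlb))$. This proceeds in three steps.

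First, $(-)^{W_F/I_E}$ commutes with forming module categories over an algebra carrying a compatible $W_F/I_E$-equivariant structure. Concretely, the action on $\rm{Sym}(\wh{\fk{g}}[-2])\lmod(\DDRep(\wh{G}\times I_E))$ described in \S\ref{ssc:KosWeil} is: twist the $\wh{G}$-action by $\rm{act}^{\rm{geom}}_{\wh{G}}(w)$ and the algebra action by $\rm{act}^{\rm{geom}}_{\wh{\fk{g}}^*(1)}(w)$, tensored with the conjugation action on $\DDRep(I_E)$. This exhibits $\rm{Sym}(\wh{\fk{g}}[-2])$ as a commutative algebra object of $\DDRep(\wh{G}\times I_E)^{W_F/I_E}=\DDRep(\wh{G}\rtimes W_F)$. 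Because of the factor $q^{\deg w}$ in the Tate twist, this algebra is exactly $\rm{Sym}(\wh{\fk{g}}(-1)[-2])$ with the adjoint action.

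Second, a module in the invariant category is then a module over this descended algebra in $\DDRep(\wh{G}\rtimes W_F)$. Formally, $A\lmod(\cal{C})^{H}\simeq A\lmod(\cal{C}^H)$ for an $H$-equivariant algebra $A$, since module categories are themselves limits (or can be presented monadically) and limits commute with limits.

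Third, the perfectness condition is checked after forgetting to $\rm{Sym}(\wh{\fk{g}}[-2])\lmod$. The forgetful functor from invariants factors through the underlying category, so the perfect full subcategories match.

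The main obstacle I expect is the identification of the descended algebra object, in particular its twist, together with the check that the $W_F/I_E$-action constructed abstractly in \S\ref{ssc:KosWeil} really is the module-twisting action for the $\infty$-categorical equivariant algebra structure. To handle this, I would first verify that the equivariant structure on $fr$ recalled before Proposition \ref{prop:compatiSatIF} comes from an algebra map $w\colon\rm{Sym}(\wh{\fk{g}}[-2])\to\rm{Sym}(\wh{\fk{g}}[-2])^{w}$ with coherences. Since the algebra is formal, this should reduce to the 1-categorical statement that $\rm{act}^{\rm{geom}}_{\wh{\fk{g}}^*(1)}$ is a group action compatible with $\rm{act}^{\rm{geom}}_{\wh{G}}$. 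The compatibility with the non-derived equivalence of \cite[\S VI]{FS} then follows by applying invariants to the second statement of Theorem \ref{thm:mainSpdbreveEbody}, using $\Rep^{\rm{f.d.}}(\wh{G}\times I_E)^{W_F/I_E}=\Rep^{\rm{f.d.}}(\wh{G}\rtimes W_F)$ and $\Sat(\Hck_{G,\Spd\breve{E}})^{W_F/I_E}=\Sat(\Hck_{G,\Div^1})$ via \cite[\S VI.11]{FS}.
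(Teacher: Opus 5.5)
Your proposal is correct and follows the paper's proof: apply $(-)^{W_F/I_E}$ to the equivalence (\ref{eqn:mainSpdbreveEbody}) of Theorem \ref{thm:mainSpdbreveEbody}. The geometric side then becomes $\cal{D}^{\ULA}(\Hck_{G,\Div^1},\Qlb)$ by definition, and the invariants of the spectral side are identified with $\rm{Sym}(\wh{\fk{g}}(-1)[-2])\lmod^{\perf}(\DDRep(\wh{G}\rtimes W_F))$. The paper dismisses that last identification as ``the standard argument,'' and your three steps are a faithful expansion of it: descending the equivariant algebra with its Tate twist, the formal equivalence $A\lmod(\cal{C})^H\simeq A\lmod(\cal{C}^H)$, and detecting perfectness after forgetting.
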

Moreover, let
\[
S_{G,\Div^1}\colon \Rep^{\rm{f.d.}}(\wh{G}\rtimes W_E,\Qlb)\to \cal{D}^{\ULA}(\Hck_{G,\Div^1},\Qlb)
\]
denote the functor induced from the non-derived geometric Satake equivalence over $\Div^1$ in \cite[\S VI]{FS}.
Then by construction, $S_{G,\Div^1}$ is isomorphic to $(S_{G,\Spd \breve{E}})^{W_F/I_E}$, and we have a canonical monoidal equivalence
\[
S_{G,\Div^1}\simeq \cal{S}^{der}_{G,\Div^1}\circ fr|_{\Rep^{\rm{f.p.}}(\wh{G}\rtimes W_{F},\Qlb)}.
\]
\bibliographystyle{test}
\bibliography{Div1SatBib}
\end{document}